\newtheorem{Theorem}{Theorem}[section]
\newtheorem{Proposition}{Proposition}[section]
\newtheorem{Lemma}{Lemma}[section]
\newtheorem{Corollary}{Corollary}[section]
\newtheorem{Remark}{Remark}[section]
\newtheorem{Definition}{Definition}[section]
\numberwithin{equation}{section}
\numberwithin{equation}{section}
\begin{document}

\bibliographystyle{plain}
\title{\textbf{
Global Schr\"odinger map flows to K\"ahler manifolds with small data in critical Sobolev spaces: Energy critical case.  }}

\author{Ze Li\thanks{School of Mathematics, School of Mathematics and Statistics, Ningbo University, Ningbo, 315211, Zhejiang, P.R. China. Email: \texttt{rikudosennin@163.com}} }
\date{ }
\maketitle



\begin{abstract}
In this paper and the companion work \cite{LIZE2}, we prove that the Schr\"odinger map flows from $\Bbb R^d$ with $d\ge 2$ to compact K\"ahler manifolds with small initial data in critical Sobolev spaces are global.  The main difficulty compared with the constant sectional curvature case is that the gauged equation now is not self-contained due to the curvature part. Our main idea is to use a novel bootstrap-iteration scheme to reduce the gauged equation to an approximate constant curvature system in finite times of iteration.
This paper with the companion work \cite{LIZE2} solves the open problem raised by Tataru.
\end{abstract}

\setcounter{tocdepth}{1}
 \tableofcontents

\section{Introduction}

Let $(\mathcal{N},J,h)$ be a K\"ahler manifold, the Schr\"odinger map flow (SMF) on Euclidean spaces is a map $u: (t,x)\in \Bbb R\times\Bbb R^d\mapsto \mathcal{N}$ which satisfies
\begin{align}\label{1}
\begin{cases}
u_t =J(\sum^{d}_{i=1}\nabla_i\partial_iu)\\
u\upharpoonright_{t=0} = u_0,
\end{cases}
\end{align}
where $\nabla$ denotes the induced covariant derivative in the pullback bundle $u^{*}T\mathcal{N}$.

Assume that $\mathcal{N}$ is isometrically embedded into $\Bbb R^N$, then  (\ref{1}) can be formulated as
\begin{align}\label{11}
\begin{cases}
u_t =JP^{\mathcal{N}}_u(\Delta_{\Bbb R^d}u)\\
u\upharpoonright_{t=0} = u_0,
\end{cases}
\end{align}
where $P^{\mathcal{N}}_u$ denotes the orthogonal projection from $\Bbb R^N$ onto $T_u\mathcal{N}$.

(\ref{1}) plays a fundamental role in solid-state physics and is usually referred as the Landau-Lifshitz flow in physics literature. The various forms of SMF are commonly used in micromagnetics to  model the effects of a magnetic filed on ferromagnetic materials (e.g. \cite{LL}).  In the $d=1$ and $d=2$ case  with $\mathcal{N}=\Bbb S^2$,  SMF is referred as the ferromagnetic chain equation and the continuous isotropic Heisenberg spin model respectively (e.g. \cite{ZGT}).

The Schr\"odinger map flow can be viewed as a Hamiltonian flow on infinite dimensional symplectic manifolds, see Ding \cite{Ding}. One of the conservation law of SMF is its energy defined by
\begin{align*}
E(u)=\frac{1}{2}\int_{\Bbb R^d}|\partial_x u|^2dx.
\end{align*}
And SMF has the scaling invariance property: $u(t,x)\longmapsto u(\lambda^2 t,\lambda x)$. Thus $d=2$ is the energy critical case. In the case $\mathcal{N}=\Bbb S^2$, SMF has masses as another conservation law:
\begin{align*}
M(u)=\frac{1}{2}\int_{\Bbb R^d}|u-P|^2dx,\mbox{  } {\rm if}\mbox{  }\|u_0-P\|_{L^2_x}<\infty, \mbox{  }{\rm  for} \mbox{  }{\rm some}\mbox{ }P\in \Bbb S^2.
\end{align*}
However, the mass does not conserve  for general target $\mathcal{N}$.

We recall the following non-exhaustive list of works on Cauchy problems. The local well-posedness theory of Schr\"odinger map flows was developed by Sulem-Sulem-Bardos \cite{SSB}, Ding-Wang \cite{DW}, McGahagan \cite{Mc}. The global well-posedness theory was started by Chang-Shatah-Uhlenbeck \cite{CSU}  and Nahmod-Stefanov-Uhlenbeck \cite{NSU}.  And the $d=1$ case with general targets was studied by Rodnianski-Rubinstein-Staffilani \cite{RRS}. The global existence for small data in critical Besov spaces was proved by  Ionescu-Kenig \cite{IK} and Bejenaru \cite{B} independently. The small data global well-posedness theory in critical Sobolev spaces was done by Bejenaru-Ionescu-Kenig \cite{BIK} for high dimensions $d\ge 4$.  The two dimension case, which is energy critical, was studied by Bejenaru-Ionescu-Kenig-Tataru \cite{BIKT} where the global well-posedness theory for small data in critical Sobolev spaces was established for $\mathcal{N}=\mathbb{S}^2$ with $d\ge 2$. And Dodson-Smith \cite{DS} studied the conditional global regularity problem for $d=2$.

The stationary solutions of SMF are harmonic maps. So far, the dynamical behavior of SMF near harmonic maps is partly known in the equivariant case with $d=2$, $\mathcal{N}=\Bbb S^2$. The works of Gustafson, Kang, Tsai, Nakanish  \cite{GKT2,GNT} proved asymptotic stability v.s. wind oscillating near harmonic maps in high equivariant classes. Bejenaru-Tataru \cite{BT} proved global stability v.s. instability of harmonic maps for 1-equivariant 2D SMF.  The type II blowup solutions were constructed by Merle-Raphael-Rodnianski \cite{MPR} and Perelman \cite{P} for 1-equivariant 2D SMF. And the below threshold conjecture was verified for equivariant SMF from $\Bbb R^2$ into $\Bbb S^2$ or $\Bbb H^2$ by Bejenaru-Ionescu-Kenig-Tataru \cite{BIKT2, BIKT3}.

All the mentioned global well-posedness results of SMF with $d\ge 2$ are for targets of $\Bbb S^2$ or $\Bbb H^2$. Tataru raised the proof of  small data global well-posedness in critical Sobolev spaces for general compact K\"ahler targets as an open problem
in the survey report \cite{Koch}. This work, which deals with  the energy critical case $d=2$, together with our companion work  \cite{LIZE2} solves this problem.

\subsection{Main results}
Before stating our main results, we introduce some notations on working spaces.
For geometric PDEs, it is convenient to work in both intrinsic Sobolev spaces and extrinsic Sobolev spaces. For smooth maps from $\Bbb R^2\to \mathcal{N}$  the intrinsic norms are defined by
\begin{align*}
\|u\|^p_{\mathcal{W}^{k,p}}:=\sum^{k}_{j=1}\|\nabla^{j} u\|^p_{L^p_x(\Bbb R^d)},
\end{align*}
where $\nabla$ denotes the induced covariant derivative in $u^*T\mathcal{N}$.

Given a point $Q\in \mathcal{N}$, we define the extrinsic Sobolev space $H^{k}_Q$ by
\begin{align*}
{ {H}^{k }_Q}:=\{u:\Bbb R^d\to \Bbb R^N\mid u(x)\in \mathcal{N} a.e. {\mbox{  }}{\rm in}\mbox{  } \Bbb R^d, \| u-Q\|_{H^{k}(\Bbb R^d)}<\infty\},
\end{align*}
which is equipped with the metric $d_{Q}(f,g)=\|f-g\|_{H^k}$. Define $\mathcal{H}_Q$ to be
\begin{align*}
{ {\mathcal{H}}_Q}:=\bigcap^{\infty}_{k=1}H^{k}_Q.
\end{align*}
Our main results are the following two.
\begin{Theorem}  \label{ThR}
Let $d=2$,  $\mathcal{N}$ be a 2n-dimensional compact K\"ahler manifold which is isometrically embedded into $\Bbb R^N$, and let $Q\in\mathcal{N}$ be a given point.
 There exists a sufficiently small constant $\epsilon_* >0$ such that if $u_0\in \mathcal{H}_{Q}$  satisfies
\begin{align}
\|\partial_x u_0\|_{L^2_x}\le \epsilon_*,
\end{align}
then (\ref{1}) with initial data $u_0$ evolves into a global unique solution $u\in  C(\Bbb R;\mathcal{H}_{Q})$. Moreover, as $|t|\to \infty$  the solution $u$ converges to the constant map $Q$ in the sense that
\begin{align}\label{yuhgvb90}
\lim\limits_{|t|\to \infty}\| u(t) -Q\|_{L^{\infty}_x}=0.
\end{align}
 Furthermore, in the energy space, we also have
\begin{align}\label{y0}
\lim\limits_{t\to \infty}\|  u(t) -\sum^{n}_{j=1}\Re ( e^{it\Delta}h^{j}_{+}) -\sum^{n}_{j=1}\Im (e^{it\Delta}g^j_{+})\|_{{\dot H}^{1}_x}=0,
\end{align}
for some functions $h^j_+,g^j_+:\Bbb R^2\to \Bbb C^{N}$ belonging  to $\dot {H}^1$ with $j=1,...,n$.
\end{Theorem}

\begin{Remark}
The asymptotic behaviors  (\ref{yuhgvb90}) and (\ref{y0}) are new for SMF.
The analogous result of (\ref{yuhgvb90}) for wave maps was obtained in  Part VII of Tao \cite{T4}.
Similar result  like (\ref{y0})  was recently obtained by the author \cite{Lihy} in the setting of SMF on hyperbolic planes. One can see (\ref{y0}) is natural by checking the trivial target $\mathcal{N}=\Bbb R^{2n}$, see  Remark 1.1 of \cite{Lihy} for instance.
\end{Remark}

We also prove the uniform bounds and well-posedness results analogous to that of \cite{BIKT}.
\begin{Theorem}\label{Two}
Let $d=2$, $\sigma_1\ge 0$. Let $\mathcal{N}$ be a compact K\"ahler manifold which is isometrically embedded into $\Bbb R^N$, and let $Q\in\mathcal{N}$ be a given point.
 There exists a sufficiently small constant $\epsilon_{\sigma_1} >0$ depending only on $\sigma_1$ such that the global solution $u=S_{Q}(t)u_0\in  C(\Bbb R;\mathcal{H}_{Q})$  constructed in Theorem  \ref{ThR}   satisfies the uniform bounds
\begin{align}\label{XcDfg}
\sup_{t\in \Bbb R} \| u(t)-Q \|_{H^{\sigma+1}_x}\le C_{\sigma}(\|u_0-Q\|_{H^{\sigma+1}_x}), \mbox{ }\forall \sigma\in [0,\sigma_1].
\end{align}
In addition, for any $\sigma\in[0,\sigma_1]$, the operator $S_{Q}$ admits a continuous extension
\begin{align*}
S_{Q}:\mathfrak{B}^{\sigma}_{\epsilon_{\sigma_1}}\to    C(\Bbb R; {H}^{\sigma+1}),
\end{align*}
where we denote
\begin{align*}
\mathfrak{B}^{\sigma}_{\epsilon}:=\{f\in H^{\sigma+1}_Q: \|f-Q\|_{\dot{H}^1}\le \epsilon\}.
\end{align*}
\end{Theorem}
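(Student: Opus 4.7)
The plan is to upgrade the critical-regularity theory underlying Theorem \ref{ThR} to the higher-regularity setting via a frequency-envelope bootstrap argument, carried out inside the same gauge framework already used there. Fix $\sigma \in [0,\sigma_1]$ and start from $u_0 \in \mathcal{H}_Q$ with $\|\partial_x u_0\|_{L^2} \le \epsilon_*$ and $\|u_0-Q\|_{H^{\sigma+1}}<\infty$. Let $\psi_m$ be the gauged derivative fields produced in the proof of Theorem \ref{ThR}; by the compatibility of the chosen gauge one has
\[
\|\psi(t)\|_{H^{\sigma}_x}\simeq \|u(t)-Q\|_{\dot H^1_x}+\|u(t)-Q\|_{\dot H^{\sigma+1}_x},
\]
so the problem reduces to proving a time-uniform $H^{\sigma}$ bound on $\psi$ in terms of its $H^{\sigma}$ norm at $t=0$.

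To produce such a bound, I would choose an $\ell^2$ frequency envelope $\{c_k\}_{k\ge 0}$ with $\|P_k\psi(0)\|_{L^2}\le c_k$, $c_k$ slowly varying and $\sum_k c_k^2 2^{2k\sigma}\lesssim \|\psi(0)\|_{H^{\sigma}}^2$, then set up a continuity/bootstrap argument on $\|P_k\psi\|_{F_k}\le C c_k$, $k\ge 0$, where $F_k$ is the frequency-localized solution space employed at critical regularity. The bootstrap step amounts to checking that each multilinear contribution to the gauged equation produces a matching envelope bound. Every such term carries at least one factor whose norm is the critical one, hence is bounded by $\epsilon_*$; this is what lets one close the argument with the factor $\tfrac12$ ultimately needed. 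The cubic magnetic self-interactions are handled exactly as at the critical level, while the curvature expressions $R(u)(\psi,\bar\psi)\psi$ and their higher multilinear descendants are treated via the bootstrap-iteration scheme of this paper, which reduces the coefficient system to an approximate constant-curvature one in finitely many steps, each step absorbing an additional power of $\epsilon_*$. Summing in $k$ with weight $2^{2k\sigma}$ yields the claimed bound with $C_\sigma$ depending only on $\|u_0-Q\|_{H^{\sigma+1}}$, on $\sigma_1$, and on finitely many derivatives of the embedding.

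The continuous extension of $S_Q$ is then obtained by approximation. Given $f\in\mathfrak{B}^\sigma_{\epsilon_{\sigma_1}}$, pick $f^{(n)}\in \mathcal{H}_Q$ with $f^{(n)}\to f$ in $H^{\sigma+1}$ and $\|f^{(n)}-Q\|_{\dot H^1}\le \epsilon_*$; define $S_Q(t)f:=\lim_{n\to\infty} S_Q(t)f^{(n)}$. This limit is justified by combining (i) a critical-level difference estimate
\[
\|S_Q(t)f^{(n)}-S_Q(t)f^{(m)}\|_{C_t H^1_x}\lesssim \|f^{(n)}-f^{(m)}\|_{H^1},
\]
available from the proof of Theorem \ref{ThR} via the stability theory for the gauged system, with (ii) a uniform $H^{\sigma+1+\delta}$ bound for a small $\delta>0$ (obtained by running the previous paragraph at regularity $\sigma+\delta\le\sigma_1$); interpolation between (i) and (ii) turns $H^1$-convergence into $H^{\sigma+1}$-convergence. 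Independence of the limit from the approximating sequence and continuity of $S_Q$ on $\mathfrak{B}^\sigma_{\epsilon_{\sigma_1}}$ follow from the same two-sided estimates.

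The main obstacle is the top-regularity step of the bootstrap: the curvature factor $R(u)$ depends on $u$ itself, and after $\sigma$ derivatives fall on it one obtains terms that no longer have an obvious critical-norm factor. This is the difficulty flagged in the abstract; it is overcome by the iteration scheme that, in finitely many steps whose number depends only on $\sigma_1$, rewrites the equation for $\psi$ as an approximate constant-curvature system plus an error satisfying the envelope bound. The quantitative point that has to be verified is that the number of iterations, the threshold $\epsilon_{\sigma_1}$, and the constant $C_\sigma$ remain bounded uniformly across $\sigma\in[0,\sigma_1]$, so that a single small parameter $\epsilon_{\sigma_1}$ serves the whole range.
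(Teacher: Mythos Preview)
Your outline for the uniform $H^{\sigma+1}$ bounds is essentially what the paper does: a frequency-envelope bootstrap for the gauged fields, combined with the dynamical-separation/iteration scheme to control curvature terms, repeated finitely many times (the number depending on $\sigma_1$) with the smallness threshold shrinking at each step. The paper organizes this through a heat-flow iteration coupled to an SL iteration, the key quantitative step being the progressive improvement of $\|P_k\widetilde{\mathcal{G}}^{(1)}\|_{L^4_xL^\infty_t}$; your description is compatible with that, if somewhat schematic.

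The genuine gap is in your argument for the continuous extension. You invoke a difference estimate
\[
\|S_Q(t)f^{(n)}-S_Q(t)f^{(m)}\|_{C_tH^1_x}\lesssim \|f^{(n)}-f^{(m)}\|_{H^1}
\]
and say it is ``available from the proof of Theorem~\ref{ThR} via the stability theory for the gauged system.'' But Theorem~\ref{ThR} contains no such stability statement; it gives global existence and uniqueness for a single datum, and the frequency-envelope bootstrap there is for $\phi_i$ of a fixed solution, not for the difference of two solutions. For a geometric equation in which the nonlinearity depends on $u$ through the curvature and the gauge, a direct subtraction of two gauged systems does not produce a clean closed equation for the difference, so a Lipschitz bound at the critical level is not a by-product of the existence proof.

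The paper obtains the needed difference estimate by a different mechanism: following Tataru, it connects $u_0^0$ and $u_0^1$ by a smooth one-parameter family $\{u_0^h\}_{h\in[0,1]}$, introduces the additional differential field $\phi_h$ associated to $\partial_h u^h$ under the same caloric gauge, derives for $\phi_h$ a Schr\"odinger-type equation structurally identical to that for $\phi_i$, and then runs the entire $G_k$ machinery on $\phi_h$ using the already-established envelope bounds (\ref{GDfh})--(\ref{Dfh}) for $\phi_i$ and $\widetilde{\mathcal{G}}$ as coefficients. This yields the $L^2$ (not $H^1$) Lipschitz bound $\|u^1-u^0\|_{L^\infty_tL^2_x}\lesssim\|u_0^1-u_0^0\|_{L^2_x}$, which combined with the uniform $H^{\sigma+1}$ bounds and interpolation gives continuity in $H^{\sigma+1}$. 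The one-parameter-family step is not cosmetic: it is what lets you avoid subtracting two different gauges and instead work with a single field obeying the standard nonlinear structure.
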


\begin{Remark}
 Theorem \ref{ThR} holds as well for $d\ge 3$. This will be proved in our companion work \cite{LIZE2}.
 The main proof of higher dimensions uses ideas of this work  and some additional ingredients in heat flows. We will explain this issue at the end of introduction.
\end{Remark}

\subsection{Caloric gauge and heat flows}

For dispersive geometric PDEs, especially for critical problems, it is important to choose suitable gauges and function spaces adapted to the structure of nonlinearities (e.g. null structure).
Most of these tools were developed in the study of wave map equations, see for instance \cite{KM,KS,NSU1,Tao1,Tao2,Krieger,Tataru1,Tataru2}.
In this work, we will use Tao's caloric gauge and function spaces developed by \cite{BIKT,IK1}.
As observed by  \cite{BIKT,Tao8}, the caloric gauge is essential for eliminating bad frequency interactions in dimension two compared with Coulomb gauge.
For the convenience of statement, we briefly recall the definition of caloric gauge.

First, let's recall the moving frame dependent quantities and some identities related to them, see \cite{SZ}  and \cite{RRS} for more expositions.
Let Greek indices run in $\{1,...,n\}$. Let Roman indices run in $\{1,...,2n\}$ or $\{1,...,d\}$ according to the context. Denote $\overline{\beta}=\beta+n$ for $\beta\in \{1,...,n\}$.

Let $\mathcal{N}$ be a 2n-dimensional compact K\"ahler  manifold. Since $\Bbb R^2\times[-T,T]$  is contractible, there must exist global orthonormal frames for $u^*(T\mathcal{N})$. Using the complex structure one can assume the orthonormal frames are of the form
\begin{align}
\mathbf{E}:=\{e_1(t,x),Je_1(t,x),....,e_n(t,x),Je_n(t,x)\}.
\end{align}
Let $\psi_i=(\psi^1_i,\psi^{\bar{1}}_i,...,\psi^n_i,\psi^{\overline{n}}_i)$ for $i=0,1,2$ be the components of $\partial_{t,x}u$ in the frame $\mathbf{E}$:
\begin{align}
\psi_i^\alpha = \left\langle {\partial _iu,{e_\alpha}} \right\rangle ,\psi^{\overline{\alpha}}_i = \left\langle {{\partial_i}u,J{e_\alpha}} \right\rangle.
\end{align}
We always use $0$ to represent $t$ in lower index. The isomorphism of $\Bbb R^{2n}$ to ${\Bbb C}^n$ induces a ${\Bbb C}^n$-valued function defined by $\phi^\beta_i=\psi_i^\beta+\sqrt{-1}\psi^{\overline{\beta}}_i$ with $\beta=1,...,n$. Conversely, given function $\phi:[-T,T]\times \Bbb R^2\to {\Bbb C}^n$, we associate it with a section $\phi{\bf E}$  of the bundle $u^*(T\mathcal{N})$ via
\begin{align}
\phi\Longleftrightarrow\phi {\bf E}:= \Re (\phi^\beta)e_\beta+\Im(\phi^{{\beta}})Je_\beta,
\end{align}
where $(\phi^1,...,\phi^n)$ denotes the components of $\phi$.
Then $u$ induces a covariant derivative on the trivial complex vector bundle over base manifold $[-T,T]\times\Bbb R^d$ with fiber $\Bbb C^n$ defined by
\begin{align*}
D_i\varphi^\beta=\partial_i \varphi^\beta+\sum^{n}_{\alpha=1}\left([A_i]^\beta_\alpha+\sqrt{-1}[A_i]^{\overline{\beta}}_{\alpha}\right)\varphi^\alpha,
\end{align*}
where the induced connection coefficient matrices are defined by
\begin{align*}
[A_i]^p_q= \left\langle \nabla _ie_p,{e_q} \right\rangle.
\end{align*}
Schematically we write $D_i=\partial_i+A_i$.
Recall the torsion free identity and the commutator identity
\begin{align}
D_i\phi_j&=D_j\phi_i\label{pknb}\\
[D_i,D_j]\varphi&=\left(\partial_i  A_j-\partial_j A_i+[ A_i, A_j]\right)\varphi\Longleftrightarrow \mathbf{R}(\partial_iu, \partial_j u)(\varphi {\bf E}).\label{commut}
\end{align}
Schematically, we write $[D_i,D_j]=\mathcal{R}(\phi_i,\phi_j)$.
With the notations given above, (\ref{1}) can be written as
\begin{align}\label{jnk}
\phi_t=\sqrt{-1}\sum^{2}_{i=1}D_i\phi_i.
\end{align}

\cite{Paul} proved the heat flow  with initial data $u(t,x)$ below threshold energy would converge to $Q$ as $s\to\infty$ in the topology of $C([-T,T];C^{\infty}_x)$.
 Tao's caloric gauge is defined as follows:
\begin{Definition}\label{cg}
 Let  $u(t,x):[-T,T]\times \Bbb R^2\to \mathcal{N}$ be a solution of (\ref{1}) in $C([-T,T];\mathcal{H}_Q)$. For a given orthonormal frame $E^{\infty}:=\{ {e}^{\infty}_1,J {e}^{\infty}_1,..., {e}^{\infty}_n,J {e}^{\infty}_n\}$ for $ T_{Q}\mathcal{N}$, a caloric gauge is a tuple consisting of a map  $v:\Bbb R^+\times [-T,T]\times \Bbb R^2\to \mathcal{N}$ and  orthonormal frames $\mathbf{E}(v(s,t,x)):=\{ {e}_1,J {e}_1,...,{e}_n,J {e}_n\}$ such that
\begin{align}\label{Heat}
\left\{ \begin{array}{l}
{\partial _s}v= \sum^{2}_{i=1}\nabla_i\partial_iv \\
v(0,t,x)= u(t,x) \\
\end{array} \right.
\end{align}
and
\begin{align}\label{aq2}
\left\{ \begin{array}{l}
{\nabla _s}{{e} _k} = 0,\mbox{ } k=1,...,n.\\
\mathop {\lim }\limits_{s \to \infty } {{e}_k} = {  {e}^{\infty}_k}. \\
\end{array} \right.
\end{align}
\end{Definition}

Denote
\begin{align*}
\mathcal{H}_Q(T):=C([-T,T];\mathcal{H}_Q).
\end{align*}

\begin{Proposition}\label{Caloric}
Let  $u\in \mathcal{H}_Q(T)$ solve  SMF with $u_0\in  \mathcal{H}_Q$. For any fixed frame $E^{\infty}:=\{{e}^{\infty}_k,J{e}^{\infty}_k\}^n_{k=1}$ for  $ T_{Q}\mathcal{N}$, there exists a unique corresponding caloric gauge defined in Definition \ref{cg}.
Moreover, we have for $i=1,2$ and $p,q=1,...,2n$
\begin{align*}
&\mathop {\lim }\limits_{s \to \infty } [{A_i}]^q_p(s,t,x) =0\\
&\mathop {\lim }\limits_{s \to \infty } {[A_t]^{q}_p}(s,t,x) = 0.
\end{align*}
Particularly, we have for $i=1,2$, $s>0$,
\begin{align*}
&[{A_i}]^p_q(s,t,x)= -\int^\infty_s\langle \mathbf{R} \left( {\partial _s}v(\widetilde{s})\right),{\partial _i}v(\widetilde{s}) e_{p},e_{q}\rangle  d\widetilde{s}\\
&[{A_t}]^{p}_{q}(s,t,x)=-\int^{\infty}_s \langle \mathbf{R} \left(\partial_s v(\widetilde{s}), \partial_tv(\widetilde{s})\right) e_{p}, e_{q}\rangle d\widetilde{s}.
\end{align*}
\end{Proposition}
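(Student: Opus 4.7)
The plan is to proceed in four stages: establish global existence and parabolic decay of the heat flow, construct the frame $\{e_k\}$ by backward parallel transport from $s=\infty$, verify that the connection coefficients vanish at $s=\infty$, and finally integrate the structure equation to obtain the representation formulas. The main obstacle throughout is producing quantitative decay in $s$ of $\partial_s v$, $\partial_t v$, $\partial_i v$ strong enough that (i) backward parallel transport from $s=\infty$ is well-defined and (ii) every tail integral appearing below converges absolutely.

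For Stage 1, the small-energy result recalled from \cite{Paul} gives global existence of $v$ on $s\in[0,\infty)$, smoothness in $(s,t,x)$, and convergence $v(s,t,x)\to Q$ as $s\to\infty$ in the topology of $C([-T,T];C^\infty_x)$. Differentiating the heat equation in $(t,x)$ and applying standard parabolic smoothing with the small-energy bound then yields quantitative polynomial decay of $\|\partial_{t,x}^\alpha v(s)\|_{L^\infty}$ as $s\to\infty$ for any fixed multi-index $\alpha$; in particular $\|\partial_s v(s)\|_{L^\infty}$, $\|\partial_i v(s)\|_{L^\infty}$ and $\|\partial_t v(s)\|_{L^\infty}$ are integrable in $s$ on $[0,\infty)$ uniformly on compact subsets of $[-T,T]\times \Bbb R^2$.

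For Stage 2, fix $(t,x)$ and view $s\mapsto v(s,t,x)$ as a smooth curve terminating at $Q$. For $s_0$ sufficiently large, $v(s_0,t,x)$ lies in a fixed coordinate chart centered at $Q$ in which the constant $T_Q\mathcal{N}$-frame $\{\widetilde{e}_k,J\widetilde{e}_k\}$ extends to a smooth local frame $\{\widetilde{E}_k\}$. Solving $\nabla_s e_k=0$ backward from $s=s_0$ with data $\widetilde{E}_k(v(s_0,t,x))$ and letting $s_0\to\infty$ produces $\{e_k\}$ on $[0,\infty)$: convergence as $s_0\to\infty$ follows because, in the chart, the parallel transport ODE has coefficient bounded by a multiple of $|\partial_s v|$, which is integrable in $s$ by Stage 1. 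Orthonormality is preserved by parallel transport; since $\mathcal{N}$ is K\"ahler, $\nabla J=0$, so $Je_k$ is automatically parallel with the correct limit, yielding the unitary frame $\{e_k,Je_k\}$. Smooth dependence on $(t,x)$ follows from ODE-with-parameters theory combined with parabolic smoothing of $v$, and uniqueness is immediate from uniqueness for the parallel transport ODE with fixed asymptotic value.

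For Stage 3, expressing $\nabla_i e_p=(\partial_i e_p^\alpha)\partial_{\xi_\alpha}+e_p^\alpha\Gamma_{\beta\alpha}^\gamma(v)\partial_i v^\beta\partial_{\xi_\gamma}$ in the chart near $Q$, one sees that both $\partial_i v$ and $\partial_i e_p^\alpha$ decay to zero as $s\to\infty$ (the latter by differentiating the parallel transport ODE in $(t,x)$ and invoking Stage 1), so $[A_i]^p_q=\langle\nabla_i e_p,e_q\rangle\to 0$, and likewise $[A_t]^p_q\to 0$. For Stage 4, since $\nabla_s e_p=\nabla_s e_q=0$, the commutator identity $[\nabla_s,\nabla_i]={\bf R}(\partial_s v,\partial_i v)$ yields
\begin{align*}
\partial_s[A_i]^p_q=\langle\nabla_s\nabla_i e_p,e_q\rangle=\langle{\bf R}(\partial_s v,\partial_i v)e_p,e_q\rangle,
\end{align*}
and integrating from $s$ to $\infty$ together with the vanishing limit from Stage 3 gives the stated formula. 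The formula for $[A_t]^p_q$ follows identically with $\partial_i$ replaced by $\partial_t$; absolute convergence of each tail integral is ensured by the decay of $|\partial_s v|\,|\partial_i v|$ and $|\partial_s v|\,|\partial_t v|$ from Stage 1.
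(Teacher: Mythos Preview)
Your proposal is correct and follows essentially the same standard approach the paper defers to (citing \cite{Paul}): heat flow convergence, backward parallel transport from $s=\infty$, and integration of $\partial_s[A_i]^p_q=\langle{\bf R}(\partial_s v,\partial_i v)e_p,e_q\rangle$ using the vanishing limit. You also correctly identify the one point the paper singles out as new---that the K\"ahler condition $\nabla J=0$ makes $Je_k$ automatically parallel---so nothing further is needed.
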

\begin{proof}
The proof is standard (see e.g. \cite{Paul}). The only new issue here is the complex structure $J$. But this will not cause any trouble since $J$ commutes with $\nabla_s$.
\end{proof}

Given  $u\in \mathcal{H}_Q(T)$ which solves  (\ref{1}), let  $v:\Bbb R^+\times [-T,T]\times \Bbb R^2\to \mathcal{N}$ be the solution to (\ref{Heat}).  Let  $\{ {e}_\alpha,J {e}_\alpha\}^n_{\alpha=1}$ be the corresponding caloric gauge.
Define the heat tension field  $\phi_s$ to be
\begin{align*}
\phi^{\alpha}_s=\langle \partial_s v,e_{\alpha}\rangle+ \sqrt{-1} \langle \partial_s v,Je_{\alpha}\rangle, \mbox{ }\alpha=1,...,n.
\end{align*}
And define the differential fields to be
\begin{align*}
\phi^{\alpha}_i=\langle \partial_i v,e_{\alpha}\rangle+ \sqrt{-1} \langle \partial_i v,Je_{\alpha}\rangle, \mbox{ }\alpha=1,...,n,
\end{align*}
where $i=1,2$ refers to the variable  $x_i$, $i=1,2$, and $i=0$ refers to the variable $t$.

\begin{Lemma}\label{asdf}
The heat tension filed $\phi_s$ satisfies
\begin{align}\label{heat1}
\phi_s=\sum^{2}_{j=1}D_j\phi_j.
\end{align}
The differential fields $\{\phi_i\}^2_{i=1}$ along the heat flow satisfy
\begin{align}\label{heat2}
\partial_s\phi_i= \sum^{2}_{j=1}D_jD_j\phi_i + \sum^{2}_{j=1}\mathcal{R}(\phi_i,\phi_j)\phi_j.
\end{align}
And when $s=0$, along the Schr\"odinger  flow direction,  $\{\phi_i\}^2_{i=1}$ satisfy
\begin{align}
-\sqrt{-1}D_t\phi_i&=  \sum^{2}_{j=1}D_jD_j\phi_i+\sum^{2}_{j=1} \mathcal{R}(\phi_i,\phi_j)\phi_j.\label{xcgfrt}
\end{align}
\end{Lemma}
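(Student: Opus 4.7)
The plan is to verify each of the three identities by translating the geometric statements on $v^{*}T\mathcal{N}$ into their component form under the caloric frame, exploiting the Kähler condition (so that $J$ commutes with $\nabla$ and the real $\mathbb{R}^{2n}$ components bundle up into a $\mathbb{C}^n$ bundle whose induced connection is precisely $D_i=\partial_i+A_i$). The two workhorse identities will be the torsion–free relation $D_i\phi_j=D_j\phi_i$ recorded in \eqref{pknb}, the commutator formula $[D_i,D_j]=\mathcal{R}(\phi_i,\phi_j)$ from \eqref{commut}, and the fact that the caloric gauge makes $\nabla_s e_\alpha=0$, i.e.\ $A_s\equiv 0$, so that $\partial_s$ agrees with the covariant derivative $D_s$ on sections written in the frame $\mathbf{E}$. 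With these in hand the three identities are essentially direct computations.

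First I would prove \eqref{heat1}. Writing $\partial_s v$ in the frame $\mathbf{E}$, the components give precisely $\phi_s$ by its very definition. On the other hand, the heat flow equation in \eqref{Heat} equates $\partial_s v$ with $\sum_j\nabla_j\partial_j v$; converting $\nabla_j \partial_j v$ into components via the complex identification yields $\sum_j D_j \phi_j$, since $J$ commutes with $\nabla$. Matching components then produces \eqref{heat1}.

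Next I would derive \eqref{heat2}. Because $A_s=0$, we have $\partial_s\phi_i=D_s\phi_i$; by the torsion–free relation extended to the $s$-direction (i.e.\ $D_s\phi_i=D_iD_s v= D_i\phi_s$ since $\nabla_s\partial_i v=\nabla_i\partial_s v$), this equals $D_i\phi_s$. Substituting \eqref{heat1} gives $D_i\sum_j D_j\phi_j$, at which point I swap the two covariant derivatives using \eqref{commut}, producing a curvature commutator term $\mathcal{R}(\phi_i,\phi_j)\phi_j$, and then rewrite the remaining $D_j D_i \phi_j$ as $D_j D_j\phi_i$ using \eqref{pknb}. Finally, \eqref{xcgfrt} is proved in exactly the same spirit but at $s=0$: start from the gauged Schrödinger map equation \eqref{jnk}, namely $\phi_t=\sqrt{-1}\sum_j D_j\phi_j$, apply $D_i$ to both sides, use the torsion–free property $D_i\phi_t=D_t\phi_i$ on the left, and then on the right perform the same commutator–plus–torsion–free manipulation as in the heat flow case. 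Multiplying by $-\sqrt{-1}$ gives the stated form.

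There is no real obstacle beyond bookkeeping: the only subtle point is verifying that the Kähler hypothesis legitimately lets one pass between the real frame $\{e_\alpha,Je_\alpha\}$ and the $\mathbb{C}^n$-valued functions $\phi_i$, so that $\nabla$ acts as multiplication by the complex scalar $\sqrt{-1}$ on the $J$-components. Once this identification is in place, each of the three identities is a two-line manipulation combining \eqref{pknb}, \eqref{commut}, and the defining equations \eqref{Heat}, \eqref{aq2}, \eqref{jnk}.
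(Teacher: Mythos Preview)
Your proposal is correct and follows the standard derivation. The paper itself does not supply a proof of this lemma, treating the identities as routine consequences of the frame formalism; your argument fills in precisely the expected steps—using the caloric condition $A_s=0$ (so $\partial_s=D_s$), the torsion-free relation \eqref{pknb}, the commutator \eqref{commut}, and the gauged equations \eqref{Heat} and \eqref{jnk}—and is exactly how one would prove it.
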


\subsection{Function Spaces built by \cite{BIKT}}

We recall the spaces developed by Bejenaru-Ionescu-Kenig-Tataru \cite{BIKT}.
Given a unite vector ${\bf e}\in \Bbb S^{1}$ we denote its orthogonal complement of $\Bbb R^2$ by ${\bf e}^\bot$. The lateral space $L^{p,q}_{\bf e}$ is defined by the norm
\begin{align*}
\|f\|_{L^{p,q}_{\bf e}}=\left(\int_{\Bbb R}\left(\int_{{\bf e}^{\bot}\times\Bbb R}\left|f(t,x_1{\bf e}+x')\right|^qdx'dt\right)^{\frac{p}{q}}dx_1\right)^{\frac{1}{p}},
\end{align*}
with standard modifications when either $p=\infty$ or $q=\infty$.
And for any given $\lambda\in \Bbb R$, $W\subset \Bbb R$, we define the spaces $L^{p,q}_{{\bf e},\lambda}$, $L^{p,q}_{{\bf e},W}$ with norms
\begin{align*}
\|f\|_{L^{p,q}_{{\bf e},\lambda}}&=\|G_{\lambda{\bf e}}(f)\|_{L^{p,q}_{\bf w}}\\
\|f\|_{L^{p,q}_{{\bf e},W}}&=
\mathop {\inf }\limits_{f = \sum\limits_{\lambda  \in W} {{f_\lambda }} } \sum\limits_{\lambda  \in W} {{{\left\| {{f_\lambda }} \right\|}_{L_{{\mathbf{e}},\lambda }^{p,q}}}},
\end{align*}
where $G_{{\bf a}}$ denotes the Galilean transform:
\begin{align*}
G_{{\bf a}}(f)(t,x)=e^{-\frac{1}{2}ix\cdot {\bf a}}e^{-\frac{i}{4}t|{\bf a}|^2}f(x+t{\bf a},t).
\end{align*}
We remark that the lateral space was developed by Linares-Ponce \cite{LP}, Kenig-Ponce-Vega \cite{KPV} and Ionescu-Kenig \cite{IK}.

The main dyadic function spaces $N_k(T),F_{k}(T),G_k(T)$ are recalled as follows:
Given $T\in \Bbb R^+$, $k\in \Bbb Z$, let $I_k:=\{\eta\in \Bbb R^2: 2^{k-1}\le  |\eta|\le 2^{k+1}\}$ and
\begin{align}
L^2_k(T):=\{g\in L^2([-T,T]\times \Bbb R^2): \mathcal{F}{g}(t,\eta)  \mbox{  }{\rm is} \mbox{  }{\rm supoorted} \mbox{  }{\rm in} \mbox{  }\Bbb R\times I_{k}\}.
\end{align}
Given $\mathcal{L}\in \Bbb Z_+$,  $T\in(0,2^{2\mathcal{L}}]$, $k\in\Bbb Z$,  define
\begin{align}
W_k=\{\lambda\in[-2^{2k},2^{2k}]:2^{k+2\mathcal{L}}\lambda\in \Bbb Z\}.
\end{align}
The $N_k(T),F_{k}(T),G_k(T)$ spaces are Banach spaces of functions in $L^2_k(T)$ for which the associated norms are finite:
 \begin{align*}
\|g\|_{F^0_k(T)}&:=\|g\|_{L^{\infty}_tL^2_x}+2^{-\frac{k}{2}}\|g\|_{L^{4}_xL^{\infty}_t}+\|g\|_{L^4}+2^{-\frac{k}{2}}\sup_{{\bf e}\in \Bbb S^{1}}\|g\|_{L^{2,\infty}_{{\bf e},W_{k+40}}}\\
\|g\|_{F_k(T)}&:=
\mathop {\inf }\limits_{j\in {\mathbb{Z}_ + },{n_1},...,{n_j} \in \Bbb N} \mathop {\inf }\limits_{g = {g_{{n_1}}} + .... + {g_{{n_j}}}} \sum\limits_{l = 1}^j {{2^{{n_l}}}} {\left\| {{g_{{n_l}}}} \right\|_{F_{k + {n_l}}^0}}
\\
\|g\|_{G_k(T)}&:=\|g\|_{F^0_k}+2^{-\frac{k}{6}}\sup_{{\bf e}\in \Bbb S^1}\|g\|_{L^{3,6}_{\bf e}}+
2^{\frac{k}{6}}\sup_{|k-j|\le 20}\sup_{{\bf e}\in \Bbb S^{1}}\|P_{j,{\bf e}}g\|_{L^{6,3}_{\bf e}}\\
&+2^{\frac{k}{2}}\sup_{|k-j|\le 20}\sup_{{\bf e}\in \Bbb S^{1}}\sup_{|\lambda|<2^{k-40}}  \|P_{j,{\bf e}}g\|_{L^{\infty,2}_{{\bf e},\lambda}}\\
\|g\|_{N_k(T)}&:=\inf_{g=g_1+g_2+g_3+g_4}\|g_1\|_{L^{\frac{4}{3}}}+2^{\frac{k}{6}}\|g_2\|_{L^{\frac{3}{2},\frac{6}{5}}
_{{{\bf e}_1}}}+2^{\frac{k}{6}}\|g_3\|_{L^{\frac{3}{2},\frac{6}{5}}
_{{{\bf e}_2}}}\\
&+2^{-\frac{k}{2}}\sup_{{\bf e}\in \Bbb S^{1}}\|g_4\|_{L^{1,2}_{{\bf e}, W_{k-40}}},
\end{align*}
where $\{{\bf e}_1,{\bf e}_2\}\subset \Bbb S^1$ consist of the standard basis of $\Bbb R^2$.
The $G_{k},F_{k}$ spaces were built by
Bejenaru-Ionescu-Kenig-Tataru \cite{BIKT}.

\subsection{Overview of the proof}

\underline{{\it Main difficulty for general targets.}}
The new difficulty arising in the case of general targets is to control the curvature terms in frequency localized spaces. Since the curvature term relates with the map itself, it cannot be written in a self-closed form of differential fields and heat tension fields $\{\phi_{x},\phi_s\}$. Thus directly working with the moving frame dependent quantities may lose control of curvature terms, which is much serious when frequency interactions are considered. In the wave map setting, the general targets case was solved by Tataru \cite{Ta7} using Tao's micro-local gauge and Tataru's function spaces. It is important that the wave map equation is semilinear in the extrinsic form,
and the micro-local gauge adapts to the extrinsic formulation well. However, for SMF, on one side, since the extrinsic form equation is quasilinear one has to use the intrinsic formulation to obtain a semilinear equation. On the other side, the intrinsic form is not a self-contained system where the curvature term is not determined by differential fields. The two conflict sides make solving SMF for general targets challenging.

\underline{{\it  Outline of proof for $d=2$.}}
Let us sketch the outline of proof in the $d=2$ case. The whole proof is divided into ten steps. Given $\delta>0$, let $\{a_k\}_{k\in \Bbb Z}$ be a positive sequence, we call it a frequency envelope of order $\delta$ if
\begin{align*}
\sum_{k\in\Bbb Z} &a^2_k   <\infty ,     {\mbox{  }}{\rm and}\mbox{  } a_{j}  \le 2^{\delta|l-j|}a_{l},   \mbox{  }\forall {j,l\in \Bbb Z}.
\end{align*}
We call the frequency envelope $\{a_k\}$ an $\epsilon$-envelope if it additionally satisfies
\begin{align*}
\sum_{k\in\Bbb Z}a^2_k\le \epsilon^2.
\end{align*}

{\bf Step 1. Tracking $L^{\infty}_tL^2_x $ bounds along the heat flow direction.}
Recall the extrinsic formulations of heat flows:
Assume that the target manifold $\mathcal{N}$ is isometrically embedded into $\Bbb R^{N}$, then the heat flow equation can be formulated as
\begin{align}\label{f1OOT}
\partial_s v^{l}-\Delta v^{l}=\sum^2_{a=1}\sum^N_{i,j=1}S^{l}_{ij}\partial_{a}v^{i}\partial_{a}v^{j},\mbox{ }l=1,...,N,
\end{align}
where $S=\{S^{l}_{ij}\}$ denotes the second fundamental form of the embedding $\mathcal{N}\hookrightarrow \Bbb R^N$.
For $u\in \mathcal{H}_{Q}(T)$, define
\begin{align*}
\gamma_k(\sigma)=\sup_{k'\in\Bbb Z}2^{-\delta|k-k'|}2^{\sigma k'+k'}\|P_{k'}u\|_{L^{\infty}_tL^2_{x}}, \mbox{  }\sigma\ge 0, \mbox{  }\delta=\frac{1}{800}.
\end{align*}
Denote $\{\gamma_k\}$ the frequency envelope for the energy norm, i.e. $
\gamma_k=\gamma_k(0)$.
The first result of  Step 1 is stated for $\sigma\in[0,\frac{5}{4}]$:
\begin{Proposition}\label{888a}
Assume that $u\in \mathcal{H}_Q(T)$  satisfies
\begin{align}\label{fBOOT}
\|\partial_xu\|_{L^{\infty}_tL^2_{x}}=\epsilon_1\ll1.
\end{align}
And let $v(s,t,x)$ be the solution of heat flow (\ref{f1OOT}) with initial data $u(t,x)$. Then $v$ satisfies
\begin{align*}
\sup_{s\ge 0}(1+s2^{2k})^{31}2^{k}\|P_{k }v\|_{L^{\infty}_tL^2_{x}}\lesssim  2^{-\sigma k} \gamma_k(\sigma)
\end{align*}
for all $\sigma\in[0,\frac{99}{100}]$, $k\in\Bbb Z$.
Moreover, for any $\sigma\in[\frac{99}{100},\frac{5}{4}]$,  $k\in\Bbb Z$, we have
\begin{align*}
\sup_{s\ge 0}(1+s 2^{2k})^{30}2^{\sigma k+k}\|P_{k }v\|_{L^{\infty}_tL^2_{x}}\lesssim_{M} \gamma_k(\sigma)+\gamma_k(\sigma-\frac{3}{8})\gamma_k(\frac{3}{8}).
\end{align*}
\end{Proposition}
\begin{Remark} The power of $(1+s2^{2k})$ in  Proposition \ref{888a} can be chosen to be any $M\in \Bbb Z_+$  with additionally assuming that $\epsilon_1$ is sufficiently small depending on $M$. See Proposition \ref{fJth}  below.
\end{Remark}

The second result of this step  is  bounds of $2^{(\sigma+1)k}\|P_{k}v\|_{L^{\infty}_tL^2_x}$  for $\sigma\in[0,\frac{j+1}{4}]$:
\begin{Proposition}($j$-th iteration)  \label{fJth}
Let $j\in \Bbb N, M\in\Bbb Z_+ $. Assume that $u\in \mathcal{H}_Q(T)$  satisfies (\ref{fBOOT}) with $\epsilon_1$ sufficiently small depending on $j+M$.
Let $v(s,t,x)$ be the solution of heat flow (\ref{f1OOT}) with initial data $u(t,x)$. Then for $\sigma\in[0,1+\frac{j}{4}]$ and any $k\in\Bbb Z$, $v$ satisfies
 \begin{align}\label{fj4JHB}
\sup_{s\in[0,\infty)}(1+s2^{2k})^{M}2^{k+\sigma k}\|P_{k}v\|_{L^{\infty}_tL^2_{x}}\lesssim \gamma^{(j)}_k(\sigma).
\end{align}
where $ \{\gamma^{(j)}_k(\sigma)\}$ are defined in (\ref{fd1})-(\ref{fd2}).
\end{Proposition}

{\bf Step 2. Pretreat curvature terms.} The curvature part in master equation (\ref{xcgfrt}) can be schematically written as
\begin{align*}
 \Re[\mathcal{R}(\phi_i,\phi_j)\phi_j]^{\alpha}=\sum_{1\le j_0,j_1,j_2\le 2n}\langle {\bf R}(e_{j_0},e_{j_1})e_{j_2},e_{\alpha}\rangle  \psi^{i_0}_i\psi^{i_1}_j \psi^{i_2}_j\\
\Im[\mathcal{R}(\phi_i,\phi_j)\phi_j]^{\alpha}=\sum_{1\le j_0,j_1,j_2\le 2n}\langle {\bf R}(e_{j_0},e_{j_1})e_{j_2},e_{\overline{\alpha}}\rangle  \psi^{i_0}_i\psi^{i_1}_j \psi^{i_2}_j.
\end{align*}
With abuse of notations, we denote
\begin{align*}
\mathcal{G}=\langle {\bf R}(e_{j_0},e_{j_1})e_{j_2},e_{j_3}\rangle,
\end{align*}
for any given index $j_0,...,j_3\in\{1,...,2n\}$.
And let $\phi_i\diamond\phi_j$ denote the linear combinations of  multiplications of real and imaginary parts of $\phi_i$, $\phi_j$, i.e. $\sum_{ij} c_{ij}\phi^{\pm}_i\phi^{\pm}_{j}$, where we denote $\phi^{+}_j=\Re \phi_j,\phi^{-}_j={\Im} \phi_j$. Then the master equation (\ref{xcgfrt}) is now  schematically written as
\begin{align}
-\sqrt{-1}D_t\phi_i&=  \sum^{2}_{j=1}D_jD_j\phi_i+\sum^{2}_{j=1} \mathcal{G}\phi_i\diamond\phi_j\diamond\phi_j.\label{1xcgfrt}
\end{align}
Meanwhile, the connection coefficients in $D_j$ also depend on curvatures and can be schematically written as
\begin{align*}
[A_j]^{p}_{q}(s)=-\int^{\infty}_s\phi_s\diamond \phi_j\mathcal{G}ds'.
\end{align*}

We shall perform dynamical separation to $\mathcal{G}$. In fact, by caloric condition and  twice dynamic separation, $\mathcal{G}$ can be decomposed into
\begin{align*}
\mathcal{G}(s)&= \langle{\bf R}(e_{j_0},e_{j_1})e_{j_2},e_{j_3}\rangle(s)\\
&=\Gamma^{\infty}-\Gamma^{\infty,(1)}_l\int^{\infty}_s\psi^{l}_s(\widetilde{s})d\widetilde{s}
-\int^{\infty}_s \psi^{l}_s(\widetilde{s})\left(\int^{\infty}_{\widetilde{s}}\psi^{p}_s(s') ( \widetilde{\nabla}^2{\bf R})(e_{l},e_{p};e_{j_0},...,e_{j_3} ) ds'\right)d\widetilde{s}.\\
&:=\Gamma^{\infty}+\mathcal{U}_{00}+\mathcal{U}_{01}+\mathcal{U}_{I}+\mathcal{U}_{II},\nonumber
\end{align*}
where we denote
\begin{align*}
&{\rm\bf constant \mbox{ }limit \mbox{ }part} \mbox{ }\mbox{ }\Gamma^{\infty}:=\lim\limits_{s\to\infty}\mathcal{G}(s);\mbox{  } \mbox{ }\mbox{ } \Gamma^{\infty,(1)}_{l}:=\lim\limits_{s\to\infty}(\widetilde{\nabla}{\bf R})( {e_l};e_{j_0},...,e_{j_3})\\
&{\rm\bf one \mbox{ }order \mbox{ }terms} \mbox{ }\mbox{ } \mbox{ }\mbox{ }\mbox{  }\mathcal{U}_{00}:=-\Gamma^{\infty}_l\int^{\infty}_s\sum^{2}_{i=1}(\partial_i\psi_i)^{l}ds',
\end{align*}
and {\bf quadratic terms} by
 \begin{align*}
\mathcal{U}_{01}&:=-\int^{\infty}_s\sum^{2}_{i=1}(\partial_i\psi_i)\left(  (\widetilde{\nabla}{\bf R})( {e_l};e_{j_0},...,e_{j_3})-\Gamma^{\infty,(1)}_l\right)ds'\\
\mathcal{U}_{I}&:=-\Gamma^{\infty,(1)}_l\int^{\infty}_s\sum^{2}_{i=1}(A_i\psi_i)^{l}ds'\\
\mathcal{U}_{II}&:=-\int^{\infty}_s\sum^{2}_{i=1} (A_i\psi_i)^{l}(\widetilde{s})\left(\int^{\infty}_{\widetilde{s}}\psi^{p}_s(s')  ( \widetilde{\nabla}^2 {\bf R})(e_{l},e_{p};e_{j_0},...,e_{j_3} )ds'\right)d\widetilde{s}\nonumber\\
&=-\int^{\infty}_s\sum^{2}_{i=1}(A_i\psi_i)^{l}(\widetilde{s})\left(  (\widetilde{\nabla}{\bf R})({e_l};e_{j_0},...,e_{j_3}) -\Gamma^{\infty,(1)}_l\right)d\widetilde{s}.
\end{align*}
Here, with abuse of notations, $A_j$ denotes the $2n\times2n$ real-valued matrix with elements $\{[A_j]^{p}_{q}\}^{2n}_{p,q=1}$.
The constant  limit  part and the one order terms will be dominated by frequency  envelopes of $\{\phi_i\}$, the bounds of quadratic terms essentially rely on a delicate bootstrap on the term:
{\begin{align*}
\widetilde{\mathcal{G}}^{(1)}_{l}&:= (\widetilde{\nabla}{\bf R})({e_l};e_{j_0},...,e_{j_3}) -\Gamma^{\infty,(1)}_{l}.
\end{align*}}

We also need higher order derivatives of $\mathcal{G}$. Given $k\in\Bbb N$, let
 \begin{align*}
{\mathcal{G}}^{(k)}_{l_1,...,l_k}&:=(\widetilde{\nabla}^{(k)}{\bf R})(e_{l_1},...,e_{l_{k}};e_{j_0},...,e_{j_3}) \\
\widetilde{\mathcal{G}}^{(k)}_{l_1,...,l_k}&:= {\mathcal{G}}^{(k)}_{l_1,...,l_k} -\Gamma^{\infty,(k)}_{l_1,...,l_{k}},
\end{align*}
where we denote
{\begin{align*}
\Gamma^{\infty,(k)}_{l_1,...,l_{k}}&:= \lim\limits_{s\to\infty}{\mathcal{G}}^{(k)}_{l_1,...,l_k}(s).
\end{align*}}

Similarly, we perform  dynamical separation to frames. In fact, let $\mathcal{P}$ be the isometric embedding of $\mathcal{N}$ into $\Bbb R^{N}$, and let $\{e_{l}\}^{2n}_{l=1}$ be the caloric gauge built in Prop. 1.1. With abuse of notations, we denote
 \begin{align*}
[d\mathcal{P}]^{(k)}&:=({\bf D}^kd\mathcal{P})(\underbrace{e ,...,e }_{k};e) \\
[\widetilde{d\mathcal{P}}]^{(k)}&:=[d\mathcal{P}]^{(k)}-\lim\limits_{s\to\infty}[d\mathcal{P}]^{(k)}.
\end{align*}

{\bf Step 3. Tracking $L^4\cap L^{\infty}_tL^2_x$ bounds for curvature terms and frames along heat direction.}
\begin{Proposition}\label{f7.2q}
Let $d=2$. And let $u \in\mathcal{H}_Q(T)$ be a solution of SMF.
Denote $v(s,t,x)$ the solution to heat flow with initial  data $u(t,x)$, and denote $\{\phi_i\}^{2}_{i=0}$ the corresponding  differential fields under the caloric gauge.
Assume that $\{\beta_{k}(\sigma)\}$ is a frequency envelope of order $\delta$ such that for all $i=1,2$, $k\in\Bbb Z$,
\begin{align}\label{favNMl0mnm}
2^{\sigma k}\|\phi_i(\upharpoonright_{s=0})\|_{L^{\infty}_tL^{2}_x\cap L^{4}_{t,x}}\le \beta_k(\sigma).
\end{align}
\begin{itemize}
  \item There exists a sufficiently small constant $\epsilon>0$ such that if
\begin{align}\label{fbvNMl0mnm}
\sum_{k\in\Bbb Z} |\beta_k(0)|^2<\epsilon,
\end{align}
then we have for any $m\in\Bbb N$, any $\sigma\in[0,\frac{99}{100}]$, $s\in [2^{2j-1}, 2^{2j+1})$,  $j,k\in\Bbb Z$,
 \begin{align*}
 \|P_{k}\widetilde{\mathcal{G}}^{(m)}\|_{L^4\cap L^{\infty}_tL^2_x}&\lesssim_m 2^{-\sigma k-k} \beta_{k}(\sigma)(2+s2^{2k})^{-30} \\
 \|P_{k}\phi_s \|_{L^4\cap L^{\infty}_tL^2_x}&\lesssim 2^{-\sigma k+k}\left[1_{k+j\ge 0}(1+s2^{2k})^{-30}\beta_{k}(\sigma)+1_{k+j\le 0}\sum_{k\le l\le -j}\beta_{l}(\sigma)\beta_l\right] \\
 \|P_{k}([\widetilde{d\mathcal{P}}]^{(m)})\|_{L^{\infty}_tL^{2}_x\cap L^{4}}&\lesssim_{m} \beta_{k}(\sigma)(1+s2^{2k})^{-29}2^{-\sigma k-k} \\
 \|P_kA_i\|_{L^{\infty}_tL^2_x }
 &\lesssim \beta_{k,s}(\sigma)(1+s2^{2k})^{-27}2^{-\sigma k}.
\end{align*}
 \item Furthermore, given $j,M \in\Bbb Z_+$, if $\{\beta_{k}(\sigma)\}$ is a frequency envelope of order $\frac{1}{2^{j}}\delta$, then similar results hold for $\sigma\in[0,\frac{1}{4}j+1]$ and $\epsilon$ sufficiently small depending only on $j,M\in\Bbb Z_+$. Particularly, for  any $m\in\Bbb N$,  $k\in\Bbb Z$ and any  $\sigma\in[0,\frac{1}{4}j+1]$,  one has
\begin{align*}
(1+s2^{2k})^{M+2} 2^{\sigma k+k}\|P_{k}\widetilde{\mathcal{G}}^{(m)}\|_{L^4\cap L^{\infty}_tL^2_x}&\lesssim_{m,M} \beta^{(j)}_{k}(\sigma) \\
(1+s2^{2k})^{M+1}2^{\sigma k+k}\|P_{k}([\widetilde{d\mathcal{P}}]^{(m)})\|_{L^{\infty}_tL^{2}_x\cap L^{4}_{t,x}}&\lesssim_{m,M} \beta^{(j)}_{k}(\sigma) \\
(1+s2^{2k})^{M}2^{\sigma k}\|P_kA_i\|_{L^{\infty}_tL^2_x}&\lesssim_{M} \beta^{(j)}_{k,s}(\sigma).
\end{align*}
\end{itemize}
\end{Proposition}

\begin{Remark}
The $\{\beta^{(j)}_{k}\}$ and $\{\beta^{(j)}_{k,s}(\sigma)\}$ are defined as (\ref{fd1})-(\ref{fd2}).
\end{Remark}

{\bf Step 4.1. $F_{k}\bigcap S^{\frac{1}{2}}_k$ bounds for  connections along the heat direction.}
In this step, we prove
\begin{Lemma}\label{fParabolic}
Given $\sigma\in [0,\frac{99}{100}]$, let $\{h_{k}(\sigma)\}$ be frequency envelopes defined by
\begin{align}\label{hbjbmmm}
h_{k}(\sigma)=\sup_{k'\in\Bbb Z,j=1,2}2^{-\delta|k-k'|}2^{\sigma k'}(1+s2^{2k'})^{4}\|P_{k'}\phi_j\|_{F_{k'}(T)}.
\end{align}
Let $\{b_k\}$ be a $\varepsilon$-frequency envelope.
Assume that for any $k,j\in\Bbb Z$, $s\in [2^{2j-1},2^{2j+1})$, there holds
\begin{align}
2^{\frac{1}{2}k}\|P_{k }\widetilde{\mathcal{G}}^{(1)}\|_{L^4_xL^{\infty}_t(T)}&\le \varepsilon^{-\frac{1}{4}}h_k[(1+s2^{2k})^{-9}1_{j+k\ge 0}+1_{j+k\le 0}2^{\delta|k+j|}].\label{ftian}
\end{align}
Then, if $\varepsilon>0$ is sufficiently small, for $\sigma\in[0,\frac{99}{100}]$ one has
\begin{align*}
\|P_{k }A_i(s)\|_{F_k(T)\cap S^{\frac{1}{2}}_k}&\lesssim h_{k,s}(\sigma) 2^{-\sigma k} (1+s2^{2k})^{-4}.
\end{align*}
\end{Lemma}

{\bf Step 4.2. $F_{k}$ bounds along the heat direction without assuming (\ref{ftian}).}
\begin{Lemma}\label{fParabolic}
Let $\{b_k\}$ be a $\varepsilon$-frequency envelope.
Given $\sigma\in [0,\frac{99}{100}]$, suppose that  $\{b_k(\sigma)\}$ are also frequency envelopes,  and $\{h_{k}(\sigma)\}$ are the frequency envelopes defined by (\ref{hbjbmmm}).
Assume that for $i=1,2$,
\begin{align*}
\|P_{k }\phi_i\upharpoonright_{s=0}\|_{F_k(T)}&\le b_k(\sigma)2^{-\sigma k}, \mbox{  }\sigma\in [0,\frac{99}{100}] \\
\|P_{k }\phi_t\upharpoonright_{s=0}\|_{L^{4}_{t,x}}&\lesssim b_k(\sigma)2^{-(\sigma -1)k}, \mbox{  }\sigma\in [0,\frac{99}{100}] \\
\|P_{k }\phi_i(s)\|_{F_{k}(T)}&\lesssim \varepsilon^{-\frac{1}{2}}b_k (1+s2^{2k})^{-4}.
\end{align*}
Then, if $\varepsilon>0$ is sufficiently small, for $\sigma\in[0,\frac{99}{100}]$ one has
\begin{align*}
\|P_{k }A_i(s)\|_{F_k(T)\cap S^{\frac{1}{2}}_k}&\lesssim h_{k,s}(\sigma) 2^{-\sigma k} (1+s2^{2k})^{-4} \nonumber\\
\|P_{k }\phi_i(s)\|_{F_k(T)}&\lesssim b_k(\sigma) 2^{-\sigma k} (1+s2^{2k})^{-4}  \\
\|P_k A_i\upharpoonright_{s=0}\|_{L^{4}_{t,x}}&\lesssim b_k(\sigma)2^{-\sigma k}, \mbox{ }i=1,2. \\
\|P_k\phi_t(s)\|_{L^{4}_{t,x}}&\lesssim b_k(\sigma)2^{-(\sigma-1)k}(1+2^{2k}s)^{-2}\nonumber\\
\|P_kA_t\upharpoonright_{s=0}\|_{L^{2}_{t,x}}&\lesssim \varepsilon b_{k}(\sigma)2^{- \sigma k},\mbox{ }\sigma\in[\frac{1}{100},\frac{99}{100}]\\
\|P_kA_t\upharpoonright_{s=0}\|_{L^{2}_{t,x}}&\lesssim \varepsilon^2, \mbox{ } \sigma\in[0,\frac{99}{100}].\nonumber
\end{align*}
\end{Lemma}
{\bf Step 5. $G_{k}$ bounds along the SMF direction for $\sigma\in[0,\frac{99}{100}]$.}
\begin{Proposition}
Assume that  $\sigma\in [0,\frac{99}{100}]$.
  Given any $\mathcal{L}\in \Bbb Z_+$, assume that $T\in(0,2^{2\mathcal{L}}]$. Let $\epsilon_0$ be a sufficiently small constant. Assume that  $\{c_k\}$ is an $\epsilon_0$-frequency envelope of order $\delta$. And let $\{c_k(\sigma)\}$ be another frequency envelope of order $\delta$. Let $u\in \mathcal{H}_Q(T)$ be the solution to SMF with initial data $u_0$ which satisfies
\begin{align*}
\|P_{k}\nabla u_0\|_{L^2_x}&\le c_k\\
\|P_{k}\nabla u_0\|_{L^2_x}&\le c_k(\sigma)2^{-\sigma k}.
\end{align*}
Denote $\{\phi_i\}$ the corresponding differential fields of the heat flow initiated from $u$. Suppose also that at the heat initial time $s=0$,
\begin{align*}
\|P_{k}\phi_i\|_{G_k(T)}&\le \epsilon^{-\frac{1}{2}}_0c_k.
\end{align*}
Then, when $s=0$, we have for all $i=1,2$, $k\in\Bbb Z$,
\begin{align*}
\|P_{k}\phi_i\|_{G_k(T)}&\lesssim c_k \\
\|P_{k}\phi_i\|_{G_k(T)}&\lesssim c_k(\sigma)2^{-\sigma k}.
\end{align*}
\end{Proposition}

{\bf Step 6. Improve $F_{k}$ bounds of  $P_{k}\widetilde{\mathcal{G}}^{(1)}$ once.}
\begin{Lemma}
Let $u\in\mathcal{H}_Q(T)$ solve SMF with data $u_0$. Let $\{c_k\}$  be $\epsilon_0$-frequency envelopes of order $\frac{1}{2}\delta$. Given any $\sigma\in[0,\frac{99}{100}]$, let $\{c_{k}(\sigma)\}$ be another frequency envelopes of order $\delta$ such that
\begin{align*}
\|P_{k}\nabla u_0\|_{L^2_x}&\le c_k\\
\|P_{k}\nabla u_0\|_{L^2_x}&\le c_k(\sigma)2^{-\sigma k}.
\end{align*}
Then for $\epsilon_0$ sufficiently small there holds
\begin{align*}
2^{k}\|P_{k}\widetilde{ \mathcal{G}}^{(1)}\|_{F_{k}(T)}\le c_k(\sigma)2^{-\sigma k}[(1+2^{2k+2k_0})^{-20}1_{k+k_0\ge 0}+1_{k+k_0\le 0}2^{\delta|k+k_0|}],
\end{align*}
for any $\sigma\in[0,\frac{99}{100}]$, $k,k_0\in
\Bbb Z, s\in[2^{2k_0-1},2^{2k_0+1})$.
\end{Lemma}

{\bf Step 7. Improve $F_{k}\cap S^{\frac{1}{2}}_k$ bounds of  $\{P_{k}A_j\}^{2}_{j=0}$ and derive parabolic estimates for $\{\phi_j\}^2_{j=0}$  with  $\sigma\in[0,\frac{5}{4}]$ .}
\begin{Lemma}\label{ffParabolic}
Let $u\in \mathcal{H}_{Q}(T)$ be solution to SMF with initial data $u_0\in\mathcal{H}_{Q}$. Define frequency envelopes   $\{c^{(1)}(\sigma)\}$ as Def. 6.1.
Given any $\sigma\in [0,\frac{5}{4}]$,
let $\{b_k(\sigma)\}$ be frequency envelopes of order $\delta$. And assume that
\begin{align*}
b_{k}(\sigma)\lesssim c^{(1)}_{k}(\sigma),\mbox{  }\forall \sigma\in [0,\frac{99}{100}].
\end{align*}
Assume also that for $i=1,2$,
\begin{align*}
\|P_{k }\phi_i\upharpoonright_{s=0}\|_{F_k(T)}&\le b_k(\sigma)2^{-\sigma k}, \mbox{  }\sigma\in [0,\frac{5}{4}] \\
\|P_{k }\phi_t\upharpoonright_{s=0}\|_{L^{4}_{t,x}}&\lesssim b_k(\sigma)2^{-(\sigma -1)k}, \mbox{  }\sigma\in [0,\frac{5}{4}].
\end{align*}
Then, if $\varepsilon>0$ is sufficiently small, for $\sigma\in[0,\frac{5}{4}]$ one has for any $\sigma\in [0,\frac{5}{4}]$
\begin{align*}
\|P_{k }A_i(s)\|_{F_k(T)\cap S^{\frac{1}{2}}_k}&\lesssim h^{(1)}_{k,s}(\sigma) 2^{-\sigma k} (1+s2^{2k})^{-4} \nonumber\\
\|P_{k }\phi_i(s)\|_{F_k(T)}&\lesssim b^{(1)}_k(\sigma) 2^{-\sigma k} (1+s2^{2k})^{-4}  \\
\|P_k A_i\upharpoonright_{s=0}\|_{L^{4}_{t,x}}&\lesssim b^{(1)}_k(\sigma)2^{-\sigma k}, \mbox{ }i=1,2. \\
\|P_k\phi_t(s)\|_{L^{4}_{t,x}}&\lesssim b^{(1)}_k(\sigma)2^{-(\sigma-1)k}(1+2^{2k}s)^{-2}\nonumber\\
\|P_kA_t\upharpoonright_{s=0}\|_{L^{2}_{t,x}}&\lesssim \varepsilon b^{(1)}_{k}(\sigma)2^{- \sigma k},\mbox{ }\sigma\in[\frac{1}{100},\frac{5}{4}]\\
\|P_kA_t\upharpoonright_{s=0}\|_{L^{2}_{t,x}}&\lesssim \varepsilon^2, \mbox{ } \sigma\in[0,\frac{5}{4}].\nonumber
\end{align*}
\end{Lemma}

\begin{Remark}
In Lemma \ref{ffParabolic}, the $\{b^{(j)}_{k}\}$, $\{h^{(j)}_{k}\}$,$\{b^{(j)}_{k,s}\}$, $\{h^{(j)}_{k,s}\}$  are defined as Def. 6.2.
\end{Remark}

{\bf Step 8. $G_{k}$ bounds along the SMF direction for $\sigma\in[0,\frac{5}{4}]$.}
\begin{Lemma}\label{6fffParabolic}
Assume that  $\sigma\in [\frac{99}{100},\frac{5}{4}]$.
Let  $\epsilon_0>0$ be a sufficiently small constant.  Given any $\mathcal{L}\in \Bbb Z_+$, assume that $T\in(0,2^{2\mathcal{L}}]$.  Let $\{c_k(\sigma)\}$ be  frequency envelopes of order $\frac{1}{2}\delta$. And let $\{c_k\}$ be an $\epsilon_0$-frequency envelope of order $\frac{1}{2}\delta$. Let $u\in \mathcal{H}_Q(T)$ be the solution to SMF with initial data $u_0$ which satisfies
\begin{align*}
\|P_{k}\nabla u_0\|_{L^2_x}&\le c_k\\
\|P_{k}\nabla u_0\|_{L^2_x}&\le c_k(\sigma)2^{-\sigma k}.
\end{align*}
Denote $\{\phi_i\}$ the corresponding differential fields of the heat flow initiated from $u$.
Then, when $s=0$, given $\sigma\in[\frac{99}{100},\frac{5}{4}]$, one has
\begin{align*}
\|P_{k}\phi_i\upharpoonright_{s=0}\|_{G_{k}(T)}\lesssim \left(c_{k}(\sigma)+ c_{k}(\sigma -\frac{3}{8})c_{k}(\frac{3}{8})\right)2^{-\sigma k}.
\end{align*}
\end{Lemma}

{\bf Step 9. $F_{k}$ bounds of $P_{k}\widetilde{\mathcal{G}}^{(1)}$ and  $G_{k}$ bounds of $\phi_x$ along the  SMF  direction   for $\sigma\in[0,1+\frac{j}{4}]$.}
\begin{Lemma}\label{6fffParabolic}
Given $j\ge 2$, assume that  $\sigma\in [0,1+\frac{j}{4}]$.
Let $Q\in \mathcal{N}$ be a fixed point and $\epsilon_0$ be a sufficiently small constant depending on $j$.  Given any $\mathcal{L}\in \Bbb Z_+$, assume that $T\in(0,2^{2\mathcal{L}}]$.  Let $u\in \mathcal{H}_Q(T)$ be the solution to SMF with initial data $u_0$. Let $\{c^{(j)}_k(\sigma)\}$ be  frequency envelopes defined in Def. 6.1. And assume that $\{c^{(j)}_k(0)\}$ is an $\epsilon_0$-frequency envelope with $0<\epsilon_0\ll 1$.
\begin{itemize}
 \item Then, for $\sigma\in[0,1+\frac{j-1}{4}]$, we have
\begin{align*}
2^{k}\|P_{k}\widetilde{ \mathcal{G}}^{(1)}\|_{F_{k}(T)}\le c^{(j)}_k(\sigma)2^{-\sigma k}[(1+2^{2k+2k_0})^{-20}1_{k+k_0\ge 0}+1_{k+k_0\le 0}2^{\delta|k+k_0|}],
\end{align*}
for any $s\in[2^{2k_0-1},2^{2k_0+1})$, $k_0,k\in\Bbb Z$.
   \item Denote $\{\phi_i\}$ the corresponding differential fields of the heat flow initiated from $u$. Then, for $\sigma\in[0,\frac{j}{4}+1]$  there holds
\begin{align*}
\|P_{k}\phi_i\upharpoonright_{s=0}\|_{G_{k}(T)}\lesssim  c^{(j)}_k(\sigma)2^{-\sigma k}.
\end{align*}
 \end{itemize}
\end{Lemma}

{\bf Step 10. Global regularity, global well-posedness and asymptotic behaviors.}

\noindent As Step 9,  proceeding the bootstrap-iteration  scheme  for $K$ times gives  bounds of $2^{\sigma k}\|P_{k}\phi_j\|_{G_{k}}$ for $\sigma\in[0,\frac{K}{4}+1]$.
Then, transforming the bounds of $\{\phi_j\}\upharpoonright_{s=0}$ back to the solution $u$ gives
\begin{align*}
\|u\|_{L^{\infty}_t\dot {H}^{\sigma+1}_x(T)}\lesssim  \|u_0\|_{{\dot H}^{1}_x\bigcap{\dot {H}}^{\sigma+1}_x}
\end{align*}
Noticing that an $\dot {H}^{1} \bigcap \dot {H}^{2+}$ uniform bound will rule out blow-up for SMF in $\Bbb R^2$, one step iteration suffices to show $u$ is global. And the $\epsilon_0$ depends only on the dimension and the target manifold $\mathcal{N}$.
Moreover, we proceed the bootstrap-iteration  scheme  for $K$ times and obtain uniform bounds for higher Sobolev norms.

The asymptotic behaviors stated in  (\ref{y0}) will be proved following our recent work \cite{Lihy} on SMF on hyperbolic planes.

\subsection{Main ideas}

Let us explain the main ideas. To control the curvature terms, which is the non-selfclosed part, we use dynamical separation and a bootstrap-iteration  scheme to obtain an approximate
 constant sectional curvature nonlinearity with controllable remainder in finite steps of iteration. The essential  advantage of this   scheme is that it reduces estimates in frequency localized spaces such as $F_k,G_k$ to decay estimates in Lebesgue spaces along the heat direction.

{{\bf Iteration scheme for Step 1.}  }We describe the iteration scheme  for heat flows.  The starting point of heat flow iteration is bounds for $\partial_sv$.  \underline {\it 1. First time iteration.}  Suppose that we have obtained parabolic decay estimates of $\|P_{k}\partial_sv\|_{L^{\infty}_tL^2_x}$ such as
\begin{align*}
 \|P_{k}\partial_s v\|_{L^{\infty}_tL^2_x}\lesssim (1+2^{2k}s)^{-M_1}\gamma_{k,s}(\sigma)2^{-\sigma k}, \mbox{ }\sigma\in [0,\frac{99}{100}].
\end{align*}
By applying dynamical separation
\begin{align*}
 S^{l}_{ij}(v(s))=S^{l}_{ij}(Q)-\int^{\infty}_s (DS^{l}_{ij})(v(s'))\cdot\partial_sv ds'
\end{align*}
bounds for $\|P_{k}\partial_sv\|_{L^{\infty}_tL^2_x}$ yield improved frequency localized bounds for the second fundamental form term, i.e. $\|P_{k} S^{l}_{ij}(v(s))\|_{L^{\infty}_tL^2_x}$. The only potential trouble to do this is the ${\rm High\times Low}$ interaction of $(DS^{l}_{ij})(v(s'))\cdot\partial_sv$. But
we will see this interaction can be handled with additionally proving the decay estimates
\begin{align*}
\|\partial^{L+1}_x DS^{l}_{ij}(v(s))\|_{L^{\infty}_tL^2}\lesssim_{L} \epsilon_1 s^{-\frac{L}{2}}, \mbox{ }\forall\mbox{ } L\in \Bbb N.
\end{align*}
Then back to the extrinsic map $v$, using the heat flow equation will give an improved bound for $\|P_{k}\partial_sv\|_{L^{\infty}_tL^2_x}$ for $\sigma\in[1,\frac{5}{4}]$.   \underline{{\it 2. $m$-th time iteration.} } Using dynamical separation of the schematic form
\begin{align*}
 D^{m-1}S(v(s))=D^{m-1}S(Q)-\int^{\infty}_s D^{m}S(v(s'))\partial_sv ds'
\end{align*}
and the decay estimates
\begin{align*}
\|\partial^{L+1}_x D^{m-1}S^{l}_{ij}(v(s))\|_{L^{\infty}_tL^2}\lesssim_{L,m} \epsilon_1 s^{-\frac{L}{2}}, \mbox{ }\forall\mbox{ } L\in \Bbb N,
\end{align*}
one gets frequency localized  bounds of the extrinsic map $v$ for  $\sigma\in[1,1+\frac{m}{4}]$:
\begin{align*}
2^{\sigma k}\|P_{k}v(s)\|_{L^{\infty}_tL^2_x}\lesssim  \epsilon_1 (1+2^{2k}s)^{-M_1+m}\gamma^{(m)}_{k}(\sigma).
\end{align*}

{\bf The motivation of decomposition in  Step 2.}
To bound the curvature terms,  by a kind of dynamical separation we have the decomposition of curvature terms denoted by $\mathcal{G}$:
\begin{align*}
\mathcal{G}={\rm constant}{\mbox{ }} +{\rm one}{\mbox{ }}{\rm  order}{\mbox{ }}{\rm  terms}{\mbox{ }}+{\mbox{ }}{\rm quadratic}\mbox{ }{\rm terms},
\end{align*}
(see Step 2). We observe that to control $\mathcal{G}$ in the $F_{k}$ space, it suffices to prove parabolic decay estimates of $\mathcal{G}^{(j)}$. The same idea will be applied to bound the frames in frequency localized spaces. We remark that dynamical separation was previously used by \cite{Krieger,T4} to expose the implicit null structures. Here, we apply  dynamical separation to do iterations.
Besides using dynamical separation, in order to give a bound for connection coefficients which is the heart for  bootstrap, we further decompose the curvature term into differential fields $\phi_i$ dominated terms and relatively smaller quadratic terms. By an appropriate bootstrap argument, bounding  connection coefficients $\{A_j\}^2_{j=1}$ in the { $F_{k}\bigcap S^{1/2}_k$} space reduces to derive parabolic decay estimates of covariant derivatives of curvatures $\mathcal{G}^{(j)}$ in the simpler $L^{\infty}_tL^2_x\cap L^4$ spaces.

{\bf The motivation of adding (\ref{ftian}) in Step 4.1.}
The key difficulty in bounding curvature involved terms is the ${\rm High\times Low\to High}$ interaction of curvatures and differential fields or heat tension fields, i.e. the frequency of curvatures occupies the dominate position compared with differential fields or heat tension fields. First of all, we observe that it suffices to control the $F_k$ norms of curvatures $\mathcal{G}$. Then we further clarify that  among the four blocks of $F_k$ space only the three blocks  $L^{\infty}_tL^2$, $L^{4}$, $L^{4}_xL^{\infty}_t$  need to be estimated for curvatures.  Second, we find that using dynamic separation in the heat direction
  \begin{align*}
\mathcal{G}:= {\bf R}(e_{i_0},e_{i_1},e_{i_2},e_{i_3})&=\Gamma^{\infty}-\int^{\infty}_s\psi^{l}_s(\widetilde{\nabla}{\bf R})(e_l;,e_{i_0},e_{i_1},e_{i_2},e_{i_3})ds'\\
&:=\Gamma^{\infty}-\Gamma^{\infty,(1)}_{l}\int^{\infty}_s\psi^{l}_sds'-\int^{\infty}_s\psi^{l}_s\widetilde{\mathcal{G}}^{(1)}_lds'
 \end{align*}
and the heat flow iteration scheme, the  $L^{\infty}_tL^2$, $L^{4}$ norms of  curvatures can be controlled by corresponding norms of differential fields.

The troublesome block is  the $L^4_xL^{\infty}_t$ norm. This norm of curvatures  can not be obtained by dynamic separation in the heat direction and the heat flow iteration scheme as before. The problem is the ${\rm High\times Low\to High}$ interaction of  $\widetilde{\mathcal{G}}^{(1)}_l\psi^{l}_s$ in  $L^4_xL^{\infty}_t$ fails if one only previously has bounds for $\widetilde{\mathcal{G}}^{(1)}$ in  $L^{\infty}_tL^2_x\cap L^4$. (Estimates of $\widetilde{\mathcal {G}}^{(1)}$ in $L^{\infty}_tL^2_x\cap L^4$ are obtained in Step 3.) So we add a bootstrap assumption  (\ref{ftian})
to bound $L^4_xL^{\infty}_t$ of $\widetilde{\mathcal {G}}^{(1)}$.  The key is that one can indeed  improve the  assumption  (\ref{ftian}) and then close bootstrap.

{\bf How to drop  (\ref{ftian}) in Step 4.2.}
Let us explain how to improve  (\ref{ftian}) and thus close bootstrap of Step 4.  ({\bf{I}}) With assumption  (\ref{ftian}) we can prove bounds of $A_t$ and $\phi_t$ in $L^4$ by envelopes of differential fields $\phi_x$, loosely speaking say
  \begin{align*}
&\|P_{k}(A_t)\|_{L^{4}}\lesssim 2^{-\sigma k+k}h_{k}(\sigma)[(1+2^{2k+2k_0})^{-1}1_{k+k_0\ge 0}+1_{k+k_0\le 0}2^{\delta|k+k_0|}]\\
 &\|P_{k}(\phi_t)\|_{L^4}\lesssim 2^{k} b_{k}(1+2^{2k}s)^{-2}
 \end{align*}
where $k,k_0\in\Bbb Z$, $s\in[2^{2k_0-1},2^{2k_0+1})$, $\{h_{k}(\sigma)\}$ is the frequency envelope associated with differential fields $\{\phi_i\}^2_{i=1}$ defined in (\ref{hbjbmmm}), and  $\{b_k\}$ is  some frequency envelope with $\|b_{k}\|_{\ell^2}$ sufficiently small. This will give bounds for $\|P_{k}{\partial_t}\widetilde{\mathcal{G}}^{(1)}\|_{L^{4}}$.
({\bf{II}})  We improve the assumption  (\ref{ftian}) by interpolation
\begin{align}\label{Uy78}
\|P_{k}\widetilde{\mathcal{G}}^{(1)}\|_{L^{4}_xL^{\infty}_t}\le \|P_{k}\widetilde{\mathcal{G}}^{(1)}\|^{\frac{3}{4}}_{L^{4}} \|P_{k}\partial_t\widetilde{\mathcal{G}}^{(1)}\|^{\frac{1}{4}}_{L^{4}},
\end{align}
and the $L^4$ estimates obtained from Step 3 (Prop. 1.4 for heat flow iteration)
\begin{align*}
2^{k}\|P_{k}\widetilde{\mathcal{G}}^{(1)}\|_{L^{4}}\le h_{k} (1+2^{2k}s)^{-M}.
\end{align*}
In fact, we can prove
\begin{align*}
2^{\frac{1}{2}k}\|P_{k}\widetilde{\mathcal{G}}^{(1)}\|_{L^{4}_xL^{\infty}_t}\le h_{k}  2^{-\sigma k}[(1+2^{2k+2k_0})^{-\frac{3}{4}M}1_{k+k_0\ge 0}+1_{k+k_0\le 0}2^{\delta|k+k_0|}].
\end{align*}
Then choosing sufficiently large  $M$, one obtains better bounds of $\widetilde{\mathcal{G}}^{(1)}$ than the assumption  (\ref{ftian}).
  This presents the way to bound curvatures in the block space $L^4_xL^{\infty}_t $ of $F_k$.

In  ({\bf{I}}), the key step is to obtain bounds of connection coefficients, see Lemma \ref{connection}. To prove Lemma  \ref{connection}, as mentioned before,  we  decompose the curvature term into differential fields $\phi_i$ dominated terms and the remained quadratic terms, see  Step 2 of Lemma \ref{connection}. The additional smallness gain yielded by the remained quadratic terms gives us the chance to use bootstrap argument to control  connection coefficients.

{\bf  Iteration in Step 6 to Step  9.}
With these new ideas  and \cite{BIKT}'s framework, the range of $\sigma\in[0,\frac{99}{100}]$ can be reached before performing iteration for the SMF evolution. In order to reach larger $\sigma$, one combine the heat flow iteration with an SMF iteration. For the SMF iteration scheme, the key is to improve the  estimate
$\|P_{k}\widetilde{\mathcal{G}}^{(1)}\|_{L^{4}_xL^{\infty}_t}$ step by step to reach larger $\sigma$.

\subsection{Idea for higher dimensions}

Let us give a prevue of the higher dimensions.  As $d=2$, for higher dimensions in order to track the curvature terms $\mathcal{G}$ in the $F_k$ space along the heat flow direction,  it suffices to control the one order covariant derivative of curvature term $\widetilde{\mathcal{G}}^{(1)}$ in the simpler Lebesgue spaces in the heat direction.
Thus the parabolic decay estimates of moving frame dependent quantities for $d\ge 3$ should be established. The difficulty is to bound all geometric quantities in the fractional Sobolev spaces when $d$ is odd. We solve this problem by using the geodesic parallel transpose and difference characterization of Besov spaces. The idea is that the difference characterization reduces bounding fractional Sobolev norms to bound  difference of
all these geometric quantities and their covariant derivatives in Lebesgue spaces. And the geodesic parallel transpose gives us the difference of geometric quantities at different points of the base manifold.

We divide the whole Theorem for $d\ge 2$ into two papers to make the main idea clear and avoid the paper being too long.

{\bf Notations}
Let $\Bbb Z_+=\{1,2,...\}$, $\Bbb N=\{0,1,2,...\}$.
We  apply the notation $X\lesssim Y$ whenever there exists some  constant $C>0$ so that $X\le C Y$. Similarly, we will use $X\sim Y$ if $X\lesssim Y \lesssim X$.
We sometimes drop the integral variable in the integration if no confuse occurs. And we closely follow the notations of \cite{BIKT} for reader's convenience.

Let $\mathcal{F}$ denote the Fourier transformation in $\Bbb R^2$.

Let $\chi:\Bbb R\to [0,1]$ be a given smooth even function which is supported in $\{z\in\Bbb R:|z|\le \frac{8}{5}\}$ and equals to 1 for $\{z\in \Bbb R:|z|\le \frac{5}{4}\}$. Define $\chi_{k}(z)=\chi(\frac{z}{2^{k}})-\chi(\frac{z}{2^{k-1}})$, $k\in\Bbb Z$.
The Littlewood-Paley projection operators with Fourier multiplier  $\eta\mapsto \chi_{k}(|\eta|)$ are denoted by
$P_k$, $k\in \Bbb Z$. For $I\subset \Bbb \Bbb R$, let $\chi_{I}=\sum_{i\in I} \chi_{i}(|\xi|)$.  The low frequency cutoff operator with Fourier multiplier $\eta\mapsto \chi_{(-\infty,k]}(|\xi|)$ is denoted by $P_{\le k}$. And  the high frequency cutoff is defined by  $P_{\ge k}=I-P_{\le k}$. Given ${\bf e}\in\Bbb S^1$, $k\in\Bbb Z$, denote $P_{k,{\bf e}}$ the operator with Fourier multiplier $\xi\mapsto \chi_{k}(\xi\cdot {\bf e})$.

The Riemannian curvature tensor on $\mathcal{N}$ is denoted by ${\bf R}$. The covariant derivative on $\mathcal{N}$ is denoted by $\widetilde{\nabla}$. And we denote $\nabla$ the induced covariant derivative
on $u^{*}T\mathcal{N}$. The metric tensor of $\mathcal{N}$ is denoted by $\langle \cdot, \cdot\rangle$. Let $E$ be a Riemannian manifold with connection $\nabla$, and ${\Bbb T}$ be a $(0,r)$ type tensor. For $k,r\in\Bbb Z_+$, we define the $(0,r+k)$ type tensor $\nabla^{k}{\Bbb T}$ by
\begin{align*}
\nabla^{k}{\Bbb T}(X_1,...,X_k; Y_1,...,Y_r):=(\nabla_{X_k}(\nabla^{k-1}{\Bbb T}))(X_1,...,X_{k-1}; Y_1,...,Y_r)
\end{align*}
for arbitrary tangent vector fields $X_1,...,X_k,Y_1,...,Y_r$  on $E$.

\section{Preliminaries }

\subsection{ Linear estimates}

The following is the main linear estimates established  by \cite{BIKT}.
\begin{Proposition}[\cite{BIKT}]\label{bvc457}
Given $\mathcal{L}\in \Bbb Z_+$, assume that $T\in (0,2^{2\mathcal{L}}]$. Then for every $u_0\in L^2_x$ with frequency localized in $I_k$ and every $F\in N_k(T)$,
we have the inhomogeneous estimate:
If $u$ solves
\begin{align}\label{q}
  \left\{
     \begin{array}{ll}
       i\partial_t u+\Delta u=F, & \hbox{ } \\
       u(0,x)=u_0(x), & \hbox{ }
     \end{array}
   \right.
\end{align}
then
\begin{align}
\|u\|_{G_k(T)}\lesssim \|u_0\|_{L^2_x}+\|F\|_{N_k(T)}.
\end{align}
\end{Proposition}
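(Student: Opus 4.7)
The plan is to reduce Proposition \ref{bvc457} to a collection of linear estimates for the free Schr\"odinger propagator $e^{it\Delta}$ component by component, one for each of the block norms appearing in $G_k(T)$, and then pass to the inhomogeneous estimate by a duality plus Christ--Kiselev argument against the atomic structure of $N_k(T)$. Since $F \in N_k(T)$ admits an atomic decomposition $F=F_1+F_2+F_3+F_4$ into pieces matching the four blocks of the $N_k$ norm, and each $G_k$ piece is dual (in the relevant Christ--Kiselev sense) to one $N_k$ piece, it suffices to prove the homogeneous estimate $\|e^{it\Delta}u_0\|_{G_k(T)}\lesssim \|u_0\|_{L^2}$ together with the $TT^{\ast}$--type bilinear couplings $\bigl|\int \overline{e^{it\Delta}u_0}\cdot F_l\,dx\,dt\bigr|\lesssim \|u_0\|_{L^2}\|F_l\|_{\ast}$ for each atom class.

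For the homogeneous estimate I would verify each block of $G_k$ separately on $e^{it\Delta}u_0$ with $u_0$ frequency-localized in $I_k$: (a) $L^\infty_t L^2_x$ is unitarity; (b) $L^4_{t,x}$ is the endpoint $d=2$ Strichartz estimate of Strichartz--Ginibre--Velo; (c) $2^{-k/2}\|\cdot\|_{L^4_xL^\infty_t}$ is the Kenig--Ponce--Vega local smoothing/maximal estimate, where the $2^{-k/2}$ weight records the half-derivative gain; (d) the directional Kenig--Ruiz-type norms $2^{-k/6}\|\cdot\|_{L^{3,6}_{\mathbf{e}}}$ and $2^{k/6}\|P_{j,\mathbf{e}}\cdot\|_{L^{6,3}_{\mathbf{e}}}$ follow from one-dimensional Kenig--Ponce--Vega maximal estimates after slicing perpendicular to $\mathbf{e}$, together with interpolation and frequency localization; (e) the Galilean-boosted local-smoothing blocks $2^{k/2}\|P_{j,\mathbf{e}}\cdot\|_{L^{\infty,2}_{\mathbf{e},\lambda}}$ and $2^{-k/2}\|\cdot\|_{L^{2,\infty}_{\mathbf{e},W_{k+40}}}$ follow by applying $G_{\lambda\mathbf{e}}$ to both sides, using Galilean invariance of the free flow and then invoking the classical one-dimensional Constantin--Saut/Sj\"olin smoothing estimate in the direction $\mathbf{e}$. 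The $W_{k+40}$-atomic form is then obtained by summing the $\ell^1$-atomic decomposition.

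For the inhomogeneous estimate I would pair a general test function with each of the four pieces of an atomic decomposition of $F$. The $L^{4/3}$ piece is controlled against $L^4$ via the standard Strichartz--Christ--Kiselev bound; the two $L^{3/2,6/5}_{\mathbf{e}_j}$ pieces pair against the $L^{3,6}_{\mathbf{e}_j}$ block of $G_k$ via the same Christ--Kiselev lemma applied in the $x_1$-slab direction (which is legal because the relevant exponents are ordered to permit it); and the $L^{1,2}_{\mathbf{e},W_{k-40}}$ piece pairs against $2^{k/2}\|P_{j,\mathbf{e}}\cdot\|_{L^{\infty,2}_{\mathbf{e},\lambda}}$ after transferring by Galilean conjugation and summing the atoms in $W_{k-40}$ against the dual atoms in $W_{k+40}$. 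The frequency localization of $u$ in $I_k$ and the fact that $W_k$ is discretized at scale $2^{-(k+2\mathcal{L})}$ (with $T\le 2^{2\mathcal{L}}$) guarantee convergence of these atomic sums without loss.

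The main obstacle, as in \cite{BIKT}, is handling the lateral $L^{\infty,2}_{\mathbf{e},\lambda}$/$L^{2,\infty}_{\mathbf{e},W_{k+40}}$ blocks: one must simultaneously use Galilean invariance, apply a one-dimensional local smoothing estimate in a tilted direction, and respect the time restriction $t\in[-T,T]$ compatibly with the atomic discretization $W_k$. The discretization scale $2^{-(k+2\mathcal{L})}$ is precisely tuned so that Christ--Kiselev succeeds: on each time interval of length $\lesssim T$ the Galilean parameter $\lambda$ ranges over $O(2^{k+2\mathcal{L}}\cdot 2^{2k})$ admissible values, and the atomic $\ell^1$ summation absorbs the resulting loss. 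Verifying this compatibility (and checking that the bootstrap of all the frequency envelopes is consistent with the $T\in(0,2^{2\mathcal{L}}]$ constraint) is the delicate bookkeeping step, but no new analytic input beyond the estimates already in \cite{IK,BIKT} is needed.
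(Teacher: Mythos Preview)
The paper does not give its own proof of this proposition: it is stated as a quotation of the main linear estimate from \cite{BIKT} and is used as a black box throughout. Your sketch is a faithful outline of the argument in \cite{BIKT} (and \cite{IK1}): verify each block of $G_k$ for $e^{it\Delta}$ on frequency-localized data via Strichartz, Kenig--Ponce--Vega maximal/smoothing, and Galilean-shifted local smoothing, then obtain the inhomogeneous bound by pairing against the atomic pieces of $N_k$ with a Christ--Kiselev step. So there is nothing to compare against in this paper, and your approach matches what the cited reference actually does.
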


Recall the refined space for $F_k(T)$: Let $S^{\omega}_k(T)$ denote  the normed space of functions in $L^2_k(T)$ for which
\begin{align}
\|g\|_{S^{\omega}_k(T)}=2^{k\omega}\left(\|g\|_{L^{\infty}_tL^{2_\omega}_x}+\|g\|_{L^{4}_tL^{p^*_ \omega}_x}
+2^{-\frac{k}{2}}\|g\|_{L^{p^*_{\omega}}_xL^{\infty}_t}\right)
\end{align}
is finite, where the exponents $2_{\omega} $ and $p^*_{\omega}$ are defined via
\begin{align}
\frac{1}{2_{\omega}}-\frac{1}{2}=\frac{1}{p^*_{\omega}}-\frac{1}{4}=\frac{\omega}{2}.
\end{align}

The following lemma will be used widely.
\begin{Lemma}[\cite{BIKT}]
For $f\in L^2_k(T)$, there hold
\begin{align}
\|P_kf\|_{L^4}&\le \|f\|_{F_{k}(T)}\\
\|P_kf\|_{F_{k}(T)}&\lesssim \|f\|_{L^{2}_{x}L^{\infty}_t}+\|f\|_{L^4}\\
\|P_kf\|_{L^{2}_{x}L^{\infty}_x }&\le \|f\|_{S^{\frac{1}{2}}_k},
\end{align}
and
\begin{align}
\|e^{s\Delta} g \|_{F_k(T)}&\lesssim (1+s2^{2k})^{-20}\|g\|_{F_k(T)}\label{3.18},
\end{align}
provided that the RHS is finite.
\end{Lemma}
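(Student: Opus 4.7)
All four assertions reduce, via the atomic structure of $F_k(T)$, to statements at the unit scale $F^0_k$; my plan is to carry out that reduction and then verify each defining component of $\|\cdot\|_{F^0_k}$ either directly or by Bernstein and Minkowski. Throughout, I exploit that the admissible atomic decomposition $g=\sum_l g_{n_l}$ uses only levels $n_l\ge 0$, so the atoms are frequency-localized at $2^{k+n_l}\ge 2^k$.

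For the first embedding, given an optimal atomic decomposition $f=\sum_l g_{n_l}$, disjoint Fourier supports force $P_kg_{n_l}=0$ as soon as $n_l\ge 2$, so $P_kf$ only receives contributions from the levels $n_l\in\{0,1\}$; bounding the $L^4$ norm of each surviving piece by its $F^0_{k+n_l}$ norm, inserting the harmless factor $2^{n_l}\ge 1$, and taking the infimum yields $\|P_kf\|_{L^4}\le\|f\|_{F_k(T)}$. For the second embedding I use the trivial (single-atom, $n_l=0$) decomposition to reduce to $\|P_kf\|_{F_k(T)}\le\|P_kf\|_{F^0_k(T)}$, then estimate each of the four $F^0_k$ components: Minkowski gives $\|P_kf\|_{L^\infty_tL^2_x}\le\|P_kf\|_{L^2_xL^\infty_t}\lesssim\|f\|_{L^2_xL^\infty_t}$; Bernstein in $x$ transfers $L^2_x$ to $L^4_x$ at cost $2^{k/2}$, controlling $2^{-k/2}\|P_kf\|_{L^4_xL^\infty_t}$; the $L^4$ piece is immediate from the $L^4$-boundedness of $P_k$; and the lateral norm $2^{-k/2}\sup_{\bf e}\|P_kf\|_{L^{2,\infty}_{{\bf e},W_{k+40}}}$ is handled by a directional Bernstein along ${\bf e}^\bot$ combined with the Galilei-translate decomposition indexed by $W_{k+40}$. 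The third embedding is essentially tautological: at $\omega=\tfrac12$ one computes $p^{\ast}_\omega=2$, so the factor $2^{k/2}\cdot 2^{-k/2}\|\cdot\|_{L^2_xL^\infty_t}$ appears literally inside the $S^{1/2}_k$ norm.

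For the heat decay (\ref{3.18}), I would again atomize $g=\sum_l g_{n_l}$; since $e^{s\Delta}$ commutes with Littlewood--Paley projections, it suffices to prove $\|e^{s\Delta}h\|_{F^0_m}\lesssim(1+s2^{2m})^{-20}\|h\|_{F^0_m}$ whenever $h\in L^2_m(T)$, and then re-sum using $(1+s2^{2(k+n_l)})^{-20}\le(1+s2^{2k})^{-20}$ for $n_l\ge 0$. The convolution kernel of $e^{s\Delta}\widetilde{P}_m$ (with $\widetilde{P}_m$ a smoothed projector onto $I_m$) has Fourier symbol $e^{-s|\xi|^2}\tilde\chi_m(|\xi|)$, whose inverse transform has $L^1_x$-mass bounded by $(1+s2^{2m})^{-20}$ thanks to the exponential factor on $|\xi|\sim 2^m$; since each component of $\|\cdot\|_{F^0_m}$ is a mixed Lebesgue norm on which $L^1_x$-convolution (uniform in $t$) acts boundedly, the decay transfers directly.

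The only point that requires genuine work is the lateral piece of the second embedding: one must verify that $L^1_x$-convolution preserves the mixed sup--$L^2$ structure $L^{2,\infty}_{{\bf e},W_{k+40}}$, and that Bernstein bounds interact correctly with the Galilei-covariant atomic decomposition. This is the technical heart of the lemma, and it is precisely the calculation carried out in \cite{BIKT}; after a Galilei change of frame it reduces to a Schur-type estimate in the direction ${\bf e}$.
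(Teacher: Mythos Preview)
The paper does not prove this lemma; it is stated with attribution to \cite{BIKT} and used as a black box. There is therefore no ``paper's own proof'' to compare against.

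Your sketch is broadly sound, but it rests on a misreading of the $F_k(T)$ norm that bites twice. In the atomic decomposition $g=\sum_l g_{n_l}$ defining $\|g\|_{F_k(T)}$, the pieces $g_{n_l}$ are \emph{not} required to lie in $L^2_{k+n_l}(T)$; they are arbitrary functions, merely \emph{measured} in the $F^0_{k+n_l}$ norm. Hence your claim that ``disjoint Fourier supports force $P_kg_{n_l}=0$ as soon as $n_l\ge 2$'' is unfounded, and the same issue reappears in your reduction for the heat bound, where you assume the atom $h$ lies in $L^2_m(T)$ in order to invoke decay at scale $2^m$.

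Both gaps are easily closed. For the $L^4$ embedding, simply note that $\|\cdot\|_{L^4}$ is one of the defining summands of $\|\cdot\|_{F^0_m}$ for every $m$, so the triangle inequality gives $\|f\|_{L^4}\le\sum_l\|g_{n_l}\|_{L^4}\le\sum_l\|g_{n_l}\|_{F^0_{k+n_l}}\le\sum_l 2^{n_l}\|g_{n_l}\|_{F^0_{k+n_l}}$ directly, with no need to discuss Fourier supports of the atoms. For the heat bound, route the frequency localization through $g$ itself rather than the atoms: since $g\in L^2_k(T)$ one has $e^{s\Delta}g=(e^{s\Delta}\widetilde P_k)g$ for a fattened projector $\widetilde P_k$, and the convolution kernel of $e^{s\Delta}\widetilde P_k$ has $L^1_x$-mass $\lesssim(1+s2^{2k})^{-20}$. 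It then suffices to check that $L^1_x$-convolution is bounded on each $F^0_{k+n_l}$ with operator norm equal to the $L^1$ mass (your observation that every component of $F^0_m$ is a mixed Lebesgue norm handles this), after which atomic re-summation yields the bound on $F_k(T)$ with decay at the correct level $k$. Your kernel estimate and your treatment of the remaining three inequalities are fine.
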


\subsection{ Frequency Envelopes}

We recall the definition of envelopes introduced by Tao.
\begin{Definition}
Let $\{a_k\}_{k\in \Bbb Z}$ be a positive sequence, we call it a frequency envelope if
\begin{align}\label{gvbnm}
\sum_{k\in\Bbb Z} &a^2_k   <\infty ,     {\mbox{  }}{\rm and}\mbox{  } a_{j}  \le 2^{\delta|l-j|}a_{l},   \mbox{  }\forall {j,l\in \Bbb Z}.
\end{align}
\end{Definition}
We call the frequency envelope $\{a_k\}$ an $\epsilon$-envelope if it additionally satisfies
\begin{align*}
\sum_{k\in\Bbb Z}a^2_k\le \epsilon^2.
\end{align*}
For any nonnegative sequence $\{a_j\}\in \ell^2$, we define its frequency envelope by
\begin{align*}
\tilde{ a}_j:=\sup_{j'\in\Bbb Z}a_{j'}2^{-\delta|j-j'|}.
\end{align*}
And $\{\tilde{ a}_j\}$ satisfies
\begin{align*}
|a_j|\le \tilde{ a}_j, \forall j\in\Bbb Z; \sum_{j\in\Bbb Z}\tilde{a}^2_j\lesssim \sum_{j\in\Bbb Z}a^2_j.
\end{align*}
Generally the $\delta $ in Definition 2.2 is not important if it has been fixed throughout the paper. But due to our iteration argument we shall introduce different $\delta$ in different steps of iterations. So we call $\{a_k\}$ satisfying  (\ref{gvbnm})\underline{ frequency envelope of order $\delta$}.

In this paper, each time we mention frequency envelopes, we will clearly state its order.

We recall the following two facts on envelopes: (a) If $d_k\le b_k$ for all $k\in \Bbb Z$ and $\{b_k\}$ is
a frequency envelope of order $\delta>0$ then $\widetilde{d}_k\le b_k$ for all $k\in \Bbb Z$ as well, where $\{\widetilde{d}_k\}$ denotes the envelope of $\{d_k\}$ of the same order $\delta>0$:
\begin{align*}
\tilde{d}_k:=\sup_{j\in\Bbb Z}d_{j}2^{-\delta|k-j|}.
\end{align*}
(b) If $\{d_k\}$ is already an envelope of order $\delta>0$ then ${d}_k=\widetilde{d}_k$ for all $k\in \Bbb Z$.

We recall the classic result obtained by  \cite{Paul, Tao7}.
\begin{Lemma}[\cite{Paul, Tao7}]\label{kuai}
Assume that $u\in \mathcal{H}_Q(T)$  satisfies
\begin{align}\label{BOOT}
\|\partial_xu\|_{L^{\infty}_tL^2_{x}}=\epsilon_1\ll1.
\end{align}
And let $v(s,t,x)$ be the solution of heat flow (\ref{Heat}) with initial data $u(t,x)$. Then
\begin{align}\label{FD}
\|\partial^{j+1}_xv\|_{L^{\infty}_tL^2_{x}}\lesssim s^{-\frac{j}{2}}\epsilon_1,
\end{align}
and the corresponding differential fields and connection coefficients satisfy
\begin{align}
s^{\frac{j}{2}}\|\partial^{j}_x\phi_i\|_{L^{\infty}_tL^2_x}&\lesssim \epsilon_1\\
s^{\frac{j}{2}}\|\partial^{j}_xA_i\|_{L^{\infty}_{t}L^2_x}&\lesssim \epsilon_1\\
s^{\frac{j}{2}}\|\partial^{j}_x\phi_i\|_{L^{\infty}_{t}L^{\infty}_x}&\lesssim \epsilon_1\\
s^{\frac{j+1}{2}}\|\partial^{j}_xA_i\|_{L^{\infty}_tL^{\infty}_x}&\lesssim \epsilon_1,
\end{align}
for all $s\in[0,\infty)$, $i=1,2$ and any nonnegative integer $j$.
\end{Lemma}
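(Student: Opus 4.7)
The plan is to establish all four estimates by combining energy monotonicity for the harmonic map heat flow with iterated parabolic smoothing, following the scheme of Smith \cite{Paul} and Tao \cite{Tao7}. The base case $j=0$ of (\ref{FD}) is the classical decrease of Dirichlet energy: differentiating $E(v(s)) = \tfrac{1}{2}\int |\partial_x v|^2\, dx$ along (\ref{Heat}) yields $\tfrac{d}{ds} E(v(s)) = -\int |\partial_s v|^2\, dx \le 0$, so $\|\partial_x v(s)\|_{L^\infty_t L^2_x} \le \|\partial_x u\|_{L^\infty_t L^2_x} = \epsilon_1$ for every $s\ge 0$. Under the bootstrap assumption (\ref{BOOT}) this places us in the small-energy regime of Struwe-type $\varepsilon$-regularity, so the heat flow remains smooth for all $s>0$.

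For higher derivatives I would work with the extrinsic formulation $\partial_s v = \Delta v + S(v)(\partial_x v,\partial_x v)$, where $S$ is the (bounded) second fundamental form of $\mathcal{N}\hookrightarrow \Bbb R^N$. Setting $w_j := \partial^{j+1}_x v$ and differentiating the equation $j$ times in $x$ produces an inhomogeneous heat equation $(\partial_s - \Delta) w_j = N_j$, where $N_j$ is a sum of multilinear monomials in $\partial^{\alpha}_x v$ with $1\le |\alpha|\le j+1$ and total differentiation order $j+2$. Testing against $w_j$, integrating in $x$, and distributing derivatives by 2D Gagliardo--Nirenberg between the top factor (kept in $L^2$) and the lower-order factors (controlled in $L^\infty$ via Sobolev embedding plus the previous smoothing bounds) yields a differential inequality of the form $\tfrac{d}{ds}\|w_j\|_{L^2_x}^2 + \|\partial_x w_j\|_{L^2_x}^2 \lesssim \epsilon_1^2\, \|w_j\|_{L^2_x}^2 + (\text{l.o.t.})$. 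An induction on $j$, with smallness of $\epsilon_1$ absorbing the linear coefficient and the inductive hypothesis handling the lower-order terms, yields $\|w_j(s)\|_{L^\infty_t L^2_x} \lesssim s^{-j/2}\epsilon_1$.

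Once (\ref{FD}) is in hand, the intrinsic estimates follow by three translations. The pointwise identity $|\phi_i|_{\Bbb C^n} = |\partial_i v|_{T_v\mathcal{N}}$ handles the base case for $\phi_i$; for higher derivatives, iterating the commutator $\partial_x \phi_i = \nabla_x \phi_i - A_x \phi_i$ writes $\partial^j_x \phi_i$ as $\nabla^j_x \phi_i$ plus multilinear combinations of lower-order $\phi_i$ and $A_i$, and an induction on $j$ combining the extrinsic bound with the $A_i$ bound (below) closes the $\phi_i$ estimate. For $A_i$ I use the caloric gauge representation in Proposition \ref{Caloric},
\[
[A_i]^p_q(s,t,x) = -\int^\infty_s \langle \mathbf{R}(\partial_s v, \partial_i v)\, e_p, e_q\rangle\, d\tilde{s},
\]
distribute $\partial^j_x$ by Leibniz, bound $\mathbf{R}$ and its covariant derivatives uniformly on the compact target, and insert the bounds on $\partial^k_x \partial_s v$ (which scales as $\partial^{k+2}_x v$) and $\partial^l_x \partial_i v$; evaluating the $\tilde{s}$-integral gives $\|\partial^j_x A_i(s)\|_{L^\infty_t L^2_x} \lesssim s^{-(j+1)/2}\epsilon_1^2\cdot s^{1/2} \lesssim s^{-j/2}\epsilon_1$. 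The $L^\infty_x$ bounds then follow from the 2D embedding $H^{1+\delta}\hookrightarrow L^\infty_x$ applied at orders $j$ and $j+1+\delta$, together with the small-energy $\varepsilon$-regularity that guarantees the $j=0$ endpoint; the extra factor $s^{-1/2}$ appearing for $A_i$ reflects that $A_i$ is formally one order smoother than the top derivatives of $v$.

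The main obstacle will be closing the $L^2_x$ induction in the second paragraph, which is borderline in two dimensions because $N_j$ involves products of Sobolev-critical factors. The smallness $\epsilon_1\ll 1$ is essential: it absorbs the linear coefficient of the differential inequality and creates the headroom needed in the Gagliardo--Nirenberg interpolation to control both high--high and high--low frequency interactions in $N_j$. Because the K\"ahler structure enters only through the parallel complex structure $J$, which commutes with $\nabla_s$, the arguments of \cite{Paul, Tao7}, originally written for sphere targets, adapt with purely notational changes to the present setting.
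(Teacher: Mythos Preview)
The paper does not supply its own proof of this lemma; it is quoted as a known result from \cite{Paul,Tao7} and used as a black box throughout Sections~3--4. There is therefore nothing in the paper to compare your argument against.

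That said, your outline is the standard one from those references: energy monotonicity gives the $j=0$ case of (\ref{FD}), iterated $L^2$ energy estimates on $\partial^{j+1}_x v$ with Gagliardo--Nirenberg interpolation and the smallness of $\epsilon_1$ handle higher $j$, and the gauge quantities are read off from the integral representation of $A_i$ in Proposition~\ref{Caloric}. One organizational point: the dependence you set up between $\partial^j_x\phi_i$ and $\partial^j_x A_i$ is genuinely circular at each level (differentiating the frames in the $A_i$ formula reintroduces $A_x$), so in practice the cited references first control the \emph{covariant} derivatives $\nabla^j_x\phi_i$ intrinsically via Bochner-type identities (where only bounded curvature of the compact target appears), then bound $A_i$ from the integral formula, and only at the end convert to coordinate derivatives via $\partial_x=\nabla_x-A_x$. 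Your extrinsic-first ordering can be made to close as a simultaneous bootstrap, but the intrinsic-first route is cleaner.
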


\section{Iteration for Heat Flows}

\subsection{Main results on the extrinsic map $v$ solving HF}

For $u\in \mathcal{H}_{Q}(T)$, define
\begin{align}\label{Jkmnbn}
\gamma_k(\sigma)=\sup_{k'\in\Bbb Z}2^{-\delta|k-k'|}2^{\sigma k'+k'}\|P_{k'}u\|_{L^{\infty}_tL^2_{x}}, \mbox{  }\sigma\ge 0, \mbox{  }\delta=\frac{1}{800}.
\end{align}
Denote $\{\gamma_k\}$ the frequency envelope for the energy norm, i.e.,
\begin{align*}
\gamma_k=\gamma_k(0).
\end{align*}
Thus
\begin{align*}
2^{k}\|P_{k }u\|_{L^{\infty}_tL^2_{x}}\le 2^{-\sigma k}\gamma_k(\sigma), \mbox{  }\forall \sigma\ge 0.
\end{align*}

Before going ahead, we recall the extrinsic formulations of heat flows.
Assume that the target manifold $\mathcal{N}$ is isometrically embedded into $\Bbb R^{N}$, then the heat flow equation can be formulated as
\begin{align}\label{1OOT}
\partial_s v^{l}-\Delta v^{l}=\sum^2_{a=1}\sum^N_{i,j=1}S^{l}_{ij}\partial_{a}v^{i}\partial_{a}v^{j},\mbox{ }l=1,...,N,
\end{align}
where $S=\{S^{l}_{ij}\}$ denotes the second fundamental form of the embedding $\mathcal{N}\hookrightarrow \Bbb R^N$.

Recall that $\{\gamma_k(\sigma)\}$ are defined to be  the frequency envelopes of $u\in \mathcal{H}_Q(T)$. ( See (\ref{Jkmnbn}) and $u$ {\bf{neednot}} solve SMF.)
The frequency localized estimates of the extrinsic map $v$ solving (\ref{1OOT}) with initial data $u$ are given below.
\begin{Proposition}\label{mei}
Assume that $u\in \mathcal{H}_Q(T)$  satisfies
\begin{align}\label{BOOT}
\|\partial_xu\|_{L^{\infty}_tL^2_{x}}=\epsilon_1\ll1.
\end{align}
And let $v(s,t,x)$ be the solution of heat flow (\ref{1OOT}) with initial data $u(t,x)$. Then $v$ satisfies
\begin{align*}
\sup_{s\ge 0}(1+s2^{2k})^{31}2^{k}\|P_{k }v\|_{L^{\infty}_tL^2_{x}}\lesssim  2^{-\sigma k} \gamma_k(\sigma)
\end{align*}
for all $\sigma\in[0,\frac{99}{100}]$, $k\in\Bbb Z$.
Moreover, for any $\sigma\in[\frac{99}{100},\frac{5}{4}]$,  $k\in\Bbb Z$, we have
\begin{align*}
\sup_{s\ge 0}(1+s 2^{2k})^{30}2^{\sigma k+k}\|P_{k }v\|_{L^{\infty}_tL^2_{x}}\lesssim  \gamma_k(\sigma)+\gamma_k(\sigma-\frac{3}{8})\gamma_k(\frac{3}{8}).
\end{align*}
\end{Proposition}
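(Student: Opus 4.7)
The plan is to work with the Duhamel representation of the semilinear heat equation (\ref{1OOT}),
\[
v(s)=e^{s\Delta}u+\int_0^s e^{(s-s')\Delta}\,F(v)(s')\,ds',\qquad F(v)^l=\sum_{a,i,j}S^{l}_{ij}(v)\,\partial_a v^i\,\partial_a v^j,
\]
and run a bootstrap on the envelope quantity
\[
h_k(\sigma):=\sup_{s\ge 0}(1+s2^{2k})^{31}2^{(\sigma+1)k}\|P_k v\|_{L^\infty_t L^2_x}.
\]
Continuity of $s\mapsto v(s)$ in Sobolev spaces together with Lemma~\ref{kuai} gives a trivial starting bound on $h_k(\sigma)$; the aim is to improve it to $h_k(\sigma)\lesssim \gamma_k(\sigma)$ for $\sigma\in[0,\tfrac{99}{100}]$ and to $h_k(\sigma)\lesssim\gamma_k(\sigma)+\gamma_k(\sigma-\tfrac38)\gamma_k(\tfrac38)$ for $\sigma\in[\tfrac{99}{100},\tfrac54]$.

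First I would dispose of the linear piece $P_k e^{s\Delta}u$ using the standard parabolic smoothing estimate $\|P_k e^{s\Delta}f\|_{L^\infty_t L^2_x}\lesssim (1+s2^{2k})^{-100}\|P_k f\|_{L^\infty_t L^2_x}$; combined with the definition $2^{(\sigma+1)k}\|P_k u\|_{L^\infty_tL^2_x}\le \gamma_k(\sigma)$ this gives the required bound with room to spare. The bulk of the work is the Duhamel integral. I paraproduct-decompose $S(v)(\partial v,\partial v)$ and write $S(v)=S(Q)+(S(v)-S(Q))$; since $\mathcal N$ is compact and $S$ is smooth, $|S(v)-S(Q)|\lesssim |v-Q|$, so $P_j(S(v)-S(Q))$ inherits the same envelope bounds as $P_j v$ (modulo harmless commutators and Moser-type terms handled by Lemma~\ref{kuai}). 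The high-low and low-high pieces are controlled by putting the high-frequency factor in $L^\infty_t L^2_x$ using $h_j(\sigma)$, and the low-frequency factor in $L^\infty$ using Lemma~\ref{kuai} together with Bernstein. The high-high-to-frequency-$k$ pieces are controlled by $L^4\times L^4$ Bernstein bounds coming from $L^\infty_tL^2_x$ at each high frequency. In every case the $s'$-integration of the kernel bound $(1+(s-s')2^{2k})^{-M}$ against the faster parabolic decay of the inputs at their own (higher) frequency produces both the $(1+s2^{2k})^{-31}$ weight and the $\delta$-slowly varying envelope $\gamma_j(\sigma)2^{-\delta|j-k|}$, which sums in $j$ by the frequency envelope axiom.

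For the second range $\sigma\in[\tfrac{99}{100},\tfrac54]$, the bulk of the argument goes through unchanged, but the high-high-to-$k$ interaction becomes genuinely quadratic because $\sigma+1>2$ exceeds the scaling of the nonlinearity $(\partial v)^2$. Here, rather than allowing the bootstrap to produce the disallowed product $\gamma_k\cdot\gamma_k(\sigma)$, I split the derivative budget asymmetrically: when both inputs $\partial v^i,\partial v^j$ sit at frequencies $j_1,j_2\ge k-O(1)$ the contribution is estimated by placing one factor at regularity $\tfrac38$ (using $\gamma_{j_1}(\tfrac38)$) and the other at regularity $\sigma-\tfrac38<\tfrac{99}{100}$ (using $\gamma_{j_2}(\sigma-\tfrac38)$, which is already known to control $h_{j_2}(\sigma-\tfrac38)$ from the first range). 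The exponents $\tfrac38$ and $\sigma-\tfrac38$ are chosen so that both envelopes lie in the already-established range $[0,\tfrac{99}{100}]$, the Bernstein losses balance against the parabolic gain in $(1+s'2^{2j})^{-M}$, and the $\ell^2$ summability over the diagonal $j_1\sim j_2\ge k$ gives the desired envelope $\gamma_k(\tfrac38)\gamma_k(\sigma-\tfrac38)$.

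The main technical obstacle is bookkeeping the parabolic weight $(1+s2^{2k})^{31}$ through the Duhamel iteration while simultaneously tracking two slightly different frequency envelopes across the high-high interaction. The critical observation is the arbitrariness of the exponent $M$ in the kernel decay estimate $\|P_k e^{(s-s')\Delta}f\|\lesssim (1+(s-s')2^{2k})^{-M}\|P_k f\|$, which furnishes enough integrable decay to absorb every derivative loss, including the $s'^{-1}$ coming from the two $L^\infty$ factors supplied by Lemma~\ref{kuai}. Closing the bootstrap then reduces, at each $\sigma$, to a purely algebraic verification that the contributions assemble with a small constant (from $\epsilon_1\ll 1$), which follows from the usual continuity argument in $s$.
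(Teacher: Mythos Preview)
Your treatment of the first range $\sigma\in[0,\tfrac{99}{100}]$ matches the paper's argument (Proposition~\ref{4.2}): Duhamel, the splitting $S(v)=S(Q)+(S(v)-S(Q))$, a trilinear Littlewood--Paley decomposition of $S(v)\,\partial v\,\partial v$, and a continuity/bootstrap argument closed with $\epsilon_1\ll1$.

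For the second range $\sigma\in[\tfrac{99}{100},\tfrac54]$ there is a genuine gap. You locate the obstruction in the \emph{high--high} interaction of $\partial v\cdot\partial v$, but that term is harmless for every $\sigma<2$: with both inputs at frequency $j\ge k$ one gets $2^{2k}\sum_{j\ge k}\mu_j^2\lesssim 2^{(2-\sigma)k}\gamma_k\gamma_k(\sigma)$, which already has the small prefactor $\gamma_k\lesssim\epsilon_1$ and is absorbed into the bootstrap without any $3/8$ split. The actual obstruction is the \emph{Low$\times$High} case where $S(v)$ sits at frequency $\sim k$ and both $\partial v$'s are at low frequency (the term $a_k(\sum_{k_1\le k}2^{k_1}\mu_{k_1})^2$ in the trilinear decomposition of Lemma~\ref{A1}). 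Once $\sigma>1$ one cannot place $\sigma$ derivatives on a low-frequency $\partial v$ and sum, so the $\sigma$-regularity must come from $a_k=\|\partial_xP_kS(v)\|_{L^\infty_tL^2_x}$ itself.

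Your proposal handles $P_k(S(v)-S(Q))$ by a Moser claim (``inherits the same envelope bounds as $P_kv$''), but this is precisely what is \emph{not} available by a direct argument at regularity above $1$. The paper's key step (Proposition~\ref{uyhgvbL}) is instead a \emph{dynamic separation} along the heat direction,
\[
S^{l}_{ij}(v)(s)=S^{l}_{ij}(Q)+\int_s^\infty (DS^{l}_{ij})(v)\cdot\partial_sv\,ds',
\]
which, combined with the already-proved bounds for $\partial_sv$ in the range $\sigma\in[0,\tfrac{99}{100}]$, upgrades the trivial estimate on $a_k$ to the frequency-localized bound $2^{\tilde\sigma k}a_k\lesssim(1+2^{2k}s)^{-30}\gamma_k(\tilde\sigma)\gamma_k$ for $\tilde\sigma\in[0,\tfrac{99}{100}]$. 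It is this improved $a_k$ (applied at $\tilde\sigma=\sigma-\tfrac38$), together with one low-frequency $\partial v$ placed at level $\tfrac38$, that produces the $\gamma_k(\sigma-\tfrac38)\gamma_k(\tfrac38)$ correction. Without the dynamic separation your bootstrap for $\sigma>1$ does not close.
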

\begin{Remark} The power of $(1+s2^{2k})$ in  Proposition \ref{mei} can be chosen to be any $M\in \Bbb Z_+$  with additionally assuming that $\epsilon_1$ is sufficiently small depending on $M$. See Proposition 3.4 below.
\end{Remark}

\subsection{Before Iteration }

Given an initial data  $v_0\in \mathcal{H}_Q$ with energy sufficiently small, by Lemma  \ref{kuai} the corresponding heat flow is global with (\ref{FD}) holding. Then combining this with local Cauchy theory in Sobolev spaces for heat flows yields the following lemma.
\begin{Lemma}\label{KJm1}
Assume that $v_0\in \mathcal{H}_Q$ has sufficiently small energy.
And let $v(s,x)$ be the solution of heat flow (\ref{1OOT}) with initial data $v_0$.
Given arbitrary  $L\in\Bbb Z_+$, there exist constants $C_{L}>0$, $C_{s}>0$, such that for any $s\ge 0, 0\le j\le L$,
\begin{align*}
\|\partial^{j+1}_xv(s)\|_{H^L_{x}}\le C_{L}(1+s)^{-\frac{j}{2}};\mbox{ } \|v(s)-Q\|_{L^2_{x}}\le C_{s}.
\end{align*}
\end{Lemma}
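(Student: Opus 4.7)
The plan is to split the analysis into the regimes $s \geq 1$ and $s \in [0,1]$, treating the former by Lemma \ref{kuai} and the latter by short-time parabolic persistence of regularity, and to handle the $L^2$ bound on $v - Q$ separately via a direct energy identity.

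For $s \geq 1$, Lemma \ref{kuai} applied with derivative order $j + m$, $0 \leq m \leq L$, yields
\[
\|\partial^{j+m+1}_x v(s)\|_{L^2_x} \lesssim s^{-(j+m)/2} \epsilon_1 \lesssim s^{-j/2},
\]
and summing over $m$ gives $\|\partial^{j+1}_x v(s)\|_{H^L_x} \lesssim s^{-j/2} \sim (1+s)^{-j/2}$. For $s \in [0,1]$ the factor $(1+s)^{-j/2}$ is bounded below, so it suffices to establish a uniform short-time bound $\|v(s) - Q\|_{H^{L+j+1}_x} \leq C(L, j, v_0)$. Because $v_0 \in \mathcal{H}_Q$ has every Sobolev norm finite, and because the second fundamental form $S$ of the smooth compact isometric embedding $\mathcal{N} \hookrightarrow \Bbb R^N$ has all derivatives bounded, a standard Moser-type energy estimate on the semilinear parabolic equation (\ref{1OOT}), combined with the smallness of $\|\partial_x v\|_{L^\infty_s L^2_x} \lesssim \epsilon_1$ from Lemma \ref{kuai}, closes by Gronwall and produces such a bound on $[0,1]$.

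For the second estimate, I would differentiate $\|v - Q\|^2_{L^2_x}$ and use compactness of $\mathcal{N}$ (so that $|v - Q| \leq C$ pointwise) together with boundedness of $S$:
\[
\frac{d}{ds}\|v - Q\|^2_{L^2_x} = -2\|\partial_x v\|^2_{L^2_x} + 2\int (v - Q)\cdot S(v)(\partial_x v,\partial_x v)\,dx \leq C\epsilon_1^2.
\]
Integration from $0$ to $s$ yields $\|v(s) - Q\|^2_{L^2_x} \leq \|v_0 - Q\|^2_{L^2_x} + C\epsilon_1^2 s$, which is the claimed $C_s$ bound.

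The main technical point is the short-time Sobolev persistence on $[0,1]$: one must verify that the energy estimates for $\|\partial^{m+1}_x v\|^2_{L^2_x}$ with $m \leq L + j$ close uniformly, using that the nonlinearity is a smooth quadratic form in $\partial_x v$ with coefficients depending smoothly on $v$. This is routine via tame Moser-type product estimates at high order, and the smallness from Lemma \ref{kuai} ensures no breakdown of lower-order quantities on $[0,1]$. All other ingredients reduce to direct applications of Lemma \ref{kuai} and elementary interpolation.
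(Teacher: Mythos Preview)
Your proposal is correct and follows essentially the same route as the paper: the paper's proof is just the one-line remark that Lemma \ref{kuai} gives the decay estimates (\ref{FD}) for large $s$, while local Cauchy theory in Sobolev spaces for heat flows handles the short-time regime and the $L^2$ bound. You have simply spelled out the details of that sketch.
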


\begin{Proposition}\label{4.2}
Assume that $u\in \mathcal{H}_Q(T)$   satisfies
\begin{align}\label{BOOT}
\sum_{k\in\Bbb Z}2^{2k}\|P_{k }u\|^2_{L^{\infty}_tL^2_{x}}=\epsilon^2_1\ll1.
\end{align}
And let $v(s,t,x)$ be the solution of heat flow (\ref{1OOT}) with initial data $u(t,x)$. Then for $\sigma\in[0,\frac{99}{100}]$ and all $k\in\Bbb Z$, $v$ satisfies
\begin{align}\label{2JHB}
\sup_{s\in[0,\infty)}(1+s2^{2k})^{31}2^{k+\sigma k}\|P_{k}v\|_{L^{\infty}_tL^2_{x}}\lesssim \gamma_k(\sigma).
\end{align}
\end{Proposition}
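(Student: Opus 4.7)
The plan is to establish (\ref{2JHB}) via Duhamel's formula for (\ref{1OOT}) and a bootstrap argument in the heat variable $s$. Applying $P_k$ to
\begin{equation*}
v(s)=e^{s\Delta}u+\int_0^s e^{(s-s')\Delta}\,S(v)\,\partial_a v\cdot\partial_a v\,(s')\,ds',
\end{equation*}
the linear term satisfies $\|P_k e^{s\Delta}u\|_{L^\infty_t L^2_x}\lesssim (1+s2^{2k})^{-100}\,2^{-k-\sigma k}\gamma_k(\sigma)$ by the standard frequency-localized heat kernel decay, which is already compatible with the target bound.

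For the nonlinear Duhamel term I would introduce the bootstrap quantity
\begin{equation*}
\tilde\gamma_k(\sigma;s):=\sup_{0\le s'\le s}(1+s'2^{2k})^{31}\,2^{k+\sigma k}\|P_k v(s')\|_{L^\infty_t L^2_x},
\end{equation*}
and assume $\tilde\gamma_k(\sigma;s)\le C_*\gamma_k(\sigma)$ for all $k$ on some initial interval $[0,s_0]$, with $C_*$ a large constant independent of $\epsilon_1$; the goal is to improve this constant to $C_*/2$ so that a continuity argument propagates the bound for all $s\ge 0$. The bootstrap is initialized by the definition (\ref{Jkmnbn}) of $\gamma_k$ at $s=0$.

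To close it, I would decompose $S(v)\partial_a v\cdot\partial_a v$ via Littlewood--Paley into the three standard regimes (high$\times$low $\to$ high, its mirror, and high$\times$high with output at arbitrary frequency), treat $S(v)$ as a bounded smooth function of $v$ using compactness of $\mathcal{N}$ and Moser-type composition estimates, and apply 2D Bernstein $\|P_j f\|_{L^\infty_x}\lesssim 2^j\|P_j f\|_{L^2_x}$ together with the pointwise bounds of Lemma \ref{kuai} and the bootstrap hypothesis. This should yield a schematic bound
\begin{equation*}
\|P_k[S(v)\partial_a v\cdot\partial_a v](s')\|_{L^\infty_t L^2_x}\lesssim \epsilon_1 C_*\,2^{2k}\cdot 2^{-k-\sigma k}\gamma_k(\sigma)(1+s'2^{2k})^{-M''}
\end{equation*}
for some $M''$ much larger than $31$, where the $2^{2k}$ prefactor comes from the two derivatives of $v$, the factor $\epsilon_1$ arises from pulling one of the $\partial v$ factors into an $L^\infty$ norm controlled by Lemma \ref{kuai}, and the envelope property (\ref{gvbnm}) of $\gamma_k(\sigma)$ is used to sum over frequency indices. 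Convolving this estimate against the heat kernel $e^{-c(s-s')2^{2k}}$ absorbs the $2^{2k}$ factor (through the $2^{-2k}$ gained by the $s'$-integration) and preserves the polynomial decay $(1+s2^{2k})^{-31}$ via the elementary convolution identity for the heat semigroup acting on polynomially decaying data, delivering an $O(\epsilon_1 C_*)$ contribution to $\tilde\gamma_k(\sigma;s)$, which closes the bootstrap for $\epsilon_1$ sufficiently small.

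The main obstacle I anticipate is the high$\times$high interaction as $\sigma$ approaches $1$: the envelope summation $\sum_{k_1\ge k}\gamma_{k_1}(\sigma)^2\,2^{\alpha(k_1-k)}$ converges with room to spare only when $\sigma<1$, and each factor of $\partial v$ extracted through Bernstein loses a power $2^{k_1(1-\sigma)}$ that must be absorbed back into the envelope. The cutoff at $\sigma=99/100$ precisely provides the margin needed for this summation and for the slow variation parameter $\delta=1/800$ in (\ref{Jkmnbn}) to absorb the resulting geometric series; for $\sigma\in[99/100,5/4]$, a more refined decomposition yielding the quadratic correction $\gamma_k(\sigma-\tfrac{3}{8})\gamma_k(\tfrac{3}{8})$ of Proposition \ref{mei} will be required.
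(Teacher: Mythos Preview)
Your strategy---bootstrap in $s$, Duhamel, trilinear Littlewood--Paley decomposition, close via smallness of $\epsilon_1$---is exactly the paper's. However, the schematic pointwise-in-$s'$ bound you write for the nonlinearity,
\[
\|P_k[S(v)\,\partial_a v\cdot\partial_a v](s')\|_{L^\infty_t L^2_x}\lesssim \epsilon_1 C_*\,2^{k-\sigma k}\gamma_k(\sigma)(1+s'2^{2k})^{-M''},
\]
is not actually achievable and this is where your sketch breaks. In the high$\times$high regime (both copies of $\partial v$ at frequency $k_2\ge k$) the bootstrap hypothesis only gives decay in $(1+s'2^{2k_2})$, not $(1+s'2^{2k})$, and after Bernstein the contribution is $2^{2k}\sum_{k_2\ge k}2^{-\sigma k_2}\gamma_{k_2}(\sigma)\gamma_{k_2}(1+s'2^{2k_2})^{-62}$. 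At $\sigma=0$ and $s'\to 0$ this sum is $\sum_{k_2\ge k}\gamma_{k_2}^2$, which is \emph{not} controlled by $\epsilon_1\gamma_k$ (take $\gamma_k$ much smaller than $\epsilon_1$). Pulling one $\partial v$ into $L^\infty$ via Lemma~\ref{kuai} does not rescue this: the $L^\infty$ bound there carries no frequency-envelope information, so you lose the $\gamma_k(\sigma)$ localization.

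The paper's fix is to keep the $k_2$-sum \emph{outside} the Duhamel time integral and use the asymmetric convolution inequality
\[
\int_0^s e^{-(s-\tau)2^{2k}}(1+\tau\,2^{2k_2})^{-31}\,d\tau\lesssim s\,(1+s2^{2k})^{-31}(1+s2^{2k_2})^{-1},
\]
which manufactures an extra $(1+s2^{2k_2})^{-1}$; then $2^{2k}s\sum_{k_2\ge k}\gamma_{k_2}\gamma_{k_2}(\sigma)(1+s2^{2k_2})^{-1}\lesssim \epsilon_1\gamma_k(\sigma)$ because $s(1+s2^{2k_2})^{-1}\le 2^{-2k_2}$. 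This inequality is the missing ingredient in your sketch. A secondary correction: the barrier at $\sigma=\tfrac{99}{100}$ does not come from the high$\times$high sum you wrote, but from the terms where $S(v)$ sits at the output frequency and both $\partial v$'s are summed from \emph{below}, producing $\sum_{k_1\le k}2^{(1-\sigma)k_1}\gamma_{k_1}(\sigma)$; this is what forces $\sigma<1-\delta$ and what the dynamic-separation refinement of $a_k$ in the next proposition is designed to overcome.
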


\begin{proof}
Since $v$ converges to a fixed point $Q\in\mathcal{N}$ as $s\to\infty$, we put
\begin{align*}
S^{l}_{ij}(v)=S^{l}_{ij}(Q)+(S^{l}_{ij}(v)-S^{l}_{ij}(Q)).
\end{align*}
The $\{S^{l}_{ij}(Q)\}$ part is constant and makes acceptable contribution to the final estimates by [Lemma 8.3,\cite{BIKT}].  Moreover, by Lemma \ref{kuai}, for all nonnegative integers $L$ the remained part satisfies
\begin{align*}
\|\partial^{L+1}_x(S^{l}_{ij}(v)-S^{l}_{ij}(Q))\|_{L^{\infty}_tL^2_x}\lesssim_L s^{-\frac{L}{2}}\|\nabla u\|_{L^{\infty}_tL^2_x}.
\end{align*}
Thus for all $j\in\Bbb Z_+$ we have the bound
\begin{align}\label{GHv}
(1+2^{2k}s)^{j}\left\|\partial_x\left[P_k(S^{l}_{ij}(v)-S^{l}_{ij}(Q))\right]\right\|_{L^{\infty}_tL^2_x}\lesssim_j \epsilon_1.
\end{align}
Now, let's use [Lemma 8.3,\cite{BIKT}]'s arguments.
Given $\sigma\in[0,\frac{99}{100}]$, define $B_{1,\sigma}(S)$ to be
\begin{align*}
B_{1,\sigma}(S)=\sup_{k\in\Bbb Z,s\in[0,S)} \gamma^{-1}_k(\sigma)(1+s2^{2k})^{31}2^{\sigma k}2^{k}\|P_{k }v\|_{L^{\infty}_tL^2_{x}}.
\end{align*}
By Lemma \ref{KJm1} and the fact  $\{\gamma_k(\sigma)\}$ is a frequency envelope,  $B_{1,\sigma}(S)$ is well-defined for $S\ge 0$ and continuous in $S$ with
$\mathop {\lim }\limits_{S \to 0} B_{1,\sigma}(S)=1$.

Then by trilinear Littlewood-Paley decomposition (see (\ref{c6FC}) in Lemma \ref{A1}), we have
\begin{align*}
2^{k}\|P_{k }S^{l}_{ij}(f)\partial_af^{i}\partial_a f^{j}\|_{L^{\infty}_tL^2_x}&\lesssim 2^{k}\sum_{k_1\le k}\mu_{k_1}2^{k_1}\mu_k+\sum_{k_2\ge k}2^{2k}\mu^2_{k_2}+ a_{k}(\sum_{k_1\le k}2^{k_1}\mu_{k_1})^2\\
&+\sum_{k_2\ge k}2^{2k}2^{-k_2}a_{k_2}\mu_{k_2}\sum_{k_1\le k_2}2^{k_1}\mu_{k_1},
\end{align*}
where $\{a_k\}$, $\{\mu_k\}$  denote
\begin{align}
a_k&:=\sum_{|k-k'|\le 20}\sum^{N}_{l,i,j=1}\|\partial_x P_{k'}(S^{l}_{ij}(v))\|_{L^{\infty}_tL^2_x}; \mbox{ }\mu_k :=\sum^{N}_{l=1}\sum_{|k'-k|\le 20}2^{k'}\|P_{k'}v^{l}\|_{L^{\infty}_tL^2_x}.\label{N08}
\end{align}
Then by definition of $B_1(S)$ and slow variation of envelopes, for $s\in[0,S], \sigma\in[0,\frac{99}{100}]$, we get
 \begin{align}
&2^{k}\|P_{k }S^{l}_{ij}(f)\partial_af^{i}\partial_a f^{j}\|_{L^{\infty}_tL^2_x}\label{CbmXZ}\\
&\lesssim  B_{1,\sigma}B_{1,0}(1+s2^{2k})^{-31} \gamma_k\sum_{k_1\le k}2^{k_1-\sigma k_1+k}\gamma_{k_1}(\sigma)+B_{1,\sigma}B_{1,0}\sum_{k_2\ge k}2^{2k-\sigma k_2}(1+s2^{2k_2})^{-62}\gamma_{k_2}\gamma_{k_2}(\sigma)\nonumber\\
&+B_{1,\sigma}B_{1,0}2^{ k} a_{k} (\sum_{k_1\le k}2^{k_1}(1+s2^{2k_1})^{-31}\gamma_{k_1}) (\sum_{k_1\le k}2^{k_1-\sigma k_1}(1+s2^{2k_1})^{-31}\gamma_{k_1}(\sigma)) \nonumber\\
&+B_{1,\sigma}B_{1,0}\sum_{k_2\ge k}2^{2k}(1+s2^{2k_2})^{-31}2^{-\sigma k_2}a_{k_2}\gamma_{k_2}(\sigma)\gamma_{k_2}\nonumber\\
&\lesssim B_{1,\sigma}B_{1,0} (1+s2^{2k})^{-62}2^{2k-\sigma k}\gamma_k\gamma_k(\sigma)+ B_{1,\sigma}B_{1,0} \sum_{k_2\ge k}2^{2k-\sigma k_2}(1+s2^{2k_2})^{-31}\gamma_{k_2}\gamma_{k_2}(\sigma)\nonumber\\
&+B_{1,\sigma}B_{1,0}\sum_{k_2\ge k}2^{-\sigma k_2}2^{2k}(1+s2^{2k_2})^{-31}a_{k_2}\gamma_{k_2}\gamma_{k_2}(\sigma)+ a_{k}2^{-\sigma k}B_{1,\sigma}B_{1,0}2^{2k} \gamma_{k}\gamma_{k}(\sigma) \label{CmXZ}
\end{align}
Applying (\ref{GHv}) to $\{a_k\}$, (\ref{CmXZ}) is further bounded by
\begin{align*}
(\ref{CmXZ})\lesssim 2^{-\sigma k} B_{1,\sigma}B_{1,0}2^{2k}\sum_{k_2\ge k}(1+s2^{2k_2})^{-31}\gamma_{k}\gamma_{k_2}(\sigma).
\end{align*}
Therefore, for $s\ge 0$, we conclude that  (\ref{CbmXZ}) is dominated by
\begin{align*}
2^{k}\|P_{k }S^{l}_{ij}(v)\partial_av^{i}\partial_a v^{j}\|_{L^{\infty}_tL^2_x}\lesssim 2^{-\sigma k}2^{2k} B_{1,\sigma}B_{1,0} \sum_{k_2\ge k}(1+s2^{2k_2})^{-31}\gamma_{k_2}\gamma_{k_2}(\sigma).
\end{align*}
Hence by Duhamel principle:
\begin{align*}
(1+s2^{2k})^{31}&2^{k+\sigma k}\|P_{k }v\|_{L^{\infty}_tL^2_x}\lesssim (1+s2^{2k})^{31}e^{-s2^{{2k}}}2^{k+\sigma k}\|P_ku\|_{L^{\infty}_tL^2_x}\\
&+B_{1,\sigma}B_{1,0}(1+s2^{2k})^{M}\int^{s}_0e^{-(s-\tau)2^{2k}}
2^{k+\sigma k}\|P_{k }S^{l}_{ij}(v)\partial_av^{i}\partial_a v^{j}\|_{L^{\infty}_tL^2_x}d\tau,
\end{align*}
and the inequality
\begin{align}\label{KJn}
\int^{s}_0e^{-(s-\tau)\lambda}(1+\tau \lambda_1)^{-31}d\tau\lesssim s (1+\lambda s)^{-31}(1+\lambda_1 s)^{-1},
\end{align}
we get
\begin{align*}
(1+s2^{2k})^{31}2^{k+\sigma k}\|P_{k }v\|_{L^{\infty}_tL^2_x}&\lesssim \gamma_k(\sigma)+ B_{1,\sigma}B_{1,0}(S)2^{2k} s\sum_{k_2\ge k}\gamma_{k_2}\gamma_{k_2}(\sigma )(1+2^{2k_2}s)^{-1}\\
 &\lesssim \gamma_k(\sigma)+ B_{1,\sigma}(S)B_{1,0}(S)\epsilon_1\gamma_k(\sigma ).
\end{align*}
Then $B_{1,0}\lesssim 1+\epsilon_1 B^2_{1,0}$. By $B_{1,0}(0)\le 1$ and $\epsilon_1$ is sufficiently small, we have $B_{1,0}(S)\lesssim 1$ for all $S\ge 0$. Then
using $B_{1,\sigma}\lesssim 1+\epsilon_1 B_{1,0}B_{1,\sigma}$ and $B_{1,\sigma}(0)\le 1$, we get   $B_{1,\sigma}(S)\lesssim 1$ for any $\sigma\in[0,\frac{99}{100}]$ and any $S\ge 0$ provided that  $\epsilon_1$ is sufficiently small.
Thus (\ref{2JHB}) has been proved.
\end{proof}

\begin{Remark} The power of $(1+s2^{2k})$ in  Proposition \ref{4.2} can be chosen to be any $M\in \Bbb Z_+$  with additionally assuming that $\epsilon_1$ is sufficiently small depending on $M$. See Proposition 3.4 below.
\end{Remark}

\subsection{First time iteration }

We state the first time iteration in the the following Proposition.
\begin{Proposition}\label{uyhgvbL}
Assume that $u\in \mathcal{H}_Q(T)$   satisfies (\ref{BOOT}).
And let $v(s,t,x)$ be the solution of heat flow (\ref{1OOT}) with initial data $u(t,x)$. Then for $\sigma\in[\frac{99}{100},\frac{5}{4}]$ and any $k\in\Bbb Z$, $v$ satisfies
\begin{align}\label{4JHB}
\sup_{s\in[0,\infty)}(1+s2^{2k})^{30}2^{k+\sigma k}\|P_{k}v\|_{L^{\infty}_tL^2_{x}}\lesssim \gamma_k(\sigma)+\gamma_k(\sigma-\frac{3}{8})\gamma_k(\frac{3}{8}).
\end{align}
\end{Proposition}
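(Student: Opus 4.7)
My proof plan follows the same bootstrap architecture as Proposition \ref{4.2}, but with the target bound enlarged to accommodate a quadratic residual. Introduce
\begin{equation*}
B_{2,\sigma}(S) := \sup_{k \in \mathbb{Z},\, s \in [0,S)} \frac{(1+s2^{2k})^{30}\,2^{k+\sigma k}\,\|P_k v\|_{L^\infty_t L^2_x}}{\gamma_k(\sigma) + \gamma_k(\sigma - 3/8)\,\gamma_k(3/8)}
\end{equation*}
for $\sigma \in [\tfrac{99}{100},\tfrac{5}{4}]$. By Lemma \ref{KJm1} and the slow variation of the envelopes, $B_{2,\sigma}(S)$ is finite and continuous in $S$ with $\lim_{S \to 0^+} B_{2,\sigma}(S) \lesssim 1$. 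The critical observation is that the shift $3/8$ is chosen precisely so that both $\sigma - 3/8$ and $3/8$ lie in $[0,\tfrac{99}{100}]$ for every $\sigma$ in the target range: indeed $\sigma - 3/8 \in [\tfrac{39}{200},\tfrac{7}{8}] \subset [0,\tfrac{99}{100}]$. Thus Proposition \ref{4.2} applies at both exponents and will be the workhorse for controlling the high-high frequency interactions.

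The plan is to run the Duhamel identity for (\ref{1OOT}) and perform a trilinear Littlewood-Paley decomposition of the nonlinearity $P_k(S^l_{ij}(v)\partial_a v^i \partial_a v^j)$, after writing $S^l_{ij}(v) = S^l_{ij}(Q) + (S^l_{ij}(v) - S^l_{ij}(Q))$ so that the residual piece enjoys the decay (\ref{GHv}). The interactions split into several regimes. In the low-high regimes, where one $\partial v$ has frequency $k_1 \le k$, I would control the high-frequency factor by the bootstrap quantity $B_{2,\sigma}$ itself and bound the low-frequency factor through Proposition \ref{4.2} with $\sigma = 0$; $\ell^2$-summation of $\gamma_{k_1}$ yields a factor of $\epsilon_1$. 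In the balanced high-high regime $k_2 \sim k_3 \ge k$ where both derivatives sit above $k$, I would split $\sigma = (\sigma - \tfrac{3}{8}) + \tfrac{3}{8}$, apply Proposition \ref{4.2} to each factor with the corresponding exponent, and then use slow variation of the envelopes $\gamma_{k_2}(\sigma - \tfrac{3}{8})$ and $\gamma_{k_2}(\tfrac{3}{8})$ in the variable $k_2$ to produce a clean bound by $\gamma_k(\sigma - \tfrac{3}{8})\gamma_k(\tfrac{3}{8})$ after summing $k_2 \ge k$. The mixed terms carrying derivatives of $S^l_{ij}(v)$ are handled exactly as in Proposition \ref{4.2} via (\ref{GHv}).

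Applying (\ref{KJn}) to perform the $s$-integration in Duhamel and combining these estimates leads to an inequality $B_{2,\sigma}(S) \lesssim 1 + \epsilon_1\, B_{2,\sigma}(S)$, which closes the bootstrap once $\epsilon_1$ is sufficiently small and yields (\ref{4JHB}). The main technical obstacle lies in the balanced high-high regime: one must verify that the product $\gamma_{k_2}(\sigma - \tfrac{3}{8})\gamma_{k_2}(\tfrac{3}{8})$, multiplied by the combinatorial weight from the trilinear Littlewood-Paley and by the parabolic factor $(1 + s 2^{2k_2})^{-62}$ inherited from Proposition \ref{4.2} at each factor, actually sums to something compatible with the target weight $(1 + s 2^{2k})^{-30}$ after the time integration. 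The fact, noted in the Remark following Proposition \ref{mei}, that the exponent $30$ can be replaced by any large $M$ (at the price of further smallness in $\epsilon_1$) provides the extra slack needed to absorb the loss of parabolic decay that occurs in the summation when $k_2 \gg k$.
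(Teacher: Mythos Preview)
Your bootstrap setup and the treatment of the low-high and high-high $\partial v$ interactions (terms 1 and 2 in the trilinear decomposition (\ref{c6FC})) are sound. However, there is a genuine gap in your handling of term 3 --- the case where $S(v)$ carries the output frequency $\sim k$ and \emph{both} $\partial v$ factors sit at low frequencies $k_1\le k$. You dismiss this as ``handled exactly as in Proposition~\ref{4.2} via (\ref{GHv})'', but that argument puts the full weight $\sigma$ on one low-frequency $\partial v$ factor and then requires summing $\sum_{k_1\le k} 2^{(1-\sigma)k_1}\gamma_{k_1}(\sigma)$, which only converges for $\sigma<1-\delta$. Once $\sigma$ crosses $1$ this sum diverges, so the argument from Proposition~\ref{4.2} does not carry over. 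This interaction --- not the high-high $\partial v$ case you identify as the main obstacle --- is the actual obstruction to pushing $\sigma$ beyond $\tfrac{99}{100}$.

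The paper's key new input is an \emph{improved} bound on $a_k$, obtained by dynamic separation: writing $S(v)(s)=S(Q)+\int_s^\infty (DS)(v)\cdot\partial_s v\,ds'$ and feeding in the $\sigma\in[0,\tfrac{99}{100}]$ estimates for $\partial_s v$ (themselves derived from Proposition~\ref{4.2} and the heat equation) yields $2^{\sigma k}a_k\lesssim (1+2^{2k}s)^{-30}\gamma_k(\sigma)\gamma_k$ for $\sigma\in[0,\tfrac{99}{100}]$ (this is (\ref{Niubility})). Term~3 is then controlled by placing $0$ and $\tfrac{3}{8}$ on the two low-frequency $\partial v$ factors and the remaining $\sigma-\tfrac{3}{8}$ on $a_k$ via (\ref{Niubility}); this is where the quadratic envelope $\gamma_k(\sigma-\tfrac{3}{8})\gamma_k(\tfrac{3}{8})$ actually originates. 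Incidentally, your own split-$\sigma$ idea, if applied to the two low-frequency $\partial v$ factors in term~3 (weights $\tfrac{3}{8}$ and $\sigma-\tfrac{3}{8}$, both in $[0,\tfrac{99}{100}]$) together with the trivial bound (\ref{GHv}) on $a_k$, would also close --- but you did not propose this; you deferred entirely to the Proposition~\ref{4.2} argument, which fails here.
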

\begin{proof}
The key point is to improve the bounds of $\{a_k\}$ defined by (\ref{N08}). For this, we use dynamic separation again. One has
 \begin{align}\label{Uy}
S^{l}_{ij}(v)(s)=S^{l}_{ij}(Q)-\int^{\infty}_s(DS^{l}_{ij})(v)\cdot \partial_s vds'.
\end{align}
By Proposition \ref{4.2}, for $\sigma\in[0,\frac{99}{100}]$ and any $k\in\Bbb Z$,  we get
 \begin{align*}
2^{k+\sigma k}\|P_k\Delta v\|_{L^{\infty}_tL^2_x}\lesssim (2^{2k}s+1)^{-31}2^{2k}\gamma_k(\sigma).
\end{align*}
And repeating the proof of  Proposition \ref{4.2} gives
 \begin{align*}
\sum_{a=1,2}2^{k+\sigma k}\|S^{l}_{ij}(\partial_a v^i,\partial_a v^j)\|_{L^{\infty}_tL^2_x}\lesssim 2^{2k}\sum_{k_1\ge k}(2^{2k_1}s+1)^{-31}\gamma_{k_1}\gamma_{k_1}(\sigma).
\end{align*}
Thus, given $s\in [2^{2k_0-1}, 2^{2k_0+1})$, by the heat flow equation we get for all $k\in\Bbb Z$ and $\sigma\in[0,\frac{99}{100}]$
 \begin{align}
 2^{k+\sigma k}\|P_k\partial_s v\|_{L^{\infty}_tL^2_x}
&\lesssim (2^{2k}s+1)^{-31}2^{2k}\gamma_k(\sigma)+ \sum_{k_1\ge k}(2^{2k}s+1)^{-31}2^{2k}\gamma_{k_1}\gamma_{k_1}(\sigma)\label{Gan}\\
&\lesssim (2^{2k}s+1)^{-31}2^{2k}\gamma_k(\sigma)+ 1_{k+k_0\ge 0}(2^{2k}s+1)^{-31}2^{2k}\gamma_{k}\gamma_{k}(\sigma)\nonumber\\
&+2^{2k}1_{k+k_0\le 0}\sum_{k\le l\le -k_0}\gamma_{l}\gamma_{l}(\sigma).\nonumber
\end{align}
Recall the  bound
\begin{align}\label{GbM1}
2^{k}\|P_k[(DS)(v)]\|_{L^{\infty}_tL^2_x}\lesssim  \epsilon_1(2^{2k}s+1)^{-j}
\end{align}
for all $j\in \Bbb Z_+$ and $k\in \Bbb Z$. Then for  $s\in[2^{2k_0-1},2^{2k_0+1})$, repeating bilinear arguments, (\ref{Uy}) shows that if $k+k_0\ge 0$ then
 \begin{align}
\left\|P_{k}[S^{l}_{ij}(v)(s)]\right\|_{L^{\infty}_tL^2_x}
&\lesssim \int^{\infty}_s\sum_{ |k_1-k|\le 4}\|P_{\le k-4}(DS(v))\|_{L^{\infty}_{t,x}} \|P_{k_1}\partial_s v\|_{L^{\infty}_tL^2_x}ds'\nonumber\\
&+\int^{\infty}_s2^{k}\sum_{ |k_1-k_2|\le 8,k_1,k_2\ge k-4} \|P_{ k_2}(DS(v))\|_{L^{\infty}_{t}L^2_x}\|P_{k_1}\partial_s v\|_{L^{\infty}_tL^2_x}ds'
\nonumber\\
&+\int^{\infty}_s\sum_{ |k_2-k|\le 4, k_1\le k-4}2^{k_1}\|P_{k_2}(DS(v))\|_{L^{\infty}_{t}L^2_x} \|P_{k_1}\partial_s v\|_{L^{\infty}_tL^2_x}ds'\nonumber\\
&\lesssim 2^{-\sigma k-k} (2^{2k+2k_0}+1)^{-31}2^{2k+2k_0}\gamma_k(\sigma)\gamma_k,\label{U1y}
\end{align}
provided $\sigma\in [0,\frac{99}{100}]$, where we applied  (\ref{GbM1}), (\ref{Gan}) in the last line.
Moreover, for any $\sigma\in[0,\frac{99}{100}]$, $k_0\in\Bbb Z$, and $s\in[2^{2k_0-1},2^{2k_0+1}]$,  in the case  $k+k_0\le 0$ one has
 \begin{align}
 \left\|P_{k}[S^{l}_{ij}(v)(s)]\right\|_{L^{\infty}_tL^2_x}\lesssim \sum_{k_0\le j\le -k}2^{-\sigma k} 2^{j+2\delta|k+j|}\gamma_{k}(\sigma)\gamma_{k}\lesssim  2^{-\sigma k-k} \gamma_k(\sigma)\gamma_k. \label{U2y}
\end{align}
Thus (\ref{U2y}), (\ref{U1y}) yield the following  bounds for $\{a_k\}$:
 \begin{align}\label{Niubility}
{2^{\sigma k}}{a_k} \lesssim  {(1 + {2^{2k}}s)^{ - 30}}{\gamma _k}(\sigma ){\gamma _k},
\end{align}
provided that $\sigma\in[0,\frac{99}{100}]$. Now define the function $B_{2,\sigma}(S)$ for a given $\sigma\in[\frac{99}{100},\frac{5}{4}]$ by
 \begin{align*}
B_{2,\sigma}(S)=\sup_{k\in\Bbb Z,s\in[0,S)} \left(\gamma^{(1)}_k(\sigma)\right)^{-1}2^{\sigma k}(1+s2^{2k})^{30}2^{k}\|P_{k }v\|_{L^{\infty}_tL^2_{x}},
\end{align*}
where we denote
 \begin{align*}
\gamma^{(1)}_k(\sigma):=\left\{
                          \begin{array}{ll}
                            \gamma_k(\sigma), & \hbox{ }\sigma\in[0,\frac{99}{100}] \\
                            \gamma_k( \sigma)+\gamma_k(\sigma-\frac{3}{8})\gamma_{k}(\frac{3}{8}), & \hbox{ }\sigma\in(\frac{99}{100},\frac{5}{4}]
                          \end{array}
                        \right..
\end{align*}
Moreover, by Lemma \ref{KJm1} and the fact that  $\{\gamma^{(1)}_k(\sigma)\}$ is a frequency envelope of order $2\delta$, it is clear that  $B_{2,\sigma}:[0,\infty)\to \Bbb R^+$ is well-defined and continuous in $S\ge 0$ with
$\mathop {\lim }\limits_{S \to 0}B_{2,\sigma}(S)=1$.
Then by trilinear Littlewood-Paley decomposition (see (\ref{c6FC}) in Lemma \ref{A1}), the definition of $B_{2,\sigma}$ and slow variation of envelopes, we get for $s\in[0,S], \sigma\in[\frac{99}{100},\frac{5}{4}]$ that
 \begin{align}
&2^{k}\left\|P_{k }[S^{l}_{ij}(v)\partial_av^{i}\partial_a v^{j}]\right\|_{L^{\infty}_tL^2_x}\label{CbmXZ1}\\
&\lesssim B_{2,\sigma}B_{1,0}(1+s2^{2k})^{-30}2^{-\sigma k} \gamma^{(1)}_k(\sigma)\sum_{k_1\le k}2^{k_1+k}\gamma_{k_1}\\
&+ B_{2,\sigma}B_{1,0}\sum_{k_2\ge k}2^{2k-\sigma k_2}(1+s2^{2k_2})^{-60}\gamma_{k_2}\gamma^{(1)}_{k_2}(\sigma)\nonumber\\
&+B_{1,0}B_{1,\frac{3}{8}} a_{k} (\sum_{k_1\le k}2^{k_1}(1+s2^{2k_1})^{-30}\gamma_{k_1}) (\sum_{k_1\le k}2^{k_1-\frac{3}{8}\sigma k_1}(1+s2^{2k_1})^{-30}\gamma_{k_1}(\frac{3}{8})) \nonumber\\
&+ B_{2,\sigma}B_{1,0}\sum_{k_2\ge k}2^{2k}(1+s2^{2k_2})^{-30}2^{-\sigma k_2}a_{k_2}\gamma^{(1)}_{k_2}(\sigma)\gamma_{k_2}\nonumber\\
&\lesssim   B_{2,\sigma}B_{1,0} \sum_{k_2\ge k}2^{2k-\sigma k_2}(1+s2^{2k_2})^{-30}\gamma_{k_2}\gamma^{(1)}_{k_2}(\sigma)\nonumber\\
&+ B_{2,\sigma}B_{1,0}\sum_{k_2\ge k}2^{-\sigma k_2}2^{2k}(1+s2^{2k_2})^{-30}a_{k_2}\gamma_{k_2}\gamma^{(1)}_{k_2}(\sigma)+ a_{k}2^{-\frac{3}{8}\sigma k}B_{1,0}B_{1,\frac{3}{8}}2^{2k} \gamma _{k}\gamma_{k}(\frac{3}{8}). \label{CmXZ1}
\end{align}
Then applying the trivial bound (\ref{GHv}) to the RHS of (\ref{CmXZ1}) except the last term and applying (\ref{Niubility}) to $\{a_k\}$ in the last term, we get for all $\sigma\in (\frac{99}{100},\frac{5}{4}]$  that
 \begin{align*}
2^{k+\sigma k}\left\|P_{k }[S^{l}_{ij}(v)\partial_av^{i}\partial_a v^{j}]\right\|_{L^{\infty}_tL^2_x}&\lesssim B_{1,0}B_{2,\sigma} 2^{2k}\sum_{k_2\ge k} (1+s2^{2k_2})^{-30}\gamma_{k_2}\gamma^{(1)}_{k_2}(\sigma)\\
&+B_{1,0}B_{1,\frac{3}{8}}2^{2k} 2^{-\sigma k} (1+s2^{2k})^{-30}\gamma_{k}(\sigma -\frac{3}{8})\gamma_{k}(\frac{3}{8})\gamma_k1_{k+k_0\ge 0}
\\
&+B_{1,0}B_{1,\frac{3}{8}}2^{2k} 2^{-\sigma k}2^{2\delta|k+k_0|}\gamma_{k}(\sigma -\frac{3}{8})\gamma_{k}(\frac{3}{8})\gamma_k1_{k+k_0\le 0}
\end{align*}
if $s\in[2^{2k_0-1},2^{2k_0+1})$.
Then using Duhamel principle, (\ref{KJn}) and the following inequality
 \begin{align*}
(1+2^{2k}s)^{30}e^{-2^{2k}s}\int^{s}_0e^{s'2^{2k}}(s'2^{2k})^{-\delta}1_{s'\le 2^{-2k}}ds'\lesssim 2^{-2k},
\end{align*}
we obtain
 \begin{align*}
2^{k+\sigma k}(1+2^{2k}s)^{30}\left\|P_{k }v\right\|_{L^{\infty}_tL^2_x}\lesssim
(1+\epsilon_1 B_{1,0}B_{1,\frac{3}{8}}+\epsilon_1 B_{2,\sigma}B_{1,0}) \gamma^{(1)}_{k}(\sigma).
\end{align*}
Since $B_{1,\tilde{\sigma}}\lesssim 1$ for $\tilde{\sigma}\in[0,\frac{99}{100}]$ has been proved in Proposition 3.2, we arrive at
\begin{align*}
B_{2,\sigma}\lesssim 1+\epsilon_1 B_{2,\sigma},\mbox{  }\forall \sigma\in(\frac{99}{100},\frac{5}{4}],
\end{align*}
which  shows $B_{2,\sigma}\lesssim 1$,
thus finishing our proof.
\end{proof}

We define the frequency envelope $\gamma^{(j)}_k(\sigma)$, $j=0,1$ by :
 \begin{align}\label{fd1}
  \gamma _k^{(0)}(\sigma ) &= {\gamma _k}(\sigma ),0 \leqslant \sigma  < \frac{{99}}
{{100}} \hfill \\
  \gamma _k^{(1)}(\sigma ) &= \left\{ \begin{gathered}
  \gamma _k^{({\text{0}})}(\sigma ),0 \leqslant \sigma  \le \frac{{99}}
{{100}} \hfill \\
 \gamma_k(\sigma)+ \gamma _k^{({\text{0}})}(\sigma  - \frac{3}
{8}){\gamma _k}(\frac{3}
{8}),\frac{{99}}
{{100}} <\sigma  \le \frac{{\text{5}}}
{4} \hfill \\
\end{gathered}  \right.
\end{align}
And the frequency envelopes $\gamma^{(j)}_k(\sigma)$, $j\ge 2$ are defined by induction:
 \begin{align}\label{fd2}
  \gamma _k^{(j)}(\sigma )=\left\{
                          \begin{array}{ll}
                           {\gamma _k}^{(j-1)}(\sigma ),&0 \leq  \sigma  \le  \frac{j+3}
{4}   \\
                          \gamma _k(\sigma ) +\gamma _k^{(j-1)}(\sigma-\frac{3}{8} ){\gamma _k}(\frac{3}
{8}),&\frac{j+3}{4} <\sigma  \le \frac{j+4}{4}
                          \end{array}
                        \right.
\end{align}
Define the sequence $\{\gamma^{(j)}_{k,s}(\sigma)\}_{k\in\Bbb Z}$ with $j\in\Bbb N$ by
 \begin{align}\label{fd3}
\gamma^{(j)}_{k,s}(\sigma)=\left\{
                     \begin{array}{ll}
                       2^{k+k_0}\gamma^{(j)}_{k}(\sigma)\gamma^{(j)}_{-k_0}(0), & \hbox{ }k+k_0\ge 0 \\
                      \sum^{-k_0}_{l=k}\gamma^{(j)}_{l}(\sigma)\gamma^{(j)}_{l}(0), & \hbox{ }k+k_0\le 0
                     \end{array}
                   \right.
\end{align}
for $s\in[2^{2k_0-1},2^{2k_0+1})$, $k,k_0\in\Bbb Z$.

We state the $j$-th time iteration in the the following proposition.
\begin{Proposition}($j$-th iteration)  \label{Jth}
Let $j\in \Bbb N, M\in\Bbb Z_+ $. Assume that $u\in \mathcal{H}_Q(T)$  satisfies (\ref{BOOT}) with $\epsilon_1$ sufficiently small depending on $j+M$.
Let $v(s,t,x)$ be the solution of heat flow (\ref{1OOT}) with initial data $u(t,x)$. Then for $\sigma\in[0,1+\frac{j}{4}]$ and all $s\ge 0$, $v$ satisfies
  \begin{align}\label{j4JHB}
\sup_{s\in[0,\infty)}(1+s2^{2k})^{M}2^{k+\sigma k}\|P_{k}v\|_{L^{\infty}_tL^2_{x}}\lesssim \gamma^{(j)}_k(\sigma).
\end{align}
\end{Proposition}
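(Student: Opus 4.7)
The proof will proceed by strong induction on $j$, with base cases $j=0$ and $j=1$ supplied by Propositions \ref{4.2} and \ref{uyhgvbL} respectively. Under the inductive hypothesis at all levels $\leq j-1$, only the range $\sigma\in(\frac{j+3}{4},\frac{j+4}{4}]$ requires new work, since on $[0,\frac{j+3}{4}]$ the envelope $\gamma^{(j)}_k(\sigma)$ coincides with $\gamma^{(j-1)}_k(\sigma)$ by definition.

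The strategy mirrors that of Proposition \ref{uyhgvbL}. The pivotal step is a sharpened frequency-localized bound for the coefficients $a_k$ from \eqref{N08}. Using the inductive hypothesis, the heat flow equation \eqref{1OOT}, and the bilinear decomposition that produced \eqref{Gan}, one obtains
\begin{align*}
2^{k+\sigma k}\|P_k\partial_s v\|_{L^\infty_t L^2_x}\lesssim (1+2^{2k}s)^{-M'}2^{2k}\gamma^{(j-1)}_k(\sigma)+(\text{high frequency tail})
\end{align*}
for every $\sigma\in[0,1+\frac{j-1}{4}]$, where $M'=M+C(j,M)$ is a buffer produced by running the inductive hypothesis with a larger decay exponent. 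Plugging this into the dynamical separation identity \eqref{Uy} and repeating the frequency analysis of \eqref{U1y}--\eqref{U2y} yields
\begin{align*}
2^{\sigma k}a_k\lesssim (1+2^{2k}s)^{-M'+1}\gamma^{(j-1)}_k(\sigma)\gamma_k
\end{align*}
uniformly in $\sigma\in[0,1+\frac{j-1}{4}]$, which is the desired upgrade of \eqref{Niubility}.

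With this estimate in hand, for $\sigma\in(\frac{j+3}{4},\frac{j+4}{4}]$ define the bootstrap function
\begin{align*}
B_{j+1,\sigma}(S):=\sup_{k\in\Bbb Z,\,s\in[0,S)}\bigl(\gamma^{(j)}_k(\sigma)\bigr)^{-1}2^{\sigma k}(1+s2^{2k})^{M}2^{k}\|P_k v\|_{L^\infty_t L^2_x},
\end{align*}
which is continuous in $S$ with $\lim_{S\to 0}B_{j+1,\sigma}(S)=1$ by Lemma \ref{KJm1} and the envelope property of $\gamma^{(j)}_k(\sigma)$. Expanding $P_k[S^l_{ij}(v)\partial_a v^i\partial_a v^j]$ by the trilinear Littlewood--Paley decomposition exactly as in \eqref{CbmXZ1}--\eqref{CmXZ1}, the pieces in which at most one factor carries the top frequency contribute either $\epsilon_1 B_{j+1,\sigma}(S)$ or previously controlled bootstrap constants. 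The critical piece is the high-low-low term in which $S^l_{ij}(v)$ occupies the top frequency; there one invokes the improved $a_k$ bound above at the reduced regularity $\sigma-\frac{3}{8}$, which is admissible because $\sigma-\frac{3}{8}\in(\frac{2j+3}{8},\frac{2j+5}{8}]\subset[0,1+\frac{j-1}{4}]$, and pairs it with the factor $\gamma_k(\frac{3}{8})$. This produces a contribution $\gamma^{(j-1)}_k(\sigma-\frac{3}{8})\gamma_k(\frac{3}{8})\gamma_k$ that is absorbed by $\gamma^{(j)}_k(\sigma)\gamma_k$ by the very definition of the new envelope. Duhamel's principle combined with \eqref{KJn} and the auxiliary $(1+2^{2k}s)^{30}$ inequality used after \eqref{CmXZ1} then yields $B_{j+1,\sigma}(S)\lesssim 1+\epsilon_1 B_{j+1,\sigma}(S)$, and choosing $\epsilon_1$ small (depending on $j+M$) closes the bootstrap.

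The principal technical obstacle is bookkeeping of the polynomial decay exponent $(1+2^{2k}s)^{-M}$: each iteration spends a bounded number of powers in both the dynamic-separation step and Duhamel's formula \eqref{KJn}, so the induction must be initialized with a buffer of roughly $M+Cj$ powers. Since the statement permits $\epsilon_1$ to shrink quantitatively with $j+M$, this loss is benign.
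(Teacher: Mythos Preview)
Your overall architecture matches the paper's: induction on $j$, a bootstrap quantity $B_{j+1,\sigma}$, dynamical separation to upgrade the coefficient bound $a_k$, and careful bookkeeping of the decay exponent. For small $j$ your argument goes through essentially as written.

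There is, however, a genuine gap in the step where you claim that ``repeating the frequency analysis of \eqref{U1y}--\eqref{U2y}'' yields
\[
2^{\sigma k}a_k\lesssim (1+2^{2k}s)^{-M'+1}\gamma^{(j-1)}_k(\sigma)\gamma_k
\qquad\text{for all }\sigma\in[0,1+\tfrac{j-1}{4}].
\]
The analysis in \eqref{U1y}--\eqref{U2y} uses only the \emph{rough} bound \eqref{GbM1} on $DS(v)$. In the High~$DS(v)$~$\times$~Low~$\partial_s v$ interaction (third line of \eqref{U1y}) the low–frequency sum is $\sum_{k_1\le k}2^{(2-\sigma)k_1}\gamma^{(j-1)}_{k_1}(\sigma)$, which ceases to converge (after slow variation) once $\sigma\gtrsim 2$. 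Since you eventually need this bound at $\sigma-\tfrac38\le\tfrac{2j+5}{8}$, the argument breaks as soon as $j\ge 6$. A single application of dynamical separation with the crude $DS$ bound is simply not enough for large $j$.

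The paper closes this gap by iterating the dynamical separation through higher derivatives of the second fundamental form: one starts from the trivial bound on $D^{j+1}S(v)$ and descends via identities $D^{a}S(v)=D^{a}S(Q)+\int_s^\infty (D^{a+1}S)(v)\cdot\partial_s v\,ds'$, at each stage feeding in the already–established $v$ bound with envelope $\gamma^{(j-a)}_k(\sigma)$ (claims (ii)--(iii) in the paper's proof). This produces a frequency–localized estimate $2^k\|P_k D^aS(v)\|\lesssim 2^{-\sigma k}\gamma^{(j-a)}_k(\sigma)$ rather than merely $\epsilon_1$, which is exactly what is needed to handle the High~$\times$~Low term for $\sigma$ beyond $2$. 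Your inductive hypothesis should therefore carry not only the bound on $v$ but the full chain of bounds on $D^aS(v)$, $0\le a\le j$; with that addition your outline becomes complete and coincides with the paper's.
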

\begin{proof}
Define intervals $\{\mathbb{I}_{l}\}^{\infty}_{l=0}$ by
  \begin{align*}
\mathbb{I}_{0}:=[0,\frac{99}{100}]; \mbox{ }\mathbb{I}_1=(\frac{99}{100},\frac{5}{4}];  \mbox{ }\mathbb{I}_{l}=(\frac{3+l}{4}, \frac{4+l}{4}], l\ge 2.
\end{align*}
Given $K\in\Bbb Z_+$, $\sigma\in \mathbb{I}_{l}$, $l\in\Bbb N$, we denote
\begin{align*}
B_{l+1,\sigma,K}(S):=\sup_{s\in[0,S),k\in\Bbb Z}\frac{1}{ \gamma^{(l)}_k(\sigma)}
(1+s2^{2k})^{K}2^{k+\sigma k}\|P_{k}v\|_{L^{\infty}_tL^2_{x}}.
\end{align*}
And let
\begin{align*}
\mathbb{B}_{l+1,K}(S):=\sup_{\sigma \in \bigcup\limits_{\ell\le l} {\mathbb I}_{\ell}} B_{l+1,\sigma, K}(S).
\end{align*}
(In this notation,  Proposition \ref{4.2} and  Proposition  \ref{uyhgvbL} have proved $\mathbb{B}_{2,30}(S) \lesssim 1$.)

Moreover, the argument of Proposition  \ref{uyhgvbL}  indeed shows:

(i) For all $K_0\ge 2, j\in\Bbb N$, $0\le a\le j+1$,
 \begin{align*}
 {2^{k}\left\| {{P_k}[D^{a}S(v)]} \right\|_{L_t^\infty L_x^2}} \lesssim C_{K_0,j}\epsilon{(1 + {2^{2k}}s)^{ -K_0-(j+1)}}.
 \end{align*}
(ii) For all $K_0\ge 2, j\in\Bbb N$,  if
 \begin{align*}
  \left\{
 \begin{array}{lll}
  {2^{k}\left\| {{P_k}[{D^{j+1}}S (v)]} \right\|_{L_t^\infty L_x^2}} \lesssim  \epsilon{(1 + {2^{2k}}s)^{ -K_0-j-1 }}\\
  {\left\| {{P_k}v} \right\|_{L_t^\infty L_x^2}} \lesssim {2^{ - \sigma k}}\gamma^{(0)}_k(\sigma ){(1 + {2^{2k}}s)^{ -K_0-j-1 }} \\
 \end{array}
\right.,
 \end{align*}
then
 \begin{align*}
  {2^{k}\left\| {{P_k}[D^{j}S (v)]} \right\|_{L_t^\infty L_x^2}} \lesssim {2^{ - \sigma k}}\gamma _k^{(0)}(\sigma ){(1 + {2^{2k}}s)^{ -K_0-j}}.
 \end{align*}
  where the implicit constant in the conclusion is of the form $C(1+C^2_1+C^2_2)$ if we denote $C_1,C_2$ to be the implicit constants in the conditions of (ii). Here, $C$ is universal and $C_1,C_2$ may depend on $j,K_0$.\\
(iii) For all $K_0\ge 2, j\in\Bbb N$, $0\le a\le j+1$, if
 \begin{align*}
 &\left\{
  \begin{array}{ll}
      {2^{k}\left\| {{P_k}[D^{a+1}S(v)]} \right\|_{L_t^\infty L_x^2}} \lesssim {2^{ - \sigma k}}\gamma^{(j-(a+1))}_k(\sigma )(1 + {2^{2k}}s)^{-K_0-(a+1)},   \hbox{ } \\
    {\left\| {{P_k}v} \right\|_{L_t^\infty L_x^2}}  \lesssim  {2^{ - \sigma k}}\gamma^{(j-a)} _k(\sigma ){(1 + {2^{2k}}s)^{-K_0-(a+1)}} \hbox{ }
  \end{array}
\right.
\end{align*}
  then
 \begin{align*}
  {2^{k}\left\| {{P_k}[D^{a}S(v)]} \right\|_{L_t^\infty L_x^2}} \lesssim {2^{ - \sigma k}}\gamma^{(j-a)}_k(\sigma ){(1 + {2^{2k}}s)^{-K_0-a}},
  \end{align*}
  where the implicit constant in the conclusion is of the form $C(1+C^2_1+C^2_2)$ if we denote $C_1,C_2$ to be the implicit constants in the conditions of (iii). Here, $C$ is universal and $C_1,C_2$ may depend on $j,K_0$.\\
(iv) For any $K\ge 2, j\ge 1$, $1\le a\le j+1$, $\sigma\in{\Bbb I}_{a}$, if
 \begin{align*}
 \left\{
\begin{array}{ll}
  {2^{k}\left\| {{P_k}[S (v)]} \right\|_{L_t^\infty L_x^2}} \le C_{K} {2^{ - \sigma k}}\gamma^{(a - 1)} _k(\sigma ){(1 + {2^{2k}}s)^{ -K}} \\
 { \left\| {{P_k}v} \right\|_{L_t^\infty L_x^2}} \le  B_{a+1,\sigma,K}{2^{ - \sigma k}}\gamma _k^{(a)}(\sigma ){(1 + {2^{2k}}s)^{ -K}}  \\
  \end{array}
\right.,
\end{align*}
then for all  $S\in[0,\infty)$ there holds
 \begin{align*}
B_{a+1,\sigma,K}(S) \le C_* ( 1+  \epsilon_1{\Bbb B}_{a,K}B_{a+1,\sigma,K}(S)+C_{K}\sum^{a}_{l=1}\sup_{S\in[0,\infty)}{\mathbb{B}}^2_{l,K+l+1}(S)).
\end{align*}
where $C_*$ depends only on $d$ and emerges from the Littlewood-Paley trilinear decomposition. Then our proposition follows by iteration. To be concrete, we make several remarks.  First, in order to get the $M$-power decay in (\ref{j4JHB}), it suffices to set $K_0=M+4$ and the top involved derivative order is $D^{j+1}S$. Second, let us describe the iteration in a clearer way: In the first step, one verifies
 \begin{align}\label{FvBn}
\sup_{S\in[0,\infty)}B_{1,K_0+j+1}(S) \le C_{K_0,j},
\end{align}
i.e. the second conditions in (ii).
This was presented in Proposition \ref{4.2}. (We emphasize that in this step $\epsilon_1$ shall be sufficiently small depending on $K_0+j$.) In the second step, one verifies $
\sup_{S\in[0,\infty)}B_{2,K_0+j}(S) \le C_{K_0,j}$, and in the $a$-th step  one verifies $
\sup_{S\in[0,\infty)}B_{a,K_0+j+2-a}(S) \le C_{K_0,j}$. This is presented as (iii) and (iv). Thus in the $j$-th step, we get (\ref{j4JHB}).
\end{proof}

\subsection{Rough dynamical separation} \label{VNM}

Recall the notations $\psi^{\alpha}_{i}=\langle \partial_iv,e_{\alpha}\rangle$, $\psi^{\overline{\alpha}}_{i}=\langle \partial_iv,J e_{\alpha}\rangle$, $\alpha=1,...,n, i=0,...,2,3$, and $\phi^{\alpha}_{i}=\psi^{\alpha}_i+\sqrt{-1}\psi^{\overline{\alpha}}_i$. Here, $i=0$ refers to the $t$ variable and $i=3$ refers to the $s$ variable.

We aim to bound connection coefficients in the localized frequency  spaces.
As a preparation, we first derive a suitable form of connection coefficients.  By definitions, we see
\begin{align*}
&{\bf R}( \mathbf{E}\phi_i,{\bf E}\phi_s)={\bf R}\left((\Re \phi^\alpha_i)e_{\alpha}+(\Im \phi^\alpha_i)e_{\overline{\alpha}}, (\Re \phi^\alpha_s)e_{\beta}+(\Im \phi^\alpha_s)e_{\overline{\beta}}\right)\\
&=(\phi^\alpha_i\wedge \phi^{\beta}_s){\bf R}(e_{\alpha},e_{\overline{\beta}})
+(\phi^\alpha_i\cdot \phi^{\beta}_s){\bf R}(e_{\alpha},e_{{\beta}}).
\end{align*}
where we denote $z_1\wedge z_2=-\Im(z_1\overline{z_2})$, $z_1\cdot z_2=\Re z_1\Re z_2+\Im z_1\Im {z_2}$ for complex numbers $z_1,z_2$.
Thus schematically under the frame ${\bf E}=\{e_{\alpha},e_{\overline{\alpha}}\}^{n}_{\alpha=1}$ we can write
\begin{align}\label{PSD2}
\left\{
  \begin{array}{ll}
    \left(\Re[A_i]\right)^{\gamma}_{\theta}=\sum\int^{\infty}_s(\phi^\alpha_i\diamond\phi^{\beta}_s)\langle{\bf R}(e_{\alpha},e_{\beta,\overline{\beta}}) (e_{\gamma}),e_{\theta}\rangle ds', & \hbox{ } \\
    \left(\Im[A_i]\right)^{\gamma}_{\overline{\theta}}=\sum\int^{\infty}_s(\phi^\alpha_i\diamond\phi^{\beta}_s)\langle{\bf R}(e_{\alpha},e_{\beta,\overline{\beta}}) (e_{\gamma}),e_{\overline{\theta}}\rangle ds', & \hbox{ }
  \end{array}
\right.
\end{align}
where $\diamond=``\wedge"$ when $ e_{\beta,\overline{\beta}}=e_{\overline{\beta}}$, and $\diamond= ``\cdot"$ when $ e_{\beta,\overline{\beta}}=e_{ {\beta}}$.
For simplicity, we schematically write
\begin{align*}
A_i(s)= \int^{\infty}_s\left(\phi_i\diamond\phi_s\right)\langle{\bf R}(e_{j_0},e_{j_1}) (e_{j_2}),e_{j_3}\rangle ds',
\end{align*}
where $\{j_c\}^{3}_{c=0}$ run in $\{1,...,2n\}$, and $i$ runs in $\{0,1,2\}$. Recall also that $\phi_s=\sum^{2}_{l=1}D_l\phi_l$.

With abuse of notations, denote
\begin{align}
\mathcal{G}(s)= \langle {\bf R}(e_{j_0},e_{j_1})e_{j_2},e_{j_3}\rangle(s).
\end{align}
for any given $j_0,...,j_3\in\{1,2,...,2n\}$. We expand $\mathcal{G}$ as
\begin{align*}
 \langle {\bf R}(e_{j_0},e_{j_1})e_{j_2},e_{j_3}\rangle(s)&=\lim_{s\to\infty} \langle {\bf R}(e_{j_0},e_{j_1})e_{j_2},e_{j_3}\rangle-\int^{\infty}_s\partial_s\langle {\bf R}(e_{j_0},e_{j_1})e_{j_2},e_{j_3}\rangle ds'\\
&=\Gamma^{\infty}-\int^{\infty}_s\psi^{l}_s(\widetilde{\nabla} {\bf R})(e_{l};e_{j_0},...,e_{j_3})ds',
\end{align*}
where  $\Gamma^{\infty}$ denotes the limit part which is constant, and we used the identity $\nabla_s{e}_p=0$ for all $p=1,...,2n$ in the last line. Here, we view  $\bf R$ as a type $(0,4)$ tensor.

With the above notations, we write
\begin{align}\label{PSD}
A_i(s)=\sum_{j_0,j_1,j_2,j_3}\int^{\infty}_s\left(\phi_i\diamond\phi_s\right)\mathcal{G}ds',
\end{align}
and $\mathcal{G}$ is decomposed as
\begin{align*}
\mathcal{G}=\Gamma^{\infty}-\int^{\infty}_s\psi^{l}_s(\widetilde{\nabla} {\bf R})(e_{l};e_{j_0},...,e_{j_3})ds'.
\end{align*}
Of course, one can perform this separation for any time as desired.
Denote
\begin{align*}
\mathcal{G}^{(j)}= (\widetilde{\nabla}^{j}{\bf R})(\underbrace{e,...,e}_{j};e_{j_0},...,e_{j_3}),
\end{align*}
and
\begin{align*}
\Gamma^{\infty,(j)}=\lim\limits_{s\to\infty}\mathcal{G}^{(j)}(s).
\end{align*}
Then we can schematically write
\begin{align*}
\mathcal{G}=\Gamma^{\infty}-\int^{\infty}_s\psi_s(s_1)ds_1\left(\Gamma^{\infty,(1)}-\int^{\infty}_{s_1}\psi_s(s_2)ds_2(\Gamma^{\infty,(2)}+....)\right)
\end{align*}
For simplicity we also denote
\begin{align*}
\widetilde{\mathcal{G}} =\mathcal{G} -\Gamma^{\infty}, \mbox{ }\widetilde{\mathcal{G}}^{(j)}=\mathcal{G}^{(j)}-\Gamma^{\infty,(j)}.
\end{align*}

\subsection{ Intrinsic v.s. Extrinsic formulations in localized frequency pieces }

\begin{Proposition}\label{7.2}
Let $d=2$. And let $u \in\mathcal{H}_{Q}(T)$ satisfy
\begin{align}\label{qXaC}
\|\partial_x u \|_{L^{\infty}_tL^2_x}= \epsilon_1\ll 1.
\end{align}
Here,  we do not require $u$ to solve SMF.
Denote $v(s,t,x)$ the solution to heat flow with data $u(t,x)$, and denote $\{\phi_i\}$ the corresponding  differential fields under the caloric gauge.
Assume that $\{\eta_{k}(\sigma)\}$ is a frequency envelope of order $\delta$ such that for all $i=1,2$, $k\in\Bbb Z$,
\begin{align}\label{NMl0mnm}
2^{\sigma k}\|\phi_i(\upharpoonright_{s=0})\|_{L^{\infty}_tL^2_x}\le \eta_k(\sigma).
\end{align}
Then we have
\begin{align}
\gamma_k(\sigma)&\lesssim \eta_{k}(\sigma)\label{hbyu77}\\
(1+s2^{2k})^{30}2^{\sigma k}\|P_kA_i\|_{L^{\infty}_tL^2_x}&\lesssim \eta^{(0)}_{k,s}(\sigma).\label{shujiu1}
\end{align}
for any $\sigma\in[0,\frac{99}{100}]$,  $k\in\Bbb Z$.
Furthermore, assume that for  $\sigma\in [0,\frac{5}{4}]$, $\{\eta_{k}(\sigma)\}$ is a frequency envelope of order $\frac{1}{2}\delta$ such that for all $i=1,2$, $k\in\Bbb Z$,
(\ref{NMl0mnm}) holds.
Then for any $\sigma\in[0,\frac{5}{4}]$, $k\in\Bbb Z$, one has
\begin{align}
\gamma^{(1)}_k(\sigma)&\lesssim \eta^{(1)}_{k}(\sigma)\label{hbyu78}\\
(1+s2^{2k})^{29}2^{\sigma k}\|P_kA_i\|_{L^{\infty}_tL^2_x}&\lesssim \eta^{(1)}_{k,s}(\sigma).\label{shujiu2}
\end{align}
\end{Proposition}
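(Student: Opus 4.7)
The proof follows the same bootstrap-plus-iteration philosophy as Propositions 3.2 and 3.3, but now pivots on the intrinsic data $\phi_i\upharpoonright_{s=0}$ rather than the extrinsic map $u$. My plan is to simultaneously propagate frequency-localized control of $\phi_i$ along the heat direction via the parabolic equation (\ref{heat2}), and extract bounds on the connection coefficients $A_i$ via the caloric-gauge representation (\ref{PSD}), then feed the resulting bounds on $\partial_x u$ back into Propositions 3.2/3.4 to control the extrinsic map $v$ itself.

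For (\ref{hbyu77}), i.e.\ $\gamma_k(\sigma)\lesssim \eta_k(\sigma)$, I would exploit the pointwise identity $\partial_i u = \sum_\alpha (\psi^\alpha_i e_\alpha + \psi^{\bar\alpha}_i J e_\alpha)$ at $s=0$ and perform a trilinear Littlewood--Paley decomposition. The low-frequency part of the frame is absorbed using the uniform bound $\|e_\alpha\|_{L^\infty_{t,x}}\lesssim 1$, while its high-frequency part is controlled by a further dynamical separation (\ref{PSD2}) in the heat direction, which expresses $\partial_x e_\alpha$ as an $s'$-integral of a curvature term paired with $\phi_s$, reducing everything bilinearly to $\phi_j$ and $A_j$. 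The bilinear pieces carry an $\epsilon_1$-smallness and can be absorbed in a standard bootstrap. Once $\gamma_k(\sigma)\lesssim \eta_k(\sigma)$ is in hand, Proposition 3.4 gives the frequency-localized $(1+s2^{2k})^{-M}$ decay on $v$, $\partial^j_x v$, and on the moving-frame and curvature quantities built from $v$, that drives the remainder of the argument.

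For (\ref{shujiu1}) I would use the caloric-gauge formula
\begin{equation*}
A_i(s)=\int^{\infty}_s(\phi_i\diamond\phi_s)\,\mathcal{G}\,ds', \qquad \mathcal{G}=\Gamma^{\infty}+\widetilde{\mathcal{G}},
\end{equation*}
from Subsection 3.4. The constant piece $\Gamma^{\infty}$ contributes a bilinear $s'$-integral of $\phi_i\diamond \phi_s$; substituting $\phi_s=\sum_j(\partial_j+A_j)\phi_j$ and applying the trilinear Littlewood--Paley decomposition reduces matters to the parabolically propagated frequency-localized bounds on $\phi_j$, together with a bootstrap absorption of the $A_j$ factor using that $\|P_k A_j\|$ itself carries an $\epsilon_1$-smallness through (\ref{PSD}). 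The remainder $\widetilde{\mathcal{G}}$ is the $s'$-integral $\int_{s'}^{\infty}\psi_s\cdot(\widetilde{\nabla}{\bf R})(\ldots)\,ds''$, which inherits an extra $\epsilon_1$ and the strong parabolic decay from the curvature iteration of Subsection 3.4, so its contribution to $\|P_k A_i\|_{L^\infty_t L^2_x}$ is strictly subcritical. The $(1+s2^{2k})^{30}$ weight is then propagated via the time-integral bound (\ref{KJn}).

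The main obstacle, and the reason the bootstrap does not close in one step, is the ${\rm High}\times{\rm Low}\to{\rm High}$ interaction in $(\phi_i\diamond \phi_s)\mathcal{G}$ at the output dyadic frequency $k$: the frame-curvature factor $\mathcal{G}$ may concentrate at frequency $k$ while $\phi_i,\phi_s$ sit at lower frequencies. This is precisely the situation the dynamical separation is designed to handle: removing the constant limit $\Gamma^{\infty}$ eliminates the potentially large piece of $\mathcal{G}$, and the remainder $\widetilde{\mathcal{G}}$ carries a manifest $\epsilon_1$-smallness together with parabolic decay in $s'$, so its high-frequency contribution can be absorbed after the trilinear Littlewood--Paley decomposition. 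For the extended range $\sigma\in(\tfrac{99}{100},\tfrac{5}{4}]$, the very same scheme, now combined with one iteration as in Proposition 3.3, produces an additional bilinear $\eta_k(\sigma-\tfrac{3}{8})\eta_k(\tfrac{3}{8})$ contribution mirroring the $\gamma_k^{(1)}$ structure; this yields precisely (\ref{hbyu78})--(\ref{shujiu2}), and also explains why the envelope order must be relaxed from $\delta$ to $\tfrac{\delta}{2}$ to absorb the logarithmic losses from the trilinear sums after iteration.
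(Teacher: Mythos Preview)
Your overall strategy is sound and the ingredients you name are the right ones, but the ordering of the argument differs from the paper's in a way that matters for economy. You propose to propagate $\phi_i$ along the heat direction via the intrinsic equation (\ref{heat2}) and run a coupled bootstrap with $A_i$, then at the end feed the bound on $\partial_x u$ into Propositions~\ref{4.2}/\ref{Jth}. The paper proceeds in the opposite direction: it first establishes $\gamma_k(\sigma)\lesssim\eta_k(\sigma)$ at $s=0$ using only the crude Lemma~\ref{kuai} bound $\|\{(1+s2^{2k})^M\nu_k\}\|_{\ell^2}\lesssim\epsilon_1$ on the embedded frame $d\mathcal P(e_l)-\chi^\infty_l$ (no dynamical separation, no bootstrap); then invokes Proposition~\ref{4.2} to obtain the full $(1+s2^{2k})^{-31}$ decay of $P_kv$; and only then converts back to $\phi_i(s)$, $\phi_s(s)$ for all $s\ge0$ via the reverse identity $\psi^l_i=d\mathcal P(e_l)\cdot\partial_iv$. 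This decouples the steps completely: no simultaneous bootstrap on $\phi_i$ and $A_i$ is needed in $L^\infty_tL^2_x$, because the already-proven extrinsic estimate does the parabolic propagation for you. Your route would work, but it essentially reproves inside this proposition a weaker version of what Section~4 later does in $F_k$.

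A related point: your emphasis on the ${\rm High}\times{\rm Low}\to{\rm High}$ interaction of $(\phi_i\diamond\phi_s)\mathcal G$ as the ``main obstacle'' requiring dynamical separation of $\mathcal G$ is more germane to the $F_k\cap S^{1/2}_k$ analysis of Lemma~\ref{connection} than to Proposition~\ref{7.2}. In the $L^\infty_tL^2_x$ setting with $\sigma\le\tfrac{99}{100}$, the rough bound $2^k\|P_k\widetilde{\mathcal G}\|_{L^\infty_tL^2_x}\lesssim_{M}\epsilon_1(1+s2^{2k})^{-M}$ from direct differentiation (Lemma~\ref{kuai}) already handles every interaction; the refined dynamical separation of $\mathcal G$ is only needed in Step~2.2 for $\sigma\in(\tfrac{99}{100},\tfrac54]$, exactly as you anticipate for the $\eta_k^{(1)}$ structure. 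So your description is correct for the extended range but overcomplicates the base case.
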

\begin{proof}
{\bf Step 1.1.  $\sigma\in[0,\frac{99}{100}]$.}
Let $\mathcal{P}:\mathcal{N}\to \Bbb R^N$ be the isometric embedding. By definition, we see
\begin{align}\label{6.5a}
\partial_iv=\sum^{2n}_{l=1}\psi^{l} d\mathcal{P} (e_l)=\sum^{2n}_{l=1}\psi^{l}_i\chi^{\infty}_l +\sum^{2n}_{l=1}\psi^{l}_i\left( d\mathcal{P} (e_l)-\chi^{\infty}_l\right),
\end{align}
where $\{\chi^{\infty}_l\}$ are the corresponding limit of $(d\mathcal{P})(e_l)$ as $s\to\infty$ which are constant vectors belonging to $\Bbb R^N$. Denote
\begin{align}\label{jnb9va}
\omega_k(s)=\sum_{|k-k'|\le 20}\|P_{k'}\psi_i(s)\|_{L^{\infty}_tL^2_x}, \mbox{  }\nu_k(s)=\sum_{|k-k'|\le 20}2^{k'}\|P_{k'}\left( d\mathcal{P} (e_l)-\chi^{\infty}_l\right)\|_{L^{\infty}_tL^2_x}.
\end{align}
Then we see by Lemma \ref{kuai} that
\begin{align*}
\|\{\nu_k\}\|_{\ell^2}&\lesssim  \|\partial_i\left( d\mathcal{P} (e_l)-\chi^{\infty}_l\right)\|_{L^2_x}\lesssim  \|\partial_i v\|_{L^2_x}+\|A_i\|_{L^2_x}\lesssim \epsilon_1.
\end{align*}
Moreover, direct calculations give the inequality
\begin{align}
\|\partial^{L}_x\left((d\mathcal{P})(e_l)-\chi^{\infty}_l\right)\|_{L^{\infty}_tL^2_x}&\lesssim \sum_{0\le p, q\le L}\sum_{\mathcal{A}}|\partial^{\alpha_1}_x\phi_x|^{l_1}....|\partial^{\alpha_p}_x \phi_x|^{l_p}
|\partial^{\beta_1}_xA|^{n_1}....|\partial^{\beta_q}_x A|^{n_q},
\end{align}
where  $\mathcal{A}$ is the set of nonnegative indexes $l_1,...,n_q\in\Bbb Z$ and $(\alpha_1,...,\beta_q)\in \Bbb Z^2\times...\times\Bbb Z^2$  which satisfy
\begin{align*}
l_1(|\alpha_1|+1)+...+l_p(|\alpha_p|+1)+n_1(|\beta_1|+1)+...+n_{q}(|\beta_{q}|+1)=L.
\end{align*}
Suppose $l_1\ge 1$, by H\"older and Lemma \ref{kuai}, we get
\begin{align*}
&\|\partial^{L}_x\left((d\mathcal{P})(e_l)-\chi^{\infty}_l\right)\|_{L^2_x}\\
&\lesssim \epsilon_1 \sum s^{-\frac{\alpha_1}{2}}s^{-(l_1-1)\frac{\alpha_1+1}{2}}s^{-\frac{l_2(\alpha_2+1)}{2}-...- \frac{l_p(\alpha_p+1)}{2}}
s^{-\frac{(|\beta_1|+1)n_1}{2}-...-\frac{(|\beta_q|+1){n_q}}{2}}\\
&\lesssim \epsilon_1 s^{-\frac{L-1}{2}}.
\end{align*}
Suppose that $n_1\ge 1$, then we also obtain the same bound as above. Thus
we arrive at
\begin{align}\label{lkmna}
\|\{(1+s 2^{2k})^{M}\nu_k(s)\}\|_{\ell^2}&\lesssim_M \epsilon_1,
\end{align}
for $M\in\Bbb Z_+$.
Meanwhile, we see $\| (d\mathcal{P})(e_l)-\chi^{\infty}_l \|_{L^{\infty}}\lesssim 1$. Thus by [\cite{BIKT},(8.4)], we obtain
\begin{align}\label{KoJa}
2^{k}\|P_k(\partial_iv)\|_{L^{\infty}_tL^2_x}\lesssim 2^k\omega_k +\nu_k\sum_{k_1\le k}\omega_{k_1}2^{k_1}
+\sum_{k_1\ge k}2^{-2|k_1-k|}\omega_{k_1}2^{k_1}\nu_{k_1}.
\end{align}
Since when $s=0$, $\omega_k(0)\le 2^{-\sigma k}\eta_k(\sigma)$,  by slow variation of envelopes one deduces
\begin{align*}
&2^{k}\|P_k(\partial_iv)\|_{L^{\infty}_tL^2_x}\\
&\lesssim 2^k2^{-\sigma k}\eta_k(\sigma) +\nu_k \sum_{k_1\le k}2^{k_1+\delta|k-k_1|}2^{-\sigma k_1}\eta_{k_1}(\sigma)
+\sum_{k_1\ge k}2^{-2(k_1-k)}2^{\delta|k_1-k|} 2^{k_1}\nu_{k_1}2^{-\sigma k_1}\eta_{k_1}(\sigma)\\
&\lesssim 2^k 2^{-\sigma k}\eta_k(\sigma) (1 + \epsilon_1),
\end{align*}
for $\sigma\in[0,\frac{99}{100}]$, $s=0$.
Thus since $\{\eta_k(\sigma)\}$ is an envelope, by the definition of $\{\gamma_k(\sigma)\}$  we obtain
\begin{align}\label{kjln6a}
\gamma_k(\sigma)\lesssim  \eta_k(\sigma).
\end{align}
Hence, (\ref{hbyu77}) has been done for $\sigma\in[0,\frac{99}{100}]$.

{\bf Step 1.2.  $\sigma\in(\frac{99}{100},\frac{5}{4}]$.}
 Recall $\mathcal{P}:\mathcal{N}\hookrightarrow \Bbb R^N$ is the given isometric embedding.
Viewing  $d\mathcal{P}$ as a section of $T^*\mathcal{N} \otimes T\Bbb R^N$, then the connection on $\mathcal{N}$ induces a covariant derivative  ${\bf D}$ on the bundle $T^*\mathcal{N} \otimes T\Bbb R^N$.  We have the identity
\begin{align}\label{Hahaob2}
 d\mathcal{P}(e_l)-\chi^{\infty}_l=\int^{\infty}_s \psi^j_s\mathbf{D}d\mathcal{P}(e_j;e_l)ds'.
\end{align}
where we used the caloric condition $\nabla_s e_{l}=0$ for all $l=1,...,2n$. Similar to (\ref{lkmna}), direct calculations  give
\begin{align}\label{Hahaob}
\|P_{k}(\mathbf{D}d\mathcal{P}(e_j;e_l))\|_{L^{\infty}_tL^2_x}\lesssim_{M} \epsilon_1 2^{-k}(1+s2^{2k})^{-M}
\end{align}
for any $M\in \Bbb Z_+$, $k\in\Bbb Z$.

By (\ref{Gan}), we have the bound for $\partial_sv$:
\begin{align}
&2^{\sigma k+k}\|P_{k}(\partial_sv) \|_{L^{\infty}_tL^2_x} \lesssim 2^{2k}\left[(1+2^{2k}s)^{-31}1_{k+k_0\ge 0}\gamma_{k}(\sigma)+\sum_{k\le l\le -k_0}\gamma_{l}(\sigma)\gamma_l\right],\label{JIO1}
\end{align}
if $s\in [2^{2k_0-1}, 2^{2k_0+1})$, $k_0\in\Bbb Z$.
And using the identity $\psi^{l}_s=(d\mathcal{P}e_l)\cdot \partial_sv$, (\ref{lkmna}) and  (\ref{JIO1}) instead yield
\begin{align}\label{uyg}
\|P_{k}\psi_s\|_{L^{\infty}_tL^2_x}\lesssim 2^{k-\sigma k}\left(1_{k+k_0\ge 0}(1+s2^{2k})^{-31}\gamma_{k}(\sigma)+1_{k+k_0\le 0}\sum_{k\le l\le -k_0}\gamma_{l}(\sigma)\gamma_l\right)
\end{align}
for all $k\in\Bbb Z$ and $\sigma\in[0,\frac{99}{100}]$,  $s\in [2^{2k_0-1}, 2^{2k_0+1})$, $k_0\in\Bbb Z$.

Then applying bilinear Littlewood-Paley decomposition to (\ref{Hahaob2}), (\ref{uyg}) and (\ref{Hahaob}) yield for any $\sigma\in[0,\frac{99}{100}]$
\begin{align}\label{sdffmna}
(1+s 2^{2k})^{30}2^k\|P_k\left((d\mathcal{P})(e_l)-\chi^{\infty}_l\right)\|_{L^{\infty}_tL^2_x} \lesssim 2^{-\sigma k}\gamma_{k}(\sigma)\lesssim 2^{-\sigma k}\eta_{k}(\sigma),
\end{align}
where we applied (\ref{kjln6a}) in the last inequality.
Then by the identity (\ref{6.5a}),  (\ref{sdffmna}) and bilinear Littlewood-Paley decomposition, one has when $s=0$
\begin{align}
& \left\|P_{k}\partial_iv\right\|_{L^{\infty}_tL^2_x}\lesssim\|P_{k}\psi_i\|_{L^{\infty}_tL^2_x}\|P_{\le k-4}[(d\mathcal{P})(e_l)-\chi^{\infty}_l]\|_{L^{\infty}}\nonumber\\
&+\|P_{k}[(d\mathcal{P})(e_l)-\chi^{\infty}_l]\|_{L^{\infty}_tL^2_x}\sum_{k_1\le k-4}2^{k_1}\|P_{k_1}\psi_i\|_{L^{\infty}_tL^2_x}\nonumber\\
&+2^{k}\sum_{|k_1-k_2|\le 8,k_1,k_2\ge k-4}\|P_{k_1}\psi_i\|_{L^{\infty}_tL^2_x}\|P_{k}[(d\mathcal{P})(e_l)-\chi^{\infty}_l]\|_{L^{\infty}_tL^2_x}\nonumber\\
&\lesssim 2^{-\sigma k}\eta_{k}(\sigma)+  2^{-\sigma k}\eta_{k}(\frac{3}{8}) \eta_{k}(\sigma-\frac{3}{8}).\label{wesaa}
\end{align}
Thus  (\ref{wesaa}) gives $\gamma^{(1)}_k(\sigma)\lesssim \eta^{(1)}_k(\sigma)$ for $\sigma\in(\frac{99}{100},\frac{5}{4}]$. Combining Step 1.1 and Step 1.2, we have  proved (\ref{hbyu78}) and (\ref{hbyu77}).

{\bf Step 2.1. Bounds of connections for $\sigma\in[0,\frac{99}{100}]$.}
Applying Proposition \ref{mei} gives
\begin{align}\label{JIO}
(1+2^{2k}s)^{31}2^{\sigma k+k}\|P_{k}v \|_{L^{\infty}_tL^2_x} \lesssim \gamma_{k}(\sigma)
\end{align}
for all $\sigma\in[0,\frac{99}{100}]$ and $s\ge 0$.
Then using the identity $\psi^{l}_i=(d\mathcal{P}e_l)\cdot \partial_iv$  and the bound (\ref{lkmna}), by (\ref{JIO}) we infer from the bilinear Littlewood-Paley decomposition that
\begin{align*}
2^{\sigma k}\|P_{k}\phi_i\|_{L^{\infty}_tL^2_x}\lesssim (1+s2^{2k})^{-31}\eta_{k}(\sigma)
\end{align*}
for all $k\in\Bbb Z$ and $\sigma\in[0,\frac{99}{100}]$.
 Then, by (\ref{JIO1}), applying bilinear Littlewood-Paley decomposition again gives
\begin{align}\label{kjiu89P}
\|P_{k}(\phi_s\diamond\phi_i)\|_{L^{\infty}_tL^2_x}\lesssim 2^{-\sigma k}(1+2^{2j+2k})^{-31}\left(2^{-j+k}\eta_{-j}\eta_{k}(\sigma)+2^{-2j}\eta_{-j}\eta_{-j}(\sigma)\right)
\end{align}
for $j+k\ge 0$, $s\in[2^{2j-1},2^{2j+1})$, $\sigma\in[0,\frac{99}{100}]$.

Recall that Section \ref{VNM} shows $A_i$ can be written in the form of
\begin{align}\label{XZ98}
A_i(s)=\int^{\infty}_s(\phi_s \diamond \phi_i)\mathcal{G}ds'.
\end{align}
Direct calculations and Lemma \ref{kuai} imply $\mathcal{G}$ modulate a constant part $\Gamma^{\infty}$ satisfies
\begin{align}\label{Nh}
2^{k}\|P_k\mathcal{\widetilde{G}}\|_{L^{\infty}_tL^{2}_x}\lesssim_{M_1} (1+s2^{2k})^{-M_1}\epsilon_1,
\end{align}
for all $M_1\in\Bbb Z_+$, $k\in\Bbb Z$ and $s\ge 0$.
Then applying  bilinear Littlewood-Paley decomposition and (\ref{kjiu89P}) leads to
\begin{align}
\int^{\infty}_s\|P_{k}((\phi_i\diamond\phi_s)\mathcal{G})\|_{L^{\infty}_tL^{2}_x}ds'\lesssim 2^{-\sigma k}(1+s2^{2k})^{-30}\eta_{k}(\sigma)\eta_{-j}
\end{align}
for all $j\in\Bbb Z$, $s\in[2^{2j-1},2^{2j+1})$ and $k+j\ge 0$. Moreover, one has
\begin{align*}
\int^{\infty}_s\|P_{k}((\phi_i\diamond \phi_s)\mathcal{G})\|_{L^{\infty}_tL^{2}_x}ds'\lesssim 2^{-\sigma k}\sum_{k\le l\le -j}\eta_{l}(\sigma)\eta_{l}
\end{align*}
for any $j\in\Bbb Z$, $s\in[2^{2j-1},2^{2j+1})$ and $k+j\le 0$.

Hence, (\ref{shujiu1}) is done.

{\bf Step 2.2. $\sigma\in[\frac{99}{100},\frac{5}{4}]$.}
Recall that  (\ref{hbyu77}), (\ref{hbyu78}) has given
\begin{align*}
\gamma_{k}(\sigma) \lesssim \eta_{k}(\sigma), \mbox{ } \gamma^{(1)}_{k}(\sigma) \lesssim \eta^{(1)}_{k}(\sigma).
\end{align*}
Now, we are ready to estimate $A_x$ for  $\sigma\in[\frac{99}{100},\frac{5}{4}]$. By the identity  $\psi^{l}_i=(d\mathcal{P}e_l)\cdot \partial_iv$,  the bound (\ref{sdffmna}) and
\begin{align*}
\|P_{k}\partial_i v\|_{L^{\infty}_tL^2_x}\lesssim 2^{-\sigma k}(1+s2^{2k})^{-30}\eta^{(1)}_{k}(\sigma),
\end{align*}
one obtains by bilinear Littlewood-Paley decomposition that
\begin{align} \label{1zNb}
\|P_{k}\phi_i\|_{L^{\infty}_tL^2_x}\lesssim 2^{-\sigma k}(1+s2^{2k})^{-30}\eta^{(1)}_{k}(\sigma),
\end{align}
for any $\sigma \in[\frac{99}{100},\frac{5}{4}]$, $k\in\Bbb Z$, $s\ge 0$.  For any $\sigma \in[\frac{99}{100},\frac{5}{4}]$, the proof of Proposition \ref{uyhgvbL} yields
the bound
\begin{align*}
\|P_{k}\partial_s v\|_{L^{\infty}_tL^2_x}\lesssim 2^{-\sigma k+k}\left[1_{k+j\ge 0}(1+s2^{2k})^{-30}\eta^{(1)}_{k}(\sigma)+1_{k+j\le 0}\sum_{k\le l\le -j}\eta^{(1)}_{l}(\sigma)\eta_l\right],
\end{align*}
which combined with (\ref{sdffmna})  instead gives
\begin{align}\label{zNb}
\|P_{k}\phi_s \|_{L^{\infty}_tL^2_x}\lesssim 2^{-\sigma k+k}\left[1_{k+j\ge 0}(1+s2^{2k})^{-30}\eta^{(1)}_{k}(\sigma)+1_{k+j\le 0}\sum_{k\le l\le -j}\eta^{(1)}_{l}(\sigma)\eta_l\right],
\end{align}
for any $k\in\Bbb Z$, $s\in [2^{2j-1}, 2^{2j+1})$, $j\in\Bbb Z$., $\sigma \in[\frac{99}{100},\frac{5}{4}]$.

In order to apply (\ref{XZ98}), we also need to improve the bound of $\mathcal{\widetilde{G}}$ stated in (\ref{Nh}).
Recall the formula
\begin{align*}
\mathcal{G}&:= \langle {\bf R}(e_{j_0},e_{j_1})(e_{j_2}),e_{j_3}\rangle =\Gamma^{\infty}-\int^{\infty}_s\psi^{p}_s (\widetilde{\nabla } {\bf R})(e_{p};e_{j_0},...,e_{j_3})ds'.
\end{align*}
By Lemma \ref{kuai} and the direct calculations (see Step 1.1  for instance) we have the bounds:
\begin{align}\label{chui}
2^{k}\|P_{k}\left(  (\widetilde{\nabla} {\bf R})(e_{l};e_{j_0},...,e_{j_3})-\Gamma^{\infty,(1)}_l\right)\|_{L^{\infty}_tL^{2}_x}\lesssim_{M} (1+s2^{2k})^{-M}
\end{align}
for all $M\in\Bbb Z_+$, $k\in\Bbb Z$.
Hence one obtains by (\ref{uyg}) and bilinear Littlewood-Paley decomposition that
\begin{align}\label{98uh}
&2^{k}\|P_{k}\left(\mathcal{G}-\Gamma^{\infty}\right)\|_{L^{\infty}_tL^{2}_x}\lesssim 2^{-\sigma k}(1+s2^{2k})^{-30}\gamma_{k}(\sigma)
\end{align}
for any $k\in\Bbb Z$, $\sigma\in[0,\frac{99}{100}]$.

Then by (\ref{98uh}), (\ref{zNb}), (\ref{1zNb}) using trilinear Littlewood-Paley decomposition as Step 2.1 with additional modifications in the $\rm Low\times \rm High$ interaction of $P_k((\phi_s\diamond\phi_i)\mathcal{\widetilde{G}})$  (see Proposition \ref{4.2} for instance) we conclude that
\begin{align*}
&\int^{\infty}_s\|P_{k}\left((\phi_s\diamond\phi_i)\mathcal{G} \right)\|_{L^{\infty}_tL^{2}_x}ds'\lesssim 2^{-\sigma k}(1+s2^{2k})^{-29}\eta^{(1)}_{k,s}(\sigma)
\end{align*}
for any $k\in\Bbb Z$, $\sigma\in(\frac{99}{100},\frac{5}{4}]$.
Thus, (\ref{shujiu2}) is done.
\end{proof}

\begin{Lemma}\label{Ncccmkijnl}
Let $d=2$, $j,M\in\Bbb Z_+$. And let $u \in \mathcal{H}_Q(T)$, and $v(s,t,x)$ be the solution of heat flow with data $u(t,x)$. Denote $\{\phi_i\}$ the differential fields of
$v$ under the caloric gauge. Assume that $\sigma\in[0,1+\frac{j}{4}]$, and $\{\eta_{k}(\sigma)\}$ are frequency envelopes of order $\frac{1}{2^{j}}\delta$ such that for all $i=1,2$, $k\in\Bbb Z$,
\begin{align}
2^{\sigma k}\|\phi_i(\upharpoonright_{s=0})\|_{L^{\infty}_tL^2_x}\le \eta_k(\sigma).
\end{align}
Then, given   $j,M\in\Bbb Z_+$,  there exists a sufficiently small constant $\epsilon_j$ depending only on $M,j$ such that
if $\|\nabla u \|_{L^{\infty}_tL^2_x}\le  \epsilon_j$, then  we have
\begin{align}
\gamma^{(j)}_k(\sigma)&\lesssim \eta^{(j)}_{k}(\sigma)\label{asshujiu4}
\end{align}
for  $\sigma\in[0,\frac{j}{4}+1]$, $k\in\Bbb Z$.
Moreover, for $l=0,...,j$,  we have
\begin{align}
(1+s2^{2k})^{M}\|P_k\phi_i\|_{L^{\infty}_tL^2_x}&\lesssim 2^{-\sigma k}\eta^{(j)}_{k}(\sigma), \mbox{ }i=1,2,  \mbox{ }\sigma\in[0,1+j/4)\label{qashujiu3}\\
(1+s2^{2k})^{M}\|P_k [\widetilde{d\mathcal{P}}]^{(l)}\|_{L^{\infty}_tL^2_x}&\lesssim 2^{-\sigma k}\eta^{(j-l)}_{k}(\sigma), \mbox{ }\sigma\in[0,1+(j-l)/4) \label{qbshujiu3}\\
(1+s2^{2k})^{M}\|P_k\widetilde{\mathcal{G}}^{(l)}\|_{L^{\infty}_tL^2_x}&\lesssim 2^{-\sigma k}\eta^{(j-l)}_{k}(\sigma),\mbox{ }\sigma\in[0,1+(j-l)/4) \label{qcshujiu3}\\
(1+s2^{2k})^{M}\|P_kA_i\|_{L^{\infty}_tL^2_x}&\lesssim 2^{-\sigma k}\eta^{(j)}_{k,s}(\sigma),\mbox{ }\sigma\in[0,1+j/4) \label{shujiu3}
\end{align}
where we denote  $[d\mathcal{P}]^{(l)} =({\bf D}^{l} {d\mathcal{P}})(\underbrace{e,...,e}_l;e)$, and $[\widetilde{d\mathcal{P}}]^{(l)} =  [d\mathcal{P}]^{(l)} -\lim\limits_{s\to\infty} [d\mathcal{P}]^{(l)} $.
\end{Lemma}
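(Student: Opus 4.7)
The plan is to argue by induction on $j$, using Proposition \ref{7.2} as the base cases $j=0,1$. At each stage I expect to establish the five bounds (\ref{asshujiu4})--(\ref{shujiu3}) simultaneously, in a loop that mirrors how Proposition \ref{7.2} handled the transition from $\sigma\in[0,\tfrac{99}{100}]$ to $\sigma\in[\tfrac{99}{100},\tfrac{5}{4}]$. The envelope $\eta^{(j)}_k(\sigma)$ carries the quadratic correction $\eta^{(j-1)}_k(\sigma-\tfrac{3}{8})\eta_k(\tfrac{3}{8})$, so the induction hypothesis of order $j-1$ yields exactly the input needed to advance one quarter step in $\sigma$.

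For the inductive step, I would first iterate the dynamic separation (\ref{Hahaob2}) for $d\mathcal{P}$: writing $[d\mathcal{P}]^{(l)} = [d\mathcal{P}]^{(l)}_\infty + \int_s^\infty \psi_s^{j}\,[d\mathcal{P}]^{(l+1)}\,ds'$ in schematic form, I can trade an order of $l$ for an extra factor of $\phi_s$. Combining this with the heat flow bound on $\phi_s$ coming from Proposition \ref{Jth} (via (\ref{uyg})--(\ref{zNb}), now with $\gamma$ replaced by $\eta^{(j)}$) and the bilinear Littlewood--Paley decomposition, one obtains (\ref{qbshujiu3}) for $[\widetilde{d\mathcal{P}}]^{(l)}$. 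The same dynamic separation, applied to $\widetilde{\nabla}^j\mathbf{R}$ rather than $d\mathcal{P}$, produces (\ref{qcshujiu3}) for $\widetilde{\mathcal{G}}^{(l)}$, since the elementary bound analogous to (\ref{chui}) holds for any number of covariant derivatives of $\mathbf{R}$ by Lemma \ref{kuai}.

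Next I would close the loop: substituting the improved $[\widetilde{d\mathcal{P}}]^{(0)}$ bound into the identity $\partial_i v = \psi^l_i\chi_l^\infty + \psi^l_i\,\widetilde{d\mathcal{P}}(e_l)$, a bilinear Littlewood--Paley argument as in (\ref{wesaa}) upgrades $\|P_k\partial_i v\|_{L^\infty_t L^2_x}$, which translates to (\ref{asshujiu4}) for $\gamma^{(j)}_k(\sigma)$ after invoking the envelope definition. Combined with (\ref{sdffmna})-type bounds this yields (\ref{qashujiu3}) for $\phi_i$ at positive $s$. Finally, applying the dynamic separation (\ref{XZ98})--(\ref{PSD}) for $A_i$ as a triple integral of $(\phi_s\diamond\phi_i)\cdot\mathcal{G}$, and feeding in the just-obtained bounds for $\phi_i,\phi_s,\widetilde{\mathcal{G}}$ via trilinear Littlewood--Paley decomposition, reproduces (\ref{shujiu3}) with the envelope $\eta^{(j)}_{k,s}(\sigma)$.

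The main obstacle I expect is bookkeeping the $(1+s2^{2k})^{-M}$ decay through the iteration: each application of dynamic separation and each Duhamel integral of the form (\ref{KJn}) costs one power of $(1+s2^{2k})$, and the trilinear arguments at the High--Low end of $\sigma\in (\tfrac{j+3}{4},\tfrac{j+4}{4}]$ are where the $\eta^{(j-l)}$ in (\ref{qbshujiu3})--(\ref{qcshujiu3}) must absorb the quadratic piece without being overtaken by the $\epsilon_1$-free constant contribution $\Gamma^\infty$ (which is killed by passing to the tilded quantity). This is exactly the mechanism that forces $\epsilon_j$ to depend on both $M$ and $j$: each level consumes a fixed number of powers of decay, so the heat flow iteration Proposition \ref{Jth} must be run with $M$ replaced by $M+O(j)$ at the bottom, and the smallness of $\|\nabla u\|_{L^\infty_tL^2_x}$ must beat the resulting constants. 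Once this counting is set up cleanly, the actual estimates are routine repetitions of the bilinear and trilinear decompositions already executed in Proposition \ref{7.2} and Proposition \ref{Jth}.
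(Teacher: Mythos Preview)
Your proposal is correct and follows essentially the same route as the paper's proof: induction on $j$ with Proposition~\ref{7.2} as the base, repeated dynamic separation for $[d\mathcal{P}]^{(l)}$ and $\mathcal{G}^{(l)}$ via Lemma~\ref{kuai}, and bilinear/trilinear Littlewood--Paley arguments to close the loop back to $\gamma^{(j)}_k(\sigma)$ and $A_i$. Your bookkeeping observation that the decay exponent must be initialized at $M+O(j)$ (the paper takes $M+2+j$) and that this forces $\epsilon_j$ to depend on $M+j$ is exactly the point the paper makes.
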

\begin{proof}
The case  $\sigma\in[0,5/4]$ has been done in Proposition \ref{7.2}.
Let $\sigma\in[1+j/4 ,1+(j+1)/4]$. The general case of (\ref{asshujiu4}) follows by iteration.
The highest covariant  derivative order of $\mathcal{G}$ and $d\mathcal{P}(e)$ one needs for the $j$-th iteration is $j+1$, and it suffices to take the decay power $M+2+j$, i.e.,
\begin{align*}
 \left\|  \partial^{L+1}_x \mathcal{G}^{(j+1)}  \right\|_{L_t^\infty L_x^2}&\lesssim_{L,j}  \epsilon s^{-\frac{L}{2}}, \forall L\in [0,M+2+j] \\
 \left\| \partial^{L+1}_x [d\mathcal{P}]^{(j+1)}  \right\|_{L_t^\infty L_x^2}&\lesssim_{L,j} \epsilon s^{-\frac{L}{2}},  \forall L\in [0,M+2+j],
\end{align*}
where we denote $\mathcal{G}^{(k)}=(\widetilde{\nabla}^{k} {\bf R})(\underbrace{e,...,e}_k;\underbrace{e,...,e}_4)$, and   $[d\mathcal{P}]^{(k)} =({\bf D}^{k} {d\mathcal{P}})(\underbrace{e,...,e}_k;e)$. These decay estimates are easy to check by using Lemma \ref{kuai}.
If these decay estimates along heat direction are verified,
then (\ref{asshujiu4}) follows by repeating  the arguments of Step 2 in Proposition \ref{7.2} for $j$ times. Moreover, the left (\ref{qashujiu3}), (\ref{qbshujiu3}), (\ref{qcshujiu3}), (\ref{shujiu3}) follow along  the same path by applying dynamical separation and iterations  for $j$ times.
\end{proof}

Similar to Proposition \ref{7.2}, one also have
\begin{Corollary}\label{JkL}
Let $d=2$. And let $v_0 \in\mathcal{H}_Q$ satisfy
\begin{align*}
\|\partial_x v_0 \|_{L^2_x}= \epsilon_1\ll 1.
\end{align*}
\begin{itemize}
  \item Let $\{d_{k}(\sigma)\}$ with $k\in\Bbb Z$, $\sigma\in[0,\frac{5}{4}]$ be frequency envelopes of order $\frac{1}{2}\delta$ satisfying
\begin{align}\label{sddNMl0mnm}
2^{\sigma k+k}\|P_{k}v_0\|_{L^2_x}\le d_k(\sigma).
\end{align}
Denote $v(s,x)$ the solution to heat flow with data $v_0$, and denote $\{\phi_i\}$ the corresponding  differential fields under the caloric gauge.
Then we have
\begin{align}\label{sdNMl0mnm}
\|\phi_i(\upharpoonright_{s=0})\|_{L^2_x}\le 2^{-\sigma k} d^{(1)}_k(\sigma).
\end{align}
  \item Let $j\in\Bbb Z_+$. Assume that $\{d_{k}(\sigma)\}$ with $k\in\Bbb Z$, $\sigma\in[0,1+\frac{1}{4}j]$ are frequency envelopes of order $\frac{1}{2^{j}}\delta$. Then for $\epsilon_1$ sufficiently small depending only on $j$,  similar results hold with  $d^{(1)}_k(\sigma)$ replaced by $d^{(j)}_k(\sigma)$.
\end{itemize}
\end{Corollary}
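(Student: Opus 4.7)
The corollary is the time-independent analogue of Proposition \ref{7.2} with the roles of the extrinsic and intrinsic envelopes reversed: the input is now an envelope $\{c_k(\sigma)\}$ controlling the extrinsic data $v_0$, and the output is an envelope on the intrinsic fields $\phi_i\upharpoonright_{s=0}$. My plan is to (i) apply the heat-flow iteration of Propositions \ref{4.2}--\ref{Jth} to the time-independent initial datum $v_0$, (ii) extract parabolic decay for $\widetilde{d\mathcal{P}(e_l)}$ via the dynamic separation identity (\ref{Hahaob2}), and (iii) read off the bound on $\phi_i\upharpoonright_{s=0}$ by a bilinear Littlewood--Paley decomposition of the pointwise identity $\psi_i^{l}=(d\mathcal{P}(e_l))\cdot\partial_i v$ at $s=0$.

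For step (i), note that the proofs of Propositions \ref{4.2} and \ref{uyhgvbL} make no essential use of the $t$ variable; they are Duhamel bootstraps in $s$ for the semilinear heat equation (\ref{1OOT}). Running these arguments with $u$ replaced by $v_0$ and $\gamma_k(\sigma)$ replaced by $c_k(\sigma)$, I obtain, for any target power $M$ and with $\epsilon_1$ sufficiently small depending on $M$,
\begin{equation}\label{hfiter-extr}
(1+s2^{2k})^{M}2^{k+\sigma k}\|P_{k}v(s)\|_{L^2_{x}}\lesssim c^{(1)}_k(\sigma),\qquad \sigma\in[0,\tfrac{5}{4}].
\end{equation}
For step (ii), feeding \eqref{hfiter-extr} (together with the heat flow equation to control $\partial_s v$, and hence $\psi_s$ via the pointwise identity $\psi_s^l = (d\mathcal{P}(e_l))\cdot\partial_s v$) into the separation identity (\ref{Hahaob2}) and using the pointwise bound (\ref{Hahaob}), the same bilinear Littlewood--Paley argument that produced (\ref{sdffmna}) yields
\[
(1+s2^{2k})^{M}2^{k}\|P_k\widetilde{d\mathcal{P}(e_l)}(s)\|_{L^2_x}\lesssim 2^{-\sigma k}c^{(1)}_k(\sigma).
\]
Step (iii) then evaluates at $s=0$: writing $\phi_i = (d\mathcal{P}(e_l)+\sqrt{-1}\,d\mathcal{P}(Je_l))\cdot\partial_i v = \chi^{\infty}\cdot\partial_i v + \widetilde{d\mathcal{P}}\cdot\partial_i v$, the first term contributes $2^{-\sigma k}c_k(\sigma)$ directly from \eqref{hfiter-extr}, while the second contributes the quadratic cross term $2^{-\sigma k}c_k(\sigma-\tfrac{3}{8})c_k(\tfrac{3}{8})$ by bilinear Littlewood--Paley exactly as in the derivation of (\ref{wesaa}). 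Summing gives (\ref{sdNMl0mnm}).

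For the second bullet I would iterate this scheme $j$ times, mirroring Proposition \ref{Jth} and Lemma \ref{Ncccmkijnl}. Each iteration requires parabolic decay of the higher-order separations $[\widetilde{d\mathcal{P}}]^{(l)}$ and $\widetilde{\mathcal{G}}^{(l)}$ for $1\le l\le j$, obtained by peeling off constant limits $l$ times in the heat direction and invoking the trivial bounds $\|\partial_x^{L+1}[d\mathcal{P}]^{(l)}\|_{L^2_x}\lesssim_L \epsilon_1 s^{-L/2}$ coming from Lemma \ref{kuai}. The main technical obstacle is purely one of bookkeeping: each bilinear decomposition consumes a slow-variation factor, so to reach $\sigma\in[0,1+j/4]$ the input envelope must have order $\delta/2^j$, which is precisely the hypothesis; moreover, the number of covariant derivatives of $\mathbf{R}$ and $d\mathcal{P}$ that must be controlled grows with $j$, forcing the dependence of $\epsilon_1$ on $j$. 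No new analytic ingredient beyond what appears in Proposition \ref{Jth} and Lemma \ref{Ncccmkijnl} is required.
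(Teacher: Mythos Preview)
Your proposal is correct and follows essentially the same route as the paper's proof: apply the heat-flow iteration (Propositions~\ref{4.2}--\ref{Jth}) to the time-independent datum $v_0$ to get the extrinsic bound (\ref{sdhbyu77}), use the trivial frame decay (\ref{shujxc1}) together with the identity $\psi_i^{l}=(d\mathcal{P}(e_l))\cdot\partial_i v$ and bilinear Littlewood--Paley to reach $\sigma\in[0,\tfrac{99}{100}]$, then bootstrap the frame bound via the separation identity (\ref{Hahaob2}) to extend to $\sigma\in(\tfrac{99}{100},\tfrac{5}{4}]$, with the second bullet handled by iterating as in Lemma~\ref{Ncccmkijnl}. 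The only cosmetic difference is that the paper first establishes the $\sigma\in[0,\tfrac{99}{100}]$ bound on $\phi_i$ and then improves the frame estimate, whereas you state the improved frame bound up front; both orderings are valid.
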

\begin{proof}
Using (\ref{sddNMl0mnm})  Proposition \ref{4.2} and Proposition \ref{uyhgvbL} show for $\sigma\in[0,\frac{5}{4})$
\begin{align}
(1+s2^{2k})^{30}2^{k+\sigma k}\|P_{k}v\|_{L^2_x}&\lesssim d^{(1)}_{k}(\sigma)\label{sdhbyu77}.
\end{align}
Let's first consider $\sigma\in[0,\frac{99}{100}]$.
Recall
\begin{align}
2^{k} \|P_k(d\mathcal{P}(e_l)-\chi^{\infty}_l)\|_{L^2_x}&\lesssim \epsilon_1(1+s2^{2k})^{-29}.\label{shujxc1}
\end{align}
Then by the identity $\psi^{l}_i=d\mathcal{P}(e_{l})\cdot \partial_i v$,  (\ref{sdhbyu77}), (\ref{shujxc1}), we obtain from the  bilinear Litttlewood-Paley decomposition
\begin{align*}
& \|P_k(d\mathcal{P}(e_{l})\cdot \partial_i v)\|_{L^2_x}\\
 &\lesssim  2^{-\sigma k}d_k(\sigma)\|P_{\le k-4}d\mathcal{P}(e_{l})\|_{L^{\infty}}+
2^{k}\sum_{k_1\ge k-4,|k_1-k_2|\le 8} 2^{-\sigma k_1}d_{k_1}(\sigma)\|P_{k_2}d\mathcal{P}(e_{l})\|_{L^{\infty}_tL^2_x}\\
&+ \sum_{|k-k_2|\le 4} \|P_{k_2}(d\mathcal{P}(e_l))\|_{L^2_x}\sum_{k_1\le k-4}2^{k_1-\sigma k_1} d_{k_1}(\sigma)
\end{align*}
that for any $\sigma\in[0,\frac{99}{100}]$,  $k\in\Bbb Z$,
\begin{align*}
 \|P_k\psi_i(\upharpoonright_{s=0})\|_{L^2_x}&\lesssim  2^{-\sigma k}d_k(\sigma).
\end{align*}
Using this $\sigma\in[0,\frac{99}{100}]$ bound and similar arguments as before  one can improve (\ref{shujxc1}) to
\begin{align*}
2^{k} \|P_k(d\mathcal{P}(e_l)-\chi^{\infty}_l)\|_{L^2_x}&\lesssim 2^{-\sigma k} d_{k}(\sigma)(1+s2^{2k})^{-29}, \mbox{ }\sigma\in[0,\frac{99}{100}],
\end{align*}
and thus giving (\ref{sdNMl0mnm}).  The second item claimed in our corollary follows by similar arguments and Lemma \ref{Ncccmkijnl}.
\end{proof}

\begin{Lemma}\label{FFF}
Let $u\in\mathcal{H}_{Q}(T)$ solve $SMF$. And let $v(s,t,x)$ be the solution of heat flow  (\ref{1OOT})  with initial data $u(t,x)$. Then given $L\in\Bbb Z_+$, $L\ge 200$, for any $0\le \sigma\le 2L$, there exist  constants $\epsilon_{L}>0$, $C_{L}>0$, $C_{L,T}$, such that if $\|\partial_xu\|_{L^{\infty}_tL^2_x}\le \epsilon_{L}\ll 1$,
then for any $s\ge 0$, $i=1,2$, $\rho=0,1$, $m=0,1,...,L$,
 \begin{align}
 \|\partial^{\rho}_{t}\partial^{m}_x(v-Q)\|_{L^{\infty}_tH^{L}_x}&\le  C_{L,T}(s+1)^{-\frac{m}{2}}\label{RTg1}\\
(2^{-\frac{1}{2}k}1_{k\le 0}+2^{\sigma k})\| P_{k} \phi_i\|_{L^{\infty}_tL^2_x}&\le  C_L(2^{2k}s+1)^{-30}\label{RTg2}\\
(2^{-\frac{1}{2}k}1_{k\le 0}+2^{\sigma k})\| P_{k} A_i\|_{L^{\infty}_tL^2_x}&\le C_L(2^{2k}s+1)^{-28}\label{RTg3}\\
 2^{m k}\| P_{k} \partial_t\phi_i\|_{L^{\infty}_tL^2_x}&\le  C_{L,T}(2^{2k}s+1)^{-25}\label{RTg4}\\
 2^{m k}\| P_{k} \partial_tA_i\|_{L^{\infty}_tL^2_x}&\le  C_{L.T}(2^{2k}s+1)^{-25}.\label{RTg5}
\end{align}
\end{Lemma}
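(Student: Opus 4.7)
The plan is to deduce the five estimates sequentially, combining the heat-flow iteration (Propositions \ref{mei}, \ref{Jth}), the intrinsic--extrinsic transfer (Proposition \ref{7.2}, Lemma \ref{Ncccmkijnl}), and the SL equation itself. First, (\ref{RTg1}) is parabolic smoothing for (\ref{1OOT}) with initial data $u - Q \in L^\infty_t H^L_x$, which holds qualitatively since $u \in \mathcal{H}_Q(T)$. The $\rho = 0$ case follows by iterating Lemma \ref{kuai}-type arguments on the quadratic nonlinearity $S(v)(\partial v, \partial v)$, using the semigroup representation to gain the $(s+1)^{-m/2}$ factor. The $\rho = 1$ case is obtained by viewing $\partial_t v$ as the solution of the linearization of (\ref{1OOT}) with initial datum $\partial_t u = J P^{\mathcal{N}}_u(\Delta u) \in L^\infty_t H^{L-2}_x$, read off from (\ref{1}), and running the same linear smoothing for the resulting linear problem.

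For (\ref{RTg2})--(\ref{RTg3}), the high-frequency weight $2^{\sigma k}$ is controlled by Lemma \ref{Ncccmkijnl} with the number of iterations $j$ chosen so that $1 + j/4 > 2L$ and the power $M$ taken as $30$ (respectively $28$); the smallness threshold $\epsilon_L$ is chosen accordingly. The required envelope $\{\eta_k(\sigma)\}$ for $\phi_i|_{s=0}$ at Sobolev level $\sigma \le 2L$ is extracted from $u \in \mathcal{H}_Q(T)$ via the identity $\phi_i^\alpha = \langle \partial_i v, e_\alpha\rangle + \sqrt{-1}\langle \partial_i v, J e_\alpha\rangle$ together with the parabolic decay (\ref{Hahaob}) for $d\mathcal{P}(e_\alpha) - \chi^{\infty}_\alpha$, and bounds (\ref{qashujiu3}) and (\ref{shujiu3}) then yield the $2^{\sigma k}$ parts of (\ref{RTg2}) and (\ref{RTg3}). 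The low-frequency factor $2^{-k/2}\mathbf{1}_{k\le 0}$ is handled separately: since $v - Q \in L^\infty_t L^2_x$ on $[-T,T]$ (finite for $u \in \mathcal{H}_Q(T)$), one has $\|P_k \partial_i v\|_{L^\infty_t L^2_x} \lesssim 2^k$ for $k \le 0$ at $s = 0$, heat smoothing preserves this gain for all $s \ge 0$, and the transfer to $\phi_i$ (via Corollary \ref{JkL}) and to $A_i$ (via the integral representation of Section \ref{VNM}) inherits the low-frequency $2^k$ factor, so after multiplication by $2^{-k/2}$ one retains a summable $2^{k/2}$.

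Finally, for (\ref{RTg4})--(\ref{RTg5}) I would differentiate the defining identities of $\phi_i$ and $A_i$ in $t$, using the torsion-free identity $\nabla_t \partial_i v = \nabla_i \partial_t v$ together with the formula of Proposition \ref{Caloric} to write $\partial_t \phi_i$ in terms of $\nabla_i \partial_t v$, $A_t$, and $\phi_i$, and $\partial_t A_i$ in terms of $\partial_t v$, $\partial_s v$, and curvature. The vector field $\partial_t v$ is governed by the linearization of (\ref{1OOT}) with initial datum $\partial_t u \in H^{L-2}_x$, so rerunning the iteration of Proposition \ref{Jth} on this genuinely linear (in $\partial_t v$) equation yields frequency-localized bounds for $\partial_t v$ with $T$-dependent constants. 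The dynamical separation of Section \ref{VNM} then pairs these with the $\partial_s v$-bounds already in hand to bound $A_t$, and consequently $\partial_t \phi_i$ and $\partial_t A_i$, with the loss of a few powers in $(1+2^{2k}s)$ accounting for the $-25$ exponents. The main obstacle is the apparent circularity in the time derivative of the connection coefficient, since $\partial_t A_i$ formally involves the non-selfclosed curvature tensor pulled back along $v$; I would resolve this by observing that once $v$ is fixed the equation for $\partial_t v$ is truly linear, so no new nonlinear bootstrap is required for the time direction, and the finite derivative losses accumulated over iterations are absorbed by taking $\epsilon_L$ small enough depending on $L$, as in Proposition \ref{Jth}.
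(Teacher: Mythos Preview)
Your proposal is correct and follows essentially the same route as the paper. Two minor organizational differences are worth noting. First, rather than treating the high-frequency weight $2^{\sigma k}$ and the low-frequency weight $2^{-k/2}\mathbf{1}_{k\le 0}$ separately, the paper builds a single family of envelopes $\lambda_k(\sigma)=\sup_{k'}2^{-2^{-j}\delta|k-k'|}2^{\sigma k'}\|P_{k'}(u-Q)\|_{L^\infty_tL^2_x}$ for $u-Q$ (note: no extra $2^{k'}$), runs the heat-flow iteration of Propositions~\ref{4.2}--\ref{Jth} with these envelopes, and then transfers to $\phi_i$ and $A_i$ via the arguments of Corollary~\ref{JkL} and Proposition~\ref{7.2}; the low-frequency gain then drops out of the bilinear decomposition of $\phi_i=d\mathcal{P}(e_l)\cdot\partial_i v$ using $\lambda^{(0)}_k(0)$ and $\lambda^{(0)}_k(\tfrac12)$. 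Second, for (\ref{RTg4})--(\ref{RTg5}) the paper does not rerun the full iteration of Proposition~\ref{Jth} on the linearized heat equation; it simply establishes (\ref{RTg1}) for $\rho=1$ by semigroup smoothing, reads off frequency-localized bounds for $\phi_t$ from the product identity $\partial_x^l\phi^a_t=\sum_{l_1}\partial_x^{l_1}(d\mathcal{P}(e_a))\,\partial_x^{l-l_1}(\partial_t v)$ together with (\ref{Mdfghjk}), infers bounds on $A_t$, and then uses $\partial_t\phi_i=-A_t\phi_i+D_i\phi_t$ and the integral formula for $\partial_tA_i$ to conclude. Your more direct differentiation would also work, but the paper's detour through $\phi_t$ avoids re-invoking the iteration machinery.
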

\begin{proof}
Fix arbitrary $L\in\Bbb N$, $L\ge 200$. Let $\lambda_{k}(\sigma)$ be the frequency envelope
 \begin{align*}
&\lambda_{k}(\sigma):=\sup_{k'\in\Bbb Z}2^{-\frac{1}{2^j}\delta|k-k'|}2^{\sigma k'}\|P_{k'}(u-Q)\|_{L^{\infty}_tL^2_x}.
\end{align*}
for $\sigma\in{\Bbb I}_{j}\cap [0,2L]$. And define
\begin{align*}
\widetilde{B}_{j,\sigma,K}(S)=\sup_{k\in\Bbb Z,s\in[0,S)} [\lambda^{(j)}_k(\sigma)]^{-1}(1+s2^{2k})^{K}2^{\sigma k}\|P_{k }v\|_{L^{\infty}_tL^2_{x}},
\end{align*}
for $\sigma\in{\Bbb I}_{j}\cap [0,2L]$, $j\in\Bbb N$, $K\in \Bbb Z_+$.
By Lemma \ref{KJm1} and the fact  $\{\lambda^{(j)}_k(\sigma)\}$ are frequency envelopes,  $\widetilde{B}_{j,\sigma,K}(S)$ is well-defined for $S\ge 0$ and continuous in $S$ with
$\mathop {\lim }\limits_{S \to 0} \widetilde{B}_{j,\sigma,K}(S)=1$. Then applying Proposition \ref{4.2},  Proposition  \ref{uyhgvbL}, Proposition  \ref{Jth} and their arguments,
we get $\widetilde{B}_{j,\sigma,30}(S)\lesssim 1$. Then applying similar arguments of Corollary \ref{JkL} one obtains
\begin{align*}
2^{\sigma k} \| P_{k} \phi_i(s=0)|_{L^{\infty}_tL^2_x}&\lesssim \lambda^{(j)}_k(\sigma)
\end{align*}
for $\sigma\in{\Bbb I}_{j}\cap [0,2L]$.
Then similar arguments of Proposition \ref{7.2} yield for any $\sigma\in  [0,L]$
\begin{align*}
 2^{\sigma k} \| P_{k} \phi_i\|_{L^{\infty}_tL^2_x}&\lesssim C_L(2^{2k}s+1)^{-30} \\
 2^{\sigma k} \| P_{k} A_i\|_{L^{\infty}_tL^2_x}&\lesssim C_L(2^{2k}s+1)^{-28}
\end{align*}
which verifies  half of (\ref{RTg2})-(\ref{RTg3}). Moreover, one has
\begin{align}\label{Mdfghjk}
2^{\sigma k} \|P_k(d\mathcal{P}e)\|_{L^{\infty}_tL^2_x}\lesssim  2^{-k}(1+s2^{2k})^{-30}
\end{align}
for $\sigma\in [0,2L]$.
Let's check the left half  of (\ref{RTg2})-(\ref{RTg3}).
Recall the bounds
\begin{align*}
  \|P_kv\|_{L^2_x}&\lesssim 2^{-\sigma k} \lambda^{(0)}_k(\sigma) (1+s2^{2k})^{-30}
\end{align*}
for $\sigma\in [0,\frac{99}{100}]$.
Then (\ref{shujxc1}) and bilinear Littlewood-Paley decomposition show that
\begin{align*}
&\|P_k(\phi_i)\|_{L^{\infty}_tL^2_x}=\sum^{2n}_{l}\|P_k(d\mathcal{P}(e_{l})\cdot \partial_i v)\|_{L^{\infty}_tL^2_x}\\
 &\lesssim  2^{ k}\lambda^{(0)}_k(0)(1+s^{2k})^{-30}\|P_{\le k-4}d\mathcal{P}(e_{l})\|_{L^{\infty}}+
2^{k}\sum_{k_1\ge k-4,|k_1-k_2|\le 8} 2^{\frac{1}{2}k_1} \lambda^{(0)}_{k_1}(\frac{1}{2}) \|P_{k_2}d\mathcal{P}(e_{l})\|_{L^{\infty}_tL^2_x}\\
&+ \sum_{|k-k_2|\le 4} \|P_{k_2}(d\mathcal{P}(e_l))\|_{L^2_x}\sum_{k_1\le k-4}2^{2k_1} \lambda^{(0)}_{k_1}(0) \\
&\lesssim  2^{\frac{k}{2}}(1+s^{2k})^{-30}
\end{align*}
for any $i=1,2$, $k\le 0$. Similar arguments give
\begin{align}
&\|P_k(A_i)\|_{L^{\infty}_tL^2_x}\lesssim  2^{\frac{k}{2}}(1+s2^{2k})^{-28}
\end{align}
for any $i=1,2$, $k\le 0$. Hence, (\ref{RTg2}) and (\ref{RTg2}) have been done.

Since $\partial_t v=J(v)(\sum_{i=1,2}D_i\phi)$ at $s=0$, we observe from $u\in\mathcal{H}_Q(T)$ that
\begin{align*}
 \|\partial_t v(\upharpoonright_{s=0}) \|_{L^{\infty}_tH^l_x}\le  C_{l,T}, \mbox{ }\forall \mbox{ }l\in\Bbb N.
\end{align*}
Thus using the smoothing estimates of heat semigroups and applying $\partial_t$ to (\ref{1OOT}), one obtains (\ref{RTg1}).
From (\ref{RTg1}), (\ref{shujxc1}), (\ref{Mdfghjk}) and the identity
\begin{align*}
 \partial^{l}_x\phi^{a}_t =\sum^{l}_{l_1=0}\partial^{l_1}_x(d\mathcal{P}(e_a))\partial^{l-l_1}_x(\partial_t v)
\end{align*}
we get
\begin{align*}
2^{mk} \|P_{k}\phi_t \|_{L^{\infty}_tL^2_x}\lesssim (1+s2^{2k})^{-28}   \mbox{ }\forall \mbox{ }0\le m\le L,
\end{align*}
which further gives bounds of $\|P_{k}A_t\|_{L^{\infty}_tL^2_x}$.
Then applying similar bounds of $A_i\phi_i, A_t\phi_i$ and the identity $\partial_t\phi_i=-A_t\phi_i+D_i\phi_t$, we obtain (\ref{RTg4}).
For (\ref{RTg5}), we use $\phi_s=D_i\phi_i$ and
\begin{align*}
 |\partial^{l}_x \partial_tA_i |\le\sum^{l}_{l_1=0} \int^{\infty}_s|D^{l-l_1}_xD_t\phi_i||D^{l_1}_x \phi_s|ds'+\int^{\infty}_s|D^{l-l_1}_x\phi_i||D^{l_1}_xD_t \phi_s|ds'.
\end{align*}

\end{proof}

\subsection{Additional decay estimates for dynamical caloric gauge}

\begin{Proposition}\label{7.2q}
Let $d=2$. And let $u \in\mathcal{H}_Q(T)$ be solution of SMF.
Denote $v(s,t,x)$ the solution to heat flow with data $u(t,x)$, and denote $\{\phi_i\}^{2}_{i=0}$ the corresponding  differential fields under the caloric gauge.
Assume that $\{\beta_{k}(\sigma)\}$ is a frequency envelope of order $\delta$ such that for all $i=1,2$, $k\in\Bbb Z$,
\begin{align}\label{avNMl0mnm}
2^{\sigma k}\|\phi_i(\upharpoonright_{s=0})\|_{L^{\infty}_tL^{2}_x\cap L^{4}_{t,x}}\le \beta_k(\sigma).
\end{align}
\begin{itemize}
  \item There exists a sufficiently small constant $\epsilon>0$ such that if
\begin{align}\label{bvNMl0mnm}
\sum_{k\in\Bbb Z} |\beta_k(0)|^2<\epsilon,
\end{align}
then we have for any $l\in\Bbb N$
\begin{align}
&\|P_{k}\widetilde{\mathcal{G}}^{(l)}\|_{L^4\cap L^{\infty}_tL^2_x}\lesssim_l 2^{-\sigma k-k} \beta_{k}(\sigma)(2+s2^{2k})^{-30}\label{3.16ab}\\
&\|P_{k}\phi_s \|_{L^4\cap L^{\infty}_tL^2_x}\lesssim 2^{-\sigma k+k}\left[1_{k+j\ge 0}(1+s2^{2k})^{-30}\beta_{k}(\sigma)+1_{k+j\le 0}\sum_{k\le l\le -j}\beta_{l}(\sigma)\beta_l\right]\label{wwbbzNb}\\
&(1+s2^{2k})^{29}2^{\sigma k+k}\|P_{k}(d\mathcal{P}(e))\|_{L^{4}}\lesssim \beta_{k}(\sigma)\label{We1b}
\end{align}
for any $\sigma\in[0,\frac{99}{100}]$, $s\in [2^{2j-1}, 2^{2j+1})$,  $j,k\in\Bbb Z$.
Furthermore, assume that for  $\sigma\in [0,\frac{5}{4}]$, $\{\beta_{k}(\sigma)\}$ is a frequency envelope of order $\frac{1}{2}\delta$ such that for all $i=1,2$, $k\in\Bbb Z$,
(\ref{avNMl0mnm}), (\ref{bvNMl0mnm}) hold.
Then for any $\sigma\in[0,\frac{5}{4}]$, $k\in\Bbb Z$, one has
\begin{align}
(1+s2^{2k})^{27}2^{\sigma k}\|P_kA_i\|_{L^{\infty}_tL^2_x}&\lesssim \beta^{(1)}_{k,s}(\sigma).\label{qashujiu2}
\end{align}
  \item If $\{\beta_{k}(\sigma)\}$ is a frequency envelope of order $\frac{1}{2^{j}}\delta$, then similar results hold for $\sigma\in[0,\frac{1}{4}j+1)$ and $\epsilon$ sufficiently small depending only on $j\in\Bbb Z_+$. (See Prop. 1.4 for instance )
\end{itemize}
\end{Proposition}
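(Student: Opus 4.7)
The plan is to run the three-stage architecture of Proposition \ref{7.2} (extrinsic map $v$, then $d\mathcal{P}(e)/\widetilde{\mathcal{G}}^{(l)}/\phi_s$, then $A_i$) but propagate the $L^4_{t,x}$ norm in tandem with $L^\infty_t L^2_x$ throughout. The enabling fact is that the hypothesis now supplies an $L^4$ bound on the initial differential fields, and the frequency-localized heat semigroup decays
\[
\|P_k e^{s\Delta} g\|_{L^4_{t,x}}\ \lesssim\ (1+s2^{2k})^{-M}\|P_k g\|_{L^4_{t,x}},
\]
so every Duhamel argument in Propositions \ref{4.2}, \ref{uyhgvbL}, \ref{Jth} transplants to the composite norm $L^\infty_t L^2_x\cap L^4$.

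First I upgrade the heat iteration. Using $\psi_i^l = d\mathcal{P}(e_l)\cdot\partial_i v$ and $\|d\mathcal{P}(e_l)-\chi^\infty_l\|_{L^\infty_{t,x}}\lesssim 1$ (Lemma \ref{kuai}), the hypothesis transfers into $\|P_k v(s{=}0)\|_{L^4}\lesssim 2^{-\sigma k-k}\beta_k(\sigma)$. Rerunning the bootstrap of Propositions \ref{4.2}--\ref{Jth} in the composite norm yields, for every $\sigma\in[0,\tfrac54]$,
\[
(1+s2^{2k})^{30}\,2^{\sigma k+k}\,\|P_k v\|_{L^\infty_t L^2_x\cap L^4}\ \lesssim\ \beta^{(1)}_k(\sigma).
\]
Then I apply dynamical separation twice. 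For $d\mathcal{P}(e_l)$ I feed the $L^4$ bound on $\psi_s$ (obtained from $\phi_s=D_i\phi_i$ at $s=0$ and the heat flow equation for $s>0$) into
\[
d\mathcal{P}(e_l)-\chi_l^\infty\ =\ \int_s^\infty \psi_s^j\,\mathbf{D}d\mathcal{P}(e_j;e_l)\,ds',
\]
combined with $\|P_k(\mathbf{D}d\mathcal{P})\|_{L^\infty_t L^2_x}\lesssim 2^{-k}(1+s2^{2k})^{-M}$; a bilinear Littlewood--Paley dissection gives (\ref{We1b}) and the $L^4$ half of (\ref{wwbbzNb}). For the curvature I iterate
\[
\mathcal{G}^{(l)}-\Gamma^\infty_{(l)}\ =\ \int_s^\infty \psi_s^p\,\mathcal{G}^{(l+1)}(e_p;\dots)\,ds',
\]
pairing the $L^4$ bound on $\psi_s$ with the baseline decay of $\widetilde{\nabla}^{l+1}{\bf R}$ pulled back through the caloric frame; this yields (\ref{3.16ab}). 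Iteration in $l$ forces $\epsilon$ to depend on $l$, but this is fine since $l$ is fixed.

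Finally, for (\ref{qashujiu2}) I insert these bounds into
\[
A_i(s)\ =\ \int_s^\infty (\phi_i\diamond\phi_s)(\Gamma^\infty+\widetilde{\mathcal{G}})\,ds'.
\]
The $\Gamma^\infty$ piece is the bilinear contribution already handled in Step~2 of Proposition \ref{7.2}. The $\widetilde{\mathcal{G}}$ piece is trilinear and is organized by the Littlewood--Paley decomposition of Lemma~A.1: it combines the $L^4$ decay of Step~3 with the $L^\infty_t L^2_x$ decay of $\phi_i,\phi_s$, and the $s$-integral is performed against the $(1+s2^{2k})^{-30}$ profiles using \eqref{KJn}. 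Envelope bookkeeping via $\beta^{(1)}_{k,s}(\sigma)$ delivers the extension from $\sigma\in[0,\tfrac{99}{100}]$ to $\sigma\in[0,\tfrac54]$, exactly as in Proposition \ref{7.2}.

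The main obstacle will be the High$\times$High$\to$Low interaction in the $\widetilde{\mathcal{G}}$ contribution to $A_i$ at the endpoint $\sigma=\tfrac54$, where the frequencies of both $\phi_s$ and $\mathcal{G}$ dominate that of $A_i$. Here the presence of an $L^4$ (not merely $L^\infty_t L^2_x$) bound on $\widetilde{\mathcal{G}}^{(l)}$ is what lets the trilinear H\"older estimate close, and the arbitrariness of the decay exponent $M$ in Proposition \ref{Jth} must be used to absorb the $(1+s2^{2k})^{-\delta}$ losses arising from the nested $s$-integrations. The second bullet (envelopes of order $\delta/2^j$, range $\sigma\in[0,j/4)$) then follows by iterating the same scheme $j$ times, mirroring the passage from Proposition \ref{7.2} to Lemma \ref{Ncccmkijnl}.
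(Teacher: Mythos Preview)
Your proposal is essentially the paper's proof: cite Proposition \ref{7.2} for all the $L^\infty_t L^2_x$ statements, transfer the $L^4$ hypothesis on $\phi_i$ to an $L^4$ bound on $\partial_i v$ at $s=0$, bootstrap along the heat flow to all $s$, and then propagate to $\phi_s$, $d\mathcal{P}(e)$, $\widetilde{\mathcal{G}}^{(l)}$, $\phi_i$, $A_i$ via the heat equation and dynamical separation exactly as in Proposition \ref{7.2}.

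Two small corrections. First, the identity you need to pass from the hypothesis to $\|P_k\partial_i v\|_{L^4}$ is $\partial_i v=\sum_l\psi_i^l\, d\mathcal{P}(e_l)$, not $\psi_i^l=d\mathcal{P}(e_l)\cdot\partial_i v$; you then pair the $L^4$ bound on $\psi_i$ with the $L^\infty_t L^2_x$ decay of $d\mathcal{P}(e_l)-\chi^\infty_l$ already supplied by \eqref{bsdffmna}. Second, you are working too hard on \eqref{qashujiu2}: that estimate is an $L^\infty_t L^2_x$ bound on $A_i$ and is literally \eqref{shujiu2} from Proposition \ref{7.2} (with $\eta_k=\beta_k$), so no new trilinear analysis or High$\times$High$\to$Low endpoint discussion is needed there. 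The genuinely new content of this proposition is the $L^4$ half of \eqref{3.16ab}--\eqref{We1b}, which your Steps 1--2 cover.
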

\begin{proof}
By Proposition \ref{7.2} and its proof, we have
\begin{align}
&(1+s2^{2k})^{31}2^{\sigma k}2^{k}\|P_kv\|_{L^{\infty}_tL^2_x} \lesssim \beta_{k}(\sigma).\label{asdjka}\\
&(1+s2^{2k})^{30}2^{\sigma k}2^{k}\|P_kS(v)\|_{L^{\infty}_tL^2_x} \lesssim \beta_{k}(\sigma).\label{asdjkl}\\
&\|P_{k}\phi_s \|_{L^{\infty}_tL^2_x}\lesssim 2^{-\sigma k+k}\left[1_{k+j\ge 0}(1+s2^{2k})^{-30}\beta_{k}(\sigma)+1_{k+j\le 0}\sum_{k\le l\le -j}\beta_{l}(\sigma)\beta_l\right]\label{bbzNb}\\
&(1+s 2^{2k})^{30}2^k\|P_k\left((d\mathcal{P})(e_l)-\chi^{\infty}_l\right)\|_{L^2_x}  \lesssim 2^{-\sigma k}\beta_{k}(\sigma)\label{bsdffmna}\\
&\sum_{i=1,2}(1+s2^{2k})^{29}2^{\sigma k}\|P_kA_i\|_{L^{\infty}_tL^2_x} \lesssim \beta^{(0)}_{k,s}(\sigma).\label{ashujiu1}
\end{align}
for any $\sigma\in[0,\frac{99}{100}]$, $j,k\in\Bbb Z$, $s\in[2^{2j-1}, 2^{2j+1})$,  if $\{\beta_k(\sigma)\}$ is a frequency envelope of order $\delta$.
And  Proposition \ref{7.2} and its proof also give similar results as above
for $\sigma\in[0,\frac{5}{4}]$, if $\{\beta_k(\sigma)\}$ is a frequency envelope of order $\frac{1}{2}\delta$.

{\bf Step 1.} When $s=0$, using $\partial_i v=\psi^{l}_i\mathcal{P}(e_{l})$, we get from the bilinear Littlewood-Paley decomposition
\begin{align}
&\|P_k(fg)\|_{L^4}\lesssim  \sum_{|k-k_2|\le 4} \|P_{\le k-4}f\|_{L^{\infty}_{t,x}}\|P_{k_2}g\|_{L^4_xL^{\infty}_t}
+2^{k} \sum_{k_1,k_2\ge k-4, |k_1-k_2|\le 8} \|P_{k_1}f\|_{L^{\infty}_tL^2_x}\|P_{k_2}g\|_{L^{4}}\nonumber\\
&+\sum_{k_2\le k-4, |k_1-k|\le 4}2^{\frac{1}{2}k_1} \|P_{k_1}f\|_{L^{\infty}_{t}L^{2}_x}2^{\frac{1}{2}k_2}\|P_{k_2}g\|_{L^{4}}\label{Dfr}
\end{align}
and (\ref{avNMl0mnm}), (\ref{bsdffmna}) that
\begin{align}\label{xdtyv}
\|P_k(\partial_i v)\|_{L^4}\lesssim  2^{-\sigma k}\beta_{k}(\sigma)
\end{align}
for $s=0$ and any $\sigma\in[0,\frac{99}{100}]$, $k\in\Bbb Z$.
Then we turn to study  the heat flow equation
 (\ref{Heat}) to obtain
\begin{align}\label{Fcv7yhbn}
(1+s2^{2k})^{30}\|P_k(\partial_i v)\|_{L^4}\lesssim  2^{-\sigma k}\beta_{k}(\sigma)
\end{align}
 for any $s\ge 0$, $\sigma\in[0,\frac{99}{100}]$, $k\in\Bbb Z$.
 In fact, define
\begin{align*}
Z_1(S):=\sup_{s\in[0,S),k\in\Bbb Z} \frac{1}{\beta_{k}(\sigma)}2^{\sigma k} (1+s2^{2k})^{30}\|P_k(\partial_i v)\|_{L^4}.
\end{align*}
$Z_1(S)$ is  well-defined, continuous and tends to $1$ as $S\to 0$ by (\ref{xdtyv}).
Using the following trilinear Littlewood-Paley decomposition
\begin{align}
&2^{k}\|P_{k }S(v)(\partial_x v,\partial_x v)\|_{L^{4}_{t,x}}\lesssim 2^{k}\widetilde{\beta}_k \sum_{k_1\le k}\widetilde{\beta}_{k_1}2^{k_1}+\sum_{k_2\ge k}2^{-2|k-k_2|}2^{2k_2}\widetilde{\beta}^2_{k_2}\nonumber\\
&+ 2^{\frac{1}{2}k}\widetilde{\alpha}_{k}(\sum_{k_1\le k}2^{k_1}\widetilde{\beta}_{k_1})^2+\sum_{k_2\ge k}2^{2k-k_2}\widetilde{\alpha}_{k_2}\widetilde{\beta}_{k_2}\sum_{k_1\le k_2}2^{k_1}\widetilde{\beta}_{k_1}.
\end{align}
where we denote
\begin{align*}
\widetilde{\beta}_k&=\sum_{|k'-k|\le 30}2^{ k'}\|P_{k'}v\|_{L^{\infty}_t{L^2_x}\bigcap L^{4}_{t,x}}; \mbox{ } \widetilde{\alpha}_k=\sum_{|k'-k|\le 30}2^{ k'}\|  P_{k'}(S(v))\|_{L^{\infty}_t{L^2_x}},
\end{align*}
using similar arguments as Proposition 3.2 we obtain by (\ref{asdjkl}), (\ref{asdjka}) that
\begin{align*}
Z_1(S)\lesssim 1+\epsilon Z^2_1(S)
\end{align*}
for any $S\ge 0$. Then $Z_{1}(S)\lesssim 1$ since $\lim\limits_{S\to 0}Z_{1}(S)=1$.
Thus (\ref{Fcv7yhbn}) follows.

{\bf Step 2.} With (\ref{Fcv7yhbn}) in hand, using the heat flow equation one obtains bounds for $\|P_{k}\partial_s v\|_{L^4}$. Then the bound of  $\|P_{k}\phi_s\|_{L^4}$ follows by the bilinear Littlewood-Paley decomposition (\ref{Dfr}) and (\ref{bsdffmna}). By performing dynamical separation for $d\mathcal{P}(e)$, we get bounds of  $\|P_{k}(d\mathcal{P}(e))\|_{L^4}$ from  $\|P_{k}\phi_s\|_{L^4}$ and $\|P_{k}({\bf D}d\mathcal{P}(e;e))\|_{L^{\infty}_tL^2_x}$. And then one obtains bounds of $\|P_{k}(\mathcal{G})\|_{L^4}$, $\|P_{k}\phi_i\|_{L^4}$ for all $s\ge 0$,  which further yields $\|A_i\|_{L^4}$ for any  $s\ge 0$.
See Proposition \ref{7.2} for the details.
\end{proof}

\section*{Outline of Proof before Iteration }

One of the ingredients of proof before iteration is the framework due to \cite{BIKT}. The other main  ingredient is the decomposition of curvatures mentioned in Step 2 of Section 1.4. And in the technic level  we need bootstrap assumption of $\|P_{k}\widetilde{\mathcal{G}}^{(1)}\|_{L^4_xL^{\infty}_{t}(T)}$ and ideas to improve this bound, see Step 4.1 and Step 4.2 of Section 1.4 for instance.

We outline the framework of \cite{BIKT} for reader's convenience. Since the gauged equation is now not self-closed due to the curvature terms, several key new ideas as mentioned above shall be  used. But to leave the reader a whole picture, we just sketch the framework of \cite{BIKT} rather than presenting all technically complex issues.

The proof is a bootstrap argument. Let $\sigma\in [0,\frac{99}{100})$ be given.  Given $\mathcal{L}\in \Bbb Z_+$, $Q\in \mathcal{N}$, let $T\in(0,2^{2\mathcal{L}}]$. Assume that $\{c_k\}$ is an $\epsilon_0$-frequency envelope of order $\delta$ and $\{c_k(\sigma)\}$ is another frequency envelope of order $\delta$. Let $u_0$ be the initial data  of SMF which satisfies
\begin{align}\label{zFDc65}
\|P_{k}\nabla u_0\|_{L^2_x}\le c_k(\widetilde{\sigma})2^{-\widetilde{\sigma}  k},\mbox{  } \widetilde{\sigma}\in [0,\frac{99}{100}].
\end{align}
Denote $u$ the solution to SMF with initial data $u_0$. Assume that $u$ satisfies
\begin{align*}
{\bf Bootstrap \mbox{  }I.}\mbox{  } \|P_{k}\nabla  u\|_{L^{\infty}_tL^2_x}\le\epsilon^{-\frac{1}{2}}_0c_k.
\end{align*}
Denote $v(s,t,x)$ the solution of heat flow with initial data $u(t,x)$, and $A_{i},A_t,A_s$ the corresponding connection coefficients. And denote the heat tension field  by $ \phi_s$ and the differential fields by $\{\phi_i\}$, $\phi_t$ respectively.  Suppose that $\{\phi_i\}^2_{i=1}$ satisfy
the bootstrap condition at $s=0$:
\begin{align*}
{\bf Bootstrap \mbox{  }II.} \mbox{  }\|P_{k}\phi_i \upharpoonright_{s=0}\|_{G_k(T)}&\le\epsilon^{-\frac{1}{2}}_0c_k.
\end{align*}

In Step 1, by studying the heat equations (\ref{heat1}), (\ref{heat2}), we prove {\bf Bootstrap I},{\bf II} in fact give parabolic estimates for $A_i,A_t$ and $\phi_{i,t}$ along the heat flow direction:
\begin{align*}
\|P_{k}\phi_i(s)\|_{F_k(T)}&\le c_k( {\sigma})2^{- {\sigma} k}(1+s2^{2k})^{-4}, \sigma\in [0,\frac{99}{100}]\\
\|P_{k}\phi_t(s)\|_{L^4_{t,x}}&\le c_k(\sigma)2^{-\sigma k}(1+s2^{2k})^{-2}, \sigma\in [0,\frac{99}{100}].\\
\|P_kA_i\upharpoonright_{s=0}\|_{L^{4}_{t,x}}&\le c_k(\sigma)2^{-\sigma k}, \sigma\in [0,\frac{99}{100}]\\
\|P_kA_t\upharpoonright_{s=0}\|_{L^{2}_{t,x}}&\lesssim \epsilon_0.
\end{align*}

In Step 2, by studying the Schr\"odinger equations (\ref{xcgfrt}), we prove {\bf Bootstrap I, II} indeed yield improved estimates for  $\phi_{i}$ along the Schr\"odinger flow direction:
\begin{align*}
\|P_{k}\phi_i\upharpoonright_{s=0}\|_{G_k(T)}\lesssim c_k(\sigma)2^{-\sigma k},\mbox{  } \sigma\in [0,\frac{99}{100}].
\end{align*}

In Step 3, we prove
\begin{align}\label{C098}
\|P_{k}\phi_i\upharpoonright_{s=0}\|_{G_k(T)}\lesssim c_{k}
\end{align}
with {\bf Bootstrap \mbox{  }I, II} dropped.

\section{ Evolution of SMF solutions along the heat direction }

\subsection{Parabolic Estimates for differential fields}

The main result of this  section  is the following Proposition.

\begin{Proposition}\label{Parabolic}
Let $\{b_k\}$ be a $\varepsilon$-frequency envelope.
Assume that for $i=1,2$, there hold
\begin{align}
\|P_{k }\phi_i\upharpoonright_{s=0}\|_{F_k(T)}&\le b_k(\sigma)2^{-\sigma k}, \mbox{  }\sigma\in [0,\frac{99}{100}]\label{tian}\\
\|P_{k }\phi_t\upharpoonright_{s=0}\|_{L^{4}_{t,x}}&\lesssim b_k(\sigma)2^{-(\sigma -1)k}, \mbox{  }\sigma\in [0,\frac{99}{100}]\label{tian2}.
\end{align}
and
\begin{align}\label{4.1}
\|P_{k }\phi_i(s)\|_{F_k(T)}&\le \varepsilon^{-\frac{1}{2}}b_k(1+s2^{2k})^{-4}.
\end{align}
Then if $\varepsilon>0$ is sufficiently small, for $\sigma\in[0,\frac{99}{100}]$ one has
\begin{align}
\|P_{k }\phi_i(s)\|_{F_k(T)}&\lesssim b_k(\sigma) 2^{-\sigma k} (1+s2^{2k})^{-4} \label{4.18}\\
\|P_k A_i\upharpoonright_{s=0}\|_{L^{4}_{t,x}}&\lesssim b_k(\sigma)2^{-\sigma k}, \mbox{ }i=1,2.\label{4.19}\\
\|P_k\phi_t(s)\|_{L^{4}_{t,x}}&\lesssim b_k(\sigma)2^{-(\sigma-1)k}(1+2^{2k}s)^{-2}\nonumber\\
\|P_kA_t\upharpoonright_{s=0}\|_{L^{2}_{t,x}}&\lesssim \varepsilon b_{k}(\sigma)2^{- \sigma k},\mbox{ }\sigma\in[\frac{1}{100},\frac{99}{100}]\\
\|P_kA_t\upharpoonright_{s=0}\|_{L^{2}_{t,x}}&\lesssim \varepsilon^2, \mbox{ } \sigma\in[0,\frac{99}{100}].\nonumber
\end{align}
\end{Proposition}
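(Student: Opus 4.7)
The plan is a Duhamel--bootstrap argument in the heat variable $s$ for the frequency-localized $F_k(T)$ norm of $\phi_i$, coupled with the caloric integral representations of $A_i$ and $A_t$ and the identity $\partial_s\phi_t = D_t\phi_s$ (valid since $A_s = 0$ in the caloric gauge) to propagate $\phi_t$ along the heat flow. The core engine is the heat semigroup bound (\ref{3.18}), which supplies the parabolic decay factor $(1+s2^{2k})^{-4}$ appearing in (\ref{4.18}).

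First I would set up the continuity quantity
$$ B(S) := \sup_{\sigma \in [0,99/100]}\;\sup_{k \in \Bbb Z,\, s \in [0,S]}\; \frac{2^{\sigma k}(1+s2^{2k})^{4}}{b_k(\sigma)}\,\|P_k\phi_i(s)\|_{F_k(T)}, $$
which is finite by (\ref{tian}) and (\ref{4.1}), continuous in $S$, and satisfies $\lim_{S\to 0^+}B(S)\lesssim 1$. Writing (\ref{heat2}) as $(\partial_s-\Delta)\phi_i = \mathcal{N}_i$ with $\mathcal{N}_i = 2A_j\partial_j\phi_i + (\partial_jA_j)\phi_i + A_jA_j\phi_i + \mathcal{R}(\phi_i,\phi_j)\phi_j$, Duhamel combined with (\ref{3.18}) reduces the bootstrap step to the nonlinear bound
$$ \|P_k\mathcal{N}_i(s)\|_{F_k(T)}\;\lesssim\;b_k(\sigma)\,2^{-\sigma k}\,2^{2k}(1+s2^{2k})^{-5}\bigl(1+\varepsilon^{1/2}B(S)\bigr), $$
which on insertion yields $B(S)\lesssim 1+\varepsilon^{1/2}B(S)$ and hence $B(S)\lesssim 1$ provided $\varepsilon$ is small.

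Proving the nonlinear $F_k$ bound is the crux. For each factor $A_j$ I would substitute the caloric formula $A_j(s)=\int_s^\infty(\phi_s\diamond\phi_j)\mathcal{G}\,ds'$ from Section \ref{VNM}, turning $A_j\partial_j\phi_i$ into a multilinear expression in $\phi$ equipped with an extra $s'$-integration; the quintic $A_jA_j\phi_i$ is automatically $O(\varepsilon)$. Splitting $\mathcal{G}=\Gamma^{\infty}+\widetilde{\mathcal{G}}$, the $\Gamma^{\infty}$ contribution reduces every term to the constant sectional curvature model of \cite{BIKT}, to which their $F_k$-trilinear scheme applies directly; the $\widetilde{\mathcal{G}}$ part acquires extra frequency-localized parabolic decay from (\ref{3.16ab}) and Propositions \ref{7.2}--\ref{7.2q}, contributing only perturbative corrections. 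The same splitting handles the raw curvature term $\mathcal{R}(\phi_i,\phi_j)\phi_j$, with the $\phi_s$-factor inside $\widetilde{\mathcal{G}}$ supplying the decisive frequency gain.

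Once (\ref{4.18}) is closed, the remaining conclusions follow. For $A_i\upharpoonright_{s=0}$ in $L^4_{t,x}$, use $A_i=\int_0^\infty(\phi_s\diamond\phi_i)\mathcal{G}\,ds$ with $\phi_s=\sum_jD_j\phi_j$ and apply bilinear $L^4_{t,x}$ Littlewood-Paley together with (\ref{4.18}) and the $\widetilde{\mathcal{G}}$ bounds from Proposition \ref{7.2q}. For $\phi_t(s)$, integrate $\partial_s\phi_t=D_t\phi_s$ starting from (\ref{tian2}), transferring the initial $L^4_{t,x}$ bound with parabolic decay $(1+s2^{2k})^{-2}$. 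Finally $A_t\upharpoonright_{s=0}=\int_0^\infty(\phi_s\diamond\phi_t)\mathcal{G}\,ds$ is controlled by an analogous trilinear $L^2_{t,x}$ estimate: the extra $\phi_t$ slot effectively costs one derivative (via $\phi_t=\sqrt{-1}\sum_jD_j\phi_j$), while dyadic $\ell^2$-summation in the heat time produces the $\varepsilon$-smallness needed for the stated $A_t$ bounds, with the universal $\varepsilon^2$ bound following from the energy estimate. The principal obstacle throughout is the ${\rm High}\times{\rm Low}\to{\rm High}$ frequency interaction in $\mathcal{G}\cdot(\phi_s\diamond\phi_i)$, where the curvature carries the dominant frequency; this is precisely where the dynamical separation and the heat-flow iteration results of Section 3 (Propositions \ref{mei}, \ref{7.2}, \ref{7.2q} and Lemma \ref{Ncccmkijnl}) are indispensable and cannot be replaced by a direct intrinsic $F_k$ estimate.
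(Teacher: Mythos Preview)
Your overall architecture is correct—the bootstrap quantity $B(S)$, the Duhamel scheme via (\ref{3.18}), and the reduction to multilinear $F_k$ bounds for the heat nonlinearity are exactly what the paper does. But there is a genuine gap in your treatment of the $\widetilde{\mathcal{G}}$ contribution. You assert that (\ref{3.16ab}) and Propositions \ref{7.2}--\ref{7.2q} supply the needed control, yet those results bound $P_k\widetilde{\mathcal{G}}^{(l)}$ only in $L^{\infty}_tL^2_x\cap L^4_{t,x}$. To place $P_k\bigl[(\partial_j\phi_j)\,\widetilde{\mathcal{G}}^{(1)}\bigr]$ into $F_k(T)$ in the ${\rm Low}\times{\rm High}\to{\rm High}$ configuration (curvature at high frequency) one also needs the lateral block $\|P_{k_2}\widetilde{\mathcal{G}}^{(1)}\|_{L^4_xL^{\infty}_t}$; see Lemma \ref{mJ}, specifically (\ref{3mJgvm}). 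This norm is \emph{not} obtainable from the heat-flow iteration of Section 3, because integrating along $s$ gives no handle on the supremum in $t$.

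The paper resolves this by adjoining a second bootstrap hypothesis (\ref{FIL}) on $2^{k/2}\|P_k\widetilde{\mathcal{G}}^{(1)}\|_{L^4_xL^{\infty}_t}$, which is carried through Lemmas \ref{connection}--\ref{Zoom2} and then closed in Lemma \ref{Zoom3} via the interpolation
\[
\|P_kf\|_{L^4_xL^{\infty}_t}\lesssim\|P_kf\|_{L^4}^{3/4}\,\|P_k\partial_tf\|_{L^4}^{1/4},
\]
where $\|P_k\partial_t\widetilde{\mathcal{G}}^{(1)}\|_{L^4}$ comes from the schematic identity $\partial_t\widetilde{\mathcal{G}}^{(1)}=\phi_t\,\mathcal{G}^{(2)}+A_t\,\mathcal{G}^{(1)}$ together with the $L^4$ bounds on $\phi_t$ and $A_t$ already established. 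Your sketch omits this entire mechanism; without it the $F_k$ bound on the $\widetilde{\mathcal{G}}$-part of the nonlinearity cannot be closed. The further refined splitting $\mathcal{G}=\Gamma^{\infty}+\mathcal{U}_{00}+\mathcal{U}_{01}+\mathcal{U}_I+\mathcal{U}_{II}$ and the nested Bootstrap Assumption A on $\mathcal{U}_I,\mathcal{U}_{II}$ in the proof of Lemma \ref{connection} are likewise essential structural ingredients that your outline does not capture.
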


\noindent {\bf Remark.} Assumption (\ref{4.1}) can be dropped. It suffices to apply Sobolev embeddings, Lemma \ref{FFF} and [\cite{BIKT}, Page 1463]'s argument.

\subsection{Proof of Proposition \ref{Parabolic}}

Now we turn to prove the parabolic estimates in Proposition \ref{Parabolic}.

Denote
\begin{align}
h(k)=\sup_{s\ge 0}(1+s2^{2k})^4\sum^{2}_{i=1}\|P_k\phi_i(s)\|_{F_k(T)}.
\end{align}
Define the corresponding envelope by
\begin{align}
h_k(\sigma)&=\sup_{k'\in\Bbb Z}2^{\sigma k'}2^{-\delta|k'-k|}h(k').
\end{align}
Assume that
\begin{align}\label{FIL}
2^{\frac{1}{2}k}\|P_{k}(\widetilde{\mathcal{G}}^{(1)})\|_{L^{4}_xL^{\infty}_t(T)}\le \varepsilon^{-\frac{1}{4}}h_k[(1+2^{2k}s)^{-9}1_{j+k\ge 0}+ 1_{j+k\le 0}2^{\delta|k+j|}]
\end{align}
for any  $s\in [2^{2j-1}, 2^{2j+1})$, $k,j\in\Bbb Z$.

\begin{Lemma}\label{connection}
Under the assumptions  of Proposition \ref{Parabolic} and (\ref{FIL}), for any $k\in \Bbb Z$, $s\ge 0$, $i=1,2$, we have
\begin{align}\label{xX56}
\|P_k(A_i(s))\|_{F_k(T)\bigcap S^{\frac{1}{2}}_k(T)}\lesssim 2^{-\sigma k}(1+s2^{2k})^{-4}h_{k,s}(\sigma),
\end{align}
where the sequences $\{h_{k,s}\}$ when $ 2^{2k_0-1}\le s< 2^{2k_0+1}, k_0\in\Bbb Z$ are defined by
\begin{align}\label{GH}
{h_{k,s}}\left( \sigma  \right) = \left\{ \begin{gathered}
  {2^{k + {k_0}}}{h_{ - {k_0}}}{h_k}(\sigma )\mbox{  }{\rm if}\mbox{ }k + {k_0} \geqslant 0 \hfill \\
  \sum\limits_{l = k}^{ - {k_0}}  {h_l}{h _l}(\sigma )\mbox{  }{\rm if}\mbox{ }k + {k_0} \leqslant 0. \hfill \\
\end{gathered}  \right.
\end{align}
\end{Lemma}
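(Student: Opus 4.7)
The plan is to prove the bound for $\|P_k A_i\|_{F_k(T)\cap S^{1/2}_k(T)}$ by combining the dynamical-separation representation from Section \ref{VNM} with the parabolic estimates (\ref{4.1}) and the bootstrap assumption (\ref{FIL}). Starting from the identity
\begin{align*}
A_i(s) = \int_s^\infty (\phi_i \diamond \phi_s)\, \mathcal{G}\, ds' = \Gamma^\infty \int_s^\infty (\phi_i \diamond \phi_s)\, ds' + \int_s^\infty (\phi_i \diamond \phi_s)\, \widetilde{\mathcal{G}}\, ds',
\end{align*}
I split the proof into a constant-curvature piece and a curvature-remainder piece. Each piece must be estimated in the four blocks of $F_k$ (namely $L^\infty_t L^2_x$, $L^4$, $L^4_x L^\infty_t$, and $L^{2,\infty}_{\mathbf{e}, W_{k+40}}$) together with the refined $S^{1/2}_k$ norm.

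\textbf{Constant-curvature piece.} Since $\Gamma^\infty$ is a fixed coefficient and $\phi_s = \sum_j D_j \phi_j$ by Lemma \ref{asdf}, the integrand is bilinear in the differential fields, and the estimate is the constant-curvature analogue treated in \cite{BIKT}. I apply bilinear Littlewood--Paley decomposition with the parabolic envelope bound (\ref{4.1}) in each block, integrating by parts in $s'$ against the heat equation $\partial_{s'} \phi_i = D_j D_j \phi_i + \mathcal{R}(\phi_i,\phi_j)\phi_j$ when helpful, so that the $s'$-integral converges and produces the weight $(1+s2^{2k})^{-4}$ with envelope $h_k(\sigma)$. The structure of $h_{k,s}$ defined in (\ref{GH}) emerges naturally from splitting $s \in [2^{2k_0-1}, 2^{2k_0+1}]$ and summing the low-frequency ($k+k_0\le 0$) diagonal contributions against the high-frequency ($k+k_0\ge 0$) off-diagonal ones.

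\textbf{Curvature-remainder piece, easy blocks.} For the integrand $(\phi_i \diamond \phi_s)\widetilde{\mathcal{G}}$ I apply trilinear Littlewood--Paley decomposition. In the $L^\infty_t L^2_x$, $L^4$, and lateral $L^{2,\infty}_{\mathbf{e}, W_{k+40}}$ blocks, Proposition \ref{7.2q} provides the required parabolic $L^\infty_t L^2_x$ and $L^4$ bounds on $\widetilde{\mathcal{G}}$ (and its derivatives) in terms of $h_k(\sigma)$, while (\ref{4.1}) controls $\phi_i,\phi_s$ and Lemma \ref{asdf} reduces $\phi_s$ to $D_j \phi_j$. Combined with Sobolev embeddings for the low-frequency factors, this closes these three blocks with the desired $(1+s2^{2k})^{-4}$ decay.

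\textbf{Main obstacle: the $L^4_x L^\infty_t$ block.} The essential difficulty is that $\widetilde{\mathcal{G}}$ is not self-contained in differential fields and cannot be controlled by heat-flow iteration in the anisotropic $L^4_x L^\infty_t$ space. This is exactly where the bootstrap assumption (\ref{FIL}) enters. In the $\mathrm{High}\times\mathrm{Low}\to\mathrm{High}$ branch of the trilinear decomposition where $\widetilde{\mathcal{G}}$ occupies the high frequency, I further unfold $\widetilde{\mathcal{G}} = \int_{s'}^\infty \psi^l_s(\Gamma^\infty_{(1)}+\widetilde{\mathcal{G}}^{(1)})ds''$ and place $\widetilde{\mathcal{G}}^{(1)}$ in $L^4_x L^\infty_t$ via (\ref{FIL}); the $\Gamma^\infty_{(1)}$ piece is handled by a single dynamic separation and $\phi_s$ in $L^4$. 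The two parabolic weights $(1+2^{2k}s)^{-9}$ from (\ref{FIL}) and those from (\ref{4.1}), together with the pointwise smallness $\varepsilon^{-1/4}$ and a double $s$-integration, yield the $(1+s2^{2k})^{-4}$ weight. The remaining low-frequency branches are easier since $\widetilde{\mathcal{G}}$ can be placed in $L^\infty_{t,x}$ via Sobolev embedding against the $L^\infty_t L^2_x$ bound. Finally, the $S^{1/2}_k$ norm is recovered from the same trilinear decomposition, using the $G_k$-type lateral bounds on $\phi_i$ provided by (\ref{4.1}).
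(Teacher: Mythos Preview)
Your outline has the right instinct about where (\ref{FIL}) is needed, but it misses a structural circularity and is organized quite differently from the paper.

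\textbf{The gap.} You write that Lemma \ref{asdf} reduces $\phi_s$ to $D_j\phi_j$, but $D_j\phi_j=\partial_j\phi_j+A_j\phi_j$, so the integrand $(\phi_i\diamond\phi_s)\mathcal{G}$ contains $A_j$ itself. The estimate you are trying to prove therefore depends on its own conclusion, and nothing in your proposal breaks this loop. The paper handles this by introducing the bootstrap constant $B_1$ for (\ref{xX56}) and performing a \emph{refined} dynamic separation of $\mathcal{G}$ into $\Gamma^\infty+\mathcal{U}_{00}+\mathcal{U}_{01}+\mathcal{U}_I+\mathcal{U}_{II}$, where $\mathcal{U}_I,\mathcal{U}_{II}$ isolate the $A_i\phi_i$ contributions. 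A second bootstrap (``Bootstrap Assumption A'', distinct from (\ref{FIL})) is placed on $\mathcal{U}_I,\mathcal{U}_{II}$; under both bootstraps the High$\times$High, High$\times$Low, and Low$\times$High interactions yield $B_1\lesssim 1+\varepsilon B_1$, hence $B_1\lesssim 1$. Then ``Claim A'' shows that $B_1\lesssim 1$ improves Bootstrap Assumption A back to its stated form with constant $\lesssim 1$, and a continuity argument in $T'$ (using Lemma \ref{FFF} and the heat-flow bounds of Section 3 at $T'=0$) closes it. Without isolating the $A_i\phi_i$ piece and running this inner bootstrap, your argument is circular.

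\textbf{Organizational difference.} The paper does not estimate the four $F_k$ blocks separately. It decomposes $P_k[(\phi_i\diamond\phi_s)\mathcal{G}]$ by frequency interaction and applies the bilinear $F_k\cap S^{1/2}_k$ estimates of Lemma \ref{FCq} (from \cite{BIKT}), which handle all blocks at once; the $S^{1/2}_k$ bound comes out simultaneously, not as a separate recovery step. The assumption (\ref{FIL}) enters only through the $\mathcal{U}_{01}$ term $(\partial_i\phi_i)\widetilde{\mathcal{G}}^{(1)}$, specifically in the Low$\times$High branch of Lemma \ref{mJ}, estimate (\ref{3mJgvm}), where the high-frequency factor must be placed in $L^4_xL^\infty_t$. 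Your block-by-block scheme would in particular have to treat the lateral $L^{2,\infty}_{\mathbf{e},W_{k+40}}$ component directly, and the heat-flow inputs from Proposition \ref{7.2q} give no obvious access to that space.
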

\begin{proof}
 By assumption (\ref{4.1}) of Proposition \ref{Parabolic} and noticing $\{b_k\}$ is a  $\varepsilon$-envelope, we have
\begin{align}
\|\{h_k\}\|^2_{\ell^2}\le \varepsilon.
\end{align}

In order to prove (\ref{xX56}), let $B_1$ denote the smallest number in $[1,\infty)$ for which it holds for all $\sigma\in[0,\frac{99}{100}]$, $s\ge 0$, $k\in \Bbb Z$, $i=1,2$,
\begin{align}\label{PSD2}
\|P_k(A_i(s))\|_{F_k(T)\bigcap S^{\frac{1}{2}}_k(T)}\le B_1 2^{-\sigma k}(1+s2^{2k})^{-4}h_{k,s}(\sigma).
\end{align}

Recall that Section \ref{VNM} shows  $A_i$ is schematically written as
\begin{align}\label{PSD}
A_i(s)=\sum_{j_0,j_1,j_2,j_3}\int^{\infty}_s\left(\phi_i\diamond\phi_s\right)\langle{\bf R}(e_{j_0},e_{j_1}) (e_{j_2}),e_{j_3}\rangle ds',
\end{align}
where $\{j_c\}^{3}_{c=0}$ run in $\{1,...,2n\}$, and $i$ runs in $\{1,2\}$. Recall also that $\phi_s=\sum^{2}_{l=1}D_l\phi_l$.
Applying $P_k$ to (\ref{PSD}) we have
 \begin{align}
&\|P_k(A_i(s))\|_{F_k(T)\bigcap S^{\frac{1}{2}}_k(T)}\nonumber\\
&\le \sum_{|k_1-k_2|\le 8, k_1,k_2\ge k-4}\int^{\infty}_s\left\|P_{k}[P_{k_1}\left(\phi_i\diamond\phi_s\right)P_{k_2} \langle{\bf R}(e_{j_0},e_{j_1}) (e_{j_2}),e_{j_3}\rangle ]\right\|_{F_k(T)\bigcap S^{\frac{1}{2}}_k(T)}ds'\nonumber\\
&+\sum_{|k_1- k|\le 4}\int^{\infty}_s\left\|P_{k}[P_{k_1}\left(\phi_i\diamond\phi_s\right)P_{\le k-4} \langle{\bf R}(e_{j_0},e_{j_1}) (e_{j_2}),e_{j_3}\rangle ]\right\|_{F_k(T)\bigcap S^{\frac{1}{2}}_k(T)}ds'\nonumber\\
&+\sum_{|k_2-k|\le 4, k_1\le k-4}\int^{\infty}_s\left\|P_{k}[P_{k_1}\left(\phi_i\diamond\phi_s\right)P_{k_2} \langle{\bf R}(e_{j_0},e_{j_1}) (e_{j_2}),e_{j_3}\rangle ]\right\|_{F_k(T)\bigcap S^{\frac{1}{2}}_k(T)}ds'.\label{PSD2}
\end{align}
The above three subcases according to their order are usually called
(a) ${\rm High\times High \to Low}$,
(b) ${\rm High\times Low \to High}$,
(c) ${\rm Low   \times High \to High}$.

{\bf Case b. { High$\times$ Low $\to$ High}.}
In [\cite{BIKT}, Lemma 5.2, Page 1470], the authors have proved
\begin{align}\label{aFsD}
\sum^{2}_{i=1}\int^{\infty}_s\left\|P_{k}\left(\phi_i\diamond \phi_s\right)\right\|_{F_k(T)\bigcap S^{\frac{1}{2}}_k(T)}ds'
\lesssim \varepsilon B_12^{-\sigma k}(1+s2^{2k})^{-4}h_{k,s}(\sigma).
\end{align}
with slightly different notations. Thus in the case (b), by (\ref{PDE}) and applying the trivial bound
\begin{align}
\|\langle{\bf R}(e_{j_0},e_{j_1}) (e_{j_2}),e_{j_3}\rangle\|_{L^{\infty}_{t,s,x}}\lesssim K(\mathcal{N}),
\end{align}
to the $P_{\le k}$ part and (\ref{aFsD}) to the $P_{k_1}$ part, we obtain that
\begin{align}
\sum_{|k_1- k|\le 4}&\int^{\infty}_s\left\|P_{k}\left(P_{k_1}(\phi_i\diamond \phi_s)P_{\le k-4}\langle{\bf R}(e_{j_0},e_{j_1} )e_{j_2},e_{j_3}\rangle\right)\right\|_{F_k(T)\bigcap S^{\frac{1}{2}}_k(T)}\nonumber\\
&\lesssim \varepsilon B_12^{-\sigma k}(1+s2^{2k})^{-4}h_{k,s}(\sigma).\label{A1}
\end{align}

{\bf Step 2. Refined Dynamic Separation.} For the $\rm Low\times High$ and $\rm High\times High$ part, we need to further decompose the curvature term. The  dynamic separation performed in Section \ref{VNM} also needs to be refined.
Recall the notation
\begin{align*}
\mathcal{G}(s)= \langle {\bf R}(e_{j_0},e_{j_1})e_{j_2},e_{j_3}\rangle(s),
\end{align*}
for any given $j_0,...,j_3\in\{1,...,2n\}$, and the decomposition of $\mathcal{G}$ in Section \ref{VNM}.
Thus using $\psi_s=\sum_{i=1,2}(\partial_i+A_i)\psi_i$,  after the second time dynamic separation
$\mathcal{G}$ can be decomposed into
\begin{align}
&\mathcal{G}(s)= \langle {\bf R}(e_{j_0},e_{j_1})e_{j_2},e_{j_3}\rangle(s)\label{xoz}\\
&=\Gamma^{\infty}-\Gamma^{\infty,(1)}_l\int^{\infty}_s\psi^{l}_s(\widetilde{s})d\widetilde{s}
-\int^{\infty}_s \psi^{l}_s(\widetilde{s})\left(\int^{\infty}_{\widetilde{s}}\psi^{p}_s(s') ( \widetilde{\nabla}^2{\bf R})(e_{l},e_{p};e_{j_0},...,e_{j_3} ) ds'\right)d\widetilde{s}.\label{cov}\\
&:=\Gamma^{\infty}+\mathcal{U}_{00}+\mathcal{U}_{01}+\mathcal{U}_{I}+\mathcal{U}_{II}.\nonumber
\end{align}
where we denote
 \begin{align*}
\mathcal{U}_{00}&:=-\Gamma^{\infty,(1)}_l\int^{\infty}_s\sum^{2}_{i=1}(\partial_i\psi_i)ds'\\
\mathcal{U}_{01}&:=-\int^{\infty}_s\sum^{2}_{i=1}(\partial_i\psi_i)^{l}\left(  (\widetilde{\nabla}{\bf R})( {e_l};e_{j_0},...,e_{j_3})-\Gamma^{\infty,(1)}_l\right)ds'\\
\mathcal{U}_{I}&:=-\Gamma^{\infty,(1)}_l\int^{\infty}_s\sum^{2}_{i=1}(A_i\psi_i)^{l}ds'\\
\mathcal{U}_{II}&:=-\int^{\infty}_s\sum^{2}_{i=1} (A_i\psi_i)^{l}(\widetilde{s})\left(\int^{\infty}_{\widetilde{s}}\psi^{p}_s(s')  ( \widetilde{\nabla}^2 {\bf R})(e_{l},e_{p};e_{j_0},...,e_{j_3} )ds'\right)d\widetilde{s}\nonumber\\
&=- \int^{\infty}_s\sum^{2}_{i=1}(A_i\psi_i)^{l}(\widetilde{s})\left(  (\widetilde{\nabla}{\bf R})({e_l};e_{j_0},...,e_{j_3}) -\Gamma^{\infty,(1)}_l\right)d\widetilde{s}.
\end{align*}
It is easy to prove
 \begin{align}
&\left\|\int^{\infty}_{2^{2k_0-1}}\sum^{2}_{i=1}(\partial_i\psi_i)ds'\right\|_{F_{k}(T)}\nonumber\\
&\lesssim 2^{-\sigma k}h_{k}(\sigma)(1_{k_0+k\le 0}2^{-k}+1_{k_0+k\ge 0}2^{2k_0+k})(1+2^{2k_0+2k})^{-4}\label{kbnjnm}.
\end{align}
And recall that [\cite{BIKT}, Lemma 5.2] has shown for $s\in[2^{2j-1},2^{2j+2})$ there holds
\begin{align}\label{BVzm}
&\begin{array}{*{20}{c}}
  {{{\left\| {{P_k}(\phi_i\diamond {\phi _s})(s)} \right\|}_{{F_k}(T) \cap S_k^{\frac{1}
{2}}(T)}} } \hfill  \\
 \end{array} \\
&\lesssim \left\{ \begin{gathered}
  {2^{ - \sigma k}}{(1 + {2^{2k}}s)^{ - 4}}\left({\widetilde h_{k,s}}(\sigma )+B_1\varepsilon {2^{ - 2j}}{h_{k,s}}(\sigma )\right),\mbox{  }{\rm if}\mbox{  }k + j \ge 0 \hfill \\
 {2^{ - \sigma k}}\left( {\widetilde h_{k,s}}(\sigma )+ B_1\varepsilon  {2^{ - 2j}}{h_{ - j}}h_{ - j}(\sigma) \right),\mbox{  }{\rm if}\mbox{  }k + j \le   0 \hfill \\
\end{gathered}  \right.
\end{align}
where $\widetilde{h}_{k,s}(\sigma)$ is defined by
\begin{align*}
\widetilde{h}_{k,s}(\sigma)=2^{-j}h_{-j}\left(2^kh_k(\sigma)+2^{-j}h_{-j}(\sigma)\right).
\end{align*}
Then repeating the bilinear estimates [Lemma 5.1, \cite{BIKT}], we have
\begin{align}\label{vzgc}
\int^{\infty}_s\left\|P_k(\mathcal{U}_{00}(\phi_s\diamond \phi_i))\right\|_{F_{k}(T)\bigcap S^{\frac{1}{2}}_k(T)}ds'\lesssim (1+\varepsilon B_1)2^{-\sigma k}h_{k,s}(\sigma)(1+2^{2k_0+2k})^{-4}.
\end{align}
The $\Gamma^{\infty}$ part
\begin{align*}
\int^{\infty}_s\left\|P_k(\Gamma^{\infty}(\phi_s\diamond \phi_i))\right\|_{F_{k}(T)\bigcap S^{\frac{1}{2}}_k(T)}ds'\lesssim (1+\varepsilon B_1)2^{-\sigma k}h_{k,s}(\sigma)(1+2^{2k_0+2k})^{-4}
\end{align*}
follows by directly applying Lemma 5.2 of [\cite{BIKT}],
since $\Gamma^{\infty}$ is just a constant.

Recall the notation  $\widetilde{\mathcal{G}}^{(1)}=  (\widetilde{\nabla}{\bf R})(e;e_{j_0},e_{j_1},e_{j_2},e_{j_3})-\Gamma^{\infty,(1)}$.
For $\mathcal{U}_{01}$, applying  (\ref{3.16ab}) which says
\begin{align}\label{hvzgc}
2^{k}\|P_{k}\widetilde{\mathcal{G}}^{(1)}\|_{L^{\infty}_tL^2_x}\lesssim 2^{-\sigma k}h_{k}(\sigma)(1+2^{2k}s)^{-30},
\end{align}
and (\ref{FIL}) which says for $s\in[2^{2j-1},2^{2j+1})$
\begin{align}\label{hvzgc}
2^{\frac{k}{2}}\|P_{k}\widetilde{\mathcal{G}}^{(1)}\|_{L^4_xL^{\infty}_t}\lesssim 2^{-\sigma k}h_{k}(\sigma)[(1+2^{2k}s)^{-9}1_{j+k\ge 0}+2^{\delta|j+k|} 1_{k+j\le 0}],
\end{align}
we obtain by Lemma \ref{mJ} that
 \begin{align*}
&\|P_{k}\left((\partial_i\psi_i)\widetilde{\mathcal{G}}^{(1)}\right)\|_{F_{k}(T)}\\
&\lesssim \sum_{|k_1-k|\le 4}\|P_{k_1}\partial_i\phi_i\|_{F_{k_1}(T)}\|P_{\le k-4}\widetilde{\mathcal{G}}^{(1)}\|_{L^{\infty}}
+ \sum_{|k_2-k|\le 4,k_1\le k-4}2^{\frac{k_1}{2}}\|P_{k_1}\partial_i\phi_i\|_{F_{k_1}(T)}\|P_{k_2}\widetilde{\mathcal{G}}^{(1)}\|_{L^4_xL^{\infty}_t}\\
&+ \sum_{|k_2-k|\le 4,k_1\le k-4} 2^{k_1}\|P_{k_1}\partial_i\phi_i\|_{F_{k_1}(T)}\|P_{k_2}\widetilde{\mathcal{G}}^{(1)}\|_{L^4}  \\
&+ \sum_{|k_2-k_1|\le 8,k_1,k_2\ge k-4}\|P_{k_1}\partial_i\phi_i\|_{F_{k_1}(T)}(\|P_{k_2}\widetilde{\mathcal{G}}^{(1)}\|_{L^{\infty}}
 +2^{\frac{1}{2}k_1}\|P_{k_2}\widetilde{\mathcal{G}}^{(1)}\|_{L^4_xL^{\infty}_t}).
\end{align*}
Thus by the slow variation of envelopes we further have
\begin{align*}
\|P_{k}\left((\partial_i\psi_i)\widetilde{\mathcal{G}}^{(1)}\right)\|_{F_{k}(T)}\lesssim 2^{-\sigma k}h_{k}(\sigma)\left( 1_{k+j\ge 0}2^{k }(1+2^{2k+2j})^{-4}
+ 1_{k+j\le 0}2^{-j}2^{\delta|j+k|}\right)
\end{align*}
for $s\in[2^{2j-1},2^{2j+1})$, $k,j\in\Bbb Z$. Notice that the large constant $ \varepsilon^{-\frac{1}{4}}$ is absorbed by $\|\{h_{k}\}\|_{\ell^{\infty}}\lesssim \varepsilon^{\frac{1}{2}}$.  And notice that in the ${\rm Low\times High}$ interaction of $(\partial_i\psi_i)\widetilde{\mathcal{G}}^{(1)}$ it is possible to deduce $2^{-\sigma k}$  for $\sigma\in[0,\frac{99}{100}]$ from $\partial_i \psi_i$ due to fact that the series $\sum_{k_1\le k-4}2^{2k-\sigma k}h_{k}(\sigma)$  is summable for $\sigma<2$. Thus for $s\in[2^{2k_0-1},2^{2k_0+1})$ summing the above formula in $j\ge k_0$  yields
\begin{align*}
&\int^{\infty}_s\|P_{k}\left((\partial_i\psi_i)\widetilde{\mathcal{G}}^{(1)}\right)\|_{F_{k}(T)}ds'\lesssim  2^{-\sigma k}h_{k}(\sigma)\left( 1_{k+k_0\ge 0}2^{k+2k_0 }(1+2^{2k+2k_0})^{-4}
+ 1_{k+k_0\le 0}2^{-k} \right)
\end{align*}
which is the same as (\ref{kbnjnm}). Thus (\ref{BVzm}) and  bilinear estimates give
\begin{align*}
\int^{\infty}_s\left\|P_k(\mathcal{U}_{01}(\phi_s\diamond \phi_i))\right\|_{F_{k}(T)\bigcap S^{\frac{1}{2}}_k(T)}ds'\lesssim (1+\varepsilon B_1)2^{-\sigma k}h_{k,s}(\sigma)(1+2^{2k_0+2k})^{-4}.
\end{align*}

Therefore, it remains to estimate $\mathcal{U}_{I},\mathcal{U}_{II}$.

{\bf Step 3. Prove our lemma with a bootstrap condition.}
We first prove our lemma with additional bootstrap condition. And in the final step we will drop the bootstrap condition and finish the whole proof.

{\bf Bootstrap Assumption A.}
Assume that  for all $k,j\in\Bbb Z$, $s\in[2^{2j-1},2^{2j+1})$, there holds
\begin{align}
\|P_{k}\mathcal{U}_{I}\|_{F_{k}(T)\bigcap S^{\frac{1}{2}}_k(T)}&\le \varepsilon^{-\frac{1}{2}}(1+2^{k+j})^{-7}T_{k,j}h_{k}c^*_0\label{1YTN}\\
\|P_{k}\mathcal{U}_{II}\|_{F_{k}(T)}&\le\varepsilon^{-\frac{1}{2}}(1+2^{k+j})^{-7}T_{k,j}h_{k}c^*_0,\label{YTN}
\end{align}
where $c^*_0:=\|\{h_k\}\|_ {\ell^2}$ and we denote
\begin{align}\label{FFgB}
T_{k,j}=1_{k+j\le 0}2^{-k}+1_{k+j\ge 0}2 ^{j}.
\end{align}
Our aim for this step  is to prove $B_1$ defined by (\ref{PSD2}) satisfies
\begin{align*}
B_1\lesssim 1+\varepsilon B_1,
\end{align*}
assuming {Bootstrap Assumption A}.

For $s\in [2^{2j-1},2^{2j+1})$, $j\in\Bbb Z$, (\ref{1YTN}) and (\ref{YTN})  show $\mathcal{U}:=\mathcal{U}_{I}+\mathcal{U}_{II}$ satisfies
\begin{align}
2^{k}(1+2^{k+j})^6\|P_{k}\mathcal{U }\|_{ F_{k}(T)}
&\lesssim 1.\label{VXZb}
\end{align}

{\bf Case a. ${\rm High\times High \to Low}$.} The bound  (\ref{VXZb}) suffices to control the ${\rm High\times High}$ interaction.
For $k+k_0\ge 0$, applying the  bounds (\ref{VXZb}), (\ref{BVzm}) and (\ref{FC}) of Lemma \ref{FCq} with $\omega=\frac{1}{2}$,
one obtains that in ${\rm High\times High}$ case
 \begin{align}
&\sum_{j\ge k_0}\int^{2^{2j+1}}_{2^{2j-1}}\sum_{|k_1-k_2|\le 8, k_1,k_2\ge k-4}
{\left\| {{P_k}\left( P_{k_1}{\left( {{\phi _i} \diamond {\phi _s}} \right)P_{k_2}\mathcal{U}} \right)} \right\|_{{F_k}(T) \cap S_k^{\frac{1}
{2}}(T)}}ds'\nonumber\\
&\lesssim
{2^{ - \sigma k}}\sum_{j \ge  {k_0}}{\sum _{{k_1} \ge  k-4}}{{2^{\frac{{k - {k_1}}}
{2}}} }   {(1 + {2^{{2k_1} + 2j}})^{ -6}}{h_{ - j}}\left( {{2^{{k_1} + j}}{h_{{k_1}}}(\sigma ) + {h_{ - j}}(\sigma )} \right) \nonumber\\
  &+ {2^{ - \sigma k}}\sum_{j \ge  {k_0}}{\sum _{{k_1} \ge  k-4}} {{2^{\frac{{k - {k_1}}}
{2}}}  } B_1\varepsilon {(1 + {2^{{2k_1+2j}}})^{ -6}}{h_{{k_1},{2^{2j}}}}(\sigma ) \label{0IYC}
\end{align}
which by slow variation of envelopes is further bounded by
 \begin{align*}
&{2^{ - \sigma k}}\sum_{j \ge  {k_0}}{\sum _{{k_1} \ge  k-4}} {2^{\frac{k-k_1}
{2}}} {(1 + {2^{{k_1} + j}})^{ -10}}{2^{\delta |j- {k_0}|}}{h_{ - {k_0}}}{h_k}(\sigma )\left( {{2^{{k_1} + j + \delta \left| {{k_1} - k} \right|}} + {2^{\delta |k + j|}}} \right) \hfill \\
  &+ {2^{ - \sigma k}}\sum_{j \ge  {k_0}}{\sum _{{k_1} \ge  k-4}} {2^{\frac{k-k_1}
{2}}} {{{B}}_1}\varepsilon {(1 + {2^{{k_1} + j}})^{ -10}}{2^{\delta |j - {k_0}|}}
{2^{\delta \left| {{k_1} - k} \right|}}{h_{ - {k_0}}}{h_k}(\sigma ).
\end{align*}
Since $k_1+j\gtrsim k +j\ge k+k_0\ge  0$, it is easy to see the above formula is acceptable by
\begin{align*}
&\sum_{j\ge k_0} 2^{-\sigma k}(1+\varepsilon B_1)h_{-k_0} h_{k}(\sigma)2^{\delta|j-k_0|} 2^{\delta|k+j|}(1+2^{k+j})^{-8}\nonumber\\
&\lesssim   (1+\varepsilon B_1)2^{-\sigma k}h_{-k_0}h_{k}(\sigma) 2^{k_0+k} (1+2^{k+k_0})^{-8}.
\end{align*}
Therefore, the $k+k_0\ge 0$ case has been done for the ${\rm High\times High}$ interaction.

Assume $k+k_0\le 0$. Applying the  bounds  (\ref{BVzm}), (\ref{VXZb}) with $\sigma=0$ and (\ref{FC}) of Lemma \ref{FCq} with $\omega=\frac{1}{2}$, for $j+k\le 0$, by slow variation of envelopes we have
\begin{align*}
&\sum_{k_0\le  j\le -k}\int^{2^{2j+1}}_{2^{2j-1}}
\sum_{|k_1-k_2|\le 8, k_1,k_2\ge k-4} {\left\| {{P_k}\left(P_{k_1}( {{\phi _i}\diamond {\phi _s}} )P_{k_2}\mathcal{U} \right)} \right\|_{{F_k}(T) \cap S_k^{\frac{1}
{2}}(T)}}ds'\nonumber\\
&\lesssim\sum_{k_0\le j\le -k}\sum_{k_1\ge k-4}2^{\frac{k-k_1}{2}}2^{-\sigma k_1} (1+2^{2k_1+2j})^{-6}  h_{-j}\left(2^{k_1+j}h_{k_1}(\sigma)+ h_{-j}(\sigma)\right)\\
&+B_1\varepsilon  {2^{ - \sigma k}} \sum_{k_0\le j\le -k}\left(\sum_{k-4\le k_1\le -j}2^{\frac{k-k_1}{2}}{h_{ - j}}h_{ - j}(\sigma )+\sum_{k_1\ge -j}2^{\frac{k-k_1}{2}} (1+2^{2j+2k_1})^{-4}h_{k_1,2^{2j}}(\sigma)\right) \\
&\lesssim \sum_{ k_0 \le j\le -k}2^{-\sigma k} (1+\varepsilon B_1) h_{-j}h_{-j}(\sigma).
\end{align*}
Therefore, for $k_0+k\le 0$, the ${\rm High\times High}$ part is bounded by
 \begin{align}
&\sum_{j\ge k_0}\int^{2j+1}_{2j-1}\sum_{|k_1-k_2|\le 8, k_1,k_2\ge k-4}\|P_k(P_{k_1}(\phi_i\diamond\phi_s)P_{k_2}\mathcal{U})\|_{F_k(T)\bigcap S^{\frac{1}{2}}_k(T)}d\tau\nonumber\\
&\lesssim \sum_{  k_0\le j\le -k}C (1+2^{k+j})^{-12}2^{-\sigma k }h_{-j}h_{-j}(\sigma) 2^{(1\pm \delta)(k +j) }+\sum_{j\ge -k}(...)\label{Ket}\\
&\lesssim(1+B_1\varepsilon) \sum_{k_0\le j\le -k}2^{-\sigma k }h_{-j}h_{-j}(\sigma)+(1+B_1\varepsilon) 2^{-\sigma k }h_{k}h_{k}(\sigma)\nonumber\\
&\lesssim \sum_{k_0 \le j\le -k  }(1+B_1\varepsilon)2^{-\sigma k }h_{-j}h_{-j}(\sigma).\nonumber
\end{align}
where the $\sum_{j\ge -k}(...)$ part in (\ref{Ket}) is bounded by $(1+B_1\varepsilon)2^{-\sigma k }h_{k}h_{k}(\sigma)$ via directly using results of the $k+k_0\ge 0$ case.
Therefore, we conclude that
 \begin{align*}
&\sum_{j\ge k_0}\int^{2j+1}_{2j-1}\sum_{|k_1-k_2|\le 8, k_1,k_2\ge k-4}\|P_k(P_{k_1}(\phi_i\diamond\phi_s)P_{k_2}\mathcal{U})\|_{F_k(T)\bigcap S^{\frac{1}{2}}_k(T)}d\tau\\
&\lesssim (1+\varepsilon B_1)2^{-\sigma k}(1+2^{j+k})^{-8}h_{k,s}(\sigma).
\end{align*}

{\bf Case c. $\rm Low\times High \to High$. }
In Case c, we assume $|k-k_2|\le 4$ and $k\ge k_1+4$, i.e. $\rm Low\times High \to High$. We applied the bound
\begin{align*}
\|P_{k}(f_{k_1}g_{k_2})\|_{F_k\bigcap S^{\frac{1}{2}}_k}\lesssim 2^{k_1}\|P_{k_1}f \|_{F_{k_1}\bigcap S^{\frac{1}{2}}_{k_1}}\|P_{k_2}g \|_{F_{k_2}(T)},
\end{align*}
provided that $|k-k_2|\le 20$. To avoid too long formula, we recall the notation
\begin{align*}
{T}_{k,j}=(1_{k+j\ge 0}2^j+1_{k+j\le 0}2^{-k}).
\end{align*}
Thus by (\ref{BVzm}), (\ref{1YTN}) and (\ref{YTN}), for $j\ge k_0$, $\sigma\in[0,\frac{99}{100}]$, we obtain that  the $\rm Low\times High \to High$ part is bounded by
\begin{align*}
&\sum_{j\ge k_0}\int^{2^{2j+1}}_{2^{2j-1}}\sum_{|k_2-k|\le 4, k_1\le  k-4}\|P_k\left(P_{k_1}(\phi_s \diamond \phi_i)P_{k_2}{\mathcal{U}}\right)\|_{F_{k }(T)\bigcap S^{\frac{1}{2}}_{k }(T)} \nonumber\\
&\lesssim \sum_{j\ge k_0}\sum_{|k-k_2|\le 4}\sum_{k_1\le k-4}\int^{2^{2j+1}}_{2^{2j-1}}2^{k_1}\|P_{k_1}(\phi_s \diamond \phi_i)\|_{F_{k_1}\bigcap S^{\frac{1}{2}}_{k_1}(T)}\|P_{k_2}(\mathcal{U})\|_{F_{k_2} (T)} \\
&\lesssim  \sum_{j\ge k_0} h_{k}T_{k,j}(1+2^{j+k})^{-7} \sum\limits_{{k_1} \le k} 2^{k_1 -\sigma k_1}{{h_{ - j}}\left( {{2^{{k_1} + j}}{h_{{k_1}}(\sigma) } + {h_{ - j}(\sigma)} } \right){{(1 + {2^{{k_1} + j}})}^{ - 7}}}  \\
 & + \sum_{j\ge k_0}  h_{k}T_{k,j}(1+2^{j+k})^{-7}{B_1}\varepsilon  1_{k+j\ge 0}\sum\limits_{{k_1} \le  {-j}} 2^{k_1-\sigma k_1 }{h_{-j}}{h_{-j}}(\sigma) \\
&+ \sum_{j\ge k_0} h_{k}T_{k,j}(1+2^{j+k})^{-7}{B_1}\varepsilon
1_{k+j\ge 0}\sum\limits_{-j\le {k_1} \le k} 2^{k_1 -\sigma k_1}{{2^{{k_1} + j}}{h_{ - j}}{h_{k_1}(\sigma)}{{(1 + {2^{k_1 + j}})}^{ - 7}}} \\
&+ \sum_{j\ge k_0}  h_{k}T_{k,j}(1+2^{j+k})^{-7}{B_1} \varepsilon
 1_{k+j\le 0}\sum\limits_{ {k_1} \le k} 2^{k_1-\sigma k_1 } {h_{-j}}{h_{-j}}(\sigma).
\end{align*}
Therefore, for  $k+k_0\ge 0$ we conclude
\begin{align*}
&\sum_{j\ge k_0} \int^{2^{2j+1}}_{2^{2j-1}}\sum_{|k_2-k|\le 4, k_1\le k-4}\|P_k\left(P_{k_1}(\phi_s \diamond \phi_i)P_{k_2}{\mathcal{U}}\right)\|_{F_{k }(T)\bigcap S^{\frac{1}{2}}_{k }(T)} \nonumber\\
&\lesssim 2^{-\sigma k}\sum_{j\ge k_0}
(1+B_1\varepsilon) {(1 + {2^{k + j}})^{ - 7}}2^{\delta|j-k_0|} {h_{ - k_0}}{h_{{k}}}(\sigma)\\
&\lesssim {2^{ - \sigma k}}\left(1+B_1\varepsilon\right){(1 + {2^{2k + 2j}})^{ - 4}}h_{k,2^{2k_0}}(\sigma).
\end{align*}
And for $k+k_0\le 0$, we also have
\begin{align*}
&\sum_{j\ge k_0}\sum_{|k_2-k|\le 8, k_1\le k}\int^{2^{2j+1}}_{2^{2j-1}}\|P_k\left(P_{k_1}(\phi_s \diamond \phi_i)P_{k_2}{\mathcal{U}}\right)\|_{F_{k }(T)\bigcap S^{\frac{1}{2}}_{k }(T)} \\
&\lesssim {2^{ - \sigma k}}\left(1+B_1\varepsilon ^{\frac{1}{2}}\right){(1 + {2^{2k + 2j}})^{ - 4}}h_{k,2^{2k_0}}(\sigma).
\end{align*}
Thus the $\rm Low\times High$ part has been done for $\mathcal{U}$ as well.

Therefore, combining the three cases, we summarize
\begin{align}
\|P_k(A_i(s))\|_{F_k\bigcap S^{\frac{1}{2}}_k}&\lesssim  (\varepsilon^{\frac{1}{2}} B_1+1)2^{-\sigma k } (1+2^{k+k_0})^{-8}   h_{k,2^{2k_0}}(\sigma)\label{1ABC}.
\end{align}
Then (\ref{1ABC}) shows
\begin{align}
B_1 \lesssim  \varepsilon^{\frac{1}{2}} B_1 +1.
\end{align}
Hence $B_1 \lesssim 1$. So, we have obtained our lemma for $\sigma\in[0,\frac{99}{100}]$ with assuming Bootstrap Assumption A.

{\bf Step 4.}
In this step, we prove our lemma remains valid as if we drop the Bootstrap condition A in (\ref{YTN}), (\ref{1YTN}).
First, we prove a claim.

{\bf Claim A.} If (\ref{1YTN})-(\ref{YTN}) hold, then for all $k,j\in \Bbb Z$, $\sigma\in[0,\frac{99}{100}]$, $s\in[2^{2j-1},2^{2j+1})$,
\begin{align}
\|P_{k}\mathcal{U}_{I}\|_{F_{k_2}(T)\bigcap S^{\frac{1}{2}}_k(T)}&\le c^*_0 2^{-\sigma k}(1+2^{k+j})^{-7}  T_{k,j}h_{k}(\sigma)\label{4YTN}\\
\|P_{k}\mathcal{U}_{II}\|_{F_{k_2}(T)}&\le c^*_02^{-\sigma k}(1+2^{k+j})^{-7}T_{k,j}h_{k}(\sigma).\label{3YTN}
\end{align}
Recall the definition of $\mathcal{U}_{I}$
\begin{align*}
\mathcal{U}_{I}&=-\Gamma^{\infty,(1)}_{l}\int^{\infty}_s\sum_{i=1,2}(A_i\psi_i)^{l}(s')ds',
\end{align*}
For $U_{II}$, it is better to use
\begin{align*}
\mathcal{U}_{II}= -\int^{\infty}_s\sum_{i=1,2}(A_i\psi_i)^{p}(s')\left[  (\widetilde{\nabla}{\bf R})(e_p;e_{j_0},e_{j_1},e_{j_2},e_{j_3}) -\Gamma^{\infty,(1)}_p\right] ds'.
\end{align*}
Recall the notation
\begin{align*}
\widetilde{\mathcal{G}}^{(1)}=(\widetilde{\nabla} {\bf R})(e ;e_{j_0},...,e_{j_3})-\Gamma^{\infty,(1)}.
\end{align*}
Moreover, we have by (\ref{3.16ab}) and $c^{*}_0:=\|\{h_k\}\|_{\ell^2}$ that
\begin{align}\label{3Jgvm}
\|P_k(\widetilde{\mathcal{G}}^{(1)})(s)\|_{L^{\infty}_{t,x}}\lesssim  2^{-\sigma k} h_{k}(\sigma)(1+s2^{2k})^{-20}, \mbox{ }\forall \sigma\in[0,\frac{99}{100}].
\end{align}
Thus in order to prove Claim A for $\mathcal{U}_{II}$, it suffices to prove
\begin{align}\label{Jgv}
\int^{\infty}_s\|(A_i\psi_i)\widetilde{\mathcal{G}}^{(1)} \|_{F_k(T)}ds'\lesssim
(1+2^{k}s^{\frac{1}{2}})^{-7}c^*_0T_{k,j}.
\end{align}
The bound claimed for $\mathcal{U}_{I}$ is easier to verify. Since now $B_1 \lesssim 1$,
applying bilinear Lemma 8.2 to $A_i\psi_i$, one has for $j+k\ge 0$, $s\in[2^{2j-1},2^{2j+1})$
\begin{align}
&\|(A_i\psi_i)\|_{F_k(T)\bigcap S^{\frac{1}{2}}_{k_1}(T)}\label{poknjuy}\\
&\lesssim \left(1_{k+j\ge 0}2^{-j}+1_{k+j\le 0}2^{\frac{k-j}{2}}\right)(1+2^{2k+2j})^{-4}2^{-\sigma k}c^*_0 h_{k,s}(\sigma).\nonumber
\end{align}
Summing the above formula w.r.t. $j\ge k_0$, we get
\begin{align}\label{1Jgvm}
\int^{\infty}_{s}\left\| A_i\psi_i \right\|_{{F_k}(T) \cap S_k^{\frac{1}
{2}}(T)}ds'\lesssim c^*_0 2^{-\sigma k}T_{k,k_0}(1+2^{k_0+k})^{-7}h_{k}(\sigma).
\end{align}
Thus Claim A has been verified for $\mathcal{U}_{I}$.

For $\mathcal{U}_{II}$,  we will use the following inequality (see (\ref{2Jgvm}))
\begin{align*}
\|P_k(P_{k_1}f P_{k_2} g)\|_{F_k(T)}\lesssim \|P_{k_2}g \|_{L^{\infty}_x}\|P_{k_1} f\|_{F_{k_1}(T)\bigcap S^{\frac{1}{2}}_{k_1}(T)}.
\end{align*}
Then by Littlewood-Paley bilinear decomposition,
\begin{align*}
&\|P_k ( (A_i\psi_i) \widetilde{\mathcal{G}}^{(1)})\|_{F_k(T)}\lesssim \sum_{|k_1-k|\le 4}\|P_{k_1} (A_i\psi_i)\|_{F_{k_1}(T)\bigcap S^{\frac{1}{2}}_{k_1}(T)}\|P_{\le k-4} \widetilde{\mathcal{G}}^{(1)}\|_{ L^{\infty} }\\
&+\sum_{|k_1-k_2|\le 8,k_1,k_2\ge k-4}\|P_{k_1} (A_i\psi_i)\|_{F_{k_1}(T)\bigcap S^{\frac{1}{2}}_{k_1}(T)}\|P_{ k_2} \widetilde{\mathcal{G}}^{(1)}\|_{ L^{\infty} }\\
&+\sum_{|k_2-k|\le 4,k_1 \le  k-4}\|P_{k_1}(A_i\psi_i)\|_{F_{k_1}(T)\bigcap S^{\frac{1}{2}}_{k_1}(T)}\|P_{ k_2} \widetilde{\mathcal{G}}^{(1)}\|_{ L^{\infty} }.
\end{align*}
Thus by (\ref{poknjuy}),  the ${\rm High\times Low}$ part of $(A_i\psi_i)\widetilde{\mathcal{G}}^{(1)}$ is dominated by
 \begin{align*}
&\sum_{|k-k_1|\le 4}\|P_k (P_{k_1 }(A_i\psi_i)P_{\le k-4}\widetilde{\mathcal{G}}^{(1)})\|_{F_k(T)}\\
&\lesssim c^*_0{2^{ - \sigma k}}\left(1_{k+j\le 0}2^{\frac{k-j}{2}+\delta|k+j|} {h_k}(\sigma )+1_{k+j\ge 0}2^{-j}(1+2^{k+j})^{-7}h_{k}(\sigma)h_{-j}\right).
\end{align*}
Summing in $j\ge k_0$ yields
 \begin{align*}
&\sum_{j\ge k_0}2^{2j}\sum_{|k-k_1|\le 4}\|P_k (P_{k_1 }(A_i\psi_i)P_{\le k}\widetilde{\mathcal{G}}^{(1)})\|_{F_k(T)}
\\
&\lesssim c^*_0 2^{-\sigma k}h_{k}(\sigma) \left(1_{k+k_0\ge 0}2^{k_0}(1+2^{k+k_0})^{-7}
+   1_{k+k_0\le 0}2^{-k} \right).
\end{align*}
Using (\ref{3Jgvm}) and (\ref{poknjuy}), the $\rm High\times High$ part of $(A_i\psi_i)\widetilde{\mathcal{G}}^{(1)}$ is dominated by
 \begin{align*}
&\sum_{|k_2-k_1|\le 8,k_1,k_2\ge k-4}\|P_k (P_{k_1 }(A_i\psi_i)P_{k_2}\widetilde{\mathcal{G}}^{(1)})\|_{F_k(T)}\\
&\lesssim  c^*_0 {1_{k + j \ge  0}}\sum\limits_{{k_1} \ge  k-4} {{2^{ - \sigma {k_1}}}(1 + {{\text{2}}^{2j + 2k}})^{ - 7}{2^{{k_1}}}{h_{{k_1}}}(\sigma )} \\
 &+c^*_0 {1_{k + j \le 0}}\left[{\sum _{k-4 \le  {k_1} \le   - j}}2^{\frac{k_1-j}{2}}{2^{\delta |{k_1} + j|}}{2^{ - \sigma {k_1}}}{h_{{k_1}}}(\sigma ) + {\sum _{{k_1} \ge - j}}{2^{ - \sigma {k_1}}} (1+{2} ^{2j + 2k})^{ - 7}{2^{{k_1}}}{h_{{k_1}}}(\sigma )\right]\\
&\lesssim c^*_0 1_{k+j\ge 0} 2^{-j}(1+2^{j+k})^{-10}2^{-\sigma k}  h_{k}(\sigma)+
c^*_01_{k+j\le 0} 2^{-j}2^{\delta|k+j|}2^{-\sigma k}  h_{k}(\sigma).
\end{align*}
Summing in $j\ge k_0$ also gives
 \begin{align*}
&\sum_{j\ge k_0}2^{2j}\sum_{|k_2-k_1|\le 8,k_1,k_2\ge k-4}\|P_k (P_{k_1 }(A_i\psi_i)P_{k_2}\widetilde{\mathcal{G}}^{(1)})\|_{F_k(T)}\\
&\lesssim c^*_0 2^{-\sigma k} 1_{k+k_0\ge 0}2^{k_0}(1+2^{k+k_0})^{-7}h_{k}(\sigma)
+c^*_0 1_{k+k_0\le 0}2^{-k}{2^{ - \sigma k}} {h_k}(\sigma ).
\end{align*}
Then using (\ref{3Jgvm}), the $\rm Low\times High$ part of $(A_i\psi_i)\widetilde{\mathcal{G}}^{(1)}$ is bounded as
 \begin{align*}
&\sum_{|k-k_2|\le 4,k_1\le k-4}\|P_k (P_{k_1 }(A_i\psi_i)P_{k_2}\widetilde{\mathcal{G}}^{(1)})\|_{F_k(T)}\\
&\lesssim  h_{k}(\sigma)2^{-\sigma k}1_{k+j\le 0}\sum_{k_1\le k} h_{k_1} 2^{\frac{1}{2}(k_1-j)}2^{\delta|k_1+j|} \\
&+h_{k}(\sigma) 2^{-\sigma k}(1+2^{2j+2k})^{-20}1_{k+j\ge 0}\left[\sum_{-j\le k_1\le k}h_{k_1} 2^{k_1 }(1+2^{2j+2k_1})^{-4}\right]\\
 &+h_{k}(\sigma)2^{-\sigma k}(1+2^{2j+2k})^{-20}1_{k+j\ge 0}\left[\sum_{k_1\le -j} h_{k_1} 2^{\frac{k_1-j}{2}}2^{\delta|k_1+j|}\right]\\
&\lesssim c^*_0 2^{-\sigma k}1_{k+j\ge 0} 2^{-j}(1+2^{j+k})^{-7}2^{-\sigma k}  h_{k}(\sigma)+
c^*_02^{-\sigma k}1_{k+j\le 0} 2^{\frac{k-j}{2}}2^{\delta|k+j|}2^{-\sigma k}  h_{k}(\sigma).
\end{align*}
Summing in $j\ge k_0$  as well yields
 \begin{align*}
&\sum_{j\ge k_0}2^{2j}\sum_{|k-k_2|\le 4,k_1\le k+4}\|P_k (P_{k_1 }(A_i\psi_i)P_{k_2}\widetilde{\mathcal{G}}^{(1)})\|_{F_k(T)}\\
&\lesssim c^*_0 2^{-\sigma k} 1_{k+k_0\ge 0}2^{k_0}(1+2^{k+k_0})^{-7}h_{k}(\sigma)
+c^*_0 1_{k+k_0\le 0}2^{-k}{2^{ - \sigma k}} {h_k}(\sigma ).
\end{align*}
Thus back to the LHS of (\ref{Jgv}),
we conclude if (\ref{YTN}) holds, then
\begin{align*}
\|P_k\mathcal{U}_{II}\|_{F_k(T)}\lesssim c^*_0 2^{-\sigma k} 1_{k+k_0\ge 0}2^{k_0}(1+2^{k+k_0})^{-7}h_{k}(\sigma)
+c^*_0  1_{k+k_0\le 0}2^{-k}{2^{ - \sigma k}} {h_k}(\sigma ).
\end{align*}
Particularly (\ref{YTN}) holds, thus proving Claim A.

Now we are ready to prove our lemma with (\ref{1YTN}) and (\ref{YTN})  being dropped. Define the function on $T'\in [0,T]$
\begin{align*}
\Phi(T')&=\sum_{\{j_c\}\subset\{1,...,2n\}}\sup\limits_{k,j\in\Bbb Z}\sup\limits_{s\in [2^{2j-1},2^{2j+1}]}(c^*_0)^{-1}(1+2^{k}2^{\frac{s}{2}})^{7}T^{-1}_{k,j}  h^{-1}_k \\
&\times\left( \|P_{k}\mathcal{U}_{I}\|_{F_{k}(T')}+\|P_{k}\mathcal{U}_{II}\|_{F_{k}(T')}\right).
\end{align*}
Using Lemma \ref{FFF} and Sobolev embeddings, we find that $\Phi(T')$ is a continuous function in $T'\in [0,T]$.
Then in order to prove our lemma, it suffices to prove $\Phi\lesssim 1$. It is easy to see $\Phi$ is also an increasing continuous  function on $T'\in [0,T]$.  And Claim A shows
\begin{align*}
\Phi(T')\le \varepsilon^{-\frac{1}{2}}\Longrightarrow \Phi(T')\lesssim 1.
\end{align*}
Hence it suffices to verify
\begin{align*}
\mathop {\lim }\limits_{T' \to 0} \Phi (T')\lesssim 1.
\end{align*}
This reduces to prove for all $j,k\in\Bbb Z,s\in[2^{2j-1},2^{2j+1})$
 \begin{align*}
&\sum_{\{j_c\}\subset\{1,...,2n\}}\left\|P_{k}\int^{\infty}_s(A_i\psi_i)^{q}
\left[ (\widetilde{\nabla}{\bf R})({e_q};e_{j_0},...,e_{j_3})-\Gamma^{\infty,(1)}_q\right]\right\|_{L^2_x}
 +\int^{\infty}_{s}\|P_{k}(A_i\psi_i)\|_{L^2_x}ds' \\
&\lesssim c^*_0 (1+2^{k}2^{\frac{s}{2}})^{-7}T_{k,j}h_k,
\end{align*}
where all these fields $\psi_i$, matrices $A_i$ are associated with the heat flow with initial data $u_0$. This can be proved by applying results of Section 3. In fact, by the definition of $h_{k}(\sigma)$ one has
\begin{align*}
2^{\sigma k}\|\phi_i\upharpoonright_{s=0}\|_{L^{\infty}_tL^{2}_x}\le h_k(\sigma),
\end{align*}
if $\sigma\in[0,\frac{99}{100}]$.
Then by Proposition \ref{7.2} with $\eta_k(\sigma)=h_{k}(\sigma)$  we get
\begin{align}\label{njnnhbuy78}
(1+2^{2k}s)^{29}\|P_{k}A_i(s)\|_{L^{\infty}_tL^{2}_x}\le 2^{-\sigma k}h_{k,s}(\sigma).
\end{align}
And proof of Proposition \ref{7.2} shows
\begin{align}\label{njnnhbuy88}
(1+2^{2k}s)^{30}\|P_{k}\psi_i(s)\|_{L^{\infty}_tL^{2}_x}\le 2^{-\sigma k}h_{k}(\sigma).
\end{align}
if $\sigma\in[0,\frac{99}{100}]$.
Then by (\ref{njnnhbuy88}), (\ref{njnnhbuy78}) and bilinear Littlewood-Paley decomposition, one obtains
\begin{align*}
(1+2^{2k}s)^{28}\int^{\infty}_s\|P_k(A_i\psi_i)\|_{L^{\infty}_tL^2_x}ds'\lesssim \|\{h_k\}\|_{\ell^2}T_{k,j} h_{k}(\sigma)2^{-\sigma k}
\end{align*}
for $s\in[2^{2j-1},2^{2j+1})$, $k,j\in\Bbb Z$. The left is to prove
\begin{align}
(1+2^{2k}s)^{28}\int^{\infty}_s\|P_k[A_i\psi_i\widetilde{\mathcal{G}}^{(1)}]\|_{L^{\infty}_tL^2_x}ds'\lesssim \|\{h_k\}\|_{\ell^2}T_{k,j} h_{k}(\sigma)2^{-\sigma k}, \mbox{ }\sigma\in[0,\frac{99}{100}].
\end{align}
This follows by  (\ref{njnnhbuy78}), (\ref{njnnhbuy88}), (\ref{chui}) and bilinear Littlewood-Paley decomposition as well.
\end{proof}

\begin{Remark}
Checking the proof of Lemma 4.1, we see the range of $\sigma\in[0,\frac{99}{100}]$ was only used in the $\rm Low \times High$ interaction of $(\partial_i\psi_i)\widetilde{\mathcal{G}}^{(1)}$, $(A_i\psi_i)(\mathcal{U}_I+\mathcal{U}_{II})$ of Step 2 and Step 3 respectively.
\end{Remark}

\begin{Lemma}\label{mJ}
If $|k_1-k|\le 4$, then
\begin{align}\label{1mJgvm}
\|P_k( P_{k_1} fP_{\le k-4}g )\|_{F_k(T)}\lesssim \|P_{k_1} g\|_{F_{k_1}(T)}\|P_{\le k-4}g \|_{L^{\infty}}.
\end{align}
If $|k_2-k_1|\le 8$, $k_1,k_2\ge k-4$, then
\begin{align}\label{2mJgvm}
\|P_k( P_{k_1}f P_{k_2} g)\|_{F_k(T)}\lesssim \|P_{k_1}f\|_{F_{k_1}(T)}(\|P_{k_2}g \|_{L^{\infty}}+2^{\frac{1}{2}k_2}\|P_{k_2}g \|_{L^4_xL^{\infty}_t}).
\end{align}
If $|k_2-k|\le 4$, $k_1\le k-4$, then
\begin{align}\label{3mJgvm}
\|P_k( P_{k_1}f P_{k_2} g)\|_{F_k(T)}\lesssim  2^{\frac{k_1}{2}}\|P_{k_1} f\|_{F_{k_1}(T)}\|P_{k_2}g \|_{L^4_xL^{\infty}_t}+2^{k_1}\|P_{k_1} f\|_{F_{k_1}(T)} \|P_{k_2}g \|_{L^4}.
\end{align}
For any $k_1,k_2,k\in\Bbb Z$, one has
\begin{align}\label{2Jgvm}
\|P_k(P_{k_1}f P_{k_2} g)\|_{F_k(T)}\lesssim\|P_{k_1} f\|_{F_{k_1}(T)\bigcap S^{\frac{1}{2}}_{k_1}(T)} \|P_{k_2}g \|_{L^{\infty}}.
\end{align}
\end{Lemma}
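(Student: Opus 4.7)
The plan is to unfold the $F_k(T)$ norm via its atomic decomposition and then establish the four bilinear bounds componentwise against the four block norms that define $F_k^0$ (namely $L^\infty_tL^2_x$, $2^{-k/2}L^4_xL^\infty_t$, $L^4$, and the lateral piece $2^{-k/2}L^{2,\infty}_{\mathbf{e},W_{k+40}}$). Since $F_k(T)$ is the infimum of $\sum 2^{n_l}\|g_{n_l}\|_{F_{k+n_l}^0}$ over atomic decompositions, it suffices in each of the four cases to exhibit $P_k(P_{k_1}f\,P_{k_2}g)$ (or $P_k(P_{k_1}f\,P_{\le k-4}g)$) as a single atom at a suitable scale $k+n$ with $2^n$ times the $F^0_{k+n}$ norm controlled by the stated right-hand side. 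In all four cases we also have the freedom to decompose $P_{k_1}f$ itself into its atoms; by linearity we may therefore assume $P_{k_1}f$ already lives in $F^0_{k_1+n}$ at some single dyadic scale, so the problem reduces to establishing an $F^0_{k}$ estimate for the frequency-localized output.

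For case (\ref{1mJgvm}) (i.e.\ $|k_1-k|\le 4$, with the other factor at frequency $\le k-4$), each of the four blocks of $F^0_k$ is estimated by H\"older: in the spatial-time mixed blocks $L^\infty_tL^2_x$, $L^4$, $L^4_xL^\infty_t$, and $L^{2,\infty}_{\mathbf{e},W_{k+40}}$, we place $P_{k_1}f$ in the corresponding norm (absorbed by $\|P_{k_1}f\|_{F^0_{k_1}}$) and $P_{\le k-4}g$ in $L^\infty$. The lateral block needs a small observation: multiplication by an $L^\infty$ function essentially preserves each atomic piece $L^{2,\infty}_{\mathbf{e},\lambda}$ since the Galilean boost $G_{\lambda\mathbf{e}}$ commutes with pointwise multiplication in modulus, so $\|P_{k_1}f\cdot P_{\le k-4}g\|_{L^{2,\infty}_{\mathbf{e},\lambda}}\le \|P_{k_1}f\|_{L^{2,\infty}_{\mathbf{e},\lambda}}\|P_{\le k-4}g\|_{L^\infty}$. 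For case (\ref{2mJgvm}) (High$\times$High) the output frequency $2^k$ is below the input frequencies, so we may estimate each atom $P_k(P_{k_1}f\cdot P_{k_2}g)$ in $F^0_k$ by the corresponding norm of $P_{k_1}f$ times either $\|P_{k_2}g\|_{L^\infty}$ or, for the two blocks involving $L^4_x$ in $x$ or mixed, the Bernstein-type pickup $2^{k_2/2}\|P_{k_2}g\|_{L^4_xL^\infty_t}$ after an $x$-dimensional frequency-support argument. Expressed as a single atom at level $k+n$ with $n=k_1-k$, this contributes the factor $2^{n}$ that is exactly absorbed by $\|P_{k_1}f\|_{F_{k_1}^0}$ via the definition of $F_k$.

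Case (\ref{3mJgvm}) (Low$\times$High, $k_1\le k-4$, $|k-k_2|\le 4$) is the one where one must trade a genuine loss. The output is at frequency $2^k\sim 2^{k_2}$, so we regard the product as an atom at scale $k$; $P_{k_2}g$ contributes in the natural block norms of $F^0_k$, while $P_{k_1}f$ must be taken in $L^\infty$ (for the $L^\infty_tL^2_x$ and $L^{2,\infty}_{\mathbf{e},W}$ blocks) or in $L^\infty_tL^4_x$ (for the $L^4$ block), producing a Bernstein factor $2^{k_1}$ on $\|P_{k_1}f\|_{L^\infty_tL^2_x}\le \|P_{k_1}f\|_{F_{k_1}^0}$; for the $L^4_xL^\infty_t$ block we use the two-dimensional Bernstein $\|P_{k_1}f\|_{L^\infty_{t,x}}\lesssim 2^{k_1}\|P_{k_1}f\|_{L^\infty_tL^2_x}$ paired with $P_{k_2}g$ in $L^4_xL^\infty_t$, or, more efficiently, use $\|P_{k_1}f\|_{L^{\infty}_t L^4_x}\lesssim 2^{k_1/2}\|P_{k_1}f\|_{L^\infty_tL^2_x}$ paired with $\|P_{k_2}g\|_{L^4_xL^\infty_t}$; this explains the two terms in the statement, the $2^{k_1/2}$ term coming from $\|P_{k_2}g\|_{L^4_xL^\infty_t}$ and the $2^{k_1}$ term from $\|P_{k_2}g\|_{L^4}$.

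Finally for case (\ref{2Jgvm}) (the general version), the only block in $F^0_k$ that is not immediately handled by the $F_{k_1}(T)$ norm together with the $L^\infty$ bound on $P_{k_2}g$ is the lateral $L^{2,\infty}_{\mathbf{e},W_{k+40}}$ piece; here we use the inclusion $\|P_{k_1}f\|_{L^2_xL^\infty_t}\le \|P_{k_1}f\|_{S^{1/2}_{k_1}}$ recorded in Section~2, followed by H\"older in the $t$-direction inside each lateral slab, together with the invariance of the Galilean-boosted blocks under multiplication by an $L^\infty$ factor. The main technical subtlety — and the step I expect to be the only nontrivial one — is this lateral $L^{2,\infty}_{\mathbf{e},W_{k+40}}$ piece: the atomic decomposition at Galilean shifts $\lambda\in W_{k+40}$ must be shown to be preserved under multiplication by $P_{k_2}g$ when $|k_2-k|\le 4$ or $k_2\le k-4$, which proceeds by distributing the $L^\infty$ bound onto each Galilean atom $f_\lambda$. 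All the other blocks collapse to H\"older inequalities and two-dimensional Bernstein bounds, exactly as in the standard paraproduct calculus, and the four stated inequalities follow by assembling the atoms and optimizing the atomic decomposition of $P_{k_1}f$ in $F_{k_1}(T)$.
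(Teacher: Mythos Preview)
Your approach for \eqref{1mJgvm} matches the paper (which simply cites \cite{BIKT}), but for \eqref{2mJgvm}, \eqref{3mJgvm}, and \eqref{2Jgvm} the paper takes a much shorter route that you have overlooked. Rather than working with all four blocks of $F^0_k$ and the atomic decomposition, the paper invokes the embedding
\[
\|P_k h\|_{F_k(T)}\lesssim \|h\|_{L^2_xL^\infty_t}+\|h\|_{L^4_{t,x}}
\]
(recorded in Section~2, from \cite{BIKT}), together with $\|P_kh\|_{L^4}\le\|h\|_{F_k}$ and $\|P_kh\|_{L^2_xL^\infty_t}\le\|h\|_{S^{1/2}_k}$. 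Then each of \eqref{2mJgvm}--\eqref{2Jgvm} is a two-line H\"older estimate in $L^2_xL^\infty_t$ and $L^4$, with Bernstein supplying the powers of $2^{k_1}$ in \eqref{3mJgvm}. The lateral block $L^{2,\infty}_{\mathbf{e},W_{k+40}}$ never appears: this is precisely the point of the embedding.

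Your direct four-block approach is not merely longer; it has a genuine gap in \eqref{2mJgvm}. When $k_1\gg k$ (High$\times$High$\to$Low), putting the output as a single atom at level $k+n$ with $n=k_1-k$ costs a factor $2^{n}=2^{k_1-k}$ in the $F_k$ norm by definition, and this factor is \emph{not} absorbed by $\|P_{k_1}f\|_{F^0_{k_1}}$ as you claim --- the right-hand side of \eqref{2mJgvm} carries no such factor. Alternatively, placing the atom at level $k$ forces you to compare the lateral index sets $W_{k+40}$ and $W_{k_1+40}$ and the prefactors $2^{-k/2}$ versus $2^{-k_1/2}$, which again loses $2^{(k_1-k)/2}$. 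Similarly, in \eqref{2Jgvm} your plan to control the lateral block via $\|P_{k_1}f\|_{L^2_xL^\infty_t}$ does not go through directly, since $L^2_xL^\infty_t$ and $L^{2,\infty}_{\mathbf e}$ are genuinely different mixed norms. The paper's use of $S^{1/2}_{k_1}$ is to feed the $L^2_xL^\infty_t$ half of the embedding above, not to control a lateral piece. Replace your block-by-block argument for \eqref{2mJgvm}--\eqref{2Jgvm} with the two-norm embedding and the proof becomes immediate.
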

\begin{proof}
 (\ref{1mJgvm}) has been given in \cite{BIKT}. And (\ref{2mJgvm}) follows by H\"older and [(3.17), \cite{BIKT}]
which says
\begin{align*}
\|P_k f\|_{F_k(T)}\lesssim \|P_k  f\|_{L^{2}_xL^{\infty}_t}+\|P_k f\|_{L^{4}_{t,x}}.
\end{align*}
(\ref{3mJgvm}) follows by the same reason with additionally using Bernstein inequality.
Moreover, by definition
\begin{align*}
\|P_k f\|_{L^{4}_{t,x}}&\le \|f\|_{F_{k}(T)},\mbox{ }\|P_k f\|_{L^{2}_xL^{\infty}_t}\le  \|f\|_{S^{\frac{1}{2}}_{k}(T)}.
\end{align*}
Thus one obtains
\begin{align*}
\|P_k (P_{k_1 }fP_{k_2}g)\|_{F_k(T)}&\lesssim \|P_{k_1} fP_{k_2} g\|_{L^{2}_xL^{\infty}_t}+\|P_{k_1} fP_{k_2} g\|_{L^{4}_{t,x}}\\
&\lesssim \left(\|P_{k_1} f\|_{L^{2}_xL^{\infty}_t}+\| P_{k_1} f\|_{L^{4}_{t,x}}\right)\|P_{k_2}g\|_{L^{\infty}}\\
&\lesssim \|P_{k_2}g\|_{L^{\infty}} \|P_{k_1} f\|_{F_{k_1}(T)\bigcap S^{\frac{1}{2}}_{k_1}(T)}.
\end{align*}
\end{proof}

The proof of Lemma \ref{connection} yields
\begin{Corollary}\label{kjl}
Under the assumptions of Proposition \ref{Parabolic} and (\ref{FIL}), for $s\in[2^{2j-1},2^{2j+1}]$, $\sigma\in[0,\frac{99}{100}]$, $j,k\in\Bbb Z$, there holds
\begin{align*}
\|P_{k}(\mathcal{\widetilde{G}})\|_{F_{k}(T)} \lesssim 2^{-\sigma k} h_k(\sigma)T_{k,j}(1+2^{j+k})^{-7}.
\end{align*}
where $T_{k,j}$ is defined by (\ref{FFgB}).
When $s=0$, we have
\begin{align*}
\|P_{k}(\mathcal{\widetilde{G}})\upharpoonright_{s=0}\|_{F_{k}(T)} \lesssim 2^{-\sigma k} h_k(\sigma)2^{-k}.
\end{align*}
\end{Corollary}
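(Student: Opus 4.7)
The plan is to exploit the decomposition of $\widetilde{\mathcal{G}}$ already performed in the proof of Lemma \ref{connection}. Recall that the refined dynamical separation in Section \ref{VNM} together with $\phi_s = \sum_{i=1,2} D_i\phi_i = \sum_{i=1,2}(\partial_i\phi_i + A_i\phi_i)$ and the further splitting of $(\widetilde{\nabla}\mathbf{R})(e;\cdot)$ into its limit $\Xi^\infty$ and the remainder $\widetilde{\mathcal{G}}^{(1)}$ yields
\[
\widetilde{\mathcal{G}} = \mathcal{U}_{00} + \mathcal{U}_{01} + \mathcal{U}_I + \mathcal{U}_{II},
\]
which is exactly the four‐piece decomposition that Step 2 of Lemma \ref{connection} controls. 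Since the corollary asserts the same bound (with $T_{k,j}(1+2^{j+k})^{-7}$) that appears in (\ref{kbnjnm}) and Claim A (i.e.\ (\ref{4YTN})--(\ref{3YTN})), the plan is simply to invoke those bounds componentwise.

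First I would note that $\mathcal{U}_{00}$ is essentially $\Xi^\infty_l \int^\infty_s (\partial_i\phi_i)\,ds'$, for which the bound (\ref{kbnjnm}) applied with the envelope $h_k(\sigma)$ gives
\[
\|P_k \mathcal{U}_{00}\|_{F_k(T)} \lesssim 2^{-\sigma k} h_k(\sigma)\, T_{k,j}(1+2^{k+j})^{-7}.
\]
Next, for $\mathcal{U}_{01} = \int^\infty_s (\partial_i\phi_i)\widetilde{\mathcal{G}}^{(1)}\,ds'$, the bilinear Littlewood--Paley analysis carried out inside Step 2 of Lemma \ref{connection} (combining (\ref{3.16ab}), (\ref{FIL}) via Lemma \ref{mJ}) already produced an estimate of precisely the same form, so this piece is acceptable. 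For $\mathcal{U}_I$ and $\mathcal{U}_{II}$, Claim A of Step 4 in Lemma \ref{connection} established exactly the bounds (\ref{4YTN}) and (\ref{3YTN}) (now unconditionally, since $B_1\lesssim 1$), which are again of the required form after absorbing the universal constant $c^*_0 = \|\{h_k\}\|_{\ell^2}\lesssim \varepsilon^{1/2}$. Summing the four contributions yields the first conclusion.

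For the $s=0$ case one takes $j\to -\infty$, so that the factor $T_{k,j}(1+2^{j+k})^{-7}$ reduces to $2^{-k}$ (since eventually $k+j\le 0$ and the exponential factor becomes $1$), producing the stated bound $2^{-\sigma k}h_k(\sigma)2^{-k}$. I do not expect any serious obstacle here, because this corollary is genuinely a byproduct of the work already done for the connection coefficient estimate: the key inputs—the refined dynamical separation, the parabolic decay of $\widetilde{\mathcal{G}}^{(1)}$ in $L^\infty_t L^2_x$ and of $\widetilde{\mathcal{G}}^{(1)}$ in $L^4_xL^\infty_t$ via the bootstrap hypothesis (\ref{FIL}), and the $F_k$-bilinear estimates of Lemma \ref{mJ}—have all been assembled inside the proof of Lemma \ref{connection}. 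The only subtlety to double-check is that one does not need the extra factor $\phi_i\diamond$ (present in the $A_i$ representation) when bounding $\widetilde{\mathcal{G}}$ directly, which means the $\mathrm{Low}\times\mathrm{High}$ summation in the $\mathcal{U}_{01}$ piece requires the restriction $\sigma\le \tfrac{99}{100}$ so that $\sum_{k_1\le k-4}2^{2k_1-\sigma k_1}h_{k_1}(\sigma)$ is summable; this is already noted in the remark following Lemma \ref{connection}, so the stated range of $\sigma$ is exactly what the argument permits.
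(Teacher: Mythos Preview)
Your proposal is correct and follows essentially the same route as the paper: the paper's proof simply cites the four bounds for $\mathcal{U}_{00},\mathcal{U}_{01},\mathcal{U}_{I},\mathcal{U}_{II}$ established inside Lemma \ref{connection} and sums them via the decomposition $\widetilde{\mathcal{G}}=\mathcal{U}_{00}+\mathcal{U}_{01}+\mathcal{U}_{I}+\mathcal{U}_{II}$, then uses the elementary inequality $(1+2^{j+k})^{-1}T_{k,j}\le 2^{-k}$ (valid for all $j,k$) to deduce the $s=0$ statement. Your limit argument $j\to-\infty$ is the same observation packaged slightly differently.
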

\begin{proof}
Lemma \ref{connection} gives
\begin{align*}
\|P_{k}(\mathcal{U}_{00})\|_{F_{k}(T)}+\|P_{k}(\mathcal{U}_{01})\|_{F_{k}(T)}&\lesssim 2^{-\sigma k} h_k(\sigma)T_{k,j}(1+2^{j+k})^{-7}  \\
 \|P_{k}\mathcal{U}_{I}\|_{F_{k}(T)\bigcap S^{\frac{1}{2}}_k(T)}&\lesssim 2^{-\sigma k}T_{k,j}h_k(\sigma)(1+2^{j+k})^{-7} \\
\|P_{k}\mathcal{U}_{II}\|_{F_{k}(T)}&\lesssim 2^{-\sigma k}T_{k,j}h_k(\sigma)(1+2^{j+k})^{-7}.
\end{align*}
Then our corollary follows by the decomposition
\begin{align*}
\mathcal{\widetilde{G}}=\mathcal{G}-\Gamma^{\infty}=\mathcal{U}_{00}+\mathcal{U}_{01}+\mathcal{U}_{I}+\mathcal{U}_{II},
\end{align*}
and the inequality $(1+2^{j+k})^{-1}T_{k,j}\le 2^{-k}$ for all $j,k\in\Bbb Z$.
\end{proof}

\subsection{ Evolution of differential fields along the heat flow }

Recall the evolution equation for $\phi_i$ along the heat flow:
\begin{align}
(\partial_s-\Delta)\phi_i&=K_{i}\nonumber\\
K_{i}&:=2\sum^{2}_{j=1}\partial_j(A_j\phi_i)+\sum^{2}_{j=1}(A^2_j-\partial_jA_j)\phi_i+\sum^{2}_{j=1} \phi_j\diamond \phi_i \diamond \phi_j\mathcal{G}.\label{Heat1}
\end{align}

Now we control the nonlinearities in the above equations.
\begin{Lemma}
Under the assumptions  of Proposition \ref{Parabolic} and (\ref{FIL}), for all $s\in[0,\infty)$, $i=1,2$ and $\sigma\in [0,\frac{99}{100}]$, we have
\begin{align}\label{Kende}
\|\int^{s}_0e^{(s-\tau)\Delta}P_{k}K_i(\tau)d\tau\|_{F_k(T)}\lesssim \varepsilon(1+s2^{2k})^{-4}2^{-\sigma k}h_k(\sigma).
\end{align}
\end{Lemma}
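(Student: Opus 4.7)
The plan is to split $K_i = K_i^{(1)} + K_i^{(2)} + K_i^{(3)}$, corresponding to the derivative term $\partial_j(A_j\phi_i)$, the zeroth order term $(A_j^2-\partial_j A_j)\phi_i$, and the cubic curvature term $(\phi_j\diamond\phi_i)\phi_j\mathcal{G}$, and to estimate each piece separately. After frequency localizing at scale $2^k$, the parabolic smoothing (\ref{3.18}) reduces the task to producing $F_k(T)$ bounds on $P_k K_i(\tau)$ with enough integrable time decay, since
\[
\left\|\int_0^s e^{(s-\tau)\Delta}P_k F(\tau)\,d\tau\right\|_{F_k(T)} \lesssim \int_0^s (1+(s-\tau)2^{2k})^{-20}\|P_k F(\tau)\|_{F_k(T)}\,d\tau,
\]
and the elementary convolution $\int_0^s (1+(s-\tau)2^{2k})^{-20}(1+\tau 2^{2k'})^{-M}\,d\tau$ is bounded by $2^{-2k}(1+s 2^{2\max(k,k')})^{-M+1}$, which converts pointwise-in-$\tau$ weights into the desired $(1+s2^{2k})^{-4}$ weight on the left of (\ref{Kende}).

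For $K_i^{(1)}$ I commute $\partial_j$ past the heat semigroup, absorb the resulting $2^k$ factor using the frequency scale, and apply the bilinear Littlewood-Paley decomposition of Lemma \ref{mJ} to the product $A_j\phi_i$. Lemma \ref{connection} supplies $\|P_{k_1}A_j(\tau)\|_{F_{k_1}(T)\cap S^{1/2}_{k_1}(T)} \lesssim 2^{-\sigma k_1}(1+\tau 2^{2k_1})^{-4}h_{k_1,\tau}(\sigma)$, and the hypothesis (\ref{4.1}) controls $\|P_{k_2}\phi_i(\tau)\|_{F_{k_2}(T)}$. The three interactions High$\times$High, High$\times$Low, Low$\times$High sum by slow variation of envelopes; the gain of $\varepsilon$ arises because $h_{k,\tau}$ already packages one $h$-factor bounded in $\ell^\infty$ by $\|h\|_{\ell^2}\lesssim\varepsilon^{1/2}$, and pairing with $\phi_i$ introduces a second such factor. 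For $K_i^{(2)}$ the same decomposition handles $(\partial_j A_j)\phi_i$, with the $\partial_j$-regularity loss absorbed again by the frequency scale, while the cubic $A_j^2\phi_i$ enjoys an extra $\varepsilon^{1/2}$ from the additional $A_j$.

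For the cubic curvature piece $K_i^{(3)}$, I decompose $\mathcal{G} = \Gamma^{\infty} + \widetilde{\mathcal{G}}$. The constant-$\Gamma^\infty$ contribution is a scalar multiple of the constant-curvature cubic nonlinearity, so it reduces directly to the trilinear $F_k$-estimate of \cite{BIKT}, yielding $\varepsilon h_k(\sigma)2^{-\sigma k}(1+s2^{2k})^{-4}$ via three $\phi$-factors each contributing an $h$. For the $\widetilde{\mathcal{G}}$ contribution I apply a trilinear Littlewood-Paley decomposition to $\phi_j\phi_i\phi_j\widetilde{\mathcal{G}}$, bound the three $\phi$-factors by (\ref{4.1}), bound the $\widetilde{\mathcal{G}}$-factor by Corollary \ref{kjl} (which on dyadic heat slices $\tau\in[2^{2j-1},2^{2j+1}]$ gives $2^{-\sigma k}h_k(\sigma)T_{k,j}(1+2^{k+j})^{-7}$), and sum over $j\ge k_0$. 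Three $h$-factors provide an $\varepsilon^{3/2}$ cushion, easily absorbed into the desired $\varepsilon$.

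The main obstacle is organizing the four-factor interaction in the $\widetilde{\mathcal{G}}$-piece so that all the parabolic weights $(1+\tau 2^{2\cdot})^{-M}$ line up and the final $\tau$-convolution produces the clean $(1+s2^{2k})^{-4}$ decay without loss; this is essentially the same trilinear bookkeeping that formed the heart of Lemma \ref{connection}, relying on slow variation of envelopes, the sequences $T_{k,j}$, and the $\ell^2$-smallness $\|h\|_{\ell^2}\lesssim\varepsilon^{1/2}$ to close the summation both in $j$ and in the free frequency parameter.
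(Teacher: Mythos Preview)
Your proposal is correct and follows essentially the same approach as the paper: split $K_i$ into the connection terms and the curvature term, handle the connection terms via Lemma \ref{connection} together with the bilinear machinery of \cite{BIKT} (the paper simply cites [\cite{BIKT}, Lemma 5.3] for this), and for the curvature term decompose $\mathcal{G}=\Gamma^\infty+\widetilde{\mathcal{G}}$, with the constant part reducing to the BIKT cubic estimate and the $\widetilde{\mathcal{G}}$ part controlled using Corollary \ref{kjl}. The only organizational difference is that the paper treats the quartic curvature piece as a \emph{bilinear} decomposition of the full cubic $\mathcal{V}=(\phi_i\diamond\phi_p)\phi_l$ (already estimated in $F_k\cap S^{1/2}_k$ by \cite{BIKT}) against $\widetilde{\mathcal{G}}$, rather than as a four-factor trilinear decomposition as you describe; this grouping is slightly cleaner but the underlying bookkeeping is the same.
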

\begin{proof}
First, we consider the quartic term $\mathcal{G}(\phi_i\diamond \phi_j)\diamond\phi_j$ in $K_i$.
[\cite{BIKT}, (5.25)] have proved that for $\tau\in [2^{2j-1}, 2^{2j+1})$
\begin{align}\label{FCa}
\|P_{k}\left(  \phi_i\diamond \phi_p \diamond\phi_{l}\right)(\tau)\|_{F_k(T)\bigcap S^{\frac{1}{2}}_k(T)}\lesssim \varepsilon 2^{-\sigma k}2^{2k}(1+2^{2k+2j})^{-4}\left[h_k(\sigma)+2^{-\frac{3}{2}(k+j)}h_{-j}(\sigma)\right].
\end{align}
Recall that $\mathcal{G}=\mathcal{\widetilde{G}}+\Gamma^{\infty}$. The constant part follows by (\ref{FCa}).
By bilinear Littlewood-Paley decomposition we have
\begin{align*}
&\|P_{k}\left(  \phi_i\diamond \phi_p \diamond\phi_{l}\mathcal{ \widetilde{G}}\right)\|_{F_k(T)}\\
&\lesssim \sum^{|k_1-k_2|\le 8}_{k_1\ge k-4} \|P_{k_1}\left(  \phi_i\diamond \phi_p \diamond\phi_{l}\right)P_{k_2}\mathcal{ \widetilde{G}}\|_{F_k(T)}
+\sum^{|k_2-k|\le 4}_{k_1\le k-4} \|P_{k_1}\left(  \phi_i\diamond \phi_p\diamond\phi_{l}\right)P_{k_2}\mathcal{ \widetilde{G}}\|_{F_k(T)}\\
&+\sum_{|k_1-k|\le 4} \|P_{k_1}\left(  \phi_i\diamond \phi_p\diamond\phi_{l}\right)P_{\le k-4}\mathcal{\widetilde{ G}}\|_{F_k(T)}
\end{align*}
For the $\rm High\times Low$ term,  directly applying $\|\mathcal{\widetilde{G}}\|_{L^{\infty}_{t,x}}\le K(\mathcal{N})$ gives
 \begin{align*}
\|P_{k_1}\left(  \phi_i\diamond \phi_p \diamond\phi_{l}\right)P_{\le k-4}\mathcal{ \widetilde{G}}\|_{F_k(T)}
&\lesssim  \|P_{k}\left(  \phi_i\diamond \phi_p\diamond\phi_{l}\right)\|_{F_k(T)\bigcap S^{\frac{1}{2}}_k(T)}\\
&\lesssim \varepsilon 2^{-\sigma k}2^{2k}(1+2^{2k+2j})^{-4}\left[h_k(\sigma)+2^{-\frac{3}{2}(k+j)}h_{-j}(\sigma)\right].
\end{align*}
For the $\rm High\times High$ term, denoting $\mathcal{V}:=  \phi_i\diamond \phi_p \diamond\phi_{l}$, Corollary \ref{kjl} and (\ref{FC}) show
 \begin{align}
&\sum_{|k_1-k_2|\le 8, k_1,k_2\ge k-4}\|P_{k_1}\left(  \phi_i\diamond \phi_p \diamond\phi_{l}\right)P_{k_2}\mathcal{ G}\|_{F_k(T)}\nonumber\\
&\lesssim \sum_{|k_1-k_2|\le 8, k_1,k_2\ge k-4}
2^{\frac{k_1+k}{2}}\|P_{k_1}\mathcal{V}\|_{F_{k_1}(T)\bigcap S^{\omega}_{k_1}(T)}\|P_{k_2}\mathcal{G}\|_{F_{k_2}(T)}\nonumber\\
&\lesssim \sum_{k_1\ge k,|k_1-k_2|\le 2} 2^{\frac{k_1+k}{2}} 2^{-\sigma k_1+2k_1 }(1+2^{k_1+j})^{-15}\left[h_{k_1}(\sigma)+2^{-\frac{3}{2}(k_1+j)}h_{-j}({\sigma})\right]T_{k_1,j} h_{k_1} .\label{XzZ1}
\end{align}
If $k+j\ge 0$, then by slow variation of envelopes, (\ref{XzZ1}) is bounded by
\begin{align*}
2^{3k+j}2^{-\sigma k}{(1 + {2^{k + j}})^{ - 14}} {h_{{k}}}(\sigma )h_{k}.
\end{align*}
If $k+j\le 0$,  using $(1+2^{k_1+j})^{-1}T_{k_1,j}\le 2^{-k_1}$ for all $k_1\in \Bbb Z$, (\ref{XzZ1}) is dominated by
\begin{align*}
2^{\frac{k}{2}-\frac{3j}{2}}h_{k}(\sigma)h_{k}2^{\delta|j+k|}.
\end{align*}
Therefore, for the $\rm High\times High$ interaction, we have if $s\in [2^{2j-1},2^{2j+1})$ then
\begin{align}
&\sum_{|k_1-k_2|\le 8, k_1,k_2\ge k-4} \|P_{k_1}\left(  \phi_i\diamond \phi_p \diamond\phi_{l}\right)P_{k_2}\mathcal{ \widetilde{G}}\|_{F_k(T)}\lesssim   {2^{ - \sigma k - \frac{3}
{2}j}}{2^{\frac{1}
{2}k}}{2^{\delta |j + k|}}h_{k}{h_k}(\sigma )(1+2^{2k+2j})^{-5}.\label{Hao}
\end{align}
To finish estimates for the $\rm High\times High$ interaction, we turn to verify the corresponding part in (\ref{Kende}).
We  use (\ref{Hao}) to verify (\ref{Kende}). Let $s\in[2^{2k_0-1},2^{2k_0+1})$ with $k_0\in\Bbb Z$ fixed. For $k+k_0\le 0$, by (\ref{3.18}) one has
 \begin{align*}
 &\|\int^{s}_0e^{ (s-\tau)\Delta}\sum_{k_1\ge k-4,|k_1-k_2|\le 8}P_{k}(P_{k_1}\mathcal{V}P_{k_2}\mathcal{\widetilde{G}})d\tau\|_{F_k(T)}\\
  &\lesssim \sum_{j\le k_0}\int^{2^{2j+1}}_{2^{2j-1}}\sum_{k_1\ge k-4,|k_1-k_2|\le 8}\|P_{k}(P_{k_1}\mathcal{V}p_{k_2}\mathcal{\widetilde{G}})\|_{F_k(T)}d\tau\lesssim \varepsilon  \sum_{j\le k_0}2^{-\sigma k} h_{k}(\sigma)\left(2^{\frac{1}{2}(j+k)}+2^{(\frac{1}{2}\pm \delta)(k+j)}\right)\\
&\lesssim \varepsilon  2^{-\sigma k} h_{k}(\sigma).
\end{align*}
For $k+k_0\ge 0$, by (\ref{3.18}), (\ref{Hao}) one has
 \begin{align*}
& { \int^{s}_0}\|e^{ (s-\tau)\Delta}\sum_{k_1\ge k-4,|k_1-k_2|\le 8}P_{k}(P_{k_1}\mathcal{V}P_{k_2}\mathcal{\widetilde{G}}) \|_{F_k(T)}d\tau\lesssim \int^{\frac{s}{2}}_{0}...d\tau
+\int^{s}_{\frac{s}{2}}...d\tau\nonumber\\
&\lesssim\sum _{j\le -k_0-1}\int^{2^{2j+1}}_{2^{2j-1}}2^{-20(k+k_0)}\sum_{k_1\ge k-4,|k_1-k_2|\le 8}\|P_{k}(P_{k_1}\mathcal{V}P_{k_2}\mathcal{\widetilde{G}})\|_{F_k(T)}d\tau
\\
&+2^{2k_0} \sum_{k_1\ge k-4,|k_1-k_2|\le 8}\|P_{k}(P_{k_1}\mathcal{V}P_{k_2}\mathcal{\widetilde{G}})\|_{F_{k(T)}}\upharpoonright_{\tau\in[2^{2k_0-2},2^{2k_0+1}]}\\
&\lesssim\varepsilon 2^{-20(k+k_0)}\sum_{j\le -k_0-1}2^{-\sigma k} h_{k}(\sigma)\left(2^{\frac{1}{2}(j+k)}+2^{(\frac{1}{2}\pm \delta)(j+k)}\right)
\\
&+ \varepsilon 2^{-\sigma k} h_{k}(\sigma)(1+2^{k+k_0})^{-10}\left[2^{\frac{1}{2}(k_0+k)}+2^{(\frac{1}{2}\pm \delta)(k_0+k)}\right]\\
&\lesssim\varepsilon (1+2^{k+k_0})^{-8} 2^{-\sigma k} h_{k}(\sigma).
\end{align*}
Thus we conclude
\begin{align*}
& \int^{s}_0 \|e^{ (s-\tau)\Delta}\sum_{k_1\ge k-4,|k_1-k_2|\le 8}P_{k}(P_{k_1}\mathcal{V}P_{k_2}\mathcal{\widetilde{G}})\|_{F_k(T)}d\tau\lesssim  \varepsilon^22^{-\sigma k} h_{k}(\sigma)(1+2^{2k }s)^{-4}.
\end{align*}

For the $\rm Low\times High$ term, using
 \begin{align*}
\|P_{k_1}(\mathcal{V})\|_{L^{\infty}_{t,x}}\lesssim 2^{k_1}\varepsilon 2^{-\sigma k_1+2k_1}(1+2^{k_1+j})^{-8}
&\left[h_{k_1}(\sigma)+2^{-\frac{3}{2}(k_1+j)}h_{-j}(\sigma)\right]
\end{align*}
we obtain by Corollary \ref{kjl} and (\ref{FC}) that
  \begin{align*}
&\sum_{k_1\le k-4,|k-k_2|\le 4}\left\|P_{k}\left( P_{k_1}(  \phi _i\diamond {\phi _p}\diamond{\phi _l} )P_{k_2}\mathcal{\widetilde{G}}\right)\right\|_{{F_k}(T)}\nonumber\\
 &\lesssim  2^{-\sigma k}{T_{k,j}}{(1 + {2^{j + k}})^{ - 7}}{h_k}(\sigma)\varepsilon {\sum _{{k_1} \le  k-4}}{2^{ 3{k_1}}}{(1 + {2^{{k_1} + j}})^{ - 8}}\left( 1 + {2^{ - \frac{3}
{2}({k_1} + j)}} \right).
\end{align*}
For $k+j\ge 0$, we have
 \begin{align}
&\sum_{k_1\le k-4,|k-k_2|\le 4}\left\|P_{k}( P_{k_1}\left( ( \phi _i\diamond {\phi _p}\diamond{\phi _l} )P_{k_2}\mathcal{\widetilde{G}}\right)\right\|_{{F_k}(T)} \lesssim \varepsilon 2^{-\sigma}2^{-2j}(1+2^{k+j})^{-7}h_{k}(\sigma).\label{H7ao1}
\end{align}
For $k+j\le 0$, we have
 \begin{align}
&\sum_{k_1\le k-4,|k-k_2|\le 4}\left\|P_{k}\left( P_{k_1}(  \phi _i\diamond {\phi _p}\diamond{\phi _l} )P_{k_2}\mathcal{\widetilde{G}}\right) \right\|_{{F_k}(T)} \lesssim \varepsilon 2^{-\sigma k}2^{\frac{1}{2}k-\frac{3}{2}j}(1+2^{k+j})^{-7}h_{k}(\sigma).\label{H8ao1}
\end{align}
As a summary, we use (\ref{H7ao1}), (\ref{H8ao1}) to verify (\ref{Kende}). Let $s\in[2^{2k_0-1},2^{2k_0+1})$ with $k_0$ fixed. For $k+k_0\le 0$, by (\ref{3.18}) one has
for the $\rm Low\times High$ interaction that
 \begin{align*}
\int^{s}_0 \|e^{ (s-\tau)\Delta}\sum_{k_1\le k-4,|k-k_2|\le 4}P_k(P_{k_1}\mathcal{V}P_{k_2}\mathcal{\widetilde{G}})\|_{F_k(T)}d\tau&\lesssim  \sum_{j\le k_0}\int^{2^{2j+1}}_{2^{2j-1}}...d\tau\\
&\lesssim \varepsilon \sum_{j\le k_0}2^{\frac{j}{2}+\frac{k}{2}}2^{-\sigma k} h_k({\sigma})
\lesssim  2^{-\sigma k} \varepsilon  h_{k}(\sigma).
\end{align*}
For $k+k_0\ge 0$, similarly we have
 \begin{align*}
& \int^{s}_0\|e^{ (s-\tau)\Delta}\sum_{k_1\le k-4,|k-k_2|\le 4}P_k(P_{k_1}\mathcal{V}P_{k_2}\mathcal{\widetilde{G}})\|_{F_k(T)}d\tau\\
&\lesssim \varepsilon 2^{-20(k+k_0)}2^{-\sigma k}h_{k}(\sigma)\sum_{j\le -k_0-1} 2^{\frac{1}{2}(j+k)} + \varepsilon 2^{-\sigma k} h_{k}(\sigma)(1+2^{k+k_0})^{-7}2^{-2k_0-2k}\\
&\lesssim \varepsilon  (1+2^{k+k_0})^{-8} 2^{-\sigma k} h_{k}(\sigma).
\end{align*}
Thus  we conclude
\begin{align*}
\int^{s}_0\|e^{ (s-\tau)\Delta}\sum_{k_1\le k-4,|k-k_2|\le 4}P_k(P_{k_1}\mathcal{V}P_{k_2}\mathcal{\widetilde{G}})\|_{F_k(T)}d\tau\lesssim  \varepsilon (1+2^{k+k_0})^{-8}2^{-\sigma k} h_{k}(\sigma).
\end{align*}
And the $\rm High\times Low$ case is easy by repeating the same argument or directly applying the result of [\cite{BIKT}, Lemma 5.3].
Therefore, the curvature term has been done:
\begin{align*}
\left\|\int^{s}_0e^{ (s-\tau)\Delta}P_{k}\left(  \phi_i\diamond \phi_p \diamond\phi_{l}\mathcal{ \widetilde{G}}\right)d\tau\right\|_{F_k(T)}\lesssim \varepsilon(1+s2^{2k})^{-4}2^{-\sigma k} h_{k}(\sigma).
\end{align*}

{\bf Step 2. Connection coefficient term.} In this step, we turn to estimate the terms $\partial_l(A_l\psi_i)$, $\partial_lA_l\phi_i$ and $A^2_l\phi_i$. With Lemma \ref{connection} in hand,  all these terms follow directly by repeating arguments of [\cite{BIKT},Lemma 5.3].

\end{proof}

\begin{Lemma}\label{Zoom0}
Under the assumptions of Proposition \ref{Parabolic} and (\ref{FIL}), for all $k\in \Bbb Z$, $s\ge 0$, $i=1,2$, we have
\begin{align}
\|P_{k }\phi_i(s)\|_{F_k(T)}&\lesssim b_k(\sigma)2^{-\sigma k} (1+s2^{2k})^{-4}\mbox{  }\sigma\in[0,\frac{99}{100}].\label{4.18}
\end{align}
\end{Lemma}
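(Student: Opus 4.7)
The plan is to combine Duhamel's formula for the heat evolution of $P_k\phi_i$ with the heat semigroup estimate (\ref{3.18}), the initial-data hypothesis (\ref{tian}), and the Kende nonlinear estimate (\ref{Kende}) just proved, and then close a bootstrap via the envelope $h_k(\sigma)$. Writing
$$P_k\phi_i(s) = e^{s\Delta}P_k\phi_i(0) + \int_0^s e^{(s-\tau)\Delta}P_k K_i(\tau)\,d\tau,$$
the linear piece is bounded by (\ref{3.18}) together with (\ref{tian}) as
$$\|e^{s\Delta}P_k\phi_i(0)\|_{F_k(T)} \lesssim (1+s2^{2k})^{-20}\|P_k\phi_i(0)\|_{F_k(T)} \lesssim b_k(\sigma)2^{-\sigma k}(1+s2^{2k})^{-4},$$
while the Duhamel nonlinear piece is bounded by (\ref{Kende}) as $C\varepsilon(1+s2^{2k})^{-4}h_k(\sigma)2^{-\sigma k}$. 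Summing yields, for every $k\in\Bbb Z$, $s\ge 0$, $i=1,2$, and $\sigma\in[0,\tfrac{99}{100}]$,
$$\|P_k\phi_i(s)\|_{F_k(T)} \le C\,b_k(\sigma)2^{-\sigma k}(1+s2^{2k})^{-4} + C\varepsilon\,h_k(\sigma)2^{-\sigma k}(1+s2^{2k})^{-4}. \qquad (\diamond)$$

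Next, I would propagate $(\diamond)$ to the envelope level. Taking the sup over $s\ge 0$ weighted by $(1+s2^{2k})^4$ gives $2^{\sigma k}h(k)\le Cb_k(\sigma)+C\varepsilon h_k(\sigma)$, after which applying $\sup_{k'\in\Bbb Z} 2^{-\delta|k-k'|}(\cdot)$ to both sides and invoking the $\delta$-envelope property of both $b_k(\sigma)$ and $h_k(\sigma)$ produces
$$h_k(\sigma) \le Cb_k(\sigma) + C\varepsilon h_k(\sigma).$$
For $\varepsilon$ sufficiently small this absorbs to $h_k(\sigma)\lesssim b_k(\sigma)$, and substituting back into $(\diamond)$ gives exactly (\ref{4.18}).

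The main obstacle is justifying a priori that $h_k(\sigma)<\infty$, so that absorption in the display above is legal. For $\sigma=0$ this is immediate from the bootstrap hypothesis (\ref{4.1}), which yields $h(k)\le \varepsilon^{-1/2}b_k$ and hence $h_k(0)=h_k<\infty$. For $\sigma\in(0,\tfrac{99}{100}]$ I would run a continuity argument in the heat variable: define
$$\Psi_\sigma(S) := \sup_{k\in\Bbb Z,\,s\in[0,S]} \frac{(1+s2^{2k})^4\sum_i\|P_k\phi_i(s)\|_{F_k(T)}}{b_k(\sigma)2^{-\sigma k}}.$$
By (\ref{tian}) one has $\Psi_\sigma(0)\lesssim 1$; by Lemma \ref{FFF} combined with Sobolev embeddings (as in the Remark after Proposition \ref{Parabolic}), $\Psi_\sigma(S)$ is finite and continuous in $S\ge 0$; and $(\diamond)$ applied uniformly over $s\in[0,S]$ yields the self-improvement $\Psi_\sigma(S)\le C+C\varepsilon\,\Psi_\sigma(S)$, which together with the starting bound propagates $\Psi_\sigma(S)\lesssim 1$ for all $S\ge 0$. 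This gives $h_k(\sigma)\lesssim b_k(\sigma)$ uniformly, and reinsertion into $(\diamond)$ produces (\ref{4.18}).
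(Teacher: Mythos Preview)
Your core argument --- Duhamel for $P_k\phi_i$, the heat semigroup bound (\ref{3.18}) on the linear piece, the Kende estimate (\ref{Kende}) on the Duhamel term, and then envelope absorption --- is exactly the paper's proof. The paper simply records
\[
\sup_{s\ge0}(1+s2^{2k})^{4}2^{\sigma k}\|P_k\phi_i(s)\|_{F_k(T)}\lesssim b_k(\sigma)+\varepsilon h_k(\sigma),
\]
observes that the right side is already a $\delta$-envelope so that $h_k(\sigma)\lesssim b_k(\sigma)+\varepsilon h_k(\sigma)$, and absorbs.

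Your extra continuity-in-$S$ argument to justify $h_k(\sigma)<\infty$ has a gap. The Kende bound (\ref{Kende}), and hence your display $(\diamond)$, is stated in terms of the \emph{full} envelope $h_k(\sigma)=\sup_{k'}2^{-\delta|k-k'|}2^{\sigma k'}\sup_{s\ge0}(\cdots)$, not a version truncated to $s\in[0,S]$. This is not an artifact of bookkeeping: the nonlinearity $K_i(\tau)$ contains the connection $A_i(\tau)=\int_\tau^\infty(\phi_i\diamond\phi_s)\mathcal{G}\,ds'$, which depends on the heat evolution for \emph{all} $s'\ge\tau$, so Lemma~\ref{connection} and hence (\ref{Kende}) cannot be localized to $[0,S]$. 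Consequently $(\diamond)$ does not give $\Psi_\sigma(S)\le C+C\varepsilon\Psi_\sigma(S)$, and the bootstrap in $S$ does not close.

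The correct (and simpler) justification --- which the paper leaves implicit --- is the one you already cite: Lemma~\ref{FFF} together with Sobolev/Bernstein embeddings gives a priori that $2^{\sigma k}(1+s2^{2k})^{4}\|P_k\phi_i(s)\|_{F_k(T)}$ is bounded uniformly in $s$ and $k$ (with a $T$-dependent constant), so $h_k(\sigma)<\infty$ for each $k$. Use that directly, and then the absorption $h_k(\sigma)\lesssim b_k(\sigma)+\varepsilon h_k(\sigma)$ is legitimate without any continuity argument in the heat variable.
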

\begin{proof}
By Duhamel principle and (\ref{Kende}), we get
\begin{align}\label{h6bna}
 \sup_{s\ge 0}(1+s2^{2k})^{4}2^{\sigma k} \|P_{k }\phi_i(s)\|_{F_k(T)}\lesssim  b_{k}(\sigma)+\varepsilon h_{k}(\sigma).
\end{align}
Since the RHS of (\ref{h6bna}) is a frequency envelope of $\delta$ order, by the definition of $\{h_k(\sigma) \}$ we get
\begin{align}\label{h6bn}
h_k(\sigma)\lesssim  b_{k}(\sigma)+\varepsilon h_{k}(\sigma),
\end{align}
which by letting $\varepsilon$ be sufficiently small yields
\begin{align}\label{nsbm}
h_k(\sigma)\lesssim b_{k}(\sigma).
\end{align}
\end{proof}

\begin{Lemma}\label{Zoom1}
Under the assumptions  of Proposition \ref{Parabolic} and (\ref{FIL}),  for all $k\in \Bbb Z$, $s\ge 0$, $i=1,2$, we have
\begin{align}
\|P_k A_i\upharpoonright_{s=0}\|_{L^{4}_{t,x}}&\lesssim b_k(\sigma)2^{-\sigma k}.\label{4.19}
\end{align}
\end{Lemma}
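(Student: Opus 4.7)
The proof is a short application of Lemma~\ref{connection} combined with Lemma~\ref{Zoom0} and the elementary embedding $\|P_k g\|_{L^4_{t,x}} \le \|g\|_{F_k(T)}$ recorded in the function-space preliminaries. The plan is to evaluate the bound of Lemma~\ref{connection} at $s=0$ and then use the improved envelope control of Lemma~\ref{Zoom0} to eliminate $h_k$ in favour of $b_k$ on the right-hand side. Because $L^4_{t,x}$ is a strictly weaker norm than $F_k(T)\cap S^{1/2}_k(T)$, all of the hard Littlewood--Paley bookkeeping has effectively already been done in Lemma~\ref{connection}; the present lemma simply reads off the $L^4_{t,x}$ component.

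Concretely, starting from
\begin{align*}
A_i(0)=\int_0^{\infty}(\phi_i\diamond\phi_s)(s')\,\mathcal{G}(s')\,ds',
\end{align*}
I would invoke Lemma~\ref{connection} to obtain, for every $s>0$, $\|P_k A_i(s)\|_{L^4_{t,x}}\le\|P_k A_i(s)\|_{F_k(T)\cap S^{1/2}_k(T)}\lesssim 2^{-\sigma k}(1+s2^{2k})^{-4}h_{k,s}(\sigma)$. Passing to $s\to 0^+$ (i.e.\ $k_0\to-\infty$ in the definition of $h_{k,s}$) and tracking the $j$-summation in the proof of Lemma~\ref{connection}, the decay factor $(1+2^{k+j})^{-8}$ coming from the $\mathrm{High}\times\mathrm{High}$ and $\mathrm{Low}\times\mathrm{High}$ pieces makes the series in $j\in\mathbb{Z}$ absolutely convergent, and yields
\begin{align*}
\|P_k A_i\upharpoonright_{s=0}\|_{L^4_{t,x}}\lesssim 2^{-\sigma k}\sum_{l\ge k}h_l h_l(\sigma).
\end{align*}
Lemma~\ref{Zoom0} supplies $h_l\lesssim b_l$ and $h_l(\sigma)\lesssim b_l(\sigma)$ (the latter valid on $\sigma\in[0,\tfrac{99}{100}]$), so the right-hand side is bounded by $2^{-\sigma k}\sum_{l\ge k}b_l b_l(\sigma)$.

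The main obstacle is the final envelope-closure step: to recast $\sum_{l\ge k}b_l b_l(\sigma)$ as a frequency envelope dominated by $b_k(\sigma)$ with acceptable constant. Slow variation of $\{b_l(\sigma)\}$ alone gives only the non-summable bound $b_l(\sigma)\le 2^{\delta(l-k)}b_k(\sigma)$; the key additional input is the $\varepsilon$-envelope property $\|\{b_l\}\|_{\ell^2}\le\varepsilon$, which supplies the necessary decay in $l$. Combining the two via a weighted Cauchy--Schwarz pairing (splitting the $l$-sum into near-diagonal terms $l\sim k$ and a tail controlled by $\varepsilon^{1/2}$) produces the bound $\sum_{l\ge k}b_l b_l(\sigma)\lesssim b_k(\sigma)$ up to universal constants. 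Once this envelope manipulation is in hand, (\ref{4.19}) follows immediately; exactly the same argument, interpolating $\sigma\in[\tfrac{1}{100},\tfrac{99}{100}]$ versus $\sigma\in[0,\tfrac{99}{100}]$, also yields the $A_t$ and $\phi_t$ bounds listed in Proposition~\ref{Parabolic}.
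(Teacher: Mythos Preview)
Your reduction to Lemma~\ref{connection} is correct up to the point where you obtain
\[
\|P_k A_i\upharpoonright_{s=0}\|_{L^4_{t,x}}\lesssim 2^{-\sigma k}\sum_{l\ge k}b_l\,b_l(\sigma),
\]
but the final envelope-closure step is a genuine gap: the inequality $\sum_{l\ge k}b_l\,b_l(\sigma)\lesssim b_k(\sigma)$ is \emph{false} in general. Slow variation only gives $b_l(\sigma)\le 2^{\delta(l-k)}b_k(\sigma)$, reducing the question to whether $\sum_{l\ge k}b_l\,2^{\delta(l-k)}$ is uniformly bounded, and the $\ell^2$ constraint on $\{b_l\}$ does not force this. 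For a concrete counterexample take $b_l=\varepsilon N^{-1/2}$ for $l\in[0,N]$ (with geometric tails of slope $2^{\pm\delta}$ outside) and $b_l(\sigma)\equiv C$; this is a legitimate pair of envelopes with $\|b\|_{\ell^2}\sim\varepsilon$, yet at $k=0$ one has $\sum_{l\ge 0}b_l\,b_l(\sigma)\sim C\varepsilon\sqrt{N}$, while $b_0(\sigma)=C$. No ``weighted Cauchy--Schwarz'' splitting can rescue this, because the tail sum $\sum_{l>k+L}2^{\delta(l-k)}$ already diverges.

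The paper therefore does \emph{not} deduce Lemma~\ref{Zoom1} from Lemma~\ref{connection}; it reproves the $L^4_{t,x}$ bound from scratch using a sharper mechanism unavailable at the $F_k\cap S^{1/2}_k$ level. The key input is the pointwise-in-$s$ smoothing estimate $\|P_k\phi_s\|_{L^4_{t,x}}\lesssim 2^{k-\sigma k}b_k(\sigma)(s2^{2k})^{-3/8}(1+s2^{2k})^{-3}$, which after the bilinear $L^4$ estimates of \cite[Lemma~5.4]{BIKT} produces bounds on $\|P_k(\phi_i\diamond\phi_s)\|_{L^4_{t,x}}$ that integrate in $s$ to $b_k\,b_k(\sigma)2^{-\sigma k}$ directly, with \emph{no} high-frequency tail sum. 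The interaction with $\widetilde{\mathcal G}$ is then handled by a separate trilinear decomposition using Corollary~\ref{kjl}. The extra $(s2^{2k})^{-3/8}$ factor, and working in $L^4$ rather than passing through $F_k$, are precisely what collapse the would-be tail $\sum_{l\ge k}b_l b_l(\sigma)$ down to the diagonal term $b_k b_k(\sigma)\le\varepsilon\,b_k(\sigma)$.
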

\begin{proof}
Lemma \ref{Zoom0} has shown (\ref{nsbm}).  Then the previous bounds in Lemma \ref{connection} now hold with $h_{k}(\sigma)$ replaced by $b_{k}(\sigma)$.
Recall that [\cite{BIKT}, Page 1473-1474] proved
\begin{align}\label{Bvn}
\|P_{k }\phi_s\|_{L^{4}_{t,x}}\lesssim 2^{k}2^{-\sigma k}b_k(\sigma) (2^{2k}s)^{-\frac{3}{8}}(1+s2^{2k})^{-3}.
\end{align}
We also recall the bilinear estimate of [\cite{BIKT}, Lemma 5.4] in our Appendix A,  Lemma \ref{90}.
Then (\ref{Bvn}) and  (\ref{4.18}) show
\begin{align*}
\|P_{k }(\phi_i\diamond\phi_s)\|_{L^{4}_{t,x}}&\lesssim  2^{-\sigma k}\sum_{l\le k} b_{k}(\sigma)b_{l}2^{l+k}(s2^{2k})^{-\frac{3}{8}}(1+s2^{2k})^{-3}
\\
&+ 2^{-\sigma k}\sum_{l\le k}b_{k}(\sigma) b_{l}2^{2l}2^{\frac{1}{2}(k-l)}(2^{2k}s)^{-\frac{3}{8}}(1+s2^{2k})^{-4}\\
&+  \sum_{l\ge k}2^{-\sigma l}b_{l}(\sigma)b_{l}2^{k+l} (2^{2l}s)^{-\frac{3}{8}}(1+s2^{2l})^{-7}.
\end{align*}
Thus given $s\in[2^{2j-1},2^{2j+2})$ with $j\in\Bbb Z$, we conclude for $k+j\ge 0$,
\begin{align}\label{3.591}
\|P_{k }(\phi_i\diamond\phi_s)\|_{L^{4}_{t,x}}\lesssim  b_{k}(\sigma)b_{k}2^{2k-\sigma k}(s2^{2k})^{-\frac{3}{8}}(1+s2^{2k})^{-3},
\end{align}
and for $k+j\le  0$,
\begin{align}\label{GVfcm}
\|P_{k }(\phi_i\diamond\phi_s)\|_{L^{4}_{t,x}}\lesssim  b_{k}(\sigma)b_{k}2^{2\delta|k+j|}2^{-\sigma k}2^{k}2^{-j}.
\end{align}
Recalling that for $i=1,2$,
\begin{align}
A_i(0)=\int^{\infty}_0((\phi_i\diamond \phi_s)\mathcal{G})ds.
\end{align}
we see the left is to deal with the interaction of $\phi_s\diamond\phi_i$ with $\mathcal{G}$.
Recall that $\mathcal{G}=\Gamma^{\infty}+\mathcal{\widetilde{G}}$ with
\begin{align}\label{CXbZ}
\|P_k (\mathcal{\widetilde{G}}) \|_{F_k(T)}\lesssim 2^{-\sigma k}T_{k,j}(1+2^{k+j})^{-7}h_{k}(\sigma),
\end{align}
for $s\in[2^{2j-1},2^{2j+1})$ with $j\in\Bbb Z$. (\ref{GVfcm}), (\ref{3.591}) show  the constant part $\Gamma^{\infty}$ contributes to
$\|A_i(0)\|_{L^4_{t,x}}$ by $b_k(\sigma)2^{-\sigma k}$. Thus it suffices to control $(\phi_i\diamond \phi_s)\mathcal{\widetilde{G}}$.

As before, we consider three cases according to Littlewood-Paley decomposition.
Using the trivial bound $\|\mathcal{\widetilde{G}}\|_{L^{\infty}_{t,x}}\le K(\mathcal{N})$ in the $\rm High\times Low$ part gives
 \begin{align*}
&\sum_{j\in \Bbb Z}\int^{2^{2j+1}}_{2^{2j-1}}\sum_{|k_1-k|\le 4}\|P_k\left[P_{k_1}(\phi_s\diamond\phi_i)P_{\le k-4}\mathcal{\widetilde{G}}\right]\|_{L^{4}_{t,x}}ds\\
&\lesssim   \sum_{j\ge -k} 1_{k+j\ge 0}b_{k}(\sigma)b_{k}2^{2k+2j-\sigma k}(2^{2k+2j})^{-\frac{3}{8}}(1+2^{2k+2j})^{-3}+\sum_{j\le -k} 1_{k+j\le 0} b_{k}(\sigma)b_{k}2^{2\delta|k+j|}2^{-\sigma k}2^{k}2^{j}\\
&\lesssim  b_{k}(\sigma)b_{k}2^{-\sigma k}.
\end{align*}
Notice that Lemma \ref{90} shows
\begin{align*}
\sum_{|k_1-k_2|\le 8,k_1,k_2\ge k-4}\left\|P_k(P_{k_1}fP_{k_2}g)\right\|_{L^{4}_{t,x}}\lesssim  \sum_{k_1\ge k} 2^{k(1+\omega)}2^{-\omega k_1}\mu_{k_1}\nu_{k_1},
\end{align*}
where $\mu_k=\sum_{|k-k'|\le 20}\|P_{k '}f\|_{S^{\omega}_{k'}}$, $\nu_k= \sum_{|k-k'|\le 20}\|P_{k'} g\|_{L^{4}_{t,x}}$.
Thus using (\ref{CXbZ}), we have by choosing $\omega=0$ that
 \begin{align*}
&\int^{\infty}_{0}\sum_{|k_1-k_2|\le 8,k_1,k_2\ge k-4}\left\|P_k\left[P_{k_1}(\phi_s\diamond\phi_i)P_{k_2}\mathcal{\widetilde{G}}\right]\right\|_{L^{4}_{t,x}}ds\\
&\lesssim \sum_{j\ge -k}2^{2j} \varepsilon  2^{k}\sum_{k_1\ge k-4} b_{k_1}b_{k_1}(\sigma)2^{k_1}2^{-\sigma k_1} (1+2^{2j+2k_1})^{-5}\\
&+ \sum_{j\le -k}\mbox{ } 2^{-\sigma k}2^{k}\sum_{k-4\le k_1\le -j}2^{j} b_{k_1}(\sigma)b_{k_1}2^{2\delta|k_1+j|}
\\
&+2^{-\sigma k}\sum_{j\le -k}2^{2j} 2^{k}\sum_{k_1\ge -j} b_{k_1}(\sigma)b_{k_1}2^{2\delta|k_1+j|}2^{k_1}(2^{2j+2k_1})^{-\frac{3}{8}}(1+2^{2j+2k_1})^{-5}\\
&\lesssim  \sum_{j\ge -k}2^{-\sigma k} 2^{2j+2k}(1+2^{2k+2j})^{-5}b_{k}b_{k }(\sigma)
+\sum_{j\le -k} 2^{-\sigma k}2^{(k+j)+\delta|k+j|}b_{k}b_{k }(\sigma)\\
&\lesssim  b_{k}b_{k }(\sigma)2^{-\sigma k},
\end{align*}
where we used $(1+2^{k+j})^{-1}T_{k,j}\le 2^{-k}$ for all $k\in\Bbb Z$.
In the $\rm Low\times High$ part, Lemma \ref{90} shows
\begin{align*}
\sum_{|k_2-k|\le 4,k_1\le k-4}\left\|P_k(P_{k_1}fP_{k_2}g)\right\|_{L^{4}_{t,x}}\lesssim  \sum_{l\le k} 2^{l}\mu_{l}\nu_{k},
\end{align*}
where $\nu_k= \sum_{|k'-k|\le 20}\|P_{k'} f\|_{S^{\omega}_k}$, $\nu_k=\sum_{|k'-k|\le 20}\|P_{k'} g\|_{L^{4}_{t,x}}$. Then by (\ref{CXbZ}) we have
 \begin{align*}
&\sum_{|k_2-k|\le 4,k_1\le k-4}\left\|P_k\left[P_{k_1}(\phi_s\diamond\phi_i)P_{k_2}\mathcal{\widetilde{G}}\right]\right\|_{L^{4}_{t,x}}\\
&\lesssim  2^{-\sigma k}b_{k}(\sigma)\varepsilon T_{k,j}(1+2^{j+k })^{-7}
 \sum_{l\le k} 2^{l} b_{l}b_{l } \left(1_{{l+j}\le 0}2^{2\delta|l+j|}2^{l-j}+1_{j+l\ge0}2^{2l}2^{-\frac{3}{4}(j+l)}(1+2^{2j+2l})^{-3}\right)\\
&\lesssim  2^{-\sigma k}b_{k}(\sigma)b_{k }\left( 1_{k+j\ge 0} 2^{-2j} (1+2^{k+j })^{-4}+   1_{k+j\le 0} 2^{k-j}2^{\delta|k+j|}\right).
\end{align*}
Hence we conclude for the $\rm Low\times High$ part
\begin{align*}
&\int^{\infty}_0\sum_{|k_2-k|\le 4,k_1\le k-4}\left\|P_k\left[P_{k_1}(\phi_s\diamond\phi_i)P_{k_2}\mathcal{\widetilde{G}}\right]\right\|_{L^{4}_{t,x}}ds\\
&\lesssim  2^{-\sigma k}\sum_{j\in\Bbb Z} b_{k}b_{k }(\sigma)1_{k+j\ge 0} (1+2^{k+j })^{-4}+  2^{-\sigma k}\sum_{j\in\Bbb Z}b_{k}b_{k }(\sigma)  1_{k+j\le 0} 2^{k+j}2^{\delta|k+j|}\\
&\lesssim  b_{k}b_{k }(\sigma)2^{-\sigma k}.
\end{align*}

Therefore, we get
\begin{align*}
\|P_k(A_i(0))\|_{L^{4}_{t,x}}\lesssim  2^{-\sigma k }  b_{k }(\sigma).
\end{align*}
\end{proof}

Now we turn to the bounds for $\phi_t$ stated in Proposition \ref{Parabolic}.
\begin{Lemma}\label{Zoom2}
Assume that the assumptions of Proposition \ref{Parabolic} and (\ref{FIL}) hold.
Then for $\sigma\in[0,\frac{99}{100}]$, one has
\begin{align}
\|P_k\phi_t(s)\|_{L^{4}_{t,x}}&\lesssim b_k(\sigma)2^{-(\sigma-1)k}(1+2^{2k}s)^{-2}\label{cx0}.
\end{align}
\end{Lemma}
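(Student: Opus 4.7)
My plan is to parallel the proof of Lemma \ref{Zoom0}/\ref{Zoom1}, treating $\phi_t$ as an object satisfying its own parabolic equation along the heat direction. First, since $A_s\equiv 0$ in the caloric gauge, $\partial_s\phi_t=D_s\phi_t=D_t\phi_s$; inserting $\phi_s=\sum_{j}D_j\phi_j$ and commuting $D_t$ past $D_j$ via the commutator identity (\ref{commut}) yields
\[
(\partial_s-\Delta)\phi_t=K_t,\quad K_t=2\sum_{j=1}^2\partial_j(A_j\phi_t)+\sum_{j=1}^2(A_j^2-\partial_jA_j)\phi_t+\sum_{j=1}^2(\phi_j\diamond\phi_t)\phi_j\mathcal{G},
\]
i.e.\ the same schematic structure as the equation (\ref{Heat1}) for $\phi_i$, with one $\phi_i$ slot replaced by $\phi_t$. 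Applying Duhamel,
\[
\phi_t(s)=e^{s\Delta}\phi_t\!\upharpoonright_{s=0}+\int_0^s e^{(s-\tau)\Delta}K_t(\tau)\,d\tau,
\]
and the desired $L^4_{t,x}$ control splits into a linear piece and a nonlinear piece.

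For the linear piece, the heat kernel satisfies $\|e^{s\Delta}P_k f\|_{L^4_{t,x}}\lesssim (1+s2^{2k})^{-M}\|P_k f\|_{L^4_{t,x}}$ for any $M\in\Bbb Z_+$ (this is just the Mikhlin multiplier estimate applied in the $x$ variable, uniformly in $t$); combining with the hypothesis (\ref{tian2}) gives immediately the desired bound $b_k(\sigma)2^{-(\sigma-1)k}(1+s2^{2k})^{-2}$ for the linear term. To close the nonlinear piece, I introduce the auxiliary quantity $g(k)=\sup_{s\ge 0}(1+s2^{2k})^{2}2^{(\sigma-1)k}\|P_k\phi_t(s)\|_{L^4_{t,x}}$ with associated frequency envelope $g_k(\sigma)$, run a bootstrap of the form $g(k)\le\varepsilon^{-1/2}b_k$ (justified for small $T$ by Lemma \ref{FFF} and Sobolev embeddings), and aim to improve this to $g_k(\sigma)\lesssim b_k(\sigma)+\varepsilon\,g_k(\sigma)$.

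To estimate $K_t$, I decompose by Littlewood-Paley and use the ingredients already in place: the parabolic $F_k$-bound (\ref{4.18}) on $\phi_j$; the $F_k\cap S_k^{1/2}$-bound (\ref{xX56}) and the $L^4_{t,x}$-bound (\ref{4.19}) on $A_j$; the $F_k$-bound from Corollary \ref{kjl} on $\widetilde{\mathcal{G}}=\mathcal{G}-\Gamma^\infty$; and the trivial bound $\|\Gamma^\infty\|_{L^\infty}\lesssim K(\mathcal{N})$. For the connection terms $\partial_j(A_j\phi_t)$, $(A_j^2-\partial_jA_j)\phi_t$ I pair $A_j$ in $F_{k_1}\cap S_{k_1}^{1/2}$ (which embeds into $L^2_xL^\infty_t\cap L^4_{t,x}$) with $\phi_t$ in $L^4_{t,x}$ via Lemma \ref{mJ}, producing an $L^{4/3}_{t,x}$-type source that, after heat-kernel convolution, yields the $L^4_{t,x}$ output with gain $\varepsilon g_k(\sigma)$. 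For the quartic curvature term $(\phi_j\diamond\phi_t)\phi_j\mathcal{G}$ I split $\mathcal{G}=\Gamma^\infty+\widetilde{\mathcal{G}}$: the constant piece reduces to a cubic expression already controlled exactly as in (\ref{BVzm}) (with one $\phi$ factor being $\phi_t$), and the $\widetilde{\mathcal{G}}$ piece is absorbed using the $F_k$-bound of Corollary \ref{kjl} together with the same trilinear decomposition used in the proof of Lemma \ref{Zoom1}.

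The principal obstacle is the $\rm Low\times High\to High$ case of the quartic curvature term, where the low frequency of $\phi_t$ (already carrying the unfavorable $2^k$ coming from $\phi_t\sim D\phi$ at $s=0$) must be compensated by derivatives landing on the high-frequency factors $\phi_j$ or $\widetilde{\mathcal{G}}$; this is exactly where the parabolic decay $(1+2^{2k}s)^{-M}$ is consumed, and the arithmetic is analogous to (but a half-derivative tighter than) the corresponding estimate in Lemma \ref{Zoom1}. After summing over frequencies $k_1,k_2$ and integrating $\tau$ against the heat kernel with the elementary inequality (\ref{KJn}), all nonlinear contributions are bounded by $\varepsilon\cdot 2^{-(\sigma-1)k}(1+s2^{2k})^{-2}g_k(\sigma)$. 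Combined with the linear bound, this yields $g_k(\sigma)\lesssim b_k(\sigma)+\varepsilon g_k(\sigma)$, hence $g_k(\sigma)\lesssim b_k(\sigma)$ for $\varepsilon$ small, which is (\ref{cx0}).
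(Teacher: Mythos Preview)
Your approach is essentially the same as the paper's: derive the heat equation for $\phi_t$, apply Duhamel, and close a bootstrap in $L^4_{t,x}$ using the already-established bounds on $A_i$, $\phi_i$, and $\widetilde{\mathcal{G}}$, splitting $\mathcal{G}=\Gamma^\infty+\widetilde{\mathcal{G}}$ and citing \cite{BIKT} for the constant-curvature pieces. The paper organizes the curvature term slightly differently---it first bounds the product $(\phi_i\diamond\phi_l)\widetilde{\mathcal{G}}$ in $S^{1/2}_k$ and then pairs that with $\phi_t\in L^4_{t,x}$ via Lemma~\ref{90}---but this is a matter of grouping, not of strategy.

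One technical slip to flag: your description of the connection terms as ``producing an $L^{4/3}_{t,x}$-type source that, after heat-kernel convolution, yields the $L^4_{t,x}$ output'' is not the right mechanism. The heat semigroup $e^{s\Delta}$ acts only in $x$ and cannot upgrade $L^{4/3}_t$ to $L^4_t$; moreover Lemma~\ref{mJ} outputs $F_k$ bounds, not $L^4_{t,x}$. The correct tool (used by both the paper and \cite{BIKT}) is the bilinear $L^4_{t,x}$ estimate of Lemma~\ref{90}, which takes $A_j\in S^{\omega}_k$ and $\phi_t\in L^4_{t,x}$ directly to an $L^4_{t,x}$ bound on $P_k(A_j\phi_t)$; the heat semigroup then contributes only the factor $(1+s2^{2k})^{-M}$ at fixed frequency. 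With that correction the argument goes through as you describe.
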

\begin{proof}
Recall that $\phi_t$ satisfies
 \begin{align*}
\partial_s\phi_t-\Delta \phi_t&=L(\phi_t)\\
L(\phi_t)&=L_1(\phi_t)+L_2(\phi_t)\\
L_1(\phi_t)&=\sum^{2}_{i=1}2\partial_i(A_i\phi_t)+(\sum^{2}_{l=1} A^2_l-\partial_lA_l )\phi_t\\
L_2(\phi_t)&=\sum^{2}_{i=1} (\phi_t\diamond \phi_i)\diamond\phi_i{\mathcal{G}}.
\end{align*}
By Duhamel principle, $\phi_t$ can be written as
\begin{align}
\phi_t=e^{s\Delta}\phi_t(\upharpoonright_{s=0})+\int^{s}_0e^{(s-\tau)\Delta} L(\phi_t(\tau))d\tau.
\end{align}
By a uniqueness  argument as [\cite{BIKT}, Lemma 5.6], in order to prove (\ref{cx0}), it suffices to show
 \begin{align}
&\|P_k\phi_t(s)\|_{L^{4}_{t,x}}\lesssim b_k(\sigma)2^{-(\sigma-1)k}(1+2^{2k}s)^{-2}\label{hb2}\\
&\Longrightarrow \nonumber\\
&\int^{s}_0\|e^{(s-\tau)\Delta} L(\phi_t(\tau))\|_{{L^4}_{t,x}}d\tau\lesssim \varepsilon^2b_k(\sigma)2^{-(\sigma-1)k}(1+2^{2k}s)^{-2}.\label{hbl}
\end{align}
The $L_1(\phi_t)$ part of (\ref{hbl}) has been done in  [\cite{BIKT}, Lemma 5.6]. It suffices to prove (\ref{hbl}) for $L_2(\phi_t)$ under the assumption of (\ref{hb2}). Recall also that $\mathcal{G}=\Gamma^{\infty}+\mathcal{\widetilde{G}}$ satisfies
\begin{align}\label{Gfb}
&\|P_k(\mathcal{G}-\Gamma^{\infty})\|_{F_{k}(T)}\lesssim 2^{-\sigma k}(1+2^{j+k})^{-7}T_{k,j}.
\end{align}
By the proof of [\cite{BIKT}, Lemma 5.6],
\begin{align}
&\|P_k(\phi_t(s)\diamond\phi_i\diamond\phi_l)\|_{L^{4}_{t,x}}\lesssim b^2_k 2^{-(\sigma-3)k}(1+2^{2k}s)^{-2}(s2^{2k})^{-\frac{7}{8}}b_k(\sigma).\label{kncde}
\end{align}
Then the $\Gamma^{\infty}$ part of $L_2(\phi_t)$ follows directly by proof of [\cite{BIKT}, Lemma 5.6].

Denote ${\bf P}=\phi_t(s)\diamond\phi_i\diamond\phi_l$. In order to control ${\bf P}(\mathcal{G}-\Gamma^{\infty})$, we first control $\|(\phi_i\diamond\phi_l)\mathcal{\widetilde{G}}\|_{S^{\frac{1}{2}}_k(T)}$. We have seen
\begin{align}
&\|P_k(\phi_i\diamond \phi_l)\|_{F_k(T)\bigcap S^{\frac{1}{2}}_k(T)}\lesssim
2^{-\sigma k}(1+2^{2k+2j})^{-4}2^{-j}b_{-j}b_{\max(k,-j)}(\sigma).
\end{align}
Thus applying bilinear Littlewood-Paley decomposition, we obtain by (\ref{Gfb}) that
 \begin{align*}
&\|P_k(\phi_i\diamond \phi_l\mathcal{\widetilde{G}})\|_{F_k(T)\bigcap S^{\frac{1}{2}}_k(T)}\lesssim
\sum_{|k_1-k|\le 4}\|P_{k_1}(\phi_i\diamond \phi_l)\|_{F_k(T)\bigcap S^{\frac{1}{2}}_k(T)}\|P_{\le k-4}\mathcal{\widetilde{G}}\|_{L^{\infty}}\\
&+\sum_{|k_1-k_2|\le 8,k_1,k_2\ge k-4}2^{\frac{k+k_1}{2}}\|P_{k_1}(\phi_i\diamond \phi_l)\|_{F_k(T)\bigcap S^{\frac{1}{2}}_k(T)}\|P_{k_2}\mathcal{\widetilde{G}}\|_{F_{k_2}(T)}\\
&+\sum_{|k_2-k|\le 4,k_1\le k-4}2^{k_1}\|P_{k_1}(\phi_i\diamond \phi_l)\|_{F_k(T)\bigcap S^{\frac{1}{2}}_k(T)}\|P_{k_2}\mathcal{\widetilde{G}}\|_{F_{k_2}(T)}\\
&\lesssim 2^{-\sigma k}b_{k}(\sigma)b_{k}2^{\delta|k+j|}\left(1_{k+j\le 0}2^{-j}+2^{k}1_{k+j\ge 0}(1+2^{k+j})^{-7}\right).
\end{align*}
Then
using Lemma \ref{90} with $\omega=\frac{1}{2}$ and (\ref{Gfb}),  ${\bf P}\mathcal{\widetilde{G}}$ is dominated by
 \begin{align*}
&\|P_k\left(P_{k_1}\phi_tP_{k_2}(\phi_i\diamond \phi_l\mathcal{{\widetilde{G}}})\right)\|_{L^4_{t,x}}\lesssim
\sum_{|k_1-k|\le 4,k_2\le k-4}2^{k_2}\|P_{k_1}\phi_t\|_{L^4_{t,x}}\|P_{k_2}(\phi_i\diamond \phi_l\mathcal{\widetilde{G}})\|_{S^{\frac{1}{2}}_k(T)} \\
&+2^{k}\sum_{|k_1-k_2|\le 8.k_2,k_1\ge k-4}2^{-\frac{1}{2}(k_1-k)} \|P_{k_1}\phi_t\|_{L^{4}_{t,x}}\|P_{k_2}(\phi_i\diamond \phi_l\mathcal{\widetilde{G}})\|_{S^{\frac{1}{2}}_k(T)}\\
&+\sum_{|k_2-k|\le 4,k_1\le k-4}2^{\frac{k+k_1}{2}}\|P_{k_1}\phi_t\|_{L^4_{t,x}}\|P_{k_2}(\phi_i\diamond \phi_l\mathcal{\widetilde{G}})\|_{S^{\frac{1}{2}}_k(T)}\\
&\lesssim 2^{-\sigma k}b_{k}(\sigma)b_{k}2^{\delta|k+j|}1_{k+j\le 0}\left(2^{\frac{3}{2}k-\frac{3}{2}j}+2^{2k-j}\right)\\
&+ 2^{-\sigma k}b_{k}(\sigma)b_{k}1_{k+j\ge 0}\left(2^{\delta|k+j|}2^{3k}(1+2^{k+j})^{-10}
+2^{\frac{3}{2}(k-j)}(1+2^{k+j})^{-7}+2^{k-2j}(1+2^{k+j})^{-4}\right)
\end{align*}
for $s\in[2^{2j-1},2^{2j+1})$, $j,k\in\Bbb Z$.
As a summary, inserting this bound to the following heat estimates
  \begin{align*}
&\int^{\widetilde{s}}_0\left\|e^{(\widetilde{s}-s)\Delta}P_k[\phi_t\diamond \phi_i\diamond \phi_l \mathcal{\widetilde{G}}]\right\|_{L^{4}_{t,x}}ds\lesssim  \int^{\widetilde{s}}_{0}(1+|\widetilde{s}-s|)^{-N} (...)ds,
\end{align*}
we conclude that
  \begin{align*}
\left\|P_k\int^{s}_0e^{(s-\tau)\Delta} L_2(\phi_t(\tau))d\tau\right\|_{L^4_{t,x}}\lesssim \varepsilon(1+2^{2k}s)^{-2}2^{-\sigma k+k}b_{k}(\sigma).
\end{align*}
Thus since $L_1$ has been done before, we finish our proof.
\end{proof}

\begin{Lemma}\label{Zoom3}
With the assumptions of Proposition \ref{Parabolic}, the bootstrap assumption  (\ref{FIL}) can be improved to be
\begin{align*}
2^{\frac{1}{2}k}\|P_{k }\widetilde{\mathcal{G}}^{(1)}\|_{L^{4}_{x}L^{\infty}_t}\lesssim h_k  (1_{k+j\ge 0}(1+s2^{2k})^{-20}+2^{\delta|k+j|}1_{k+j\le 0}),
\end{align*}
for any $k,j\in\Bbb Z$, $s\in[2^{2j-1},2^{2j+1})$.
\end{Lemma}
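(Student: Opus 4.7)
\medskip
\noindent\textbf{Proof proposal.}
The plan is to remove the $\varepsilon^{-1/4}$ and upgrade the exponent from $-9$ to $-20$ by running the interpolation suggested in the introduction, namely
\[
\|P_k f\|_{L^4_xL^\infty_t}\lesssim \|P_k f\|_{L^4}^{3/4}\|P_k\partial_t f\|_{L^4}^{1/4},\qquad f=\widetilde{\mathcal{G}}^{(1)},
\]
(which follows from $g(t)^4=4\int_{-\infty}^tg^3\partial_tg$ combined with H\"older in $x$). The $L^4$ factor is the easy one: because Proposition \ref{Jth} together with Proposition \ref{7.2q} and Lemma \ref{Ncccmkijnl} can be iterated to any prescribed decay order $M$ (at the cost of taking $\varepsilon$ small depending on $M$), one has
\[
2^{k}\|P_k\widetilde{\mathcal{G}}^{(1)}\|_{L^4}\lesssim h_k(1+2^{2k}s)^{-M}
\]
for arbitrarily large $M$, and crucially with no $\varepsilon^{-1/4}$ in sight. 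This arbitrariness of $M$ is exactly what will absorb the polynomial loss coming from the other factor.

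The substance of the argument is therefore to control $\|P_k\partial_t\widetilde{\mathcal{G}}^{(1)}\|_{L^4}$. Here I would first perform the schematic differentiation
\[
\partial_t\widetilde{\mathcal{G}}^{(1)}=\phi_t\cdot\widetilde{\mathcal{G}}^{(2)}+A_t\cdot\widetilde{\mathcal{G}}^{(1)}+(\text{constant-coefficient pieces}),
\]
which is just the statement that $\nabla_t$ acts on the frame via the connection $A_t$ and on $\widetilde{\nabla}R$ viewed as a tensor on $\mathcal{N}$ via directional derivative $\partial_t v=\psi_t^a e_a$. The two inputs I need are $\|P_k\phi_t\|_{L^4}$ and $\|P_kA_t\|_{L^4}$. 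The bound on $\phi_t$ is already supplied by Lemma \ref{Zoom2}. For $A_t$, I would use the caloric representation
\[
A_t(s)=\int_s^{\infty}(\phi_t\diamond\phi_s)\,\mathcal{G}\,ds',
\]
split $\mathcal{G}=\Gamma^\infty+\widetilde{\mathcal{G}}$, and run a trilinear Littlewood–Paley decomposition exactly as in the proof of Lemma \ref{connection}, using Lemma \ref{Zoom2} for $\phi_t$, the bound on $\phi_s=\sum D_i\phi_i$ obtained from Lemma \ref{Zoom0}–Lemma \ref{Zoom1}, and Corollary \ref{kjl} for $\widetilde{\mathcal{G}}$. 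The output should be of the schematic form $\|P_kA_t\|_{L^4}\lesssim 2^{(1-\sigma)k}h_k(\sigma)\cdot(\text{polynomial loss in }(1{+}2^{2k}s))$ without negative powers of $\varepsilon$. Feeding these back through bilinear Littlewood–Paley, together with dynamic separation to control $\widetilde{\mathcal{G}}^{(1)}$ and $\widetilde{\mathcal{G}}^{(2)}$ in the low-frequency partners (as in Proposition \ref{7.2q}), yields a bound of the form
\[
\|P_k\partial_t\widetilde{\mathcal{G}}^{(1)}\|_{L^4}\lesssim C\,2^{-k/2}h_k\,(1+2^{2k}s)^{-M'}\bigl(1_{k+j\ge0}+2^{\delta|k+j|}1_{k+j\le0}\bigr)
\]
for any fixed $M'$, with $C$ independent of the bootstrap constant $\varepsilon^{-1/2}$.

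Interpolating the two estimates then gives, for $s\in[2^{2j-1},2^{2j+1}]$,
\[
2^{k/2}\|P_k\widetilde{\mathcal{G}}^{(1)}\|_{L^4_xL^\infty_t}\lesssim h_k\bigl(1+2^{2k}s\bigr)^{-3M/4+M'/4}\bigl(1_{k+j\ge 0}+2^{\delta|k+j|}1_{k+j\le 0}\bigr).
\]
Choosing $M$ large enough (and $\varepsilon$ correspondingly small, which is permitted by Proposition \ref{Jth}) so that $3M/4-M'/4\ge 20$, we obtain the improved bound claimed in the lemma, with a universal constant in place of the original $\varepsilon^{-1/4}$.

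The main obstacle is Step~4: producing a clean $L^4$ bound on $A_t$ without the bad factor $\varepsilon^{-1/4}$. The representation of $A_t$ is trilinear in $(\phi_t,\phi_s,\mathcal{G})$ and the $\mathrm{High}\times\mathrm{High}\to\mathrm{Low}$ and $\mathrm{Low}\times\mathrm{High}\to\mathrm{High}$ interactions with $\widetilde{\mathcal{G}}$ must be summed in $s'\in[s,\infty)$; one has to verify that the combination of the parabolic decay of $\phi_s$, the $L^4$ decay of $\phi_t$ from Lemma \ref{Zoom2}, and the $F_k$-decay of $\widetilde{\mathcal{G}}$ from Corollary \ref{kjl} yields a convergent integral with constant depending only on $\|\{h_k\}\|_{\ell^2}$. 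Everything else reduces to bookkeeping of frequency envelopes along lines already developed in Lemma \ref{connection} and Proposition \ref{7.2q}.
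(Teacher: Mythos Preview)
Your strategy is exactly the paper's: interpolate $L^4_xL^\infty_t$ between $L^4$ and $\partial_t$ in $L^4$, write $\partial_t\widetilde{\mathcal{G}}^{(1)}=\phi_t\,\mathcal{G}^{(2)}+A_t\,\mathcal{G}^{(1)}$, take $\phi_t$ from Lemma~\ref{Zoom2}, and build an $L^4$ bound on $A_t$ from $A_t=\int_s^\infty(\phi_t\diamond\phi_s)\mathcal{G}\,ds'$ via Littlewood--Paley bilinear estimates. Two inessential corrections: (i) the scaling of the $\partial_t$ bound is $\|P_k\partial_t\widetilde{\mathcal{G}}^{(1)}\|_{L^4}\lesssim 2^{k}h_k(\cdots)$, not $2^{-k/2}h_k$; this is what makes the powers of $2^k$ close after interpolation with $\|P_k\widetilde{\mathcal{G}}^{(1)}\|_{L^4}\lesssim 2^{-k}h_k(\cdots)$. (ii) You cannot get arbitrary $M'$ on the $\partial_t$ factor: $\phi_t$ from Lemma~\ref{Zoom2} only decays like $(1+s2^{2k})^{-2}$, and $A_t$ comparably, so the paper simply takes no $s$--decay on $\partial_t\widetilde{\mathcal{G}}^{(1)}$. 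This is harmless --- and you effectively acknowledge it in your final inequality $3M/4-M'/4\ge 20$ --- because the fixed $L^4$ decay exponent $M=30$ already gives $3\cdot30/4=22.5\ge 20$ after interpolation; invoking Proposition~\ref{Jth} for arbitrary $M$ is unnecessary.
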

\begin{proof}
The proof of Lemma \ref{connection} has shown for any $\sigma\in[0,\frac{99}{100}]$, $k,j\in \Bbb Z$, $s\in [2^{2j-1}, 2^{2j+1})$,
\begin{align}\label{Ccdfcv}
\|P_{k }(\mathcal{G}D_i\phi_i)\|_{L^4\bigcap L^{\infty}_tL^2_x}&\lesssim h_k(\sigma)2^{-\sigma k+k}(1+s2^{2k})^{-3}1_{j+k\ge 0}+2^{-j}2^{\delta|k+j|}1_{k+j\le 0}.
\end{align}
Meanwhile, Lemma \ref{Zoom2} yields
\begin{align}
\|P_{k }\phi_t\|_{L^{4}}&\lesssim b_{k}2^{k}(1+s2^{2k})^{-2}.
\end{align}
Recall that $b_{k}\le \varepsilon^{\frac{1}{2}}$, for any $k\in\Bbb Z$.
Then bilinear Littlewood-Paley decomposition shows
\begin{align*}
\|P_{k }(\phi_t(D_i\phi_i\mathcal{G}))\|_{L^{4}}&\lesssim h_k(\sigma)2^{-\sigma k+3k}(1+s2^{2k})^{-2}1_{k+j\ge 0}+2^{-2j}2^{k-\sigma k}2^{2\delta|k+j|}h_{k}(\sigma)h_{k}1_{k+j\le 0}
\end{align*}
for $i=1,2$, any $\sigma\in[0,\frac{99}{100}]$, $k,j\in \Bbb Z$, $s\in [2^{2j-1}, 2^{2j+1})$. Here, in the ${\rm High\times Low}$ interaction of $\phi_t(D_i\phi_i\mathcal{G})$ we use
\begin{align*}
&\sum_{|k_1-k|\le 4,k_2\le k-4}\|P_{k }(P_{k_1}\phi_tP_{k_2}(D_i\phi_i\mathcal{G}))\|_{L^{4}}\\
&\lesssim b_{k}2^{k}\sum_{k_2\le k-4} h_{k_2}(\sigma)2^{-\sigma k_2+2k_2}\left((1+s2^{2k_2})^{-3}1_{j+k_2\ge 0}+2^{-j}2^{\delta|k_2+j|}1_{k_2+j\le 0}\right).
\end{align*}
The other two frequency interactions are standard.
Thus
\begin{align}
\int^{\infty}_s\|P_{k }(\phi_tD_i\phi_i)\mathcal{G}\|_{L^{4}}ds'&\lesssim h_k(\sigma)2^{-\sigma k+k}(1+2^{2k+2k_0})^{-1}1_{k+k_0\ge 0}\nonumber\\
&+ 2^{k-\sigma k}h_{k}(\sigma)(1+2^{2\delta|k+k_0|}h_{k}) 1_{k+k_0\le 0}\label{Pl90mn}
\end{align}
for any $\sigma\in[0,\frac{99}{100}]$, $k,k_0\in \Bbb Z$, $s\in [2^{2k_0-1}, 2^{2k_0+1})$. Recall that
\begin{align*}
A_t=\int^{\infty}_s (\phi_t\diamond\phi_s)\mathcal{G} ds'; \mbox{ }\phi_s=\sum_{i=1,2}D_i\phi_i,
\end{align*}
we see $\|P_{k}A_t\|_{L^4}$ is bounded by the RHS of (\ref{Pl90mn}).
By the schematic formula
\begin{align*}
\partial_t (\widetilde{\mathcal{G}}^{(1)})= \phi_t \mathcal{G}^{(2)} +A_t \mathcal{G}^{(1)},
\end{align*}
and the bounds
\begin{align*}
\|P_{k}(\widetilde{\mathcal{G}}^{(l)})\|_{L^{4}\cap L^{\infty}_tL^2_x} \lesssim (1+s2^{2k})^{-30}2^{-\sigma k-k}h_{k}(\sigma),\mbox{ }\forall \mbox{ } l=1,2,
\end{align*}
we deduce from bilinear Littlewood-Paley decomposition that
\begin{align*}
\|P_{k}\partial_t (\widetilde{\mathcal{G}}^{(1)})\|_{L^4}\lesssim h_k 2^{ k}(1+2^{2\delta|k+k_0|}h_{k}1_{k+k_0\le 0}).
\end{align*}
Then by Gagliardo-Nirenberg inequality we get
\begin{align*}
 2^{\frac{1}{2}k}\|P_{k} (\widetilde{\mathcal{G}}^{(1)})\|_{L^4_xL^{\infty}_t}&\lesssim \|P_{k} (\widetilde{\mathcal{G}}^{(1)})\|^{\frac{3}{4}}_{L^4}\|\partial_tP_{k} (\widetilde{\mathcal{G}}^{(1)})\|^{\frac{1}{4}}_{L^4}\\
&\lesssim
h_k   (1+s2^{2k})^{-20}1_{k+k_0\ge 0}+h_k  2^{\delta|k+k_0|} 1_{k+k_0\le 0}.
\end{align*}
for  $k_0,k\in\Bbb Z$, $s\in [2^{2k_0-1}, 2^{2k_0+1})$.
\end{proof}

\subsection{End of proof of Proposition \ref{Parabolic}}

By Lemma \ref{Zoom3}, the assumption (\ref{FIL}) in Lemma \ref{Zoom0}, Lemma \ref{Zoom1} and Lemma \ref{Zoom2} can be dropped. In fact, let
\begin{align*}
\widetilde{\Phi}(T'):=\sup\limits_{k,j\in\Bbb Z}\sup\limits_{s\in[2^{2j-1},2^{2j+1})}h^{-1}_k \left(1_{k+j\ge 0}(1+s2^{2k})^{-20}+2^{\delta|k+j|}1_{k+j\le 0}\right)^{-1}2^{\frac{1}{2}k}\|P_{k }\widetilde{\mathcal{G}}^{(1)}\|_{L^{4}_{x}L^{\infty}_t(T')}
\end{align*}
Lemma \ref{FFF} and  Sobolev embeddings imply $\widetilde{\Phi}$ is an increasing continuous function on $T'\in[0,T]$. Lemma \ref{Zoom3} shows
$\widetilde{\Phi}(T')\le \varepsilon^{-\frac{1}{4}}\Longrightarrow \widetilde{\Phi}(T')\lesssim 1$. Then by Bernstein inequality and letting $T'\to 0$, it remains to prove
\begin{align*}
 \|P_{k }\widetilde{\mathcal{G}}^{(1)}\|_{L^{2}_{x}}\lesssim h_k \left(1_{k+j\ge 0}(1+s2^{2k})^{-20}+2^{\delta|k+j|}1_{k+j\le 0}\right)
\end{align*}
along the heat flow initiated from $u_0$ for any ${k,j\in\Bbb Z}$, $s\in[2^{2j-1},2^{2j+1})$. This follows by (\ref{3.16ab}).

By Lemma \ref{FFF} and similar arguments, assumption (\ref{4.1}) can be also dropped.
Thus Lemma \ref{Zoom0}, Lemma \ref{Zoom1} and Lemma \ref{Zoom2} all hold with only  assuming (\ref{tian}), (\ref{tian2})  of Proposition \ref{Parabolic}.
The left for Proposition \ref{Parabolic} is to prove the $L^{2}_{t,x}$ bound for $A_t$.
\begin{Lemma}
With the assumptions (\ref{tian}), (\ref{tian2}) of Proposition \ref{Parabolic}, for all $k\in \Bbb Z$, one has
\begin{align}
\|P_kA_t\upharpoonright_{s=0}\|_{L^{2}_{t,x}}&\lesssim \varepsilon {b}_{k}(\sigma)2^{- \sigma k} \mbox{ }{\rm{if}}\mbox{  }\sigma\in[\frac{1}{100},\frac{99}{100}]\label{cx01}\\
\|A_t\upharpoonright_{s=0}\|_{L^{2}_{t,x}}&\lesssim \varepsilon^2\mbox{ }{\rm{if}}\mbox{  }\sigma\in[0,\frac{99}{100}]\label{cx02}.
\end{align}
\end{Lemma}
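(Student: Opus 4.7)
The starting point is the formula from Proposition~\ref{Caloric}, which schematically reads
\[ A_t\upharpoonright_{s=0}=\int_0^{\infty}(\phi_t\diamond\phi_s)\,\mathcal{G}\,ds'. \]
Splitting $\mathcal{G}=\Gamma^{\infty}+\widetilde{\mathcal{G}}$ as in Section~\ref{VNM}, the problem reduces to controlling $\int_0^\infty P_k[(\phi_t\diamond\phi_s)\mathcal{G}]\,ds'$ in $L^2_{t,x}$. The ingredients we will feed in are: (i) the $L^{4}_{t,x}$ bound on $\phi_t$ from Lemma~\ref{Zoom2}; (ii) the $L^{4}_{t,x}$ bound~\eqref{Bvn} on $\phi_s$; (iii) the $F_k(T)$ bound on $\widetilde{\mathcal{G}}$ from Corollary~\ref{kjl}. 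Observe that Lemma~\ref{Zoom3} allows us to drop the auxiliary bootstrap~\eqref{FIL}, so these bounds are unconditional under the hypotheses~\eqref{tian}--\eqref{tian2} of Proposition~\ref{Parabolic}.

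\textbf{Constant piece $\Gamma^{\infty}$.} Here we only need $\|P_k(\phi_t\diamond\phi_s)\|_{L^2_{t,x}}$. Since $L^4\times L^4\hookrightarrow L^2$, a bilinear Littlewood--Paley decomposition combined with Lemma~\ref{Zoom2} and~\eqref{Bvn} mirrors the derivation of~\eqref{3.591}--\eqref{GVfcm}, but with one $\phi_i$ factor replaced by $\phi_t$ (which costs an extra $2^k$ in scaling). For $s\in[2^{2j-1},2^{2j+1}]$ this yields
\begin{align*}
\|P_k(\phi_t\diamond\phi_s)\|_{L^2_{t,x}}&\lesssim b_k(\sigma)b_k\,2^{(2-\sigma)k}(2^{2k+2j})^{-3/8}(1+2^{2k+2j})^{-3}\quad\text{when }k+j\ge 0,\\
\|P_k(\phi_t\diamond\phi_s)\|_{L^2_{t,x}}&\lesssim b_k(\sigma)b_k\,2^{(1-\sigma)k-j}\,2^{\delta|k+j|}\quad\text{when }k+j\le 0.
\end{align*}
Integrating in $s$ and summing over the two regimes $j\gtrless -k$ gives $\int_0^{\infty}\|P_k(\phi_t\diamond\phi_s)\|_{L^2_{t,x}}\,ds'\lesssim\varepsilon\,b_k(\sigma)\,2^{-\sigma k}$ for $\sigma\in[1/100,99/100]$; the constraint $\sigma>0$ is exactly what makes the High$\times$High geometric series $\sum_{k_1\ge k}2^{-\sigma k_1}$ convergent.

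\textbf{Variable piece $\widetilde{\mathcal{G}}$.} We run a trilinear Littlewood--Paley decomposition following the template of Lemma~\ref{Zoom1}, but with the outer norm $L^4_{t,x}$ replaced by $L^2_{t,x}$ throughout. The High$\times$Low, Low$\times$High and High$\times$High cases are handled by pairing $L^4\times L^4\hookrightarrow L^2$ estimates on dyadic blocks of $\phi_t\diamond\phi_s$ against the $F_k(T)$ bound for $\widetilde{\mathcal{G}}$ from Corollary~\ref{kjl} (the $F_k(T)$ norm dominates $L^4_{t,x}$ directly and $L^{\infty}_{t,x}$ after Bernstein). Summation in $j$ and in the inner frequency scales proceeds exactly as in Lemma~\ref{Zoom1}; the parabolic decay factor $(1+2^{k+j})^{-7}T_{k,j}$ in the $\widetilde{\mathcal{G}}$-envelope and the $\ell^2$-smallness of $\{b_k\}$ together supply the overall factor $\varepsilon$. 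Combined with the $\Gamma^{\infty}$-piece this proves~\eqref{cx01}.

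\textbf{The unlocalized bound~\eqref{cx02}.} By Plancherel and~\eqref{cx01} applied at $\sigma=1/100$,
\[ \|A_t\|_{L^2_{t,x}}^2=\sum_k\|P_kA_t\|_{L^2_{t,x}}^2\lesssim\varepsilon^2\sum_k b_k(1/100)^2\,2^{-k/50}. \]
Writing $b_k(1/100)\le \sup_{k'}2^{-\delta|k-k'|}2^{k'/100}b_{k'}$ with $\delta=1/800$, so that $2\delta=1/400<1/50$, and exchanging the order of summation bounds the right-hand side by $C\sum_{k'}b_{k'}^2\le C\varepsilon^2$; hence $\|A_t\|_{L^2_{t,x}}\lesssim\varepsilon^2$, as claimed.

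\textbf{Main obstacle.} The principal technical annoyance is that $\phi_t$ is one derivative heavier than $\phi_x$ in scaling --- its $L^{4}_{t,x}$ bound carries an extra $2^{k}$ compared with the $\phi_i$ bound used in Lemma~\ref{Zoom1}. This is what shifts the target exponent from $b_k(\sigma)\cdot(\text{scale})$ in Lemma~\ref{Zoom1} to $\varepsilon\,b_k(\sigma)2^{-\sigma k}$ here, and is the reason one loses the $\sigma=0$ endpoint in the frequency-localized bound: the same $s$-integral $\int_0^\infty (2^{2k}s)^{-3/8}\,\tfrac{ds}{s}$-type mechanism that closed for $L^4$ in Lemma~\ref{Zoom1} now suffers a borderline logarithmic divergence in the High$\times$High interaction at $\sigma=0$. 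Recovering $\sigma=0$ is then forced to go through Plancherel together with the $\ell^2$-smallness of the envelope $\{b_k\}$, exactly as carried out above.
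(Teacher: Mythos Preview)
Your treatment of \eqref{cx01} is essentially the paper's: the constant piece $\Gamma^{\infty}$ is governed by the bilinear $L^4\times L^4\hookrightarrow L^2$ bound on $\phi_t\diamond\phi_s$ (this is precisely \cite{BIKT}, Lemma~5.7, recorded as \eqref{cx03}), and the $\widetilde{\mathcal G}$ piece is handled by a trilinear decomposition against the $F_k$ bound of Corollary~\ref{kjl}. One minor correction: the restriction $\sigma\ge\tfrac{1}{100}$ is not needed for the constant piece (the High$\times$High sum there carries $b_l^2$ and is already $\ell^1$); it is genuinely used only in the $\widetilde{\mathcal G}$ analysis, exactly where the paper invokes it.

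Your derivation of \eqref{cx02}, however, has a real gap. You claim
\[
\sum_{k}b_k(\tfrac{1}{100})^2\,2^{-k/50}\ \lesssim\ \sum_{k'}b_{k'}^2,
\]
citing $2\delta=1/400<1/50$. But this is the wrong direction: after writing $b_k(\tfrac{1}{100})\le \sup_{k'}2^{k'/100-\delta|k-k'|}b_{k'}$ and exchanging sums, the inner sum is $\sum_{m\in\Bbb Z}2^{-m/50-2\delta|m|}$ with $m=k-k'$, whose $m<0$ tail has exponent $|m|(\tfrac{1}{50}-2\delta)>0$ and diverges. Concretely, if the raw sequence is $b(k')=\varepsilon\,\mathbf 1_{\{k'=0\}}$, then $b_k(\tfrac{1}{100})=\varepsilon\,2^{-\delta|k|}$ and $\sum_k b_k(\tfrac{1}{100})^2 2^{-k/50}=\varepsilon^2\sum_k 2^{-2\delta|k|-k/50}=+\infty$. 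No admissible $\sigma\in[\tfrac{1}{100},\tfrac{99}{100}]$ rescues this, since \eqref{cx01} requires $\sigma\ge\tfrac{1}{100}>\delta$, forcing $2\sigma>2\delta$ and hence divergence of the low-$k$ tail.

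The paper does not attempt to deduce \eqref{cx02} from \eqref{cx01}. It proves \eqref{cx02} directly and first: since $\|\mathcal G\|_{L^{\infty}}\le K(\mathcal N)$, one has $\|A_t(0)\|_{L^2_{t,x}}\lesssim\int_0^{\infty}\|\phi_t\diamond\phi_s\|_{L^2_{t,x}}\,ds$, and \cite{BIKT}, Lemma~5.7 already gives this integral $\lesssim\varepsilon^2$ without any frequency localization. The unlocalized bound is thus logically independent of (and simpler than) the localized one.
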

\begin{proof}
Recall that [\cite{BIKT}, Lemma 5.7] has proved
 \begin{align}
\|P_k(\phi_t\diamond \phi_s)\|_{L^{2}_{t,x}}&\lesssim   \sum_{l\le k}2^{- \sigma l}2^{l+k}b_{k}(\sigma)b_l(s2^{2l})^{-\frac{3}{8}}(1+s2^{2k})^{-2}
+ \sum_{l\ge k}2^{- \sigma l}2^{2l}b_{l}(\sigma)b_l(s2^{2l})^{-\frac{3}{8}}(1+s2^{2l})^{-4}\label{cx03}.
\end{align}
Denote the RHS of (\ref{cx03}) by $\mathbf{a}_k(\sigma)$ for simplicity.

Since $A_t(0)=\int^{\infty}_0 (\phi_t\diamond\phi_s)\mathcal{G}ds$, (\ref{cx02}) follows by directly applying  [\cite{BIKT}, Lemma 5.7]  since $\|\mathcal{G}\|_{L^{\infty}}\lesssim K(\mathcal{N})$. For (\ref{cx01}), we need to clarify the frequency interaction between $(\phi_t\diamond\phi_s)$ and $\mathcal{G}$ as before. The constant part of $\mathcal{G}$ follows by (\ref{cx03}). It remains to deal with the $\mathcal{\widetilde{G}}$ part.  In the $\rm High\times Low$ part of $P_k[(\phi_t\diamond\phi_s)\mathcal{\widetilde{G}}]$, we have
\begin{align*}
&\sum_{|k_1-k|\le 4}\|P_{k_1}(\phi_t\diamond \phi_s)P_{\le k-4}\mathcal{\widetilde{G}}\|_{L^2_{t,x}}
\lesssim  \sum_{|k_1-k|\le 4}\|P_{k_1}(\phi_t\diamond \phi_s)\|_{L^2_{t,x}}\|\mathcal{\widetilde{G}}\|_{L^{\infty}}
\lesssim  \mathbf{a}_k(\sigma).
\end{align*}
Then the $\rm High\times Low$ part is acceptable by directly repeating  [\cite{BIKT}, Lemma 5.7].

\noindent From now on to the end of this lemma, we assume $\sigma\in[\frac{1}{100},\frac{99}{100}]$. In the $\rm High\times High$ part of $P_k[(\phi_t\diamond\phi_s)\mathcal{\widetilde{G}}]$,
by $\|P_kf\|_{L^{\infty}} \lesssim 2^k\|f\|_{F_k}$ and (\ref{CXbZ}), we have
 \begin{align}
&\sum_{|k_1-k_2|\le 8,k_1,k_2\ge k-4}\|P_{k_1}(\phi_t\diamond \phi_s)P_{ k_2}\mathcal{\widetilde{G}}\|_{L^2_{t,x}}\nonumber\\
&\lesssim  \sum_{|k_1-k|\le 8,k_1,k_2\ge k-4}\|P_{k_1}(\phi_t\diamond \phi_s)\|_{L^2_{t,x}}2^{k_2}\|\mathcal{\widetilde{G}}\|_{F_{k_2}}
\lesssim \sum_{ k_1\ge k-4}\mathbf{a}_{k_1}(0)(1+s2^{2k_1})^{-3}2^{-\sigma k_1}b_{k_1}(\sigma)\nonumber\\
&\lesssim  \sum_{ k_1\ge k-4}2^{-\sigma k_1}b_{k_1}(\sigma)(1+s2^{2k_1})^{-3}\sum_{l\le k_1} 2^{l+k_1}b_{k_1}b_l(s2^{2l})^{-\frac{3}{8}}(1+s2^{2k_1})^{-2}\label{huas}\\
&+  \sum_{ k_1\ge k-4}2^{-\sigma k_1}b_{k_1}(\sigma)(1+s2^{2k_1})^{-3}\sum_{l\ge k_1}2^{2l}b_{l}b_l(s2^{2l})^{-\frac{3}{8}}(1+s2^{2l})^{-4}.\label{huasq}
\end{align}
Thus for $j\in\Bbb Z$, $s\in[2^{2j-1},2^{2j+1})$, when $k+j\ge 0$, the above formula is bounded by
\begin{align*}
(1+ 2^{2j+2k})^{-2}2^{2k-\sigma k}b_{k}b_{k}(\sigma)(2^{2k+2j})^{-\frac{3}{8}}.
\end{align*}
When  $k+j\le 0$, by (\ref{huas}), (\ref{huasq}),  the $\rm High\times High$ part is  dominated by
 \begin{align*}
&( {\sum\limits_{{k_1} \ge - j} { + \sum\limits_{k-4 \le  {k_1} \le  - j} {} } } ){(1 + {2^{{\text{2}}{k_1} + 2j}})^{ - 3}}{2^{{\text{2}}{k_1} - \sigma {k_1}}}{b^2_{{k_1}}}{b_{{k_1}}}(\sigma ){2^{ - \frac{3}
{4}({k_1} + j)}} \hfill \\
 & + \sum\limits_{{k_1} \ge k-4} {{1_{{k_1} + j \ge  0}}{{(1 + {2^{{\text{2}}{k_1} + 2j}})}^{ - 3}}{2^{ - \sigma {k_1}}}} b_{k_1}(\sigma)\big[ {\sum\limits_{l \ge  {k_1}} {} {2^{2l}}{b_l}{b_l}{2^{ - \frac{3}
{4}(j + l)}}{{(1 + {2^{2l + 2j}})}^{ - 4}}} \big] \hfill \\
  &+ \sum\limits_{{k_1} \ge  k-4} {{1_{{k_1} + j \le  0}}{{(1 + {2^{{\text{2}}{k_1} + 2j}})}^{ - 3}}} {2^{ - \sigma {k_1}}} b_{k_1}(\sigma) \big[ {(\sum\limits_{l \ge   - j} { + \sum\limits_{{k_1} \le l \le  - j} {} )}{2^{2l}}{b_l}{b_l}{2^{ - \frac{3}{4} (j + l)}}{{(1 + {2^{2l + 2j}})}^{ - 4}}} \big],
  \end{align*}
 which is further bounded by
\begin{align*}
   & \sum\limits_{{k_1} \ge  - j}{b_{k_1}}(\sigma){b^2_{-j}}  {{2^{2\delta |{k_1} + j|}}} {2^{2{k_1} - \sigma {k_1}}}{2^{ - \frac{3}
{4}({k_1} + j)}}{2^{ - 6({k_1} + j)}} +  \sum\limits_{k \le {k_1} \le   - j}{b_{k_1}}(\sigma){b^2_{ - j}}2^{-\sigma k_1} {{2^{2\delta |{k_1} + j|}}} 2^{2{k_1}}{2^{ - \frac{3}
{4}({k_1} + j)}} \hfill \\
  &+ {b^2_{ - j}}{b_{ - j}}(\sigma )\sum\limits_{{k_1} \ge  k-4} {{1_{{k_1} + j \geqslant 0}}{{(1 + {2^{{\text{2}}{k_1} + 2j}})}^{ - 7}}} {2^{2{k_1} - \sigma {k_1}}}{2^{\delta |{k_1} + j|}}{2^{ - \frac{3}
{4}(j + {k_1})}}  \\
  &+ \sum\limits_{{k_1} \ge k-4} {{1_{{k_1} + j \le 0}}{b^2_{ - j}}{b_{k_1}}(\sigma )} \big[ {{2^{ - \sigma {k_1}}}{2^{ - 2j}} + {2^{ - \sigma {k_1}}}\sum\limits_{{k_1} \le  l \le   - j} {{2^{2l}}} {2^{\delta |{l} + j|}}{2^{ - \frac{3}
{4}(j + {l})}}} \big] \\
  &\lesssim {b^2_{ - j}}{b_{ - j}}(\sigma ){2^{\sigma j}}{2^{ - 2j}} + \sum\limits_{{k_1} \ge k-4} {{1_{{k_1} + j \le  0}}{b^2_{-j}}{b_{k_1}}(\sigma )} {2^{ - \sigma {k_1}}}{2^{ - 2j}}  \\
  &\lesssim {b^2_{ - j}}{b_{-j}}(\sigma ){2^{\sigma j}}{2^{ - 2j}} +  2^{-\sigma k}b_{k}(\sigma)b^{2}_{-j}{2^{ - 2j}},
\end{align*}
where in the last line we used $\sigma\ge \frac{1}{100}$.
Summing over $j\ge k_0$ we see the $\rm High\times High$ part satisfies
 \begin{align*}
&\int^{\infty}_0 \sum_{|k_1-k_2|\le 8, k_1,k_2\ge k-4}\|P_{k}[P_{k_1}(\phi_t\diamond \phi_s) P_{k_2}\mathcal{\widetilde{G}}]\|_{L^2_{t,x}}ds'\\
&\lesssim   \sum_{j\le -k} 2^{\sigma j}{b}_{-j}(\sigma)b^{2}_{-j}+\sum_{j\le -k}b^{2}_{-j} 2^{-\sigma k}{b}_{k}(\sigma)+\sum_{j\ge -k} (1+ 2^{2j+2k})^{-2}2^{2k+2j-\sigma k}b_{k}b_{k}(\sigma)(2^{2k+2j})^{-\frac{3}{8}}\\
&\lesssim \varepsilon^2   2^{-\sigma k}{b}_{k}(\sigma),
\end{align*}
where we applied $\sigma\ge\frac{1}{100}$ in the last line again.
Now let us consider the $\rm Low\times High$ part of $P_{k}[(\phi_t\diamond\phi_s)\mathcal{\widetilde{G}}]$. By the same reason as $\rm High\times High$, the  $\rm Low\times High$ part is dominated by
  \begin{align*}
\int^{\infty}_0\sum_{|k_2-k|\le 4}\|P_{\le {k-4}}(\phi_t\diamond \phi_s)P_{ k_2}\mathcal{\widetilde{G}}\|_{L^2_{t,x}}ds'&\lesssim  \int^{\infty}_0\sum_{|k_2-k|\le 4}\|P_{\le {k-4}}(\phi_t\diamond \phi_s)\|_{L^2_{t,x}}2^{k_2}\|P_{ k_2}\mathcal{\widetilde{G}}\|_{L^{\infty}_{t}L^2_{x}}ds'\\
&\lesssim    b_{k}(\sigma)2^{-\sigma k} \int^{\infty}_0 \| (\phi_t\diamond \phi_s)\|_{L^2_{t,x}}ds'\lesssim    b_{k}(\sigma)2^{-\sigma k} \varepsilon^2,
\end{align*}
where we applied (\ref{CXbZ}) and $2^{k}T_{k,j}(1+2^{j+k})^{-1}\lesssim 1$ in the third inequality.
\end{proof}

\section{Evolution along the Schr\"odinger map flow direction}

In this section, we prove the following proposition, which is the key to close the bootstrap for solutions in the Schr\"odinger evolution direction.
\begin{Proposition}\label{4.5}
Assume that  $\sigma\in [0,\frac{99}{100}]$.
Let $Q\in \mathcal{N}$ be a fixed point and $\epsilon_0$ be a sufficiently small constant.  Given any $\mathcal{L}\in \Bbb Z_+$, assume that $T\in(0,2^{2\mathcal{L}}]$. Let $\{c_k\}$ be an $\epsilon_0$-frequency envelope of order $\delta$. And let $\{c_k(\sigma)\}$ be another frequency envelope of order $\delta$. Let $u\in \mathcal{H}_Q(T)$ be the solution to SMF with initial data $u_0$ which satisfies
\begin{align}
\|P_{k}\nabla u_0\|_{L^2_x}&\le c_k\label{w1kxxxjn}\\
\|P_{k}\nabla u_0\|_{L^2_x}&\le c_k(\sigma)2^{-\sigma k}\label{w2kxxxjn}
\end{align}
Denote $\{\phi_i\}$ the corresponding differential fields of the heat flow initiated from $u$. Suppose also that at the heat initial time $s=0$,
\begin{align}
\|P_{k}\phi_i\|_{G_k(T)}&\le \epsilon^{-\frac{1}{2}}_0c_k.
\end{align}
Then when $s=0$, we have for all $i=1,2$, $k\in\Bbb Z$,
\begin{align}
\|P_{k}\phi_i\|_{G_k(T)}&\lesssim c_k\label{4.32}\\
\|P_{k}\phi_i\|_{G_k(T)}&\lesssim c_k(\sigma)2^{-\sigma k}.\label{4.33}
\end{align}
\end{Proposition}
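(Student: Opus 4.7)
Writing the gauged Schr\"odinger equation (\ref{xcgfrt}) at $s=0$ in inhomogeneous form,
\begin{equation*}
(i\partial_t + \Delta)\phi_i \;=\; A_t\phi_i + 2i\sum_j A_j\partial_j\phi_i + i\sum_j(A^2_j-\partial_jA_j)\phi_i + i\sum_j(\phi_i\diamond\phi_j)\phi_j\,\mathcal{G},
\end{equation*}
the plan is to apply the linear inhomogeneous Strichartz estimate of Proposition \ref{bvc457} after frequency localization by $P_k$. This reduces (\ref{4.32})--(\ref{4.33}) to proving that the $N_k(T)$ norm of each term on the right is bounded by $\lesssim \epsilon_0^{1/2}c_k(\sigma)2^{-\sigma k}$ (respectively $\lesssim \epsilon_0^{1/2}c_k$); combined with the initial data contribution $\|P_k\phi_i\upharpoonright_{s=0,t=0}\|_{L^2_x}$, already controlled by $c_k(\sigma)2^{-\sigma k}$ through Corollary \ref{JkL}, one then obtains the desired $G_k(T)$ bound.

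First I would repackage the bootstrap hypothesis $\|P_k\phi_i\upharpoonright_{s=0}\|_{G_k(T)} \le \epsilon_0^{-1/2}c_k$ together with (\ref{w2kxxxjn}) into the hypotheses of Proposition \ref{Parabolic}: the $G_k$ bound controls the $F_k$ and $L^4_{t,x}$ norms needed for (\ref{tian}), (\ref{tian2}), with the envelope $b_k(\sigma) = \epsilon_0^{-1/2}c_k(\sigma)$. Proposition \ref{Parabolic} then supplies the parabolic decay of $\phi_i(s)$ in $F_k(T)$, the $L^4_{t,x}$ bound for $\phi_t$, and the crucial connection estimates
\begin{equation*}
\|P_kA_i\upharpoonright_{s=0}\|_{L^4_{t,x}} \lesssim c_k(\sigma)2^{-\sigma k},\qquad \|P_kA_t\upharpoonright_{s=0}\|_{L^2_{t,x}} \lesssim \epsilon_0^{1/2}c_k(\sigma)2^{-\sigma k},
\end{equation*}
where the small factor $\epsilon_0^{1/2}$ in the $A_t$ bound is the smallness needed to close the bootstrap.

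With these inputs, the magnetic and potential terms $A_t\phi_i$, $A_j\partial_j\phi_i$, $(A^2_j-\partial_jA_j)\phi_i$ are placed in $N_k(T)$ exactly as in the BIKT framework for constant-curvature targets: using the $L^{4/3}$ component of $N_k$ for $A_t\phi_i$ (exploiting the $\epsilon_0^{1/2}$ gain from $\|A_t\|_{L^2_{t,x}}$ against $\|\phi_i\|_{L^4_{t,x}}$) and the lateral $L^{3/2,6/5}_{\mathbf{e}}$ components for the magnetic piece, via trilinear Littlewood--Paley decomposition. The genuinely new task is the curvature quartic $(\phi_i\diamond\phi_j)\phi_j\mathcal{G}$, and here I would perform the dynamic separation of Section \ref{VNM}, writing $\mathcal{G} = \Gamma^\infty + \widetilde{\mathcal{G}}$. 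The constant piece $\Gamma^\infty$ yields the same cubic-in-$\phi$ nonlinearity as the $\mathbb{S}^2$ target case, hence is handled verbatim by BIKT. For the variable piece $\widetilde{\mathcal{G}}$, Corollary \ref{kjl} provides the sharp bound $\|P_k\widetilde{\mathcal{G}}\upharpoonright_{s=0}\|_{F_k(T)} \lesssim 2^{-(1+\sigma)k}h_k(\sigma)$, with an extra $2^{-k}$ gain coming from the heat-direction dynamical separation, so that $(\phi_i\diamond\phi_j)\phi_j\widetilde{\mathcal{G}}$ behaves like the cubic term with an additional $\ell^2$-small factor, and a Littlewood--Paley trilinear decomposition closes its $N_k(T)$ bound.

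The main obstacle will be the High$\times$Low interaction of $(\phi_i\diamond\phi_j)\phi_j$ with high-frequency $\widetilde{\mathcal{G}}$ in the lateral $L^{3/2,6/5}_{\mathbf{e}}$ component of $N_k$: here the $L^\infty_t L^2_x$ decay of $\widetilde{\mathcal{G}}$ alone does not suffice, and one must invoke the improved $L^4_x L^\infty_t$ bound on $\widetilde{\nabla}\mathbf{R}$ produced by the interpolation/bootstrap of Lemma \ref{Zoom3}. This is precisely the block of $F_k$ flagged in the introduction, and is the reason the auxiliary bootstrap (\ref{FIL}) was needed in Section 4. Once this block is controlled, all three frequency regimes close with a universal $\epsilon_0^{1/2}$ smallness; the $c_k$ case (\ref{4.32}) is identical to the $c_k(\sigma)$ argument with $\sigma=0$, so both conclusions follow simultaneously.
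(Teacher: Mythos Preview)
Your overall architecture---write the gauged Schr\"odinger equation, invoke Proposition~\ref{bvc457}, feed in Proposition~\ref{Parabolic}, and split $\mathcal{G}=\Gamma^\infty+\widetilde{\mathcal{G}}$---matches the paper. But the bootstrap for $\sigma>0$ is set up incorrectly. You propose to take $b_k(\sigma)=\epsilon_0^{-1/2}c_k(\sigma)$ in Proposition~\ref{Parabolic}; however hypothesis (\ref{tian}) then reads $\|P_k\phi_i\|_{F_k(T)}\le \epsilon_0^{-1/2}c_k(\sigma)2^{-\sigma k}$, which you have no way to verify: the only bootstrap assumption is at $\sigma=0$, and (\ref{w2kxxxjn}) controls $u_0$, not $\phi_i$ on $[-T,T]$. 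Consequently you cannot aim directly at a nonlinearity bound $\lesssim\epsilon_0^{1/2}c_k(\sigma)2^{-\sigma k}$. The paper instead defines $b_k(\sigma)$ \emph{intrinsically} as the $\delta$-envelope of $2^{\sigma k'}\|P_{k'}\phi_i\upharpoonright_{s=0}\|_{G_{k'}(T)}$ (finite by Lemma~\ref{FFF} and Sobolev), so that (\ref{tian}) holds tautologically via $G_k\subset F_k$; Lemma~5.1 then supplies (\ref{tian2}) from the SL relation $\phi_t=\sqrt{-1}\sum D_j\phi_j$. With this $b_k(\sigma)$, Propositions~\ref{V1} and~\ref{CXz12} bound the nonlinearity by $\epsilon_0\,b_k(\sigma)2^{-\sigma k}$, and the linear estimate plus (\ref{bbcc}) yields the self-improving inequality $b_k(\sigma)\lesssim c_k(\sigma)+\epsilon_0 b_k(\sigma)$, which closes (\ref{4.32}) and (\ref{4.33}) simultaneously.

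Your last paragraph also mislocates where the $L^4_xL^\infty_t$ control on $\widetilde{\mathcal{G}}^{(1)}$ enters. Lemma~\ref{Zoom3} and the auxiliary bootstrap (\ref{FIL}) live entirely inside the proof of Proposition~\ref{Parabolic} in Section~4; once that proposition is in hand, the $N_k$ analysis of the curvature quartic (Proposition~\ref{CXz12}) uses only the $F_k(T)$ bound $\|P_k\widetilde{\mathcal{G}}\upharpoonright_{s=0}\|_{F_k(T)}\lesssim 2^{-(\sigma+1)k}h_k(\sigma)$ from Corollary~\ref{kjl} together with the bilinear $N_k$ lemmas (\ref{6a})--(\ref{6c}). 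The genuinely delicate interaction---a high-frequency $\phi$ factor against the rest---is controlled not through the lateral $L^{3/2,6/5}_{\mathbf{e}}$ block but via (\ref{6c}), which trades the $G_l$ norm of that factor for a $2^{(k-l)/6}$ gain; see the decomposition (\ref{B8n})--(\ref{B10n}).
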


The proof of Proposition \ref{4.5} will be divided into several lemmas.
First of all, Corollary \ref{JkL} shows
\begin{align}\label{bbcc}
\sum^{2}_{i=1}\|P_{k}\phi_i(\upharpoonright_{s=0,t=0})\|_{L^2_x}\lesssim 2^{-\sigma k} c_{k}(\sigma)
\end{align}
for any $k\in\Bbb Z$, $\sigma\in[0,\frac{99}{100}]$.

Second,  we reduce the proof to frequency envelope bounds.
Let
\begin{align}
b(k)&=\sum^{2}_{i=1}\|P_{k}\phi_i\upharpoonright_{s=0}\|_{G_k(T)}.
\end{align}
For $\sigma\in[0,\frac{99}{100}]$, define the frequency envelopes:
\begin{align}
b_k(\sigma)&=\sup_{k'\in\Bbb Z}2^{\sigma k'}2^{-\delta|k-k'|}b(k').
\end{align}
By Proposition \ref{mei} and Sobolev embeddings,  they are finite and $\ell^2$ summable. And
\begin{align}\label{dscx45}
\|P_{k}\phi_i\upharpoonright_{s=0}\|_{G_k(T)}&\lesssim 2^{-\sigma k}b_k(\sigma).
\end{align}

{\bf To prove (\ref{4.32}) and (\ref{4.33}), it suffices to show}
\begin{align}
b_k(\sigma)&\lesssim c_k(\sigma).
\end{align}

By (\ref{4.1}), we have $b_k\le \varepsilon^{-\frac{1}{2}}_0c_k$, and particularly,
\begin{align}
\sum_{k\in\Bbb Z}b^2_{k}\le \epsilon_0.
\end{align}
The assumption (\ref{tian})  of Proposition 4.1 follows from the inclusion $G_k\subset F_k$. The following lemma will show the assumption (\ref{tian2}) holds as a corollary of (\ref{dscx45}) if $u$ solves SMF.

\begin{Lemma}
If $\{b_k(\sigma)\}$ are defined above. Then the field $\phi_t$ at heat initial time $s=0$ satisfies
\begin{align}\label{VXzz}
\|P_{k}\phi_t\upharpoonright_{s=0}\|_{L^{4}_{t,x}}\lesssim  b_{k}(\sigma)2^{-(\sigma-1)k}.
\end{align}
\end{Lemma}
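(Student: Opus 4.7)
The plan is to exploit the Schr\"odinger map equation (\ref{jnk}), which at $s=0$ reads
\[
\phi_t=\sqrt{-1}\sum_{i=1}^{2}D_i\phi_i=\sqrt{-1}\sum_{i=1}^{2}\bigl(\partial_i\phi_i+A_i\phi_i\bigr),
\]
so the claim reduces to controlling the two summands in $L^{4}_{t,x}$ by $2^{-(\sigma-1)k}b_k(\sigma)$. For the derivative term the estimate is essentially immediate: since the norm $L^4_{t,x}$ appears as a component of $\|\cdot\|_{F_k^0}$, we have the embedding chain $G_k\supset F_k\supset F^0_k\hookrightarrow L^{4}_{t,x}$, so by (\ref{dscx45}) and the boundedness of $\partial_i P_k$ on frequency-localized functions,
\[
\|P_k\partial_i\phi_i|_{s=0}\|_{L^4_{t,x}}\lesssim 2^k\|P_k\phi_i|_{s=0}\|_{G_k(T)}\lesssim 2^{k}\cdot 2^{-\sigma k}b_k(\sigma)=2^{-(\sigma-1)k}b_k(\sigma),
\]
which gives the desired main contribution.

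For the bilinear term $A_i\phi_i$, I would perform a Littlewood–Paley trichotomy
\[
P_k(A_i\phi_i)=\sum_{k_1,k_2}P_k\bigl(P_{k_1}A_i\cdot P_{k_2}\phi_i\bigr),
\]
splitting into the standard High–Low ($|k_1-k|\leq 4$, $k_2<k-4$), Low–High ($k_1<k-4$, $|k_2-k|\leq 4$), and High–High ($|k_1-k_2|\leq 8$, $k_1,k_2\geq k-4$) cases. The two inputs I would feed in are (\ref{4.19}) from Proposition \ref{Parabolic}, namely $\|P_k A_i|_{s=0}\|_{L^4_{t,x}}\lesssim 2^{-\sigma k}b_k(\sigma)$, and the $L^{\infty}_{t,x}$ bound on $\phi_i$ obtained from the $L^{\infty}_tL^{2}_x$ slot of $F^0_k\subset G_k$ together with spatial Bernstein, giving $\|P_k\phi_i|_{s=0}\|_{L^\infty_{t,x}}\lesssim 2^{(1-\sigma)k}b_k(\sigma)$. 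Pairing these via H\"older $L^4\cdot L^\infty\to L^4$ handles HL (put $A_i$ at the output frequency, $\phi_i$ at low frequency) and LH (the reverse); in each case the low-frequency sum telescopes via envelope slow variation to $2^k b_k$, so an extra factor $b_k\lesssim \epsilon_0^{1/2}$ is gained and the contribution is subordinate to the linear main term.

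The genuine technical obstacle is the High–High interaction, where both factors live at frequency $\gtrsim 2^k$ and no monotone summability is immediately available. I would treat it by applying output-Bernstein $\|P_k f\|_{L^4_{t,x}}\lesssim 2^{k/2}\|P_k f\|_{L^4_tL^2_x}$ and then bounding $\|P_{k_1}A_i\cdot P_{k_2}\phi_i\|_{L^4_tL^2_x}\leq \|P_{k_1}A_i\|_{L^4_{t,x}}\|P_{k_2}\phi_i\|_{L^\infty_{t,x}}$; the resulting sum $\sum_{k_1\geq k}2^{(1/2-2\sigma)k_1}b_{k_1}(\sigma)b_{k_1}$ is closed by Cauchy–Schwarz using the $\ell^2$-smallness $\sum_k b_k^2\lesssim\epsilon_0$ and slow variation of $\{b_k(\sigma)\}$. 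The net bilinear bound is $\lesssim\epsilon_0^{1/2}\cdot 2^{-(\sigma-1)k}b_k(\sigma)$, which combines cleanly with the linear piece of the first paragraph to yield (\ref{VXzz}). I expect the only real bookkeeping to be making the HH envelope-summation estimate play well with the range $\sigma\in[0,\tfrac{99}{100}]$, since the exponent of $2^{k_1}$ in the sum is positive.
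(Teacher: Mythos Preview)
Your overall strategy is correct and matches the paper's: at $s=0$ the Schr\"odinger map equation gives $\phi_t=\sqrt{-1}\sum_i(\partial_i\phi_i+A_i\phi_i)$, and the linear piece together with the HL/LH bilinear interactions go through exactly as you describe. The paper's own proof is a one-liner pointing to the same decomposition and to [\cite{BIKT}, Lemma~6.1].

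However, your treatment of the High--High interaction has a genuine gap, not merely bookkeeping. First, the H\"older step you wrote is not valid: $L^4_{t,x}\cdot L^\infty_{t,x}$ lands in $L^4_{t,x}$, not in $L^4_tL^2_x$, so after output Bernstein you must pair e.g.\ $L^4_{t,x}\cdot L^\infty_tL^4_x\to L^4_tL^2_x$. Doing so (with $\|P_{k_2}\phi_i\|_{L^\infty_tL^4_x}\lesssim 2^{k_2/2}b_{k_2}$) the HH contribution becomes
\[
2^{k/2}\sum_{k_1\ge k}2^{(1/2-\sigma)k_1}b_{k_1}(\sigma)b_{k_1},
\]
and for $\sigma\le\tfrac12$ this sum does \emph{not} close by slow variation or Cauchy--Schwarz: the exponent of $2^{k_1}$ is nonnegative and $\{b_{k_1}\}$ is only in $\ell^2$, so the tail need not be controlled by $2^{(1/2-\sigma)k}b_k(\sigma)$ (one can build counterexamples with $b_{k_1}$ supported far above $k$).

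The fix is to use a stronger bound on $A_i$ than (\ref{4.19}). Lemma~\ref{connection} gives $\|P_{k_1}A_i|_{s=0}\|_{F_{k_1}\cap S^{1/2}_{k_1}}\lesssim h_{k_1,0}$, and the $S^{1/2}_{k_1}$ control is exactly what the bilinear estimate Lemma~\ref{90} with $\omega=\tfrac12$ needs: in the HH regime it yields an extra factor $2^{-(k_1-k)/2}$, so the sum becomes
\[
2^{3k/2}\sum_{k_1\ge k}2^{-k_1/2}\,\|P_{k_1}A_i\|_{S^{1/2}_{k_1}}\,\|P_{k_1}\phi_i\|_{L^4}
\lesssim \epsilon_0\,2^{3k/2}\sum_{k_1\ge k}2^{-(1/2+\sigma)k_1}b_{k_1}(\sigma)
\lesssim \epsilon_0\,2^{(1-\sigma)k}b_k(\sigma),
\]
which converges for all $\sigma\in[0,\tfrac{99}{100}]$. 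This is what the reference to [\cite{BIKT}, Lemma~6.1] is encoding: one cannot get by with only the $L^4$ bound on $A_i$; the $S^{1/2}_k$ structure (equivalently, the refined bilinear lemma) is essential for HH.
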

\begin{proof}
When $s=0$, $\phi_t(0)=\sqrt{-1}\sum^{d}_{i=1}\partial_i\phi_i(0)+A_i(0)\phi_i(0)$. The terms $\psi_i(0),A_i(0)$ are estimated before in Section 4. Thus copying the proof of [\cite{BIKT},Lemma 6.1] gives (\ref{VXzz}).
\end{proof}

Thus both the assumption (\ref{tian}) and  the assumption (\ref{tian2}) of Proposition 4.1 are verified. Now one can apply Proposition 4.1, since (4.3) can be dropped. We summarize the results in the following:
\begin{align}
\left\{
  \begin{array}{ll}
   \|P_k(\phi_i(s))\|_{F_k(T)}\lesssim 2^{-\sigma k}b_k(\sigma) (1+2^{2k}s)^{-4}, & \hbox{  } \\
     \|P_k(D_i\phi_i(s))\|_{F_k(T)}\lesssim 2^{k}2^{-\sigma k}b_k(\sigma) (s2^{2k})^{-\frac{3}{8}}(1+2^{2k}s)^{-2}, & \hbox{  }
  \end{array}
\right.
\end{align}
and for $F\in \{\psi_i \diamond\psi_j , A^2_{l}  \}^{2}_{l.i,j=1}\upharpoonright_{s=0}$
\begin{align}
     \|P_kF\|_{L^{2}_{t,x}}\lesssim 2^{-\sigma k}b^2_{>k}(\sigma),\mbox{  }\|F\|_{L^{2}_{t,x}}\lesssim \epsilon_0.
\end{align}
The at $s=0$, $A_t$ satisfies
\begin{align*}
\|A_t(0)\|_{L^{2}_{t,x}}&\lesssim \epsilon_0,\mbox{  }{\rm{if}}\mbox{  }\sigma\in[0,\frac{99}{100}]\\
     \|P_k A_t(0)\|_{L^{2}_{t,x}}&\lesssim 2^{-\sigma k}b_{k}(\sigma),\mbox{  }{\rm{if}}\mbox{  }\sigma\in[\frac{1}{100},\frac{99}{100}]
\end{align*}
Recall that when $s=0$ the evolution equation of  $\phi_i$ along the Schr\"odinger map flow direction (see Lemma \ref{asdf}) is :
\begin{align}\label{Final}
-\sqrt{-1}D_t\phi_i&= \sum^{2}_{j=1}D_jD_j\phi_i +\sum^{2}_{j=1} \mathcal{R}(\phi_i,\phi_j)\phi_j.
\end{align}

\subsection{Control of nonlinearities}

Now let us deal with the nonlinearities in (\ref{Final}).
{\bf In this section we always assume $s=0$. }

Denote
\begin{align}
L'_{j}=A_t\phi_j+\sum^{2}_{i=1}A^2_i\phi_j+2\sum^{2}_{i=1}\partial_i(A_i\phi_j)-\sum^{2}_{i=1}(\partial_iA_i)\phi_j.
\end{align}

\begin{Proposition}[\cite{BIKT}]\label{V1}
For all $j\in\{1,2\}$ and $\sigma\in[0,\frac{99}{100}]$ we have
\begin{align}
\|P_k(L'_{j}) \upharpoonright_{s=0}\|_{N_k(T)}&\lesssim \epsilon_02^{-\sigma k}b_k(\sigma)\label{aCXz12}\\
\sum^{2}_{j_0,j_1,j_3=1}\|P_k\left(  \phi_{j_0}\diamond\phi_{j_1} \diamond\phi_{j_3}\right)  \upharpoonright_{s=0}\|_{N_k(T)}&\lesssim \epsilon_02^{-\sigma k}b_k(\sigma).\label{CXz122}
\end{align}
\end{Proposition}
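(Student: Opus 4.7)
My plan is to reduce Proposition 5.1 to the parabolic estimates established in Proposition 4.1 together with Corollary 4.1 and the bilinear/trilinear theory in $N_k(T)$ developed in \cite{BIKT}. I would begin by noting that $L'_j$ is a semilinear combination of $A_t\phi_j$, $A_i^2\phi_j$, and the derivative/connection terms $\partial_i(A_i\phi_j)$, $(\partial_iA_i)\phi_j$. Each factor enters at $s=0$, so I would insert the bounds summarised after the previous lemma: $\|P_kA_i(0)\|_{L^4_{t,x}} \lesssim 2^{-\sigma k}b_k(\sigma)$, $\|A_t(0)\|_{L^2_{t,x}}\lesssim\epsilon_0$ with the frequency-localised refinement for $\sigma\in[\tfrac{1}{100},\tfrac{99}{100}]$, and $\|P_k\phi_i(0)\|_{G_k(T)}\lesssim 2^{-\sigma k}b_k(\sigma)$. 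Then, following the Littlewood--Paley trichotomy (High$\times$High, High$\times$Low, Low$\times$High) and the $N_k$ bilinear estimates from \cite{BIKT}, the $L'_j$ bound (\ref{aCXz12}) follows essentially verbatim from the $\mathbb{S}^2$ argument, since $L'_j$ has no explicit curvature factor.

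For the cubic term (\ref{CXz122}) the situation in general targets differs from BIKT: $\mathcal{R}(\phi_i,\phi_j)\phi_j$ in the caloric frame schematically equals $(\phi_{j_0}\diamond\phi_{j_1})\phi_{j_3}\cdot\mathcal{G}$, where $\mathcal{G}$ is the components of the Riemann tensor. I would use the dynamical separation of Section~\ref{VNM} to write $\mathcal{G}=\Gamma^\infty+\widetilde{\mathcal{G}}$. The constant piece $\Gamma^\infty$ reduces exactly to the constant-curvature nonlinearity handled in \cite{BIKT}; plugging in the envelope $\{b_k(\sigma)\}$ directly yields the target bound. The genuinely new part is the $\widetilde{\mathcal{G}}$ piece, for which I would invoke Corollary \ref{kjl} at $s=0$: $\|P_k\widetilde{\mathcal{G}}\upharpoonright_{s=0}\|_{F_k(T)}\lesssim 2^{-\sigma k}2^{-k}h_k(\sigma)\lesssim 2^{-\sigma k}2^{-k}b_k(\sigma)$, the extra $2^{-k}$ gain being the decisive saving.

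Concretely I would perform a trilinear Littlewood--Paley decomposition of $P_k\bigl((\phi_{j_0}\diamond\phi_{j_1})\phi_{j_3}\widetilde{\mathcal{G}}\bigr)$. In the High$\times$Low branch in which the three $\phi$ factors sit at frequency $\sim 2^k$ and $\widetilde{\mathcal{G}}$ is low, I use Lemma \ref{mJ}'s Low-factor $L^\infty$ bound $\|P_{\le k-4}\widetilde{\mathcal{G}}\|_{L^\infty}\lesssim 1$ together with BIKT's existing trilinear $N_k$ estimate for the cubic $\phi$-term. In the High$\times$Low branch where $\widetilde{\mathcal{G}}$ is the high frequency factor, I would use the $F_k$ bound on $\widetilde{\mathcal{G}}$ together with the $L^2_{t,x}$ bound $\|P_l(\phi_{j_0}\diamond\phi_{j_1}\phi_{j_3})\|_{L^2_{t,x}}\lesssim 2^{-\sigma l}b^2_{\ge l}(\sigma)$ recorded after Proposition 4.1; the $2^{-k}$ gain from $\widetilde{\mathcal{G}}$ absorbs the high-frequency weight that would otherwise blow up. The High$\times$High branch is treated similarly, using the $L^4\cap L^\infty_tL^2_x$ bound on $\widetilde{\mathcal{G}}$ from (\ref{3.16ab}).

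The main obstacle, as always for general targets, is precisely the High$\times$Low interaction in which $\widetilde{\mathcal{G}}$ is the high-frequency factor — exactly the block that caused the whole bootstrap architecture of Section 4 to be needed. Once Corollary \ref{kjl} gives $P_k\widetilde{\mathcal{G}}\in F_k(T)$ with the $2^{-k}$ extra decay, and Proposition 4.1 has provided the $L^4_{t,x}$ bounds for $A_i$ and $L^2_{t,x}$ bounds for $A_t$, the $N_k$ estimates close by standard Hölder-and-embedding arguments along the lines of \cite{BIKT}. The proposal, then, is simply to adapt BIKT's nonlinear estimates block by block, substituting in our parabolic estimates wherever a constant-curvature factor appeared and splitting off $\Gamma^\infty+\widetilde{\mathcal{G}}$ to handle the curvature dependence.
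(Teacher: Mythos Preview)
You have misread the statement of (\ref{CXz122}). The cubic term appearing there is the bare product $\phi_{j_0}\diamond\phi_{j_1}\diamond\phi_{j_3}$, \emph{without} any curvature factor $\mathcal{G}$. Consequently, for general K\"ahler targets this estimate is identical to the constant-curvature case and follows verbatim from \cite{BIKT}, Proposition~6.2, with nothing new required. The paper's own proof is accordingly one sentence: both (\ref{aCXz12}) and (\ref{CXz122}) are simply quoted from \cite{BIKT}, with only a remark that the slightly different $A_t$ bounds (\ref{cx01})--(\ref{cx02}) still suffice for the $A_t\phi_j$ piece of $L'_j$.

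All of your analysis involving the dynamical separation $\mathcal{G}=\Gamma^{\infty}+\widetilde{\mathcal{G}}$, Corollary~\ref{kjl}, and the trilinear decomposition with $\widetilde{\mathcal{G}}$ as a high-frequency factor is therefore misplaced here: it belongs to the \emph{next} proposition (Proposition~\ref{CXz12}), which bounds $\|P_k((\phi_{j_0}\diamond\phi_{j_1}\diamond\phi_{j_3})\mathcal{G})\|_{N_k(T)}$ and is where the general-target difficulty genuinely enters. Your treatment of (\ref{aCXz12}) is correct and matches the paper, and your sketched argument for the $\widetilde{\mathcal{G}}$ contribution is along the right lines for Proposition~\ref{CXz12}; but for the present proposition you are doing substantial extra work that the statement does not call for.
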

\begin{proof}
(\ref{aCXz12}) and (\ref{CXz122}) have been proved in [\cite{BIKT}, Proposition 6.2]. We emphasize that to bound $\|A_t\phi_i\|_{N_k}$,  [\cite{BIKT}, Proposition 6.2] used $\|A_t\|_{L^2_{t,x}}\le \varepsilon^2$ when $\sigma \in[0,\frac{1}{12}]$ and $\|P_{k}A_t\|_{L^2_{t,x}}\le 2^{-\sigma k}b_{k}(\sigma)$ when $\sigma \ge \frac{1}{12}$. Thus our bounds  (\ref{cx01}), (\ref{cx02}) suffice to bound $\|A_t\phi_i\|_{N_k}$ as well although (\ref{cx01})-(\ref{cx02})  itself differs from the bounds stated by   [\cite{BIKT}, Lemma 5.7].
\end{proof}

Now we turn to the remained curvature term in (\ref{Final}).
\begin{Proposition}\label{CXz12}
For all $k\in \Bbb Z$ and $\sigma\in[0,\frac{99}{100}]$ we have
\begin{align}\label{jnhbg}
\sum^{2}_{j_0,j_1,j_3=1}\|P_k\left((\phi_{j_0}\diamond\phi_{j_1} \diamond\phi_{j_3})\mathcal{G}\right) \|_{N_k(T)}\lesssim 2^{-\sigma k}\epsilon_0 b_{k}(\sigma).
\end{align}
\end{Proposition}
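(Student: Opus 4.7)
The plan is to follow the dynamical separation strategy described in Section~3.4, splitting the curvature quantity as
\[
\mathcal{G} = \Gamma^{\infty} + \widetilde{\mathcal{G}},
\]
where $\Gamma^{\infty}$ is a constant bilinear form on $T_{Q}\mathcal{N}$ and $\widetilde{\mathcal{G}}$ is the heat-direction remainder. For the constant piece, the factor $\Gamma^{\infty}$ is bounded by $K(\mathcal{N})$ in $L^{\infty}$, so
\[
\|P_k((\phi_{j_0}\diamond\phi_{j_1}\diamond\phi_{j_3})\Gamma^{\infty})\|_{N_k(T)} \lesssim \|P_k(\phi_{j_0}\diamond\phi_{j_1}\diamond\phi_{j_3})\|_{N_k(T)},
\]
which is already handled by (\ref{CXz122}) of Proposition~\ref{V1}.

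The main work is the remainder $\widetilde{\mathcal{G}}$. Since we are at $s=0$, Corollary~\ref{kjl} combined with the improved envelope bound $h_k(\sigma)\lesssim b_k(\sigma)$ obtained from Lemma~\ref{Zoom0} gives the key gain
\begin{equation}\label{eqGtilde}
\|P_k \widetilde{\mathcal{G}}\upharpoonright_{s=0}\|_{F_k(T)} \lesssim 2^{-k}2^{-\sigma k} b_k(\sigma),
\end{equation}
i.e. $\widetilde{\mathcal{G}}$ behaves like one extra derivative better than a $\phi$--field. The proof then performs a trilinear Littlewood--Paley decomposition of $(\phi_{j_0}\diamond\phi_{j_1}\diamond\phi_{j_3})\widetilde{\mathcal{G}}$ in $N_k(T)$, split into the usual three cases: (a) $\rm High\times Low$ from $\widetilde{\mathcal{G}}$, (b) $\rm High\times High$, and (c) $\rm Low\times High$ from $\widetilde{\mathcal{G}}$. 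In case~(a) one puts $P_{\le k-4}\widetilde{\mathcal{G}}$ in $L^{\infty}$, replacing the role of $\Gamma^\infty$ with an $L^\infty$ estimate coming from (\ref{eqGtilde}) and Bernstein, and the remaining cubic factor is controlled by the $N_k$ bound from Proposition~\ref{V1}, reducing to the previous case with an $\epsilon_0$ smallness gain. In cases~(b) and~(c), the Littlewood--Paley pieces $P_{k_2}\widetilde{\mathcal{G}}$ are paired with $P_{k_1}(\phi_{j_0}\diamond\phi_{j_1}\diamond\phi_{j_3})$ using the inclusion $F_k\subset L^{\infty}_{t}L^2_x\cap L^4$, Lemma~\ref{mJ}-type product rules, and the bilinear Lemma~5.4 of \cite{BIKT}, so that the factor $2^{-k}$ from (\ref{eqGtilde}) exactly compensates the extra derivative lost in the $N_k\leftarrow F_k\cdot F_k$ multiplier estimate.

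Concretely, for the $\rm Low\times High$ part with $|k-k_2|\le 4$ and $k_1\le k-4$, one writes $\phi_{j_0}\diamond\phi_{j_1}\diamond\phi_{j_3} = P_{k_1}(\cdots)$ and pairs it with $P_{k_2}\widetilde{\mathcal{G}}$; the low-frequency cubic factor is bounded in $L^{2}_{t,x}$ via the $L^{2}_{t,x}$-control of $A^2_i,\psi_i\diamond\psi_j,A_t$ types already established (see the bullet after (\ref{dscx45})), while $P_{k_2}\widetilde{\mathcal{G}}$ is placed in $L^{\infty}_{t,x}$ via Bernstein applied to (\ref{eqGtilde}), producing a total of $2^{-\sigma k}b_k(\sigma)$ with an $\epsilon_0$ factor from $\|b\|_{\ell^2}\lesssim \epsilon_0^{1/2}$. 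For $\rm High\times Low$ and $\rm High\times High$, one argues symmetrically, using the envelope property and the summability of $2^{-\delta|k-k_1|}$ to sum over the high-frequency parameter.

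The main obstacle will be the $\rm High\times High$ interaction, where both the cubic field and $\widetilde{\mathcal{G}}$ are at frequency $\gtrsim 2^{k}$: here one needs the full strength of the $N_k$-bilinear inequality from \cite{BIKT} combined with the $2^{-k_1}$ gain in (\ref{eqGtilde}) to absorb the $2^{k}$ cost of reconstructing the output frequency $k$ from two higher-frequency inputs. Combining the three cases yields
\[
\|P_k((\phi_{j_0}\diamond\phi_{j_1}\diamond\phi_{j_3})\widetilde{\mathcal{G}})\|_{N_k(T)} \lesssim \epsilon_0 \, 2^{-\sigma k} b_k(\sigma),
\]
which together with the bound for the constant part establishes (\ref{jnhbg}).
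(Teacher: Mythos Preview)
Your overall plan---split $\mathcal{G}=\Gamma^{\infty}+\widetilde{\mathcal{G}}$ and feed in the $F_k$ bound of Corollary~\ref{kjl}---matches the paper. But the execution has real gaps.

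\textbf{Wrong bilinear tools and wrong grouping.} The lemmas you invoke (Lemma~\ref{mJ}, and Lemma~5.4 of \cite{BIKT}) produce $F_k$ and $L^4$ bounds, not $N_k$ bounds. The $N_k$ space is handled in the paper via the bilinear estimates (\ref{6a})--(\ref{6c}) of \cite{BIKT}, which pair an $L^2_{t,x}$ factor against an $F_l$ (or $G_l$) factor at the output frequency. To fit that template the paper does \emph{not} group the product as $(\text{cubic})\cdot\widetilde{\mathcal{G}}$; it first sets $\mathbf{F}=\phi_{j_0}\diamond\phi_{j_1}$ and proves a preparatory estimate
\[
\|P_k(\widetilde{\mathcal{G}}\phi_{j_3})\|_{F_k(T)}\lesssim 2^{-\sigma k}b_k(\sigma)\quad\Big(\text{or }2^{-\sigma k}\textstyle\sum_{j\ge k}b_jb_j(\sigma)\text{ for small }\sigma\Big),
\]
then runs a bilinear $N_k$ decomposition of $\mathbf{F}\cdot(\widetilde{\mathcal{G}}\phi_{j_3})$. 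The quadratic factor $\mathbf{F}$ lands in $L^2_{t,x}$ via $\|\phi\|_{L^4}^2\lesssim\epsilon_0$, and the other factor sits in $F_k$ or $G_k$, exactly matching (\ref{6a})--(\ref{6c}). Your claim that the \emph{cubic} factor $\phi_{j_0}\diamond\phi_{j_1}\diamond\phi_{j_3}$ is controlled in $L^2_{t,x}$ ``via the $L^2_{t,x}$-control of $A_i^2,\psi_i\diamond\psi_j,A_t$ types'' is a mismatch: those are quadratic bounds, and a cubic $L^2_{t,x}$ estimate would need $\|\phi\|_{L^\infty_{t,x}}$, which is not available here.

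\textbf{Missing case split at small $\sigma$.} In the ${\rm High}\times{\rm High}$ interaction of $\mathbf{F}$ with $\widetilde{\mathcal{G}}\phi_{j_3}$ (and also inside the preparatory bound), the sum $\sum_{k_1\ge k}2^{-\sigma k_1}$ fails to converge when $\sigma\in[0,\tfrac{1}{100}]$. The paper handles this by a further Littlewood--Paley split that isolates a single $\phi$ factor at high frequency and places it in $G_l$, then uses the $2^{(k-l)/6}$ gain from (\ref{6c}) to restore summability. Your sketch does not account for this, and the envelope argument as written would not close for small~$\sigma$.
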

\begin{proof}
Recall $\mathcal{G}=\Gamma^{\infty}+\mathcal{\widetilde{G}}$. The constant part $\Gamma^{\infty}$ satisfies (\ref{jnhbg}) by directly applying (\ref{CXz122}). It suffices to control $\mathcal{\widetilde{G}}$ part.

As a preparation, we first prove the following estimate
\begin{align}\label{1Cpo}
&\sum^{2}_{i=1}\|P_k (\mathcal{\widetilde{G}}\phi_i)\|_{F_k(T)} \lesssim \left\{
                                                                           \begin{array}{ll}
                                                                             2^{-\sigma k}b_{k}(\sigma), & \hbox{ } \frac{1}{100}<\sigma\le \frac{99}{100}\\
                                                                           2^{-\sigma k}\sum_{j\ge k}b_{j}b_j(\sigma), & \hbox{  }0\le \sigma\le \frac{1}{100}
                                                                           \end{array}
                                                                         \right.
\end{align}
This follows directly by applying Corollary 4.1 and [Lemma 5.1, \cite{BIKT}]: If $\sigma>\frac{1}{100}$, then
  \begin{align*}
\|P_k (\mathcal{\widetilde{G}}\phi_i)\|_{F_k(T)}
& \lesssim 2^{-\sigma k}b_{k}(\sigma)+  2^{-k-\sigma k}b_k(\sigma)\sum_{l\le k}2^{\delta|k-l|}2^{l}b_l +b_k(\sigma)\sum_{j\ge k}  2^{-\sigma j}2^{2\delta|k-j|}\\
& \lesssim  2^{-\sigma k}b_{k}(\sigma).
\end{align*}
If $\sigma\in[0,\frac{1}{100}]$, for the $\rm High\times High$ interaction we directly use
 \begin{align*}
\sum_{|k_1-k_2|\le 8,k_1,k_2\ge k-4}\|P_k (P_{k_1}\mathcal{\widetilde{G}}P_{k_2}\phi_i)\|_{F_k(T)}
&\lesssim  \sum_{ j\ge k-4} 2^{j}(\sum_{|k_1-j|\le 28}\|P_{k_1}\mathcal{\widetilde{G}}\|_{F_{k_1}(T)}) (\sum_{|k_2-j|\le 28}\|P_{k_2}\phi_i\|_{F_{k_2}(T)})\\
&\lesssim  2^{-\sigma k}\sum_{j\ge k}b_{j}b_{j}(\sigma).
\end{align*}
The other two interactions are all the same as $\sigma \ge \frac{1}{100}$. Thus (\ref{1Cpo}) follows.

As before, denoting $\mathbf{F}=\phi_{j_0}\diamond\phi_{j_1}$, by bilinear Littlewood-Paley decomposition, we have
 \begin{align}\label{Cpo}
&\|P_k\left(\mathbf{F}\diamond (\phi_{j_3}\widetilde{\mathcal{G}}) \right)\|_{N_k(T)}\nonumber \\
&=\sum_{|l-k|\le 4}\|{P_k}({P_{ < k - 100}}\mathbf{F}{P_{l}}(\mathcal{\widetilde{G}}\phi_{j_3})) \|_{N_k(T)}+ \sum\limits_{|{k_1} - k| \le  4}\| {P_k}({P_{{k_1}}}\mathbf{F}{P_{   < k - 100}} (\mathcal{\widetilde{G}}\phi_{j_3}))\|_{N_k(T)}  \nonumber \\
&+ \sum\limits_{{k_1},{k_2} \ge k - 100}^{|{k_1} - {k_2}| \le  120}\|  {P_k}({P_{{k_1}}}\mathbf{F}{P_{{k_2}}}(\mathcal{\widetilde{G}}\phi_{j_3}))\|_{N_k(T)}.
\end{align}
For the first RHS term of (\ref{Cpo}), applying (\ref{6a}) and the trivial bound
\begin{align}
\|\mathcal{\widetilde{G}}\|_{L^{\infty}_{t,x}}&\lesssim 1\label{1CXnb}\\
\|\phi_x\|_{L^4_{t,x}}&\lesssim \epsilon_0,\label{2CXnb}
\end{align}
and (\ref{1Cpo}),  for $\sigma\in[\frac{1}{100},\frac{99}{100}]$  we get
 \begin{align*}
\sum_{|k_0-k|\le 4}\|{P_k}({P_{ < k - 100}}\mathbf{F}{P_{k_0}}(\mathcal{\widetilde{G}}\phi_{j_3}))\|_{N_k(T)}&\lesssim \|\phi_{j_0}\phi_{j_1}\|_{L^2_{t,x}}\|P_k(\mathcal{\widetilde{G}}\phi_{j_3}) \|_{F_k(T)}\lesssim   \epsilon_02^{-\sigma k}b_{k}(\sigma)b_{k}.
\end{align*}
For the first RHS term of (\ref{Cpo}), when $\sigma\in[0,\frac{1}{100}]$, one further decomposes $P_{[k-4,k+4]}(\mathcal{\widetilde{G}}\phi_{j_3})$ into $\rm High\times High$, $\rm Low\times High$, $\rm High\times Low$. We schematically write
 \begin{align}
&\sum_{|k_0-k|\le 4}\|{P_k}\left({P_{ < k - 100}}\mathbf{F}{P_{k_0}}(\mathcal{\widetilde{G}}\phi_{j_3})\right)\|_{N_{k}(T)}
\nonumber\\
&\lesssim \sum_{|l-k|\le 8}\|P_k\left((P_{ < k - 100}\mathbf{F}) P_{l}\phi_{j_3}(P_{\le k-8}\mathcal{\widetilde{G}})\right)\|_{N_{k}(T)}\label{1hxjbn}
\\
&+ \sum_{|l-k|\le 8}\|  P_k\left((P_{ < k - 100}\mathbf{F})(P_{\le k-8}\phi_{j_3}P_{l}\mathcal{\widetilde{G}})\right)\|_{N_{k}}\label{2hxjbn}\\
&+\sum_{|k_1-k_2|\le 16,k_1,k_2\ge k-8}\| P_k ((P_{ < k - 100}\mathbf{F}) P_{k_1}\phi_{j_3}(P_{k_2}\mathcal{\widetilde{G}}) )\|_{N_{k}(T)}\label{3hxjbn}
\end{align}
Since for all $\sigma\in[0,\frac{99}{100}]$, the  $\rm Low\times High$ (denoted by $P^{lh}_k$ for short) and $\rm High\times Low$ (denoted by $P^{hl}_k$ for short) interactions lead to $\|(P^{lh}_k+P^{hl}_k)(\mathcal{\widetilde{G}}\phi_{j_3})\|_{F_{k}}\lesssim 2^{-\sigma k}b_{k}(\sigma)$,
we conclude that
\begin{align*}
 (\ref{1hxjbn})+(\ref{2hxjbn}) &\lesssim \|\phi_{j_0}\phi_{j_1}\|_{L^2_{t,x}}\left(\|P^{lh}_k(\mathcal{G}\phi_{j_3}) \|_{F_k(T)}+ \|P^{hl}_k(\mathcal{G}\phi_{j_3}) \|_{F_k(T)}\right)\\
&\lesssim \epsilon_0 2^{-\sigma k}b_{k}(\sigma).
\end{align*}
For the (\ref{3hxjbn}) term, applying (\ref{6c}) yields
 \begin{align*}
 (\ref{3hxjbn})& \lesssim  \sum_{k_2\ge k-8}\sum_{|k_1-k_2|\le 16} \left\|P_k\left[\left((P_{ < k - 100}\mathbf{F})   P_{k_2}\mathcal{\widetilde{G}}\right)P_{k_1}\phi_{j_3} \right]\right\|_{N_k}\\
&\lesssim \sum_{k_2\ge k-8,|k_1-k_2|\le 16} \left \| (P_{ < k - 100}\mathbf{F})  P_{k_2}\mathcal{\widetilde{G}}\right\|_{L^2_{t,x}}2^{\frac{k-k_1}{6}}\|P_{k_1}\phi_{j_3} \|_{G_{k_1}}\\
&\lesssim \sum_{k_1\ge k-12} \left \| \mathbf{F}\right\|_{L^2_{t,x}}2^{\frac{k-k_1}{6}}2^{-\sigma k_1}b_{k_1}(\sigma)\\
&\lesssim \epsilon_0 2^{-\sigma k }b_{k }(\sigma).
\end{align*}
Thus the first RHS term of (\ref{Cpo}) has been done.

For the second RHS term of (\ref{Cpo}), we further divide $\bf F$ into
 \begin{align}
&\sum\limits_{|{k_1} - k| \le  4}  \| {P_k}({P_{{k_1}}}\mathbf{F}{P_{< k - 100}}(\mathcal{G} \phi_{j_3})) \|_{N_k(T)}\nonumber\\
&\lesssim  \sum_{|l-k|\le 8}\| P_{k}(P_{l}\phi_{j_0})(P_{\le k-8}\phi_{j_1})P_{\le k-100}(\mathcal{G} \phi_{j_3}) \|_{N_{k}(T)}\label{D76z}\\
&+\sum_{|l-k|\le 8}\|P_{k}(P_{l}\phi_{j_1})(P_{\le k-8}\phi_{j_0})P_{\le k-100}(\mathcal{G} \phi_{j_3}) \|_{N_{k}(T)}\label{C76z}\\
&+\sum_{|l_1-l_2|\le 16, l_1,l_2\ge k-8}\|P_{k}[(P_{l_1} \phi_{j_0})(P_{l_2}\phi_{j_1})P_{\le k-100}(\mathcal{G} \phi_{j_3})]\|_{N_{k}(T)}.\label{C76z7}
\end{align}
For the first two terms on the RHS, using again  (\ref{6a}) and the bounds (\ref{1CXnb}), (\ref{2CXnb}), we obtain
 \begin{align*}
 (\ref{C76z}) +   (\ref{D76z}) &\lesssim \|P_k(\phi_x)\|_{F_k(T)}\|\phi_x\|^2_{L^4_{t,x}}\lesssim \epsilon_02^{-\sigma k}b_k(\sigma).
\end{align*}
And similarly, for $\sigma>\frac{1}{100}$,
 \begin{align*}
 (\ref{C76z7}) &\lesssim  \|\mathcal{\widetilde{G}} \phi_{j_3}\|_{L^{4}_{t,x}}\|\phi_{j_1}\|_{L^{4}_{t,x}}\sum_{l\ge k-8}\|P_{l}\phi_{j_0}\|_{F_l(T)}\lesssim \epsilon_02^{-\sigma k}b_k(\sigma).
\end{align*}
For $0\le \sigma\le \frac{1}{100}$,  using again (\ref{6c}) and the bounds (\ref{1CXnb}), (\ref{2CXnb}), we have
 \begin{align*}
 (\ref{C76z7}) &\lesssim \sum_{l\ge k} 2^{\frac{k-l}{6}}\left\|\left(P_{\le k-100}(\mathcal{\widetilde{G}} \phi_{j_3})\right)P_{l}\phi_{j_1}\right\|_{L^{2}_{t,x}} \|P_{l}\phi_{j_0}\|_{G_l(T)}\\
&\lesssim  \| \phi_{x}\|^2_{L^{4}_{t,x}}\sum_{l\ge k}2^{\delta|l-k|}2^{\frac{k-l}{6}}2^{-\sigma l}b_{l}b_{l}(\sigma)\lesssim  2^{-\sigma k} b_k(\sigma)\epsilon_0.
\end{align*}
Thus the first two RHS terms of (\ref{Cpo}) are done.

For the third term of (\ref{Cpo}), applying Littlewood-Paley decomposition to $\bf F$ shows
 \begin{align}
&\sum\limits_{{k_1},{k_2} \ge k - 100}^{|{k_1} - {k_2}| \le  120}\| {P_k}({P_{{k_1}}}\mathbf{F}{P_{{k_2}}}(\mathcal{\widetilde{G}} \phi_{j_3}))\|_{N_{k}}\nonumber\\
&=\sum\limits_{{k_1},{k_2} \ge k - 100}^{|{k_1} - {k_2}| \le  120} \sum_{|l-k_1|\le 4}\| {P_k}\left[  P_{l}{\phi_{j_0}}  P_{\le k_1-8}\phi_{j_1} {P_{{k_2}}}(\mathcal{\widetilde{G}} \phi_{j_3}) \right]\|_{N_{k}}\label{B8n}\\
&+\sum\limits_{{k_1},{k_2} \ge k - 100}^{|{k_1} - {k_2}| \le  120} \sum_{|l-k_1|\le 4} \| {P_k}\left[ P_{l}{\phi_{j_1}}  P_{\le k_1-8}\phi_{j_0} {P_{{k_2}}}(\mathcal{\widetilde{G}} \phi_{j_3})\right]\|_{N_{k}}\label{B9n}\\
&+\sum\limits_{{k_1},{k_2} \ge k - 100}^{|{k_1} - {k_2}| \le  120} \sum_{l_1,l_2\ge k_1-8, |l_1-l_2|\le 16} \| {P_k}\left[ P_{l_1}{\phi_{j_1}}P_{l_2}\phi_{j_0} {P_{{k_2}}}(\mathcal{\widetilde{G}} \phi_{j_3})\right]\|_{N_{k}}.\label{B10n}
\end{align}
By (\ref{6c}) and  (\ref{1Cpo}), (\ref{2CXnb}), $\|P_kf\|_{L^{4}_{t,x}}\le \|P_kf\|_{F_{k}}$, the first two terms are bounded as
 \begin{align*}
(\ref{B8n}) + (\ref{B9n}) &\lesssim \sum\limits_{ k_1 \ge k - 100} \sum\limits_{ |k_1-k_2| \le 120} \sum_{|l-k_1|\le 4}2^{\frac{k-l}{6}}\|P_{{l}} \phi_x\|_{G_{l}(T)}\|\phi_x\|_{L^4_{t,x}}
\|P_{k_2}(\mathcal{\widetilde{G}} \phi_{j_3})\|_{F_{k_2}(T)}\\
&\lesssim \sum\limits_{ k_1 \ge k - 100} \varepsilon_0 b_{k_1}\left[2^{\frac{k-k_1}{6}}2^{-\sigma k_1}b_{k_1}(\sigma)\right]\\
&\lesssim \epsilon_0 2^{-\sigma k} b_{k}(\sigma).
\end{align*}
And using (\ref{6c}) and (\ref{6a}), we see
 \begin{align*}
 (\ref{B10n})&\lesssim \sum\limits_{{k_1},{k_2} \ge k - 100}^{|{k_1} - {k_2}| \le  120} \sum_{l_1,l_2\ge k_1-8, |l_1-l_2|\le 16} 2^{\frac{k-l_1}{6}}\|P_{l_1 }{\phi_{j_1}}\|_{G_{l_1}(T)} \|(P_{l_2}\phi_{j_0} )P_{k_2} (\mathcal{\widetilde{G}} \phi_{j_3})\|_{L^2_{t,x}}\\
&\lesssim  \sum\limits_{{k_1},{k_2} \ge k - 100}^{|{k_1} - {k_2}| \le  120} \sum_{l_1,l_2\ge k_1-8, |l_1-l_2|\le 16} 2^{\frac{k-l_1}{6}}\|P_{l_1}{\phi_{j_1}}\|_{G_{l_1}(T)} \|P_{l_2}\phi_{j_0} \|_{L^4}\|P_{k_2}(\mathcal{{\widetilde{G}}} \phi_{j_3})\|_{L^4}\\
&\lesssim \epsilon_0\sum\limits_{ k_1 \ge k - 100}   \sum_{l_1\ge k_1-4, |l_1-l_2|\le 16 }2^{\frac{k -l_1}{6}}   b_{l_1}2^{-\sigma l_2}b_{l_2}(\sigma)\\
&\lesssim 2^{-\sigma k}b_kb_k(\sigma),
\end{align*}
where we used the embedding $L^4_{k}(T)\hookrightarrow F_{k}(T)\hookrightarrow G_{k}(T)$ and the fact $ \|P_{k_2}(\mathcal{{\widetilde{G}}} \phi_{j_3})\|_{L^4}\lesssim \|\phi_x\|_{L^4}$ in the third inequality.
Thus the third RHS term of  (\ref{Cpo}) has been done. Hence, we have finished the proof .

\end{proof}

\begin{Corollary} (Proof of Proposition \ref{4.5})
 Under the assumptions of Proposition \ref{4.5}, for all $i\in\{1,2\}$ and $\sigma\in[0,\frac{99}{100}]$ we have
\begin{align}
\|P_k \phi_i  \|_{G_k(T)}\lesssim  2^{-\sigma k}c_k(\sigma).
\end{align}
\end{Corollary}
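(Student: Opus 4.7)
The plan is to combine the linear Strichartz-type bound of Proposition \ref{bvc457} with the two nonlinear estimates already established (Proposition \ref{V1} and Proposition \ref{CXz12}) and then close a direct linear inequality in the frequency envelope $\{b_k(\sigma)\}$ defined in (5.3)-(5.4).

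First, I would rewrite the Schrödinger evolution equation (\ref{Final}) at $s=0$ in the semilinear form
\begin{align*}
(i\partial_t+\Delta)\phi_i = N_i,\qquad N_i := L'_i + \sum_{j=1}^{2}\mathcal{R}(\phi_i,\phi_j)\phi_j,
\end{align*}
where $L'_i$ collects the $A_t\phi_i$, $A_j^2\phi_i$, $\partial_j(A_j\phi_i)$ and $(\partial_j A_j)\phi_i$ contributions coming from expanding $D_t$ and $D_jD_j$, and $\mathcal{R}(\phi_i,\phi_j)\phi_j$ is schematically $(\phi_{j_0}\diamond \phi_{j_1}\diamond \phi_{j_3})\mathcal{G}$. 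After applying $P_k$, Proposition \ref{bvc457} yields
\begin{align*}
\|P_k\phi_i\|_{G_k(T)} \lesssim \|P_k\phi_i(\upharpoonright_{s=0,t=0})\|_{L^2_x} + \|P_k N_i\|_{N_k(T)}.
\end{align*}

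Next I would feed in the three pieces that are already in hand. For the initial data term, (\ref{bbcc}) together with hypothesis (\ref{w2kxxxjn}) gives $\|P_k\phi_i(\upharpoonright_{s=0,t=0})\|_{L^2_x}\lesssim 2^{-\sigma k}c_k(\sigma)$. For the nonlinearity, Proposition \ref{V1} controls $\|P_k L'_i\|_{N_k(T)}\lesssim \epsilon_0 2^{-\sigma k}b_k(\sigma)$ and Proposition \ref{CXz12} controls the curvature cubic by the same quantity. Hence
\begin{align*}
\|P_k\phi_i\|_{G_k(T)} \lesssim 2^{-\sigma k}c_k(\sigma) + \epsilon_0 \, 2^{-\sigma k}b_k(\sigma).
\end{align*}
Since the left-hand side is summed over $i=1,2$ the same bound holds for $b(k)=\sum_i\|P_k\phi_i\|_{G_k(T)}$.

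Finally, I would pass to envelopes of order $\delta$. The right-hand side, multiplied by $2^{\sigma k}$, gives a quantity bounded by $c_k(\sigma)+\epsilon_0 b_k(\sigma)$, both of which are already frequency envelopes (of orders $\tfrac18\delta$ and $\delta$ respectively, so a fortiori of order $\delta$). Taking the envelope $\sup_{k'}2^{-\delta|k-k'|}(\cdot)$ on both sides and using the general fact that envelope bounds are preserved for sequences dominated by envelopes, one obtains
\begin{align*}
b_k(\sigma) \lesssim c_k(\sigma) + \epsilon_0\, b_k(\sigma).
\end{align*}
Choosing $\epsilon_0$ small enough absorbs the last term and gives $b_k(\sigma)\lesssim c_k(\sigma)$, which is precisely (\ref{4.32})-(\ref{4.33}) in view of (\ref{dscx45}).

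The only delicate point in the plan is verifying that all hypotheses of Propositions \ref{V1} and \ref{CXz12} are legitimately satisfied before the bootstrap is closed; this is exactly the purpose of the parabolic estimates compiled just after (5.7), which guarantee in particular the $L^2_{t,x}$ bounds on $A_t$ and the $F_k(T)$ bounds on $\phi_i(s)$ with the correct envelope $b_k(\sigma)$. Once these inputs are in place, the Schrödinger step itself is a clean application of the linear estimate plus the two nonlinear propositions, so no further technical obstacle is expected at this stage.
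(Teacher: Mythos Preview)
Your proposal is correct and follows essentially the same route as the paper: apply the linear estimate of Proposition~\ref{bvc457} to the semilinear form of (\ref{Final}), bound the data by (\ref{bbcc}) and the nonlinearity by Propositions~\ref{V1} and~\ref{CXz12}, then pass to envelopes to obtain $b_k(\sigma)\lesssim c_k(\sigma)+\epsilon_0 b_k(\sigma)$ and absorb. The paper's proof is more terse but structurally identical.
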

\begin{proof}
(\ref{bbcc}) shows for any $k\in\Bbb Z$, $\sigma\in[0,\frac{99}{100}]$,
\begin{align}
2^{\sigma k} \|P_{k}\phi_i(0,0,\cdot)\|_{L^2_x}\lesssim c_{k}(\sigma).
\end{align}
Then by Proposition \ref{CXz12}, Proposition \ref{V1} and the linear estimates of Proposition \ref{bvc457}, one has
\begin{align}
b_{k}(\sigma)\lesssim c_{k}(\sigma)+\epsilon_0 b_{k}(\sigma),
\end{align}
for all $\sigma\in[0,\frac{99}{100}]$.
Thus $b_{k}(\sigma)\lesssim c_{k}(\sigma)$, and our result follows by the definition of $\{b_{k}(\sigma)\}$ in Section 5.
\end{proof}

\subsection{ Unform bounds for $\sigma \in[0,\frac{99}{100}]$  }

We end the arguments for $\sigma\in[0,\frac{99}{100}]$ with the following proposition.
\begin{Proposition}\label{NbM}
Assume that $\sigma\in [0,\frac{99}{100}]$.
Let $Q\in \mathcal{N}$ be a fixed point and $\epsilon_0$ be a sufficiently small constant.  Given any $\mathcal{L}\in \Bbb Z_+$, assume that $T\in(0,2^{2\mathcal{L}}]$. Let $\{c_k\}$ be an $\epsilon_0$-frequency envelope of order $\delta$. And let $\{c_k(\sigma)\}$ be another frequency envelope of order $\delta$. Let $u\in \mathcal{H}_Q(T)$ be the solution to $SMF$ with initial data $u_0$ which satisfies
\begin{align}
\|P_{k}\nabla u_0\|_{L^2_x}&\le c_k\label{1kxxxjn}\\
\|P_{k}\nabla u_0\|_{L^2_x}&\le c_k(\sigma)2^{-\sigma k}.\label{2kxxxjn}
\end{align}
Denote $\{\phi_i\}$ the corresponding differential fields of the heat flow initiated from $u$.
Then we have for all $i=1,2$, $k\in\Bbb Z$, $\sigma\in[0,\frac{99}{100}]$
\begin{align}
\|P_{k}\phi_i\upharpoonright_{s=0}\|_{G_k(T)}&\lesssim c_k\label{HUM1}\\
\|P_{k}\phi_i\upharpoonright_{s=0}\|_{G_k(T)}&\lesssim c_k(\sigma)2^{-\sigma k}\label{HUM2}\\
\sup_{s\ge 0}(1+s2^{2k})^{4}\|P_{k}\phi_i(s)\|_{F_k(T)}&\lesssim c_k(\sigma)2^{-\sigma k}.\label{HUM}
\end{align}
\end{Proposition}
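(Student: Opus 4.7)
The plan is to convert the conditional Proposition \ref{4.5} into an unconditional statement by a standard continuity/bootstrap argument in the time variable $T$. For each $T'\in[0,T]$ define the auxiliary functional
\begin{align*}
\Psi(T') := \sup_{k\in\Bbb Z}\sup_{i=1,2} c_k^{-1}\|P_k\phi_i\upharpoonright_{s=0}\|_{G_k(T')}.
\end{align*}
Using the smoothness of $u\in\mathcal{H}_Q(T)$ coming from Lemma \ref{FFF} together with Sobolev embeddings and the definition of $G_k$, one verifies that $\Psi:[0,T]\to\Bbb R_+$ is continuous and nondecreasing. For the base case, as $T'\to 0$ every block in $G_k$ collapses (up to harmless factors) to a fixed time slice; Corollary \ref{JkL} applied to $u_0$ together with (\ref{1kxxxjn}) gives $\|P_k\phi_i\upharpoonright_{(s,t)=(0,0)}\|_{L^2_x}\lesssim c_k$, so $\lim_{T'\to 0}\Psi(T')\le C_0$ with $C_0$ an absolute constant.

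The key implication is provided by Proposition \ref{4.5}: under exactly the bootstrap hypothesis $\Psi(T')\le \epsilon_0^{-1/2}$, its conclusion delivers $\Psi(T')\le C_1$ with $C_1$ independent of $\epsilon_0$. Choosing $\epsilon_0$ so small that $C_1\le \tfrac12\epsilon_0^{-1/2}$ and invoking continuity together with the initial bound $\lim_{T'\to 0}\Psi(T')\le C_0$ yields $\Psi(T')\le C_1$ on all of $[0,T]$; this proves (\ref{HUM1}). The estimate (\ref{HUM2}) is obtained by repeating the argument verbatim with $\{c_k(\sigma)\}$ in place of $\{c_k\}$: the order-$\delta$ envelope hypothesis in Proposition \ref{NbM} is strictly stronger than the order-$\tfrac{1}{8}\delta$ envelope assumption required by Proposition \ref{4.5}, so that proposition applies with the envelope $\{c_k(\sigma)\}$ directly.

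Finally, (\ref{HUM}) is derived by feeding (\ref{HUM1})-(\ref{HUM2}) into the parabolic propagation Proposition \ref{Parabolic}. The hypothesis (\ref{tian}) is immediate from the embedding $G_k(T)\hookrightarrow F_k(T)$ together with (\ref{HUM2}); the hypothesis (\ref{tian2}) on $\phi_t\upharpoonright_{s=0}$ follows from the identity $\phi_t\upharpoonright_{s=0}=\sqrt{-1}\sum_{i=1,2}D_i\phi_i$ combined with the $L^4_{t,x}$ bounds on $\phi_i$ and $A_i\phi_i$ at $s=0$ already established in Lemma \ref{Zoom1}. As noted in the remark following Proposition \ref{Parabolic}, the auxiliary bootstrap condition (\ref{4.1}) can be dropped by a further short continuity argument using Lemma \ref{FFF} and Sobolev embeddings.

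The main subtleties lie not in any new frequency-space estimate but in the bookkeeping: (i) verifying continuity of $\Psi$ in $T'$ with respect to the $G_k$ norm, whose lateral-space blocks $L^{3,6}_{\bf e}$, $L^{6,3}_{\bf e}$, and $L^{\infty,2}_{{\bf e},\lambda}$ require the a priori regularity of Lemma \ref{FFF} to behave well under restriction in time; (ii) ensuring the correct order $\delta$ of the envelope is preserved through the continuity argument so that the hypotheses of Proposition \ref{4.5} are met throughout; and (iii) checking that the short-time limit $\lim_{T'\to 0}\Psi(T')$ is bounded in terms of $c_k$ uniformly in $k$, which uses the frequency-envelope structure of $\{c_k\}$ rather than any single-frequency estimate.
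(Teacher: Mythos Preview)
Your overall strategy---a continuity argument in $T'$ driven by Proposition \ref{4.5}---matches the paper's. Two points are worth flagging.

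First, the paper's bootstrap functional is
\[
\Theta(T')=\sup_{k}c_k^{-1}\Bigl(\|P_k\phi_i\upharpoonright_{s=0}\|_{G_k(T')}+\|P_k\nabla u\|_{L^\infty_tL^2_x(T')}\Bigr),
\]
i.e.\ it also tracks $\nabla u$, and the loop on that piece is closed by invoking Proposition \ref{7.2} (bounds on $\phi_i$ in $L^\infty_tL^2_x$ imply bounds on $\gamma_k$). Your $\Psi$ omits this. It is not fatal: the pointwise identity $|\partial_i u|=|\phi_i|$ at $s=0$ (orthonormal caloric frame plus isometric embedding) already gives $\|\nabla u\|_{L^\infty_tL^2_x}\lesssim\epsilon_0^{1/2}$ from your hypothesis $\Psi\le\epsilon_0^{-1/2}$, which is all the Section 3 lemmas underlying Proposition \ref{4.5} actually need. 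But you should say this; as written you are silently assuming that Proposition \ref{4.5} has no hidden smallness hypothesis on $u$.

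Second, two imprecisions in your handling of (\ref{HUM2}). Your claim that ``order $\delta$ is strictly stronger than order $\tfrac18\delta$'' is backwards: the slow-variation condition $a_j\le 2^{\delta|j-l|}a_l$ is \emph{weaker} for larger $\delta$, so an order-$\delta$ envelope need not be an order-$\tfrac18\delta$ envelope. (The paper's statements are themselves inconsistent on this point, but your direction of implication is wrong.) More importantly, you do not ``repeat the argument with $\{c_k(\sigma)\}$''---that envelope is not $\epsilon_0$-small, so no bootstrap runs on it. Rather, once (\ref{HUM1}) holds the single bootstrap hypothesis of Proposition \ref{4.5} is verified for all $T'\le T$, and (\ref{HUM2}) is simply its second conclusion (\ref{4.33}). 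The paper says exactly this: ``Then Proposition \ref{4.5} yields (\ref{HUM2}).''
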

\begin{proof}
Define the function $\Theta:[-T.T]\to \Bbb R^+$ by
\begin{align*}
\Theta(T'):=\sup_{k\in\Bbb Z}c^{-1}_{k}\left(\|P_{k}\phi_i\upharpoonright_{s=0}\|_{G_k(T')}
+\|P_k\nabla u\|_{L^{\infty}L^2_x(T')}\right).
\end{align*}
By Lemma \ref{FFF}, the function $\Theta(T')$ is continuous in $T'\in[0,T]$.
Then Proposition \ref{4.5} implies
\begin{align*}
\Theta(T')\le \epsilon^{-\frac{1}{2}}_0\Longrightarrow \sup_{k\in\Bbb Z}c^{-1}_{k}\left(\|P_{k}\phi_i\upharpoonright_{s=0}\|_{G_k(T')}
\right)\lesssim 1.
\end{align*}
And by Proposition \ref{7.2},
\begin{align*}
\sup_{k\in\Bbb Z}c^{-1}_{k}\left(\|P_{k}\phi_i\upharpoonright_{s=0}\|_{G_k(T')}
\right)\lesssim 1 \Longrightarrow \sup_{k\in\Bbb Z}c^{-1}_{k}\left(\|P_{k}\nabla u\|_{L^{\infty}_tL^2_x(T')}
\right)\lesssim 1.
\end{align*}
Hence, we conclude
\begin{align*}
\Theta(T')\le \epsilon^{-\frac{1}{2}}_0\Longrightarrow \Theta(T')\lesssim 1.
\end{align*}
And it is easy to see $\Theta(T')$ is continuous and increasing.  Moreover, we have the limit
\begin{align*}
\lim_{T'\to 0}\Theta(T')\lesssim 1,
\end{align*}
by the definition of $\Theta(T')$, $G_{k}(T')$ and Corollary 3.1.
Therefore, from the continuity of $\Theta$ we conclude that (\ref{1kxxxjn}), (\ref{2kxxxjn}) suffice  to yield
\begin{align*}
\Theta(T)\lesssim 1,
\end{align*}
thus giving (\ref{HUM1}). Then Proposition \ref{4.5} yields (\ref{HUM2}), and (\ref{HUM}) follows by the inclusion $G_{k}\subset F_k$ and Proposition 4.1.
\end{proof}

\section{Iteration scheme }

 {\bf From now on, the notations $a^{(j)}_{k}(\sigma)$ and $a^{(j)}_{k,s}(\sigma)$ differ from the ones defined in Section 3.} They are defined as follows.
\begin{Definition}\label{1jknmL}
Assume that $u_0\in \mathcal{H}_{Q}$. Given $j\in\Bbb N$, let
\begin{align*}
c_{k,(j)}(\sigma)=\sup_{k'\in\Bbb Z}2^{-\frac{1}{2^{j}}\delta|k-k'|}\|P_{k'}\nabla u_0\|_{L^2_x}, \mbox{ }\forall \sigma\in {\Bbb I}_{j}, \mbox{ }k\in\Bbb Z,
\end{align*}
where ${\Bbb I}_0=[0,\frac{99}{100}]$ and ${\Bbb I}_j=[0,\frac{j}{4}+1]$ for $j\in \Bbb Z_+$.
\begin{itemize}
\item For $ \sigma\in [0,\frac{99}{100}]$, define
\begin{align*}
c^{(0)}_k(\sigma)=c_{k,(0)}(\sigma),\mbox{ }\forall  \sigma\in [0,\frac{99}{100}].
\end{align*}
  \item For $\sigma\in [0,\frac{5}{4}]$, define
\begin{align*}
c^{(1)}_k(\sigma)=\left\{
                  \begin{array}{ll}
                    c_{k,(1)}(\sigma), & \hbox{ }  \sigma\in [0,\frac{99}{100}]  \\
                     c_{k,(1)}(\sigma)+ c_{k,(1)}(\frac{3}{8}) c_{k,(1)}(\sigma-\frac{3}{8}), & \hbox{ }  \sigma\in (\frac{99}{100},\frac{5}{4}].
                  \end{array}
                \right.
\end{align*}
  \item
Given an integer $j\ge 2$,  for $\sigma\in [0,\frac{j}{4}+1]$, define $\{c^{(j)}_k(\sigma)\}$ by induction:
\begin{align*}
c^{(j)}_k(\sigma)=\left\{
                  \begin{array}{ll}
                   c_{k,(j)}(\sigma), & \hbox{ }  \sigma\in [0,\frac{99}{100}]  \\
                   c_{k,(j)}(\sigma)+ c_{k,(j)}(\frac{3}{8}) c_{k,(j)}(\sigma-\frac{3}{8}), & \hbox{ }  \sigma\in [0,\frac{5}{4}]\\
... &\\
 c_{k,(j)}(\sigma)+ c_{k,(j)}(\frac{3}{8}) {c}^{(j)}_{k }(\sigma-\frac{3}{8}), & \hbox{ }  \sigma\in [\frac{m+3}{4},\frac{m}{4}+1] \\
... &\\
 c_{k,(j)}(\sigma)+ c_{k,(j)}(\frac{3}{8}) {c}^{(j)}_{k}(\sigma-\frac{3}{8}), & \hbox{ }  \sigma\in [\frac{j+3}{4},\frac{j}{4}+1].
                  \end{array}
                \right.
\end{align*}
\end{itemize}

\end{Definition}

\begin{Definition}\label{2jknmL}
\begin{itemize}
  \item Assume that  $\{a_{k}(\sigma)\}$ are frequency envelopes of order $\delta$ with $\sigma\in [0,\frac{99}{100}]$. Define
\begin{align*}
a^{(0)}_k(\sigma)=c^{(0)}_k(\sigma),\mbox{ }\forall  \sigma\in [0,\frac{99}{100}].
\end{align*}
  \item Assume that  $\{a_{k}(\sigma)\}$ are frequency envelopes of order $\delta$ with $\sigma\in [0,\frac{99}{100}]$. Define
\begin{align*}
a^{(1)}_k(\sigma)=\left\{
                  \begin{array}{ll}
                    c^{(1)}_k(\sigma), & \hbox{ }  \sigma\in [0,\frac{99}{100}]  \\
                     a_{k}(\sigma)+ c^{(1)}_{k}(\frac{3}{8}) c^{(1)}_{k}(\sigma-\frac{3}{8}), & \hbox{ }  \sigma\in (\frac{99}{100},\frac{5}{4}].
                  \end{array}
                \right.
\end{align*}
  \item
Given an integer $j\ge 2$,  assume that $\{a_{k}(\sigma)\}$ are frequency envelopes of order $\delta$ with $\sigma\in [0,\frac{j}{4}+1]$. Define
\begin{align*}
a^{(j)}_k(\sigma)=\left\{
                  \begin{array}{ll}
                    c^{(j)}_k(\sigma), & \hbox{ }  \sigma\in [0,\frac{j+3}{4}]  \\
                     a_{k}(\sigma)+ c^{(j)}_k(\frac{3}{8}) c^{(j)}_k(\sigma-\frac{3}{8}), & \hbox{ }  \sigma\in (\frac{j+3}{4},\frac{j}{4}+1].
                  \end{array}
                \right.
\end{align*}
\end{itemize}
\end{Definition}

Given an integer $j\in \Bbb N$,  assume that $\{a_{k}(\sigma)\}$ are frequency envelopes of order $\delta$ with $\sigma\in [0,\frac{j}{4}+1]$, we also define
\begin{align*}
{a^{(j)}_{k,s}}\left( \sigma  \right) = \left\{ \begin{gathered}
  {2^{k + {k_0}}}{a_{ - {k_0}}}(0)a^{(j)}_k(\sigma )\mbox{  }{\rm if}\mbox{ }k + {k_0} \geqslant 0 \hfill \\
  \sum\limits_{l = k}^{ - {k_0}}  {a_l}(0){a^{(j)} _l}(\sigma )\mbox{  }{\rm if}\mbox{ }k + {k_0} \leqslant 0, \hfill \\
\end{gathered}  \right.
\end{align*}
for $s\in[2^{2k_0-1},2^{2k_0+1})$, $k,k_0\in\Bbb Z$.

\begin{Remark}
Given $j\ge  2$, we infer from Def. \ref{1jknmL} that  $\{c^{(j)}(\sigma)\}$ is of order $\frac{1}{2^{m}}\delta$ if $\sigma\in [\frac{m+3}{4},\frac{m}{4}+1]$, $2\le m\le j$.
Particularly, $\{c^{(j)}(\sigma)\}$ is of order $ \delta$ for all $\sigma\in[0,\frac{j}{4}+1]$.  One can also  see from Def. \ref{2jknmL} that  $\{b^{(j)}(\sigma)\}$ are of order $\delta$ for all $\sigma\in[0,\frac{j}{4}+1]$.
\end{Remark}

Now we iterate the argument of previous sections to obtain uniform bounds for all  $\sigma\in[0,\frac{5}{4}]$. We aim to prove the following proposition:
\begin{Proposition}
Assume that  $\sigma\in[0,\frac{5}{4}]$.
Let $Q\in \mathcal{N}$ be a fixed point and $\epsilon_0 $ be a sufficiently small constant.  Given any $\mathcal{L}\in \Bbb Z_+$, assume that $T\in(0,2^{2\mathcal{L}}]$. Let $u\in \mathcal{H}_Q(T)$ be the solution to SMF  with initial data $u_0$. Let $\{c^{(1)}_{k}(\sigma)\}$ be frequency envelopes  defined by Definition \ref{1jknmL}, and assume that $\{c^{(1)}_k(0)\}$ is  an $\epsilon_0$-frequency envelope.
Denote $\{\phi_i\}$ the corresponding differential fields of the heat flow initiated from $u$.
Then for all $i=1,2$, $k\in\Bbb Z$, $\sigma\in [0,\frac{5}{4}]$, we have
\begin{align*}
2^{\sigma k}\|P_{k}\phi_i\upharpoonright_{s=0}\|_{G_{k}(T)}&\lesssim c^{(1)}_k(\sigma).
\end{align*}
\end{Proposition}

As before, this proposition will be divided into two propositions for proof, one is for heat flow evolution and the other is for the Schr\"odinger map flow evolution. In the statement of following propositions or lemmas, the notation $\checkmark$ means the line it lies in can be dropped.
\begin{Proposition}\label{Parabolic2}
Let $\sigma\in[0,\frac{5}{4}]$. Let $\{b_{k}(\sigma)\}$ be frequency envelopes of order $\delta$ such that  $b_{k}(\sigma)\lesssim c^{(1)}_{k}(\sigma)$ for $\sigma\in[0,\frac{99}{100}]$.   Assume that $\{c^{(1)}_k(0)\}$ is  an $\epsilon_0$-frequency envelope.
\begin{itemize}
               \item Assume that for $i=1,2$,
\begin{align}
\|P_{k }\phi_i\upharpoonright_{s=0}\|_{F_k(T)}&\le b_k(\sigma')2^{-\sigma' k} \mbox{  }\sigma'\in[0,\frac{5}{4}], \label{sad2}\\
\checkmark\|P_{k }\phi_i(s)\|_{F_k(T)}&\le \varepsilon^{-1}b^{(1)}_k(0)(1+s2^{2k})^{-4}.\label{sad}
\end{align}
Then for $\sigma \in [0,\frac{5}{4}]$, $i=1,2$,
\begin{align}
\|P_{k}\phi_i(s)\|_{F_k(T)}&\lesssim  2^{-\sigma  k}  (1+s2^{2k})^{-4} b^{(1)}_k(\sigma )\label{4.181}\\
\|P_k A_i\upharpoonright_{s=0}\|_{L^{4}_{t,x}}&\lesssim  b^{(1)}_k(\sigma )2^{-\sigma k}.\label{4.191}
\end{align}
               \item Assume further that
\begin{align}\label{kuku}
\|P_{k }\phi_t\upharpoonright_{s=0}\|_{L^{4}_{t,x}}\lesssim b_k(\sigma')2^{-(\sigma' -1)k} \mbox{  }\sigma'\in[0,\frac{5}{4}].
\end{align}
Then for $\sigma\in [0,\frac{5}{4}]$, one has
\begin{align}
\|A_t\upharpoonright_{s=0}\|_{L^{2}_{t,x}}&\lesssim \varepsilon^2\label {IU4}\\
\|P_k\phi_t(s)\|_{L^{4}_{t,x}}&\lesssim   b^{(1)}_k(\sigma)2^{-(\sigma-1)k}(1+2^{2k}s)^{-2}\label{IU3}\\
\|P_kA_t\upharpoonright_{s=0}\|_{L^{2}_{t,x}}&\lesssim \varepsilon b^{(1)}_{k}(\sigma)2^{- \sigma k}.\label{IU5}
\end{align}
\end{itemize}
\end{Proposition}
\begin{proof}
Recalling  definitions of $c^{(1)}_k(\sigma)$, $b^{(1)}_k(\sigma)$ in Def. \ref{1jknmL} and Def. \ref{2jknmL},
  by Proposition \ref{NbM}, we see (\ref{4.181}), (\ref{4.191}), (\ref{IU3}), (\ref{IU5}) are already done for $\sigma\in[0,\frac{99}{100}]$. Moreover, (\ref{IU4}) and the assumption (\ref{sad}) hold  naturally. The left is to prove (\ref{4.181}), (\ref{4.191}), (\ref{IU3}), (\ref{IU5}) for $\sigma\in[\frac{99}{100},\frac{5}{4}]$.

The key and starting point for SMF iteration scheme is to improve $\|P_{k}\widetilde{\mathcal{G}}^{(1)}\|_{L^{4}_xL^{\infty}_t}$ step by step.
\begin{Lemma}\label{KEY}
Let $u\in\mathcal{H}_Q(T)$ solve SMF with data $u_0$. Given any $\sigma\in[0,\frac{99}{100}]$, let $\{c^{(1)}_{k}(\sigma)\}$  be frequency envelopes defined in Def. \ref{1jknmL}. Assume also that $\{c^{(1)}_{k}(0)\}$ is an $\epsilon_0$-frequency envelope. Then for $\epsilon_0$ sufficiently small there holds
\begin{align*}
2^{\frac{1}{2}k}\|P_{k} \widetilde{\mathcal{G}}^{(1)}\|_{L^4_xL^{\infty}_t}\le c^{(1)}_k(\sigma)2^{-\sigma k}[(1+2^{2k+2k_0})^{-20}1_{k+k_0\ge 0}+1_{k+k_0\le 0}2^{\delta|k+k_0|}],
\end{align*}
for any $\sigma\in[0,\frac{99}{100}]$, $k,k_0\in\Bbb Z$, $s\in[2^{2k_0-1},2^{2k_0+1})$.
\end{Lemma}
\begin{proof}
We obtain  by combining Proposition 5.1 and Proposition 4.1 that
\begin{align*}
 \left\| P_{k}\phi_t\right\|_{L^4}\lesssim (1+s2^{2k})^{-2} 2^{-\sigma k+k}c^{(1)}_{k}(\sigma),\mbox{ }\sigma\in[0,\frac{99}{100}]\\
 \left\| P_{k}\phi_i\right\|_{L^4}\lesssim (1+s2^{2k})^{-4} 2^{-\sigma k}c^{(1)}_{k}(\sigma),\mbox{ }\sigma\in[0,\frac{99}{100}].
\end{align*}
Then Proposition 3.6 yields
 \begin{align*}
 \left\| P_{k}\widetilde{\mathcal{G}} \right\|_{L^4_{t.x}\cap L^{\infty}_tL^2_x}&\lesssim (1+s2^{2k})^{-30} 2^{-\sigma k-k}c^{(1)}_{k}(\sigma),\mbox{ } \sigma\in[0,\frac{99}{100}]\\
 \left\| P_{k}\widetilde{\mathcal{G}}^{(m)} \right\|_{L^4_{t.x}\cap L^{\infty}_tL^2_x}&\lesssim (1+s2^{2k})^{-30} 2^{-\sigma k-k}c^{(1)}_{k}(\sigma),\mbox{ } m=1,2,\mbox{ } \sigma\in[0,\frac{99}{100}].
\end{align*}
Then using the schematic formula
 \begin{align*}
 A_t=\int^{\infty}_{s} \phi_t\diamond (D_i\phi_i) \mathcal{G} ds'
\end{align*}
and bilinear Littlewood-Paley decomposition (see Lemma 4.7), we get
 \begin{align*}
\|P_{k} A_t\|_{L^4}\le c^{(1)}_{k}(\sigma)2^{-\sigma k+k}[(1+2^{2k+2j})^{-1}1_{k+j\ge 0}+1_{k+j\le 0}c^{(1)}_{k}2^{\delta|k+j|}]
\end{align*}
for any $\sigma\in[0,\frac{99}{100}]$,  $k,j\in\Bbb Z$, $s\in[2^{2j-1},2^{2j+1})$.  And then  using $\partial_t\widetilde{\mathcal{G}}^{(1)}=A_t\mathcal{G}^{(1)}+\mathcal{G}^{(2)}\phi_t$ and  interpolation (see Lemma 4.7), one deduces that
 \begin{align*}
2^{\frac{1}{2}k}\|P_{k} \widetilde{\mathcal{G}}^{(1)}\|_{L^4_xL^{\infty}_t}\le c^{(1)}_k(\sigma)2^{-\sigma k}[(1+2^{2k+2k_0})^{-20}1_{k+k_0\ge 0}+1_{k+k_0\le 0}2^{\delta|k+k_0|}]
\end{align*}
for any $\sigma\in[0,\frac{99}{100}]$, $k,k_0\in\Bbb Z$, $s\in[2^{2k_0-1},2^{2k_0+1})$.
\end{proof}

As before, we start with the bound for connection forms.
\begin{Lemma}\label{connection2}
Let $\sigma\in[\frac{99}{100},\frac{5}{4}]$. Denote
\begin{align}
h(k)=\sup_{s\ge 0}(1+s2^{2k})^4\sum^{2}_{i=1}\|P_k\phi_i(s)\|_{F_k(T)}.
\end{align}
Define the corresponding envelope by
\begin{align}
h_k(\sigma)&=\sup_{k'\in\Bbb Z}2^{\sigma k'}2^{-\delta|k'-k|}h(k').
\end{align}
Then under the assumptions of Proposition \ref{Parabolic2}, for all $k\in \Bbb Z$, $s\ge 0$, $i=1,2$, we have
\begin{align}
\|P_k(A_i(s))\|_{F_k(T)\bigcap S^{\frac{1}{2}}_k(T)}\lesssim 2^{-\sigma k}(1+s2^{2k})^{-4}h^{(1)}_{k,s}(\sigma),
\end{align}
where the sequence $\{h^{(1)}_{k,s}\}$ when $ 2^{2k_0-1}\le s< 2^{2k_0+1}, k_0\in\Bbb Z$, is defined by
\begin{align}\label{GH}
{h^{(1)}_{k,s}}\left( \sigma  \right) = \left\{ \begin{gathered}
  {2^{k + {k_0}}}{h_{ - {k_0}}}{h_k}^{(1)}(\sigma )\mbox{  }{\rm if}\mbox{ }k + {k_0} \geqslant 0 \hfill \\
  \sum\limits_{l = k}^{ - {k_0}}  {h_l}{h^{(1)} _l}(\sigma )\mbox{  }{\rm if}\mbox{ }k + {k_0} \leqslant 0 \hfill \\
\end{gathered}  \right.
\end{align}
with
\begin{align}\label{Notation2}
{h_k}^{(1)}(\sigma' )=\left\{
                       \begin{array}{ll}
                         c^{(1)}_{k}(\sigma'), & \hbox{ } \sigma'\in[0,\frac{99}{100}]\\
                         h_{k}(\sigma')+c^{(1)}_{k}(\frac{3}{8})c^{(1)}_{k}(\sigma'-\frac{3}{8}), & \hbox{ }\sigma'\in(\frac{99}{100},\frac{5}{4}].\\
                       \end{array}
                     \right.
\end{align}
\end{Lemma}
\begin{proof}
The proof is almost the same as Lemma \ref{connection}. The difference is that more concerns are  needed for the ${\rm High\times Low}$ interaction of $P_{k}[\widetilde{\mathcal{G}}^{(1)}\psi_s]$ in Step 4 of Lemma \ref{connection}.
First of all we point out (\ref{HUM}) of Proposition \ref{NbM} shows for all $\sigma'\in[0,\frac{99}{100}]$
 \begin{align}
h_k(\sigma') \lesssim c^{(1)}_{k}(\sigma').\label{hbL}
\end{align}
Let $B^{(1)}_1$ be the smallest constant such that for all $\sigma\in[\frac{99}{100},\frac{5}{4}]$, $s\ge 0$, $k\in\Bbb Z$, there holds
 \begin{align}
\|P_k(A_i(s))\|_{F_k(T)\bigcap S^{\frac{1}{2}}_k(T)}\lesssim B^{(1)}_12^{-\sigma k}(1+s2^{2k})^{-4}h^{(1)}_{k,s}(\sigma).
\end{align}
Recall the following decomposition of $\mathcal{G}$:
 \begin{align*}
\mathcal{G}=\Gamma^{\infty}-\Gamma^{\infty,(1)}_p\int^{\infty}_s\psi^p_sds'- \int^{\infty}_s\psi^p_s\widetilde{\mathcal{G}}^{(1)}ds'.
\end{align*}
By $\psi_s=\sum^{2}_{i=1}\partial_i\psi_i+A_i\psi_i$, we separate the $\psi_i$ part away.  And thus schematically one has
 \begin{align*}
\mathcal{G}&=\Gamma^{\infty}- \Gamma^{\infty,(1)}_{l} \int^{\infty}_s(\partial_i \psi_i)^{l}ds'- \int^{\infty}_s (\partial_i \psi_i)^l \widetilde{\mathcal{G}}^{(1)}_lds'\\
&-\Gamma^{\infty,(1)}_{l} \int^{\infty}_s(A_i\psi_i)^{l}ds'-\int^{\infty}_s(A_i\psi_i)^{l} \widetilde{\mathcal{G}}^{(1)}_{l}ds'
\end{align*}
In order to prove our lemma,
as before we first prove $B^{(1)}_1\lesssim 1$ under the {\bf Bootstrap Assumption B}: For the fixed given $\sigma\in (\frac{99}{100},\frac{5}{4}]$, there hold
 \begin{align*}
\int^{\infty}_s\|P_k(A_i\psi_i) \|_{F_k(T)}ds'\lesssim \varepsilon^{-\frac{1}{2}} 2^{-\sigma k}T_{k,j}(1+s^{\frac{1}{2}}2^{k})^{-7}h^{(1)}_{k}(\sigma)c^*_0\\
\int^{\infty}_s\|P_k[(A_i\psi_i)\widetilde{\mathcal{G}}^{(1)}] \|_{F_k(T)}ds'\lesssim \varepsilon^{-\frac{1}{2}} 2^{-\sigma k}T_{k,j}(1+s^{\frac{1}{2}}2^{k})^{-7}h^{(1)}_{k}(\sigma)c^*_0.
\end{align*}
where $c^*_0:=\|\{h_k\}\|_{\ell^2}$, $s\in[2^{2j-1},2^{2j+1})$, and $T_{k,j}$ is defined in (\ref{FFgB}).
This part is the same as Step 2 of Lemma \ref{connection} except controlling
\begin{align}\label{pcojn}
\left\|P_{k}\left(\int^{\infty}_s (\partial_i \psi_i)(\widetilde{\mathcal{G}}^{(1)})ds'\right)\right\|_{F_{k}(T)},
\end{align}
which was labeled as $\mathcal{U}_{01}$ in Lemma \ref{connection}.
To estimate (\ref{pcojn}), recall the bounds  in  Lemma \ref{KEY} and  Proposition 3.6 for $\widetilde{\mathcal{G}}^{(1)}$:
  \begin{align}
&2^{k} \|P_{k}(\widetilde{\mathcal{G}}^{(1)})  \|_{L^{\infty}_tL^2_x\cap L^{4}}+2^{\frac{1}{2}k} \|P_{k}(\widetilde{\mathcal{G}}^{(1)}) \|_{L^4_xL^{\infty}_t}\nonumber\\
&\lesssim 2^{-\widetilde{\sigma }k}c_{k}(\widetilde{\sigma} )[1_{k+j\ge 0}(1+2^{2k}s)^{-20}+1_{k+j\le 0}2^{\delta|k+j|}]\label{1pcojn}
\end{align}
for any $k,j\in\Bbb Z$, $s\in[2^{2j-1}, 2^{2j+1})$, $\widetilde{\sigma} \in[0,\frac{99}{100}]$.
By bilinear Littlewood-Paley decomposition and Lemma \ref{mJ},  we have
 \begin{align*}
&\|P_k((\partial_i\psi_i)\widetilde{\mathcal{G}}^{(1)}) \|_{F_k(T)}\\
&\lesssim \sum_{|k_1-k|\le 4}\|P_{k_1}(\partial_i\psi_i) \|_{F_{k_1}(T)}\|P_{\le k-4}\widetilde{\mathcal{G}}^{(1)}\|_{L^{\infty}}\\
&+ \sum_{|k_1-k_2|\le 8,k_1,k_2\ge k-4} \|P_{k_1}(\partial_i\psi_i) \|_{F_{k_1}{(T)}}(\|P_{k_2}(\widetilde{\mathcal{G}}^{(1)})\|_{L^{\infty}}+2^{\frac{k_1}{2}}\|P_{k_2}(\widetilde{\mathcal{G}}^{(1)})\|_{L^{4}_xL^{\infty}_t})
\\
&+\sum_{|k_2-k|\le 4,k_1\le k-4}  2^{\frac{k_1}{2}} \|P_{k_1}(\partial_i\psi_i) \|_{F_{k_1}(T)} \|P_{k_2}(\widetilde{\mathcal{G}}^{(1)})\|_{L^4_xL^{\infty}_t}+ 2^{{k_1}} \|P_{k_1}(\partial_i\psi_i) \|_{F_{k_1}(T)} \|P_{k_2}(\widetilde{\mathcal{G}}^{(1)})\|_{L^4}\\
&\lesssim  2^{-\sigma k}h^{(1)}_{k}(\sigma)+2^{-\sigma  k }h_{k}h^{(1)}_{k}({\sigma} )\left[1_{k+j\ge 0}2^{k}(1+2^{2k}s)^{-4}+1_{k+j\le 0}2^{\delta|k+j|}2^{-j}\right]\\
&+R_{j,k}2^{-(\sigma -\frac{3}{8}) k}c^{(1)}_{k}(\sigma -\frac{3}{8}) [2^{-\frac{1}{2}k}\sum_{k_1\le k-4}2^{\frac{3}{2}k_1-\frac{3}{8}  k_1}c^{(1)}_{k_1}(\frac{3}{8})
+2^{-k}\sum_{k_1\le k-4}2^{2k_1-\frac{3}{8}  k_1}c^{(1)}_{k_1}(\frac{3}{8})  ]
\end{align*}
where we denote $R_{j,k}:= 1_{k+j\ge 0}(1+2^{2k}s)^{-20}+1_{k+j\le 0}2^{\delta|k+j|}h_{k} $ and have used  (\ref{hbL}).
Thus by slow variation of envelopes we get
 \begin{align*}
&\|P_k((\partial_i\psi_i)\widetilde{\mathcal{G}}^{(1)}) \|_{F_k(T)}\lesssim  2^{-\sigma k}h^{(1)}_{k}(\sigma)\left( 1_{k+j\ge 0}2^{k}(1 +2^{2k_2}s)^{-4}
+1_{k+j\le 0}2^{-j}2^{\delta|k+j|}\right).
\end{align*}
for $s\in[2^{2j-1},2^{2j+1})$, $j,k\in\Bbb Z$.
This bound is the same as $\mathcal{U}_{01}$ in Lemma \ref{connection} and acceptable.

In the third step,  we prove the claim: If {\bf Bootstrap Assumption B} holds, then
 \begin{align}
\int^{\infty}_s\|P_k(A_i\psi_i) \|_{F_k(T)}ds'\lesssim 2^{-\sigma k}T_{k,j}(1+s^{\frac{1}{2}}2^{k})^{-7}h^{(1)}_{k}(\sigma)c^*_0\label{nbx1}\\
\int^{\infty}_s\|P_k(A_i\psi_i)\widetilde{\mathcal{G}}^{(1)} \|_{F_k(T)}ds'\lesssim  2^{-\sigma k}T_{k,j}(1+s^{\frac{1}{2}}2^{k})^{-7}h^{(1)}_{k}(\sigma)c^*_0.\label{nbx2}
\end{align}
The proof of (\ref{nbx1}) is the same as Step 4 of  Lemma \ref{connection}. For (\ref{nbx2}), the
$\rm Low\times High$ interaction of $P_{k}[(A_i\psi_i)\widetilde{\mathcal{G}}^{(1)}]$ is different due to the larger $\sigma$. The other two interactions are all the same. We present the modifications as follows.
Since under Bootstrap Assumption B  one has $B^{(1)}\lesssim 1$, $P_k(A_i\psi_i)$ enjoys the same $F_{k}\bigcap S^{\frac{1}{2}}_k$ bound as Lemma \ref{connection} with $h_{k}(\sigma)$ replaced by  $h^{(1)}_k(\sigma)$:
 \begin{align*}
\| P_{k }(A_i\psi_i)\|_{F_k(T)\bigcap S^{\frac{1}{2}}_k(T)}&\lesssim  c^*_02^{-\sigma k} 1_{k+j\le 0}h^{(1)}_{k} (\sigma)2^{\frac{1}{2}(k-j)}2^{\delta|k+j|}\\
&+  c^*_0 2^{-\sigma k}1_{k+j\ge 0} h^{(1)}_{k}(\sigma) 2^{k}(1+2^{j+k})^{-8},
\end{align*}
for all $\sigma\in[0,\frac{5}{4}]$, $s\in[2^{2j-1},2^{2j+1})$, $j,k\in\Bbb Z$.

Then by (\ref{1pcojn}), (\ref{hbL}) and   (\ref{2Jgvm}), the $\rm Low\times High$ part of $(A_i\psi_i)\widetilde{\mathcal{G}}^{(1)}$ is dominated by
 \begin{align*}
&\sum_{|k-k_2|\le 4,k_1\le k+4}\|P_k (P_{k_1 }(A_i\psi_i)P_{k_2}\widetilde{\mathcal{G}}^{(1)})\|_{F_k(T)}\\
&\lesssim  c^{*}_0 2^{-({\sigma}-\frac{3}{8}) k}c^{(1)}_{k}(\sigma-\frac{3}{8})1_{k+j\le 0}\sum_{k_1\le k-4} c^{(1)}_{k_1}(\frac{3}{8}) 2^{\frac{1}{2}(k_1-j)}2^{\delta|k_1+j|} 2^{-\frac{3}{8} k_1}\\
&+  c^{*}_0 2^{-(\sigma-\frac{3}{8}) k}(1+2^{2j+2k})^{-20}c^{(1)}_{k}( \sigma-\frac{3}{8})1_{k+j\ge 0}\left[\sum_{-j\le k_1\le k}c^{(1)}_{k_1}(\frac{3}{8}) 2^{k_1-\frac{3}{8} k_1 }(1+2^{2j+2k_1})^{-4}\right]\\
 &+ c^{*}_02^{-(\sigma-\frac{3}{8}) k}(1+2^{2j+2k})^{-20}c^{(1)}_{k}(\sigma-\frac{3}{8})1_{k+j\ge 0}\left[\sum_{k_1\le -j} c^{(1)}_{k_1}(\frac{3}{8}) 2^{\frac{k_1-j}{2}}2^{\delta|k_1+j|}2^{-\frac{3}{8} k_1}\right]\\
&\lesssim  c^{*}_0 2^{- \sigma k}c^{(1)}_{k}({\sigma}-\frac{3}{8})c^{(1)}_{k}(\frac{3}{8})\left(1_{k+j\ge 0} 2^{-j}(1+2^{j+k})^{-7}+1_{k+j\le 0} 2^{\frac{k-j}{2}}2^{\delta|k+j|}\right).
\end{align*}
Summing in $j\ge k_0$  as well yields
\begin{align*}
&\sum_{j\ge k_0}2^{2j}\sum_{|k-k_2|\le 4,k_1\le k+4}\|P_k (P_{k_1 }\psi_sP_{k_2}\widetilde{\mathcal{G}}^{(1)})\|_{F_k(T)}\\
&\lesssim c^*_0 2^{-\sigma k}h^{(1)}_{k}(\sigma)\left( 1_{k+k_0\ge 0}2^{k_0}(1+2^{k+k_0})^{-7}
+2^{-k}1_{k+k_0\le 0}\right)
\end{align*}
for $s\in[2^{2k_0-1}, 2^{2k_0+1})$, $k_0,k\in\Bbb Z$. This bound is again the same as  $\mathcal{U}_{II}$ in Lemma \ref{connection} and acceptable.

Finally, we need to prove (\ref{nbx1}), (\ref{nbx2}) of Bootstrap Assumption B hold when $T\to 0$.  Let's verify it.
Using (\ref{shujiu2}), (\ref{chui}) and putting $\frac{3}{8}$ order derivatives on $(A_i\psi_i)$ while estimating the $\rm Low\times High$ interaction of $(A_i\psi_i)\widetilde{\mathcal{G}}^{(1)}$, we also have
\begin{align*}
\int^{\infty}_s\|P_k[A_i\psi_i\widetilde{\mathcal{G}}^{(1)}]\|_{L^{\infty}_tL^2_x}ds'\lesssim \|\{h_k\}\|_{\ell^2}T_{k,j} h^{(1)}_{k}(\sigma)2^{-\sigma k}
\end{align*}
if $\sigma\in[\frac{99}{100},\frac{5}{4}]$.

Therefore, combining the above four steps gives Lemma \ref{connection2}.
\end{proof}

The proof of Lemma \ref{connection2} gives an $F_{k}$ bound for $\mathcal{\widetilde{G}}$.

\begin{Lemma}\label{xiaoyixiao}
For all $\sigma\in[\frac{99}{100},\frac{5}{4}]$, $k\in\Bbb Z$,
\begin{align}
{\left\| {{P_k}({\mathcal{\widetilde{G}}})} \right\|_{{F_k}(T) }} \lesssim \left\{ \begin{gathered}
  {2^{ - \sigma k}}{(1 + s{2^{2k}})^{ - 4}}{2^j}{h^{(1)}_k}(\sigma ),\mbox{ }{\rm if}\mbox{  }j + k \ge 0 \hfill \\
  {2^{ - \sigma k}}{2^{ - k}}{h^{(1)}_k}(\sigma ),\mbox{ }{\rm if}\mbox{  }j + k \le  0 \hfill \\
\end{gathered}  \right.
\end{align}
when $ 2^{2j-1}\le s\le 2^{2j+1}, j\in\Bbb Z$. Moreover, for $s=0$
\begin{align}
{\left\| {{P_k}{\mathcal{\widetilde{G}}}}\upharpoonright_{s=0} \right\|_{{F_k}(T) }} \lesssim 2^{-k-\sigma k}h^{(1)}_k(\sigma).
\end{align}
\end{Lemma}

{\bf Proof of  Proposition \ref{Parabolic2}.}
With this improved bound of $\mathcal{\widetilde{G}}$, running the program of Section 4 again gives
\begin{align*}
\sup_{s\ge 0}2^{\sigma k}(1+s2^{2k})^4\sum^{2}_{i=1}\|P_k\phi_i(s)\|_{F_k(T)}\lesssim b_{k}(\sigma)+
\varepsilon h^{(1)}(\sigma).
\end{align*}
Since the right side is a frequency envelope of order $\delta$, there holds
\begin{align*}
h_{k}(\sigma)\lesssim b_{k}(\sigma)+\varepsilon h^{(1)}_k(\sigma).
\end{align*}
By the definition of $h^{(1)}_k(\sigma)$, we conclude for $\sigma\in[\frac{99}{100},\frac{5}{4}]$
\begin{align*}
h_{k}(\sigma)\lesssim b_{k}(\sigma)+c^{(1)}_{k}(\frac{3}{8})c^{(1)}_{k}(\sigma-\frac{3}{8}),
\end{align*}
thus proving (\ref{4.181}). The left  (\ref{4.191}), (\ref{IU3}), (\ref{IU5}) are the same.
\end{proof}

In the following proposition, we finish iteration of $\sigma$ in the Schr\"odinger direction.
\begin{Proposition}\label{4.51}
Given $\mathcal{L}\in\Bbb Z_+$, suppose that $T\in(0,2^{2\mathcal{L}}]$ and $Q\in \mathcal{N}$.  Assume that $\sigma\in[0,\frac{5}{4}]$.  Let $u\in \mathcal{H}_Q(T)$ be a solution to SMF with initial data $u_0$, and set $\{c^{(1)}_k(\sigma)\}_{k\in\Bbb Z}$ to be frequency envelopes defined in Def. \ref{1jknmL}. And  assume that $\{c^{(1)}_k(0)\}$ is an $\epsilon_0$-frequency envelope  with $0<\epsilon_0\ll 1$.
Then
 for any $\sigma\in [0,\frac{5}{4}]$, $k\in\Bbb Z$, we have
\begin{align}\label{14.321}
 \|P_{k}\phi_i \upharpoonright_{s=0}\|_{G_k(T)}\lesssim  c^{(1)}_k(\sigma).
\end{align}
\end{Proposition}
\begin{proof}
(\ref{14.321}) has been proved for $\sigma\in[0,\frac{99}{100}]$ in Section 5.
Thus, it suffices to consider $\sigma\in(\frac{99}{100},\frac{5}{4}]$ only.
Let
\begin{align*}
b(k)&=\sum^{2}_{i=1}\|P_{k}\phi_i\upharpoonright_{s=0}\|_{G_k(T)}.
\end{align*}
For $\sigma\in[0,\frac{5}{4}]$, define the frequency envelopes:
\begin{align*}
b_k(\sigma)&=\sup_{k'\in\Bbb Z}2^{\sigma k'}2^{-\delta|k-k'|}b(k').
\end{align*}
By Proposition \ref{mei}, they are finite and $\ell^2$ summable. And
\begin{align*}
\|P_{k}\phi_i\upharpoonright_{s=0}\|_{G_k(T)}&\lesssim 2^{-\sigma k}b_k(\sigma).
\end{align*}
The assumption (\ref{kuku}) holds by repeating the same argument of Lemma 4.1.
Thus using Proposition \ref{Parabolic2}, we see (\ref{4.181})-(\ref{IU5}) hold. With Lemma \ref{xiaoyixiao}, repeating the argument in Section 5, one obtains when $s=0$,
\begin{align*}
\|P_{k}\phi_i\upharpoonright_{s=0}\|_{G_k(T)}&\lesssim c^{(1)}_k(\sigma)+\epsilon_0 \left(b_{k}(\sigma)+c^{(1)}_{k}(\frac{3}{8})c^{(1)}_{k}(\sigma-\frac{3}{8})\right), \mbox{ }\forall \sigma\in(\frac{99}{100},\frac{5}{4}].
\end{align*}
Since the RHS is frequency envelope of order $\delta$, we conclude
\begin{align*}
b_k(\sigma)&\lesssim c^{(1)}_k(\sigma).
\end{align*}
This gives (\ref{14.321}) and finishes our proof.
\end{proof}

\section{Proof of   Theorem 1.1 and Theorem 1.2}

\subsection{Global Regularity}
In order to prove $u$ is global, it suffices to verify (see Appendix B)
\begin{align}\label{jhbm}
\|\nabla u\|_{L^{\infty}_{t,x}}\lesssim 1.
\end{align}
To prove (\ref{jhbm}), it suffices to give uniform bound for  $\|u(t)\|_{{\dot H}^{1}\bigcap {\dot H}^{2+}}$. Since energy preserves, it reduces to bound  $\|u(t)\|_{ {\dot H}^{2+}}$, which is related to frequency envelopes with $\sigma=1+$.
Thus we need to  transform the intrinsic bound (\ref{14.321}) to bounds for $u$.

The following lemma  follows directly by Corollary  3.1.
\begin{Lemma}\label{huji}
Let $u\in \in\mathcal{H}_Q(T)$ solve  SMF with data $u_0$ of small energy.
For $\sigma\in[0,\frac{5}{4}]$, suppose that $\{c^{(1)}_{k}(\sigma)\}$ are frequency envelopes defined in Def. 6.1.
And assume that the differential fields $\{\phi_i\}$ associated with $u$ under the caloric gauge satisfy
\begin{align}\label{ghxxvn}
\|P_{k }\psi_i\upharpoonright_{s=0}\|_{L^{\infty}_tL^2_x}\le 2^{-\sigma k}c^{(1)}_k(\sigma), \mbox{  }\forall\mbox{  }k\in\Bbb Z.
\end{align}
Then we have
\begin{align}
2^{k}\|P_{k} u\|_{L^{\infty}_tL^2_x}\le 2^{-\sigma k}c^{(1)}_k(\sigma), \mbox{  }\forall\mbox{  }k\in\Bbb Z.
\end{align}
\end{Lemma}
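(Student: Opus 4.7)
The plan is to read Corollary \ref{JkL} in the reverse direction: instead of transferring an envelope from the extrinsic initial data $v_0$ to the differential fields $\phi_i\upharpoonright_{s=0}$, we transfer a given envelope on $\phi_i\upharpoonright_{s=0}$ to the map $u$ itself. The machinery is identical; only the starting point changes.

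I would begin with the pointwise identity at $s=0$,
\begin{align*}
\partial_i u(t,x) \;=\; \sum_{l=1}^{2n}\psi_i^{l}(0,t,x)\, d\mathcal{P}(e_l)(0,t,x),
\end{align*}
and split $d\mathcal{P}(e_l)\upharpoonright_{s=0} = \chi_l^{\infty} + \bigl(d\mathcal{P}(e_l)\upharpoonright_{s=0}-\chi_l^{\infty}\bigr)$. The constant piece contributes $\|P_k(\psi_i^l\chi_l^\infty)\|_{L^\infty_t L^2_x}\lesssim 2^{-\sigma k}c_k^{(1)}(\sigma)$ directly from the hypothesis (\ref{ghxxvn}), and this accounts for the main term $2^k\|P_k u\|_{L^\infty_t L^2_x}\sim \|P_k\partial_i u\|_{L^\infty_t L^2_x}$ we want to estimate.

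The remainder term $\psi_i^l\bigl(d\mathcal{P}(e_l)-\chi_l^\infty\bigr)\upharpoonright_{s=0}$ is handled by bilinear Littlewood--Paley decomposition exactly as in Step~1.1 and Step~1.2 of Proposition \ref{7.2} and in the proof of Corollary \ref{JkL}. The decisive input is the frequency-localized smallness
\begin{align*}
2^{k}\|P_k(d\mathcal{P}(e_l)-\chi_l^\infty)\upharpoonright_{s=0}\|_{L^\infty_t L^2_x}\;\lesssim\; 2^{-\sigma k}c_k^{(1)}(\sigma),\qquad \sigma\in[0,\tfrac{5}{4}],
\end{align*}
which was produced inside the proof of Corollary \ref{JkL} by dynamical separation $d\mathcal{P}(e_l)-\chi_l^\infty=\int_s^\infty \psi_s^j\,\mathbf{D}d\mathcal{P}(e_j;e_l)\,ds'$ together with one (respectively two, for $\sigma\in(\tfrac{99}{100},\tfrac{5}{4}]$) iteration(s) of the heat-flow envelope bound. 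Feeding these two inputs into the trichotomy $\mathrm{High}\times\mathrm{Low}$, $\mathrm{Low}\times\mathrm{High}$, $\mathrm{High}\times\mathrm{High}$ and invoking the slow variation of envelopes, every interaction is dominated by $2^{-\sigma k}c_k^{(1)}(\sigma)$, with the $\mathrm{Low}\times\mathrm{High}$ block being the one that produces the quadratic correction $c_k(3/8)c_k(\sigma-3/8)$ in the upper range of $\sigma$ and thereby forces the use of $c_k^{(1)}$ rather than $c_k$.

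The only mildly delicate step is bookkeeping in the range $\sigma\in(\tfrac{99}{100},\tfrac{5}{4}]$: there, the naive bilinear bound on $\psi_i^l(d\mathcal{P}(e_l)-\chi_l^\infty)$ with $\sigma$-derivatives falling on the second factor is not summable in $\ell^2$, so one distributes derivatives as $\sigma=\frac{3}{8}+(\sigma-\frac{3}{8})$ and applies the $\sigma\in[0,\tfrac{99}{100}]$ bounds separately to the two factors, which is precisely the mechanism encoded by the envelope $c_k^{(1)}(\sigma)$ in Definition/Corollary \ref{JkL}. Since all required estimates on $d\mathcal{P}(e_l)-\chi_l^\infty$ and on the heat flow from $u(t,\cdot)$ hold uniformly in $t$ by the small energy hypothesis and energy conservation along SL, the argument gives the claimed bound $2^{k}\|P_k u\|_{L^\infty_t L^2_x}\le 2^{-\sigma k}c_k^{(1)}(\sigma)$.
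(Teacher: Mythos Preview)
Your proposal is correct and is exactly the mechanism the paper has in mind: the paper simply records that the lemma ``follows directly by Corollary \ref{JkL}'', but the actual content you unpack --- the identity $\partial_i u=\sum_l\psi_i^l\,d\mathcal{P}(e_l)$, the splitting $d\mathcal{P}(e_l)=\chi_l^\infty+(d\mathcal{P}(e_l)-\chi_l^\infty)$, the bilinear Littlewood--Paley trichotomy, and the $\tfrac{3}{8}+(\sigma-\tfrac{3}{8})$ redistribution for $\sigma\in(\tfrac{99}{100},\tfrac{5}{4}]$ --- is precisely the argument of Proposition~\ref{7.2} (specifically (\ref{hbyu77}) and (\ref{hbyu78})), which underlies Corollary~\ref{JkL}. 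Your write-up is a faithful expansion of what the paper leaves implicit.
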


Proposition \ref{4.51} shows the assumption (\ref{ghxxvn}) of  Lemma \ref{huji} holds. And thus
by applying Lemma \ref{huji}, we conclude
\begin{align}
\|u\|_{{\dot H}^{\rho}\bigcap{\dot H}^1}\lesssim C(\|u_0\|_{{\dot H}^{\rho}\bigcap{\dot H}^1}),
\end{align}
for all $\rho\in[0,\frac{9}{4}]$. Particularly $\|\nabla u\|_{L^{\infty}_{t,x}}\lesssim 1$ by Sobolev embedding.
Therefore, $u$ is global by Appendix B and the global regularity follows by local theory of \cite{Mc}.

The remained part for Theorem 1.1 is (\ref{yuhgvb90}) and (1.5). They will be proved in Section 7.4 and Section 7.5 respectively.

\subsection{Uniform Sobolev norm bounds of solutions to SMF}\label{dzfghjkl}

To get uniform Sobolev norm bounds for SMF up to $\sigma=1+\frac{K}{4}$, $K\in \Bbb Z_+$, in the heat flow iteration scheme it suffices to begin with the parabolic decay estimates
\begin{align*}
 \left\|  \partial^{L+1}_x \mathcal{G}^{(K+1)}  \right\|_{L_t^\infty L_x^2}&\lesssim  \epsilon s^{-\frac{L}{2}}, \forall L\in [0,100+K] \\
 \left\| \partial^{L+1}_x  [d\mathcal{P}]^{(K+1)}    \right\|_{L_t^\infty L_x^2}&\lesssim  \epsilon s^{-\frac{L}{2}},  \forall L\in [0,100+K].
\end{align*}
And in the SMF iteration scheme, for the $j$-th iteration we always begin with proving
\begin{align*}
2^{\frac{1}{2}k}\|P_{k} \widetilde{\mathcal{G}}^{(1)}\|_{L^4_xL^{\infty}_t}\le c^{(j)}_k(\sigma)2^{-\sigma k}[(1+2^{2k+2k_0})^{-20}1_{k+k_0\ge 0}+1_{k+k_0\le 0}2^{\delta|k+k_0|}],\mbox{ }\sigma\in[0,1+\frac{j-1}{4}],
\end{align*}
for any $s\in[2^{2k_0-1}, 2^{2k_0+1})$, $k_0,k\in\Bbb Z$.
Then repeating the argument of first time iteration for $K$ times we obtain
\begin{align*}
 \left\|  P_{k} d\mathcal{P}(e)(\upharpoonright_{s=0})\right\|_{L_t^\infty L_x^2}&\lesssim  2^{-\sigma k}c^{(K)}_{k}(\sigma)\\
\left\| P_{k} \phi_x(\upharpoonright_{s=0})\right\|_{L_t^\infty L_x^2}&\lesssim  2^{-\sigma k+k}c^{(K)}_{k}(\sigma).
\end{align*}
By bilinear estimates we then arrive at
\begin{align}
 \left\| P_{k} \partial_x u\right\|_{L_t^\infty L_x^2}\lesssim  2^{-\sigma k+k}c^{(K)}_{k}(\sigma),
\end{align}
by which the uniform Sobolev bounds follow.
Each time iteration requires $\epsilon_*$ to be smaller in our arguments.
We emphasize that the key for the succeeding SMF iterations is  to improve $\|P_{k}\mathcal{G}^{(1)}\|_{L^4_xL^{\infty}_t}$ step by step. (see e.g. Lemma 6.1)

Therefore, we have the following result:
\begin{Proposition}\label{Poibvg}
 For any $j\ge 1$, there exists a constant $\epsilon_j>0$ such that if $u_0\in \mathcal{H}_Q$ with $\|u_0\|_{\dot{H}^1}\le \epsilon_j$, then $\|u(t)\|_{{\dot H}^{j}_x}\le C(\|u_0\|_{\dot{H}^1\cap \dot{H}^j})$ for all $t\in\Bbb R$.
\end{Proposition}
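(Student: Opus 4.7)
The plan is to prove Proposition \ref{Poibvg} by induction on $j$, following the iteration scheme outlined at the start of Section \ref{dzfghjkl}. The base case $j\in\{1,2\}$, which amounts to envelopes in the range $\sigma\in[0,\frac{5}{4}]$, is already established: Proposition \ref{NbM} together with the first SL iteration in Proposition \ref{4.51} gives the required bounds on $\{\phi_i|_{s=0}\}$, and Lemma \ref{huji} transfers these to bounds on $u$ itself. For the inductive step I assume the conclusion up to level $j-1$ (equivalently, up to $\sigma=1+\frac{j-1}{4}$) and extend the admissible Sobolev range by one quarter derivative.

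The first half of the inductive step is the heat-flow side. Using Proposition \ref{Jth} and Lemma \ref{Ncccmkijnl}, I propagate the frequency-envelope estimates on $\phi_i|_{s=0}$ up to parabolic decay bounds for $\phi_i(s)$, $A_i(s)$, $[d\mathcal{P}]^{(\ell)}$ and $\mathcal{G}^{(\ell)}$ along the heat direction, up to one extra covariant derivative on $\mathcal{G}$ and $d\mathcal{P}$. The arbitrariness of the decay power $M$ in those propositions (see the Remark after Proposition \ref{mei}) gives me freedom to absorb the losses incurred by each new iteration at the cost of shrinking $\epsilon_j$.

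The second half is the SL side, and this is where the real work sits. Following Lemma \ref{KEY} and Lemma \ref{connection2}, I first promote the $L^4$ bound on $\phi_t,A_t$ at the previous level to the improved lateral bound
\begin{align*}
2^{k/2}\|P_k\widetilde{\mathcal{G}}^{(1)}\|_{L^4_xL^\infty_t}\lesssim 2^{-\sigma k}c^{(j-1)}_k(\sigma)\bigl[(1+s2^{2k})^{-20}\mathbf{1}_{k+k_0\ge 0}+2^{\delta|k+k_0|}\mathbf{1}_{k+k_0\le 0}\bigr],
\end{align*}
now valid on the extended range $\sigma\in[0,1+\frac{j-1}{4}]$, via the interpolation (\ref{Uy78}) between $\|P_k\widetilde{\mathcal{G}}^{(1)}\|_{L^4}$ and $\|P_k\partial_t\widetilde{\mathcal{G}}^{(1)}\|_{L^4}$. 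With this upgrade in hand, the refined dynamic separation of Section \ref{VNM} yields the $F_k\cap S_k^{1/2}$ bound for connection coefficients (as in Lemma \ref{connection2}), and the Schr\"odinger-side bootstrap of Sections 5--6 runs on envelopes of order $\sigma\in[0,1+\frac{j}{4}]$ exactly as in Proposition \ref{4.51}. Finally, Lemma \ref{huji} converts the resulting bound on $c^{(j)}_k(\sigma)$ at $s=0$ into the envelope estimate $\|P_k\partial_x u\|_{L^\infty_tL^2_x}\lesssim 2^{-\sigma k+k}c^{(j)}_k(\sigma)$; summing in $k$ yields the uniform $\dot H^j$ bound.

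The main obstacle is precisely the $L^4_xL^\infty_t$ block of $F_k$ for $\widetilde{\mathcal{G}}^{(1)}$: this is the one piece that cannot be controlled directly by dynamic separation in the heat direction combined with the heat-flow iteration, and every new iteration has to reopen it through the interpolation trick above. The key technical subtlety is that the interpolation loses a factor of $(1+s2^{2k})$ in the parabolic decay, which is harmless only because the exponent $M$ in Propositions \ref{mei}--\ref{Jth} can be chosen arbitrarily large at the outset, provided $\epsilon_j$ is chosen correspondingly small. Taking $\epsilon_j\ll\epsilon_{j-1}$ at each level closes the bootstrap, and after $j$ such iterations the full range $\sigma\in[0,1+\frac{j}{4}]$ is reached, giving Proposition \ref{Poibvg}.
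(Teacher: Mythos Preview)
Your proposal is correct and follows essentially the same approach as the paper: the proposition is just a summary of the iteration scheme described in Section \ref{dzfghjkl}, and you have reproduced that scheme faithfully, including the correct identification of the $L^4_xL^\infty_t$ bound on $\widetilde{\mathcal{G}}^{(1)}$ as the key obstacle and the interpolation trick (with the freely chosen decay exponent $M$) as its resolution. The paper itself gives no further details beyond what you have written.
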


Since the mass of SMF solutions doesnot conserve, the $\|u-Q\|_{L^2_x}$ norm should be handled separately. This will be proved as a corollary of  the well-posedness, see the next section.

\subsection{Well-posedness}

 In fact, the well-posedness stated in Theorem 1.2 follows closely by  \cite{Ta7} and \cite{BIKT}'s original arguments. We sketch it for reader's convenience.

[ Tataru \cite{Ta7}, Prop. 3.13] proves that: given two maps $u^0_0,u^1_0\in \mathcal{H}_Q$ with $\|u^h_0\|_{{\dot H}^1}\ll 1$, $h=0,1$, there exists a smooth one parameter family of initial data $\{u^h_0\}_{h\in[0,1]}\in C^{\infty}([0,1];\mathcal{H}_Q)$ which satisfies
\begin{align}
 \|u^h_0\|_{{\dot H}^1}&\ll 1, \mbox{ }h\in[0,1]\\
\int^{1}_0\left\| P_{k} \partial_x u^{h}\right\|_{L^2_x}dh&\approx \|u^{0}_0-u^{1}_0\|_{L^2_x}.\label{HJdgh}
\end{align}
Given $h\in[0,1]$, Theorem 1.1 yields a solution $u^{h}(t,x)\in C(\Bbb R;\mathcal{H}_Q)$ with initial data  $u^{h}_0$.
Then under the caloric gauge $\{e_{\alpha}, Je_{\alpha}\}$ for $u^{h}(t,x)$, define the differential field $\phi_h$  by
\begin{align}
\phi^{\alpha}_{h}=\langle \partial_h u^{h}, e_{\alpha}\rangle +\sqrt{-1} \partial_h u^{h}, Je_{\alpha}\rangle, \mbox{ }\alpha=1,...,n,
\end{align}
and define $\{\phi_i\}^2_{i=0}$ as before.
Since $-\sqrt{-1}\phi_t=\sum_{i=1,2}D_i\phi_i$ at $s=0$ ( because for all $h\in[0,1]$ $u^{h}(t,x)$ solves SMF), applying $D_h=\partial_h+A_h$ to the both sides gives
\begin{align*}
-\sqrt{-1}D_t\phi_{h}= \sum^{2}_{i=1}D_iD_i\phi_h+\sum^2_{i=1}\mathcal{R}(u^h(t,x))(\phi_i,\phi_h)\phi_i,\mbox{ }{\rm when }\mbox{ }s=0.
\end{align*}
which as before can be further schematically written as
\begin{align}\label{dCvghbm}
-\sqrt{-1}D_t\phi_{h}= \sum^{2}_{i=1}D_iD_i\phi_h+\sum (\phi_i\diamond\phi_h)\phi_i\mathcal{G},\mbox{ }{\rm when }\mbox{ }s=0.
\end{align}
Given $\sigma\in[0,1+\frac{j}{4})$ with $j\in\Bbb Z_+$, let $\{c_{k,(j)}(\sigma)\}$  be
\begin{align*}
c_{k,(j),h}(\sigma)=\sup_{k'\in\Bbb Z}2^{-\frac{1}{2^j}\delta|k'-k|}2^{\sigma k'+k'}\|P_{k'}u^{h}_0\|_{L^2_x}.
\end{align*}
And define   $\{c^{(j)}_{k,h}(\sigma) \}$ as Def. \ref{1jknmL}.
Then Section \ref{dzfghjkl} gives
\begin{align}\label{GDfh}
\sum^{2}_{i=1}2^{\sigma k} \|P_{k}\phi_i(s=0, h,\cdot,\cdot )\|_{G_{k}(T)}\lesssim c^{(j)}_{k,h}(\sigma),
\end{align}
and thus
\begin{align}\label{Dfh}
2^{\sigma k} \|P_{k}{\widetilde{\mathcal{G}}}(s=0, h,\cdot,\cdot )\|_{F_{k}(T)}\lesssim c^{(j)}_{k,h}(\sigma).
\end{align}
Using (\ref{GDfh}), (\ref{Dfh}) we obtain by (\ref{dCvghbm}) that
\begin{align*}
\sum_{k\in\Bbb Z} \|P_{k}\phi_h(s=0)\|^2_{G_{k}(T)}\lesssim  \|\phi_h(s=0,t=0)\|^2_{L^2_x}.
\end{align*}
Transforming this bound to $\partial_h u^{h}$ yields
\begin{align*}
  \| \partial_hu^{h}\|_{L^{\infty}_tL^2_x}\lesssim  \|\partial_hu^{h}_0\|_{L^2_x}.
\end{align*}
Then (\ref{HJdgh}) leads to
\begin{align}\label{FGVBhj}
 \| u^{1} -u^{0} \|_{L^{\infty}_tL^2_x}\lesssim \|u^1_0-u^0_0\|_{L^2_x}.
\end{align}
With (\ref{FGVBhj}) in hand, the continuity of $S_Q$ from  $\mathfrak{B}^{\sigma}_{\epsilon}$ to $C(\Bbb R; {H}^{\sigma+1}_Q)$ follows by the same arguments of [\cite{BIKT},1467-1468], if ${\epsilon}>0$ is sufficiently small depending only on $j$ thus $\sigma$.

Moreover, letting $u^1_0=Q$, $u^0_0=u_0$ in (\ref{FGVBhj}) one obtains
\begin{align}\label{FGVBhj}
 \| u -Q \|_{L^{\infty}_tL^2_x}\lesssim \|u_0-Q\|_{L^2_x}.
\end{align}
which combined with Proposition \ref{Poibvg} gives (\ref{XcDfg}).

\subsection{Asymptotic behavior}

Let us prove (\ref{yuhgvb90}).
First, we notice
\begin{align}\label{W01}
|u(t,x)-Q|=\int^{\infty}_0|\partial_sv(s,t,x)|ds'\lesssim \int^{\infty}_0|\phi_s|ds'.
\end{align}
{\bf Step 1.1.} Recall  the definition of $\{c^{(j)}_{k}(\sigma)\}$ in  Def. \ref{1jknmL}.
Applying  (3.74) with $\beta_k(\sigma)=c^{(0)}_{k}(\sigma)$, and its analogies in succeeding iterations, we get by Bernstein inequality that
\begin{align}
\|\phi_s\|_{L^4_tL^{\infty}_x}&\lesssim s^{-\frac{1}{4}}\sum_{k\in\Bbb Z}c^{(1)}_{k}(1) \label{W03} \\
\|\phi_s\|_{L^4_tL^{\infty}_x}&\lesssim s^{-\frac{3}{4}}\sum_{k\in\Bbb Z}c^{(0)}_{k}(0). \label{W02}
\end{align}
We find  by Young's inequality and triangle inequality that
\begin{align*}
 2^{\frac{1}{2^{j+4}}\delta | k|}c^{(j)}_{k}\lesssim \sup_{k'\in\Bbb Z}2^{\frac{1}{2^{j+4}}\delta|k'|}\|P_{k'}\nabla u_0\|_{L^2_x},
\end{align*}
thus there holds
\begin{align}\label{fuygb98u}
\sum_{k\in\Bbb Z} c^{(j)}_{k}\lesssim \sup_{k'\in\Bbb Z}2^{\frac{1}{2^{j+4}}\delta|k'|}\|P_{k'}\nabla u_0\|_{L^2_x}\lesssim  1,
\end{align}
since $u_0\in  \mathcal{H}_{Q}$. Then (\ref{W02}), (\ref{W03}) show
\begin{align}\label{ghjop}
 \|\phi_s\|_{L^4_tL^{\infty}_x}\lesssim   \min( s^{-\frac{1}{4}},  s^{-\frac{3}{4}}).
\end{align}
We see (\ref{ghjop}) is not enough to put $\|\phi_s\|_{L^{\infty}_x}$ in $L^1_s$, but useful for  Step 2 below.

{\bf Step 1.2.} Applying  (3.74) with $\beta_k(\sigma)=c^{(0)}_{k}(\sigma)$, $\sigma=0$, and by interpolation, we see for any $p\in (4,\infty)$, $ \tilde{p}\in (2,4)$ satisfying $\frac{1}{p}+\frac{1}{\tilde{p}}=\frac{1}{2}$, there holds
\begin{align*}
 \|\phi_s\|_{L^{p}_tL^{\tilde{p}}_x}\lesssim   2^{k}1_{k+j\ge 0}(1+2^{2j+2k})^{-4}c^{(0)}_{k}(0)+ 2^{k}1_{k+j\le 0}2^{\delta|k+j|}c^{(0)}_{k}(0),
\end{align*}
for $s\in[2^{2j-1}, 2^{2j+1})$, $k,j\in\Bbb Z$.
Then  we get by Bernstein inequality that
\begin{align}
\int^{\infty}_0 \|\phi_s\|_{L^{p}_tL^{\infty}_x}ds'&\lesssim   \sum_{k\in\Bbb Z}\sum_{j\le -k}2^{2j+k}2^{\frac{2k}{\tilde{p}}} c^{(0)}_{k}(0)2^{\delta|k+j|} + \sum_{k\in\Bbb Z}\sum_{j\ge -k}2^{2j+k}2^{\frac{2k}{\tilde{p}}}(1+2^{k+j})^{-8} c^{(0)}_{k}(0)\nonumber\\
&\lesssim   \sum_{k\in\Bbb Z} 2^{(\frac{2}{\tilde{p}}-1)k} c^{(0)}_{k}(0).\label{ghjkn090o}
\end{align}
Taking $\tilde{p}\in (2,4)$ such that $|\frac{2}{\tilde{p}}-1|\le\frac{1}{8}\delta$, one finds   (\ref{ghjkn090o}) is finite by (\ref{fuygb98u}).
Hence, there exists a $p\in(4,\infty)$ such that
\begin{align}\label{qJH00}
\int^{\infty}_0 \|\phi_s\|_{L^{p}_tL^{\infty}_x}ds'&\lesssim 1.
\end{align}

{\bf Step 1.3.}
We aim to prove
\begin{align}\label{JH00}
 \lim\limits_{t\to\infty}\int^{\infty}_0\|\phi_s(t)\|_{L^{\infty}_x}ds' =0.
\end{align}
If (\ref{JH00}) fails, then for some $\varrho>0$, there exists a time sequence $\{t^1_\nu\}$, such that $\lim\limits_{\nu\to\infty}t^1_\nu=\infty$,
\begin{align}\label{gvc7g}
\int^{\infty}_0\|\phi_s(t^{1}_{\nu})\|_{ L^{\infty}_x}ds'>\varrho, \mbox{ }\forall \nu\in \Bbb Z_+.
\end{align}
We can also assume $t^1_{\nu}\le t^1_{\nu+1}-4$ for any $\nu\in\Bbb Z_+$. Thus  by (\ref{qJH00}) there must exist a sufficiently large constant $N$ and  a time sequence $\{t^2_{\nu}\}$ such that
\begin{align}
& t^1_{\nu}-1\le t^2_{\nu}\le  t^1_{\nu}+1 \label{sg09uhjk}\\
&\int^{\infty}_0\|\phi_s(t^{2}_{\nu})\|_{ L^{\infty}_x}ds'\le \frac{1}{8}\varrho, \mbox{ }\forall \nu\ge N. \label{dg09uhjk}
\end{align}

{\bf Step 2.}
On the other hand, we have
\begin{align*}
&\partial_t \phi_s=D_t\phi_s-A_t\phi_s=D_s\phi_t-A_t\phi_s\\
&=\Delta\phi_t+\sum_{i=1,2}2A_i\partial_i\phi_t+A_iA_i\phi_t+\phi_t\partial_iA_i+\mathcal{R}(\phi_i,\phi_t)\phi_i-A_t\phi_s.
\end{align*}
Using Proposition 6.2 ((6.12), (6.13)) with $b_{k}(\sigma)$ replaced by $c^{(1)}_{k}(\sigma)$ and similar results for succeeding iterations, we see
\begin{align*}
\| \phi_t\|_{L^4_tL^{\infty}_x}&\lesssim \sum_{k\in \Bbb Z}   c^{(2)}_k( \frac{3}{2} ) \lesssim 1\\
\|\partial_x \phi_t\|_{L^4_tL^{\infty}_x}&\lesssim \sum_{k\in \Bbb Z} c^{(6)}_k(\frac{5}{2}) \lesssim 1\\
\|\partial^2_x \phi_t\|_{L^4_tL^{\infty}_x}&\lesssim \sum_{k\in \Bbb Z}  c^{(10)}_k(\frac{7}{2}) \lesssim 1,
\end{align*}
since as before one has
\begin{align*}
\sum_{k\in \Bbb Z} 2^{\frac{1}{2^{j+4}}\delta|k|} c^{(j)}_k(\sigma)  \lesssim 1.
\end{align*}
And by the same reason there hold
\begin{align*}
\| \partial^2_x\phi_t\|_{L^4_tL^{\infty}_x}\lesssim   s^{-\frac{5}{4}}\sum_{k\in\Bbb Z}c^{(1)}_{k}(1)  \lesssim  s^{-\frac{5}{4}}\\
\|\partial_x \phi_t\|_{L^4_tL^{\infty}_x}\lesssim s^{-\frac{3}{4}}\sum_{k\in\Bbb Z}c^{(1)}_{k}(1)  \lesssim  s^{-\frac{3}{4}}.
\end{align*}
Meanwhile, Lemma 3.3 shows
\begin{align*}
\| \phi_i \|_{L^{\infty}}&\lesssim  (1+s)^{-\frac{3}{4}}, \mbox{ }i=1,2\\
\|\partial^{j}_{x}A_i \|_{L^{\infty}}&\lesssim  (1+s)^{-\frac{3}{4}-\frac{1}{2}j}, \mbox{ }j=0,1.
\end{align*}
Thus we arrive at
\begin{align*}
\|\Delta\phi_t\|_{L^4_tL^{\infty}_x}+\sum_{i=1,2}\|2A_i\partial_i\phi_t+A_iA_i\phi_t+\phi_t\partial_iA_i+\mathcal{R}(\phi_i,\phi_t)\phi_i\|_{L^4_tL^{\infty}_x}\lesssim 1.
\end{align*}
For the rest $A_t\phi_s$,  by the proof of Lemma 6.1 and its analogies in succeeding iterations, we see
\begin{align*}
\|A_t\|_{L^4_tL^{\infty}_x}\lesssim s^{-\frac{1}{4}}\sum_{k\in\Bbb Z}c^{(1)}_{k}(1)\lesssim  s^{-\frac{1}{4}}\\
\|A_t\|_{L^4_tL^{\infty}_x}\lesssim s^{-\frac{3}{4}}\sum_{k\in\Bbb Z}c^{(0)}_{k}(0)\lesssim  s^{-\frac{3}{4}}.
\end{align*}
Hence, (\ref{ghjop}) implies
\begin{align*}
\int^{\infty}_0\|A_t\phi_s\|_{L^2_tL^{\infty}_x}ds'\lesssim 1.
\end{align*}
Therefore, we conclude in this step that there exists a decomposition of $\partial_t\phi_s=I_1+I_2$ such that
\begin{align}\label{d97nbjii}
\int^{\infty}_0\|I_1\|_{L^4_tL^{\infty}_x}ds'\lesssim 1, \mbox{ }\int^{\infty}_0\|I_2\|_{L^2_tL^{\infty}_x}ds'\lesssim 1.
\end{align}

{\bf Step 3.}
(\ref{d97nbjii}) and (\ref{sg09uhjk}) show
\begin{align}\label{d97jii}
\int^{\infty}_0\|\phi_s(t^1_{\nu})-\phi_s(t^2_{\nu})\|_{L^{\infty}_x}ds'\lesssim \int^{\infty}_0\left(\|I_1\|_{L^2_tL^{\infty}_x([t^2_{\nu}-1,t^2_{\nu}+1]\times \Bbb R^d)}+\|I_2\|_{L^2_tL^{\infty}_x([t^2_{\nu}-1,t^2_{\nu}+1]\times \Bbb R^d)}\right)ds'.
\end{align}
Then as $\nu\to \infty$, (\ref{d97nbjii}) further implies the RHS of (\ref{d97jii}) goes to zero. Thus (\ref{dg09uhjk}) yields
\begin{align*}
\int^{\infty}_0\|\phi_s(t^1_{\nu})\|_{L^{\infty}_x}ds'\le   \frac{1}{4}\varrho,
\end{align*}
for $\nu$ sufficiently large, which contradicts with (\ref{gvc7g}). So we have verified  (\ref{JH00}).

Similar to (\ref{JH00}) we also have
\begin{align*}
 \lim\limits_{t\to-\infty}\int^{\infty}_0\|\phi_s(t)\|_{L^{\infty}_x}ds' =0.
\end{align*}
Then (\ref{yuhgvb90}) follows by
(\ref{W01}).

\subsection{Proof of (\ref{y0})}

The proof of  (\ref{y0}) can be reduced to the following lemma.
\begin{Lemma}\label{I90bn}
Given $s>0$, there exists a function $f_s:\Bbb R^2\to \Bbb C^{n}$ belonging to  ${\dot H}^1$ such that
\begin{align*}
 \lim\limits_{t\to \infty} \|\phi_s(t)-e^{it\Delta }f_s\|_{{\dot H^1}_x} =0.
\end{align*}
Moreover, $f_s$ satisfies
\begin{align*}
\|f_s\|_{{\dot H^1}_x} \lesssim {\bf 1}_{s\in[0,1]}+{\bf 1}_{s\ge 1}s^{-\frac{3}{2}}.
\end{align*}
\end{Lemma}

Now, let's prove (\ref{y0}) by assuming Lemma \ref{I90bn}.
Recall that
\begin{align*}
\phi_i&=-\int^{\infty}_s(\partial_i\phi_s +A_i\phi_s)ds'.
\end{align*}
Since  $\|A\|_{L^{\infty}_{s,t}L^2_x}\lesssim 1$, (\ref{JH00}) shows
\begin{align*}
 \lim\limits_{t\to \infty}\|\int^{\infty}_s |A_i\phi_s|ds'\|_{L^2_x}=0.
\end{align*}
Then Lemma \ref{I90bn} yields
\begin{align}\label{Fgnnnn0}
 \lim\limits_{t\to \infty}\|\phi(0,t,x) -\nabla e^{it\Delta}f_+ \|_{L^2_x}=0,
\end{align}
where $f_+\in {\dot H}^{1}$ is defined by
\begin{align*}
f_+=-\int^{\infty}_0f_sds'.
\end{align*}

Let $\mathcal{P}$ denote the isometric embedding of $\mathcal{N}$ into $\Bbb R^{N}$. Recall that $\{e_{\alpha},Je_{\alpha}\}^{n}_{\alpha=1}$ denotes the caloric gauge.  Then the caloric gauge condition shows
\begin{align*}
\sum^{2n}_{l=1}|d\mathcal{P}(e_{l})-d\mathcal{P}(e^{\infty}_{l})|\lesssim \int^{\infty}_0|\phi_s|ds'.
\end{align*}
which combined with (\ref{JH00}) implies for $s=0$
\begin{align}\label{cvmmmb77}
 \lim\limits_{t\to \infty}\|d\mathcal{P}(e_{l})-d\mathcal{P}(e^{\infty}_{l})\|_{L^{\infty}_x}=0,\mbox{  }\forall l=1,...,2n.
\end{align}

Thus, we deduce from
\begin{align*}
\partial_j u &=\sum^{n}_{\alpha=1}\Re (\phi^{\alpha}_{j})e_{\alpha}+\Im (\phi^{\alpha}_{j})J e_{\alpha},
\end{align*}
that for $s=0$
\begin{align*}
&\|d\mathcal{P}(\nabla u)-\sum^{n}_{\alpha=1}\Re (\nabla e^{it\Delta }f_+)^{\alpha}d\mathcal{P}(e^{\infty}_{\alpha})-\sum^{n}_{\alpha=1}\Im (\nabla e^{it\Delta }f_+)^{\alpha}d\mathcal{P}(J e^{\infty}_{\alpha})\|_{L^2_x}\\
&\lesssim \| \phi-\nabla e^{it\Delta }f_+\|_{L^2_x}+\||e^{it\Delta}\nabla f_+||d{\mathcal{P}}(e)-d\mathcal{P}(e^{\infty})|\|_{L^2_x}\\
&+\| |\phi-\nabla e^{it\Delta }f_+||d{\mathcal{P}}(e)-d\mathcal{P}(e^{\infty})|\|_{L^2_x}.
\end{align*}
Therefore, (\ref{cvmmmb77}) and (\ref{Fgnnnn0}) give
\begin{align*}
& \lim\limits_{t\to \infty}\|d\mathcal{P}(\nabla u)-\sum^{n}_{\alpha=1}\Re (\nabla e^{it\Delta }f_+)^{\alpha}d\mathcal{P}(e^{\infty}_{\alpha})-\Im (\nabla e^{it\Delta }f_+)^{\alpha}d\mathcal{P}(J e^{\infty}_{\alpha})\|_{L^2_x}=0.
\end{align*}
Then, letting $\vec{v}_{\alpha}=d\mathcal{P}(e^{\infty}_{\alpha})$, $\vec{v}_{\alpha+n}=d\mathcal{P}(J e^{\infty}_{\alpha})$, we get
\begin{align}\label{y0ubb}
\lim\limits_{t\to \infty}\|  u(t) -\sum^{n}_{j=1}\Re ( e^{it\Delta}f_{+})^j\vec{v}_{j} -\sum^{n}_{j=1}\Im (e^{it\Delta}f_{+})^j\vec{v}_{j+n}\|_{{\dot H}^{1}_x}=0.
\end{align}
Thus, (\ref{y0})  follows form (\ref{y0ubb}) by setting
\begin{align*}
 h^j_{+}:=f^{j}_+\vec{v}_{j}, \mbox{  }  g^{j}_+:=f^{j}_+\vec{v}_{j+n}, \mbox{ }j=1,...,n.
\end{align*}

Now, let's prove  Lemma \ref{I90bn}. The convenient way to verify Lemma \ref{I90bn} is to introduce the so-called Schr\"odinger  map tension field $Z:=\phi_s-i\phi_t$.
Then the heat tension field $\phi_s$ satisfies for any $s\ge 0$
\begin{align}
&(i\partial_t +\Delta)\phi_s={\bf N}\\
&{\bf N}:=-(\sum^{2}_{k=1}\partial_k A_k)\phi_s-\sum^{2}_{j=1}2A_j\partial_j \phi_s-A_jA_j\phi_s+i\partial_s Z+\sum^{2}_{j=1}\mathcal{R}(\phi_j,\phi_s)\phi_j.\label{VV3}
\end{align}
And the Schr\"odinger  map tension field $Z$ satisfies the heat equation
\begin{align}\label{vv4}
\left\{
  \begin{array}{ll}
   & ( \partial_s -\Delta)Z=(\sum^{2}_{k=1}\partial_k A_k)Z+\sum^{2}_{j=1}[2A_j\partial_j Z+A_jA_jZ]\hbox{  } \\
&+\sum^{2}_{j=1}[\mathcal{R}(Z,\phi_j)\phi_j+i\mathcal{R}(\phi_j,\phi_s)\phi_j-\mathcal{R}(\phi_j,i\phi_s)\phi_j ]\hbox{  } \\
     &Z(0,t,x)=0. \hbox{ }
  \end{array}
\right.
\end{align}
To prove  Lemma \ref{I90bn}, it suffices to verify
\begin{align*}
\|\{ 2^{k}\|P_{k}{\bf N}\|_{N_{k}}\}\|_{\ell^2}\lesssim (1+s)^{-\frac{3}{2}},
\end{align*}
where ${\bf N}$ is given by $(\ref{VV3})$.
Except for the $\partial_s Z$ term in ${\bf N}$, the other terms have been handled with before. It remains to dominate  $\|P_{k}\partial_s Z\|_{N_{k}}$. In fact, one can prove a stronger result for $Z$:
\begin{align}\label{Cvssbbm}
 \|\{(1+2^{2k})2^{k}\|P_{k} Z\|_{L^{\frac{4}{3}}}\}\|_{\ell^2}\lesssim (1+s)^{-\frac{3}{2}}.
\end{align}
We see (\ref{Cvssbbm}) follows by bootstrap and (\ref{vv4}). Therefore,  Lemma \ref{I90bn} follows.

\subsection{Conclusion}

Hence, we have finished the proof of Theorem 1.1 and Theorem 1.2.

 \section{Appendix A. {Bilinear estimates} }

\begin{Lemma}\label{A1}
Let $S:\Bbb R^N\to \Bbb R$ be a smooth function in $y\in \Bbb R^N$ and $f:(-T,T)\times \Bbb R^2\to \Bbb R^N$ be smooth w.r.t. $(t,x)\in  (-T,T)\times \Bbb R^2$. Let
\begin{align}\label{FC}
\mu_k=\sum_{|k_1-k|\le 20}2^{k_1}\|P_{k_1}f\|_{L^{\infty}_{L^2_x}}.
\end{align}
Assume that $\|f\|_{L^{\infty}_x}\lesssim 1$ and $\sup_{k\in \Bbb Z}\mu_k\le 1$. Then
\begin{align}
&2^{k}\|P_{k }S(f)(\partial_a f\partial_b f)\|_{L^{\infty}_tL^2_x}\nonumber\\
&\lesssim 2^{k} \sum_{k_1\le k}\mu_{k_1}2^{k_1}\mu_k+\sum_{k_2\ge k}2^{2k}\mu^2_{k_2}\nonumber\\
&+ a_{k}\left(\sum_{k_1\le k}2^{k_1}\mu_{k_1}\right)^2+\sum_{k_2\ge k}2^{2k}2^{-k_2}a_{k_2}\mu_{k_2}\sum_{k_1\le k_2}2^{k_1}\mu_{k_1}.\label{c6FC}
\end{align}
where $\{a_k\}$ denotes
\begin{align}\label{FCbv}
a_k:=\|\nabla P_k(S(f))\|_{L^{\infty}_tL^2_x}.
\end{align}
\end{Lemma}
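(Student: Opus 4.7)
The plan is to expand $S(f)\,\partial_a f\,\partial_b f$ as a trilinear Littlewood--Paley paraproduct and match each frequency configuration with one of the four terms on the right of \eqref{c6FC}. The workhorses are H\"older's inequality and Bernstein's inequality on $\Bbb R^2$ (in particular $\|P_{k'}g\|_{L^\infty}\lesssim 2^{k'}\|P_{k'}g\|_{L^2}$ and $\|P_k h\|_{L^2}\lesssim 2^{k}\|h\|_{L^1}$), together with the elementary pointwise bound $\|S(f)\|_{L^\infty}\lesssim 1$ (which follows from $S\in C^\infty$ and $\|f\|_{L^\infty}\lesssim 1$) and the normalizations $\|P_{k'}\partial f\|_{L^\infty_tL^2_x}\lesssim \mu_{k'}$ and $\|P_{k'}S(f)\|_{L^\infty_tL^2_x}\lesssim 2^{-k'}a_{k'}$.

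First I would split $S(f)=P_{\le k-5}S(f)+P_{>k-5}S(f)$. For the low-frequency piece, support considerations force
$$P_k[P_{\le k-5}S(f)\cdot\partial_a f\,\partial_b f]=P_k[P_{\le k-5}S(f)\cdot P_{[k-8,k+8]}(\partial_a f\,\partial_b f)],$$
so, using $\|P_{\le k-5}S(f)\|_{L^\infty}\lesssim 1$, the matter reduces to a bilinear Littlewood--Paley estimate for $\partial_a f\,\partial_b f$. The High--Low (and symmetric Low--High) piece, with one factor at frequency $\sim k$ and the other at $k_1\le k$, is bounded by $\|P_{\sim k}\partial f\|_{L^2}\|P_{k_1}\partial f\|_{L^\infty}\lesssim \mu_k\cdot 2^{k_1}\mu_{k_1}$; summing in $k_1\le k$ and multiplying by $2^k$ yields the first term of \eqref{c6FC}. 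The High--High-to-Low piece (two factors at common frequency $k_2\ge k$) is bounded by the $L^1\to L^2$ Bernstein inequality, giving $2^k\mu_{k_2}^2$; summing in $k_2\ge k$ and multiplying by $2^k$ yields the second term.

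For the high-frequency piece $P_{>k-5}S(f)$ I would further decompose the two $\partial f$ factors. If $P_{k_0}S(f)$ has $k_0\sim k$, frequency balance forces both $\partial f$'s to sit at frequencies $k_1,k_1'\le k$ (otherwise we are back in the previous regime), so H\"older together with $\|P_k S(f)\|_{L^2}\lesssim 2^{-k}a_k$ gives
$$\|P_k[P_k S(f)\cdot P_{k_1}\partial_a f\cdot P_{k_1'}\partial_b f]\|_{L^2}\lesssim 2^{-k}a_k\cdot 2^{k_1}\mu_{k_1}\cdot 2^{k_1'}\mu_{k_1'};$$
multiplying by $2^k$ and summing in $k_1,k_1'\le k$ produces the third term. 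When $k_0=k_2\ge k$ properly, frequency balance forces one $\partial f$ factor to sit at frequency $\sim k_2$ while the other can lie at any $k_1\le k_2$; the $L^1\to L^2$ Bernstein inequality combined with H\"older then yields
$$\|P_k[P_{k_2}S(f)\cdot P_{k_2}\partial_a f\cdot P_{k_1}\partial_b f]\|_{L^2}\lesssim 2^k\cdot 2^{-k_2}a_{k_2}\cdot \mu_{k_2}\cdot 2^{k_1}\mu_{k_1},$$
and multiplying by $2^k$ and summing in $k_2\ge k$ and $k_1\le k_2$ produces the fourth term.

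The main bookkeeping obstacle is organizing the symmetric roles of $\partial_a f$ and $\partial_b f$ and enumerating all frequency configurations so that each contributes to exactly one of the four terms; the individual estimates themselves are routine H\"older-plus-Bernstein bounds requiring no cancellation beyond what is already built into the Littlewood--Paley projectors.
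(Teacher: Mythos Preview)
Your proposal is correct and follows essentially the same route as the paper's proof: the paper simply cites the trilinear decomposition of \cite{BIKT}, Lemma~8.2, and notes that the only modification is to use $\|P_{k'}S(f)\|_{L^2_x}\le 2^{-k'}\|\nabla P_{k'}S(f)\|_{L^2_x}=2^{-k'}a_{k'}$ when $S(f)$ sits at high frequency relative to $\partial_a f\,\partial_b f$, and the trivial bound $\|S(f)\|_{L^\infty}\lesssim 1$ when $S(f)$ is at low frequency---precisely the two ingredients you identify. Your write-up spells out the four frequency configurations more explicitly than the paper does, but the argument is the same.
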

\begin{proof}
The same proof of [\cite{BIKT}, Lemma 8.2] shows
\begin{align*}
&2^{k}\|P_{k }S(f)(\partial_a f\partial_b f)\|_{L^{\infty}_tL^2_x}\\
&\lesssim 2^{2k}\sum_{k_1\le k}\mu_{k_1}2^{k}\mu_k+\sum_{k_2\ge k}2^{-2(k_2-k)}2^{2k_2}\mu^2_{k_2}\\
&+ a_{k}(\sum_{k_1\le k}2^{k_1}\mu_{k_1})^2+\sum_{k_2\ge k}2^{2k}2^{-2k_2}2^{k_2}a_{k_2}\mu_{k_2}\sum_{k_1\le k_2}2^{k_1}\mu_{k_1}.
\end{align*}
The only difference is that we use
\begin{align*}
\|P_k(S(f))\|_{L^2_x}\le 2^{-k}\|\nabla P_k(S( f))\|_{L^2_x}
\end{align*}
when $S(f)$ lies in the high frequency w.r.t. $\partial_a f\partial_b f$, and the trivial bound
\begin{align*}
\|P_k(S(f))\|_{L^{\infty}_x}\lesssim 1
\end{align*}
when  $S(f)$ lies in the relatively low frequency.
\end{proof}

Denote  $H^{\infty,\infty}(T)$ the set of functions $f$ defined in $(t,x)\in [-T,T]\times \Bbb R^2$ satisfying $\partial^{b_1}_t \partial^{b_2}_xf\in L^2([-T,T]\times \Bbb R^2)$  for any $b_1,b_2\in \Bbb N$.

\begin{Lemma}[\cite{BIKT},Lemma 5.1]\label{FCq}
Given $\mathcal{L}\in \Bbb Z_+$, $\omega\in [0,\frac{1}{2}]$, $T\in (0,2^{2\mathcal{L}}]$. Suppose that $f,g\in H^{\infty,\infty}(T)$, let
\begin{align*}
\alpha_k:=\sum_{|k-k'|\le 20}\|f_{k'}\|_{S^{\omega}_{k'}(T)\cap F_{k'}(T)}, \mbox{ }\beta_k:=\sum_{|k-k'|\le 20}\|g_{k'}\|_{S^{0}_{k'}(T)},
\end{align*}
If $|k_1-k_2|\le 8$, then
\begin{align}\label{FC}
\|P_{k}(P_{k_1}fP_{k_2}g)\|_{F_k(T)\bigcap S^{\frac{1}{2}}_k(T)}\lesssim 2^{\frac{kd}{2}}2^{(k_2-k)(\frac{2d}{d+2}-\omega)}\alpha_{k_1}\beta_{k_2}.
\end{align}
If $|k-k_1|\le 4$, then
\begin{align}\label{PDE}
\|P_k(gP_{k_1}f )\|_{F_k(T)\bigcap S^{\frac{1}{2}}_k(T)}\lesssim \|g\|_{L^{\infty}}\alpha_{k_1}.
\end{align}
\end{Lemma}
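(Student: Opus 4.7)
The plan is to reduce both inequalities to a case-by-case Hölder argument on the atomic norms that comprise $F_k(T)$, $S^{1/2}_k(T)$, $S^{\omega}_{k}(T)$ and $S^{0}_{k}(T)$, exactly as in \cite{BIKT}, Lemma 5.1. Concretely, I would first unpack $F_k\cap S^{1/2}_k$ into the finite list of constituent norms $L^\infty_tL^2_x$, $L^4_{t,x}$, $L^{2,\infty}_{\bf e,W_{k+40}}$, $L^4_xL^\infty_t$, $L^{3,6}_{\bf e}$, $L^{6,3}_{\bf e}$, $L^{\infty,2}_{{\bf e},\lambda}$, then test the product $P_{k_1}f\,P_{k_2}g$ in each such norm and split the exponents between $f$ and $g$ so that the $f$-factor lands in an admissible block of $S^{\omega}_{k_1}$ and the $g$-factor in an admissible block of $S^{0}_{k_2}$.

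For the second inequality (Low $\times$ High $\to$ High regime, $|k-k_1|\le 4$), this reduction is immediate: the multiplier $P_k$ is disposable on $P_{k_1}f$, and for every atomic norm $X_k$ of $F_k\cap S^{1/2}_k$ we bound
\begin{align*}
\|g\,P_{k_1}f\|_{X_k}\lesssim \|g\|_{L^\infty}\|P_{k_1}f\|_{X_{k_1}}\lesssim \|g\|_{L^\infty}\alpha_{k_1}.
\end{align*}
No frequency gain is needed, and the definition of $\alpha_{k_1}$ already covers all required norms since $S^{\omega}_{k_1}\cap F_{k_1}$ contains $X_{k_1}$ for the relevant exponents.

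For the first inequality (the $|k_1-k_2|\le 8$ case, which is High $\times$ High feeding an arbitrary $k\le k_2+O(1)$), the key step is to pay a Bernstein loss of $2^{(k_2-k)(\frac{2d}{d+2}-\omega)}$ while moving from the natural scale $2^{k_2}$ of the two inputs to the output scale $2^k$. In each atomic norm of $F_k\cap S^{1/2}_k$ one uses Hölder to factor into an $S^{\omega}_{k_1}$-block on $f$ and an $S^{0}_{k_2}$-block on $g$, then applies Bernstein in the spatial variable (together with $|I_{k_1}|^{1/2}\sim 2^{k_2}$) to interpolate between $L^2$ and $L^\infty$ along the lateral or full spatial direction; this produces the explicit power $(k_2-k)(\frac{2d}{d+2}-\omega)$. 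The lateral norms $L^{p,q}_{\bf e,W}$ are treated identically after a Galilean conjugation, since the Galilean transforms $G_{\lambda\bf e}$ preserve Bernstein inequalities and commute with the Littlewood-Paley projections up to admissible errors once the modulation window $W_{k+40}$ is respected.

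The main obstacle is purely bookkeeping: enumerating the Hölder pairs so that the admissible Strichartz exponents for $g\in S^{0}_{k_2}$ and $f\in S^{\omega}_{k_1}$ match, and checking that the Galilean-boosted $L^{\infty,2}_{{\bf e},\lambda}$ and $L^{2,\infty}_{{\bf e},W}$ blocks close under multiplication by functions localized at frequency $2^{k_2}$. Since this entire case analysis is carried out in detail in [\cite{BIKT}, Lemma 5.1] and the only inputs are the definitions of the $F_k, S^{\omega}_k$ norms already recalled in Section 2.2, the argument transcribes verbatim; the K\"ahler target plays no role at this purely linear bilinear level, as $f$ and $g$ are treated as generic functions in $L^2_k(T)$.
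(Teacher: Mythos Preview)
The paper does not prove this lemma at all: it is stated in Appendix~A as a direct citation of \cite{BIKT}, Lemma~5.1, with no argument given. Your proposal correctly identifies this and outlines the H\"older/Bernstein case analysis that \cite{BIKT} carries out, so there is nothing to compare---both you and the paper simply defer to \cite{BIKT}.
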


\begin{Lemma}[\cite{BIKT},Lemma 5.4]\label{90}
Given $\mathcal{L}\in \Bbb Z_+$, $\omega\in [0,\frac{1}{2}]$, $T\in (0,2^{2\mathcal{L}}]$. Then for $f,g\in H^{\infty,\infty}(T)$
\begin{align}\label{Lemma5.4}
\|P_k(fg)\|_{L^{4}_{t,x}}&\lesssim \sum_{l\le k}2^{l}(\mathbf{a}_{l}\mathbf{b}_{k}+2^{\frac{1}{2}(k-l)}\mathbf{a}_{k}\mathbf{b}_{l})+2^{{k}}\sum_{l\ge k}2^{-\omega(l-k)}\mathbf{a}_{l}\mathbf{b}_{l}.
\end{align}
where we denote
\begin{align}
\mathbf{a}_k:=\sum_{|l-k|\le 20}\|P_{k }f\|_{S^{\omega}_{l}(T)}, \mbox{  }\mathbf{b}_k:=\sum_{|l-k|\le 20}\|P_{k }g\|_{L^4_{t,x}(T)}.
\end{align}

\end{Lemma}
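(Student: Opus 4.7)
\textbf{Proof Plan for Lemma \ref{90}.}
Since this is stated with attribution to \cite{BIKT}, the plan is to follow the standard bilinear Littlewood--Paley trichotomy adapted to the $S^\omega_k(T)$ scale. Write
\begin{align*}
 P_k(fg)=\sum_{k_1,k_2\in\Bbb Z}P_k(P_{k_1}f\,P_{k_2}g)
\end{align*}
and split the sum into the three frequency-interaction regimes: (HL) $|k_1-k|\le 4$, $k_2\le k-4$; (LH) $k_1\le k-4$, $|k_2-k|\le 4$; (HH$\to$L) $k_1,k_2\ge k-4$ with $|k_1-k_2|\le 8$.

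The first two regimes can be handled by elementary H\"older and Bernstein estimates using only two components of the $S^\omega_k$ norm each. In the HL case I would place the low-frequency factor $P_{k_2}g$ in $L^4_{t,x}$ (yielding $\mathbf{b}_k$) and the low-frequency factor $P_l f$ in $L^\infty_{t,x}$ via the Bernstein bound $\|P_l f\|_{L^\infty_{t,x}}\lesssim 2^{l(1+\omega)}\|P_l f\|_{L^\infty_t L^{2_\omega}_x}\lesssim 2^l \mathbf a_l$, producing the term $2^l \mathbf{a}_l \mathbf{b}_k$. For the LH case, Bernstein in $x$ gives $\|P_l g\|_{L^\infty_x L^4_t}\lesssim 2^{l/2}\mathbf{b}_l$, while the $L^{p^*_\omega}_x L^\infty_t$ ingredient of $S^\omega_k$ together with another Bernstein step yields $\|P_k f\|_{L^4_x L^\infty_t}\lesssim 2^{k/2}\mathbf{a}_k$; multiplying and rewriting $2^{(k+l)/2}=2^l\cdot 2^{(k-l)/2}$ gives the second contribution.

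The main obstacle is the HH$\to$L regime, because one must produce both the prefactor $2^k$ and the $\omega$-dependent decay $2^{-\omega(l-k)}$. Here I would first use Bernstein in $x$ to pass from $L^4_x$ to a smaller Lebesgue exponent tied to $\omega$, e.g.
\begin{align*}
 \|P_k h\|_{L^4_t L^4_x}\lesssim 2^{k\omega}\|P_k h\|_{L^4_t L^{2_\omega}_x},
\end{align*}
then apply H\"older with the split $1/2_\omega = 1/p^*_\omega+1/q$ so as to place $P_l f$ in the lateral norm $L^{p^*_\omega}_x L^\infty_t$ (controlled by $\mathbf a_l$ up to the $2^{l/2-l\omega}$ factor built into the $S^\omega$ definition) and $P_l g$ in an $L^q_x L^4_t$ norm recovered from $\mathbf b_l$ by Bernstein. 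The careful balance of these Bernstein losses against the structural $2^{l\omega}$ weights in the $S^\omega_l$ norm is what produces the factor $2^k\, 2^{-\omega(l-k)}$, after which summing over $l\ge k$ yields the HH contribution to \eqref{Lemma5.4}.

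Finally, the three contributions are assembled, the almost-orthogonality loss from the factors of $2^{20}$ in the summation defining $\mathbf{a}_k,\mathbf{b}_k$ is absorbed, and one recovers exactly the three sums displayed in \eqref{Lemma5.4}. Since this is a word-for-word transcription of \cite[Lemma~5.4]{BIKT}, no additional ingredient beyond their argument is required; the role of the present Appendix~A is simply to collect the statement in a form convenient for use in Sections~4--6.
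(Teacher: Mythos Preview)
The paper does not prove this lemma: it is quoted verbatim from \cite{BIKT} and recorded in Appendix~A without argument, so there is no in-paper proof to compare against. Your recognition of this is correct, and the Littlewood--Paley trichotomy you outline is indeed the standard route.

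Two issues in your sketch, one cosmetic and one substantive. First, your descriptions of the HL and LH cases are interchanged relative to your own labeling: in what you call ``HL'' ($|k_1-k|\le 4$, $k_2\le k-4$) the factor $P_{k_2}g$ is the \emph{low}-frequency piece, so placing it in $L^4$ yields $\mathbf b_{k_2}$, not $\mathbf b_k$; the argument you actually wrote there is the LH argument, and vice versa. Once the labels are swapped the two off-diagonal estimates are fine.

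The real gap is in the HH$\to$L step. The Bernstein factor from $L^{2_\omega}_x$ to $L^4_x$ in two dimensions is $2^{k(1/2+\omega)}$, not $2^{k\omega}$; and if you then place $P_l f$ in the \emph{lateral} norm $L^{p^*_\omega}_x L^\infty_t$ (which carries the weight $2^{l/2-l\omega}$ inside $S^\omega_l$) and $P_l g$ in $L^4_{t,x}$, the outcome is $2^{(k+l)/2}\,2^{-\omega(l-k)}\mathbf a_l\mathbf b_l$, i.e.\ a loss of $2^{(l-k)/2}$ over the claimed bound. The fix is to use the $L^\infty_t L^{2_\omega}_x$ component of $S^\omega_l$ instead: H\"older gives $\|P_l f\,P_l g\|_{L^4_t L^r_x}\le 2^{-l\omega}\mathbf a_l\,\mathbf b_l$ with $1/r=1/2_\omega+1/4=3/4+\omega/2$, and then Bernstein at the output frequency yields exactly the factor $2^{k(1+\omega)}$, hence $2^{k}\,2^{-\omega(l-k)}\mathbf a_l\mathbf b_l$ as required.
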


\begin{Lemma}[\cite{BIKT},Lemma 5.4]\label{90}
Given $\mathcal{L}\in \Bbb Z_+$, $\omega\in [0,\frac{1}{2}]$, $T\in (0,2^{2\mathcal{L}}]$. Suppose that $f,g\in H^{\infty,\infty}(T)$, $P_{k}f\in S^{\omega}_k(T)$, $P_{k}g\in L^{4}_{t,x}$ for all $k\in\Bbb Z$. Let
\begin{align}
\mu_k:=\sum_{|l-k|\le 20}\|P_{k }f\|_{S^{\omega}_{l}}(T), \mbox{  }\nu_k:=\sum_{|l-k|\le 20}\|P_{k }g\|_{L^4_{t,x}(T)}.
\end{align}
If $|k_2-k|\le 4,$ $k_1\le k-4$, then
\begin{align}
\|P_{k }(f_{k_1}g_{k_2})\|_{L^{4}_{t,x}}\lesssim 2^{k_1}\mu_{k_2}\nu_{k}.
\end{align}
If $|k_1-k|\le 4,$ $k_2\le k-4$, then
\begin{align}
\|P_{k }(f_{k_1}g_{k_2})\|_{L^{4}_{t,x}}\lesssim 2^{k_2}2^{\frac{1}{2}(k-k_2)}\mu_{k}\nu_{k_2}.
\end{align}
If $|k_1-k_2|\le 8,$ $k_1,k_2\ge k-4$, then
\begin{align}
\|P_{k }(f_{k_1}g_{k_2})\|_{L^{4}_{t,x}}\lesssim 2^{k(1+\omega)}2^{-\omega k_2}\mu_{k_2}\nu_{k_2}.
\end{align}
\end{Lemma}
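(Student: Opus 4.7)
My plan is to deduce the three frequency-localized bilinear bounds by the standard template of Hölder in mixed Lebesgue spaces combined with two-dimensional Bernstein, applied to whichever of the three slots in the $S^\omega_k$ norm (namely $L^\infty_tL^{2_\omega}_x$, $L^4_tL^{p^*_\omega}_x$, and $2^{-k/2}L^{p^*_\omega}_xL^\infty_t$, each weighted by $2^{k\omega}$) is best suited to the interaction in question. In each case one factor will be absorbed via an endpoint of $S^\omega$ and the other factor will be kept in the plain $L^4_{t,x}$ norm providing $\nu$; the three gains $2^{k_1}$, $2^{k_2}\cdot 2^{(k-k_2)/2}$, and $2^{k(1+\omega)}\cdot 2^{-\omega k_2}$ then arise purely from the dimensional balance of Bernstein applied to the appropriate dyadic piece.

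For the low--high interaction (Case~1, $k_1\le k-4$, $|k_2-k|\le 4$) I would place $f_{k_1}$ in $L^\infty_{t,x}$, reached from the $L^{p^*_\omega}_xL^\infty_t$ slot of $\|f_{k_1}\|_{S^\omega_{k_1}}$ by two-dimensional spatial Bernstein with loss $2^{2k_1/p^*_\omega}=2^{k_1(1/2-\omega)}$; combined with the $2^{-k_1/2}$ prefactor and the $2^{k_1\omega}$ weight built into $\mu_{k_1}$ this yields exactly $2^{k_1}\mu_{k_1}$, after which Hölder against $\|g_{k_2}\|_{L^4_{t,x}}\le\nu_{k_2}$ gives the claim (the slow variation built into the definition of $\mu,\nu$ across $|l-k|\le 20$ permits the cosmetic relabelling $\mu_{k_1}\nu_{k_2}\to\mu_{k_2}\nu_k$). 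For the high--low case (Case~2, $|k_1-k|\le 4$, $k_2\le k-4$) the situation is symmetric but no $S^\omega$ norm is available on $g_{k_2}$, so instead I would keep $f_{k_1}$ in $L^{p^*_\omega}_xL^\infty_t$ and upgrade $g_{k_2}$ from $L^4_{t,x}$ to $L^{q}_xL^4_t$ by Bernstein in $x$, with $1/q+1/p^*_\omega=1/4$; the resulting factors assemble into $2^{k_2}\cdot 2^{(k-k_2)/2}\mu_k\nu_{k_2}$ after cancellation with the $S^\omega_{k_1}$ weights.

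For the high--high $\to$ low case (Case~3, $k_1,k_2\ge k-4$, $|k_1-k_2|\le 8$) there is no dyadic factor below $2^k$ in the pointwise product itself, so the claimed $2^k$-type prefactor has to come entirely from Bernstein acting on the output $P_k$. My plan is to Hölder as $\|f_{k_1}g_{k_2}\|_{L^4_tL^r_x}\le\|f_{k_1}\|_{L^\infty_tL^{2_\omega}_x}\|g_{k_2}\|_{L^4_{t,x}}$ with $1/r=1/2_\omega+1/4$, and then to apply spatial Bernstein from $L^r_x$ to $L^4_x$ on $P_k(f_{k_1}g_{k_2})$; the $2^{-\omega k_2}$ in the conclusion records the $L^\infty_tL^{2_\omega}_x$ weight hidden in $\mu_{k_2}$, while the $2^{k(1+\omega)}$ comes from the output Bernstein gain. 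The main delicate bookkeeping is checking that the signs of the resulting $k-k_2$ exponents come out correctly so that the estimate genuinely improves upon the trivial bound as $k_2$ grows relative to $k$; this is precisely what encodes the high--high--low cancellation, and is the only point where the proof differs materially from that of the lateral-norm variant already recorded as Lemma~\ref{FCq}.
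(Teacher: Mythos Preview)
The paper does not prove this lemma; it is quoted from \cite{BIKT} (their Lemma~5.4) and used as a black box throughout. Your H\"older--Bernstein template is exactly the standard mechanism behind such frequency-localized bilinear bounds and is how the result is obtained in \cite{BIKT}, so the overall plan is correct and matches the intended argument.

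Two concrete computational slips need fixing. In Case~1, since $1/p^*_\omega=\tfrac14+\tfrac{\omega}{2}$, the spatial Bernstein loss from $L^{p^*_\omega}_x$ to $L^\infty_x$ is $2^{2k_1/p^*_\omega}=2^{k_1(1/2+\omega)}$, not $2^{k_1(1/2-\omega)}$; with the correct sign the three factors $2^{k_1(1/2+\omega)}\cdot 2^{k_1/2}\cdot 2^{-k_1\omega}$ do combine to $2^{k_1}$ as you assert. In Case~2, your H\"older relation $1/q+1/p^*_\omega=1/4$ forces $1/q=-\omega/2\le 0$, which is vacuous; the clean route is instead to Bernstein $f_{k_1}$ from $L^{p^*_\omega}_xL^\infty_t$ up to $L^4_xL^\infty_t$ (loss $2^{k_1\omega}$, which cancels the $S^\omega$ weight to leave $2^{k_1/2}\mu_{k_1}$), Bernstein $g_{k_2}$ from $L^4_{t,x}$ to $L^\infty_xL^4_t$ (loss $2^{k_2/2}$), and then H\"older in $L^4_xL^4_t$, yielding $2^{k_1/2}\mu_{k_1}\cdot 2^{k_2/2}\nu_{k_2}\sim 2^{k_2}2^{(k-k_2)/2}\mu_k\nu_{k_2}$.

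Finally, your ``cosmetic relabelling'' $\mu_{k_1}\nu_{k_2}\to\mu_{k_2}\nu_k$ in Case~1 is not justified: there $k_1\le k-4$ while $|k_2-k|\le 4$, so $k_1$ and $k_2$ can be arbitrarily far apart, and the width-$20$ window in the definition of $\mu$ does not bridge that gap. The index $\mu_{k_2}$ in the first displayed inequality is a typo for $\mu_{k_1}$, consistent with the summed form $\sum_{l\le k}2^l\mathbf{a}_l\mathbf{b}_k$ in the companion statement (\ref{Lemma5.4}). Your Case~3 argument, by contrast, is correct as written.
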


\begin{Lemma}[\cite{BIKT}, Lemma 6.3]
\begin{itemize}
  \item If $|l-k|\le 80$ and $f\in F_{l}(T)$, then
\begin{align}\label{6a}
\|P_{k }(gf)\|_{N_k(T)}&\lesssim \|g\|_{L^2_tL^2_x}\|f\|_{F_{l}(T)}.
 \end{align}
 \item If $l\le k-80$ and $f\in F_{l}(T)$, then
\begin{align}\label{6b}
\|P_{k }(gf)\|_{N_k(T)}&\lesssim 2^{\frac{l-k}{2}}\|g\|_{L^2_tL^2_x}\|f\|_{F_{l}(T)}.
 \end{align}
  \item If $k\le l-80$ and $f\in G_{l}(T)$, then
\begin{align}\label{6c}
\|P_{k }(gf)\|_{N_k(T)}&\lesssim 2^{\frac{k-l}{6}}\|g\|_{L^2_tL^2_x}\|f\|_{G_{l}(T)}.
 \end{align}
\end{itemize}

\end{Lemma}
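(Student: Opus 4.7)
The three cases correspond to the three types of Littlewood--Paley interactions, and in each one the bound is extracted by pairing one block of the $N_k(T)$ norm with a block of $F_l(T)$ or $G_l(T)$ through an appropriate Hölder-type inequality. The overall strategy is: (i) fix the frequency relationship between $k$ and $l$; (ii) choose the single block of $N_k(T)$ that is best suited to dualize the available information on $f$; (iii) verify the claimed power of $2$ by book-keeping on the localization scales.

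\textbf{Case (\ref{6a}): $|l-k|\le 80$ (balanced frequencies).} Here I would use the plain $L^{4/3}_{t,x}$ component of $N_k(T)$. By Hölder in space-time,
\begin{equation*}
\|P_k(gf)\|_{L^{4/3}_{t,x}}\lesssim \|g\|_{L^2_tL^2_x}\|P_lf\|_{L^4_{t,x}},
\end{equation*}
and since $L^4_{t,x}$ is one of the four blocks of $F^0_l$, the inclusion $F^0_l(T)\hookrightarrow L^4_{t,x}$ followed by the atomic definition of $F_l(T)$ gives $\|P_lf\|_{L^4_{t,x}}\lesssim \|f\|_{F_l(T)}$. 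No frequency gain is needed or possible in this regime, which matches the stated bound.

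\textbf{Case (\ref{6b}): $l\le k-80$ (high $g$ $\times$ low $f$, output high).} Here the output frequency $\sim 2^k$ must come from $g$. The gain $2^{(l-k)/2}$ is forced by the lateral/Galilean-modulated block $2^{-k/2}L^{1,2}_{\mathbf{e},W_{k-40}}$ of $N_k(T)$. After decomposing $P_lf=\sum_{\lambda\in W_{l+40}}f_\lambda$ with $\|f_\lambda\|_{L^{2,\infty}_{\mathbf{e},\lambda}}$ controlled by $\|f\|_{F_l(T)}$, I would show for each Galilean piece that
\begin{equation*}
\|P_k(g\, f_\lambda)\|_{L^{1,2}_{\mathbf{e},\mu}}\lesssim \|g\|_{L^2_tL^2_x}\|f_\lambda\|_{L^{2,\infty}_{\mathbf{e},\lambda}},
\end{equation*}
using Hölder in the lateral variable ($L^{1,2}\le L^{2,2}\cdot L^{2,\infty}$) together with the identity $L^{2,2}_{t,x}=L^2_{t,x}$, and checking that the Galilean frames $W_{l+40}$ on the input and $W_{k-40}$ on the output are compatible thanks to $l+40 < k-40$. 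Summing over $\lambda$ and over a finite set of directions $\mathbf{e}$ absorbs the geometric series, and the prefactor $2^{-k/2}$ in the definition of the $N_k$-block combines with the trivial factor $2^{l/2}$ coming from the $W_{l+40}$ scale (or, equivalently, Bernstein along $\mathbf{e}$ at scale $2^l$) to produce the claimed $2^{(l-k)/2}$.

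\textbf{Case (\ref{6c}): $k\le l-80$ (low $g$ $\times$ high $f$, output low).} Here the frequencies of $g$ and of $P_lf$ must be $\approx 2^l$ and the product is then projected to the lower frequency $\sim 2^k$. I would use the blocks $2^{k/6}L^{3/2,6/5}_{\mathbf{e}_j}$ of $N_k(T)$ paired with the blocks $2^{-l/6}L^{3,6}_{\mathbf{e}}$ and $2^{l/6}L^{6,3}_{\mathbf{e}}$ of $G_l(T)$. By Hölder in the lateral space ($L^{3/2,6/5}\le L^{6,3}\cdot L^{2,2}$ or the symmetric variant), for an appropriate choice of coordinate direction $\mathbf{e}_j\in\{\mathbf{e}_1,\mathbf{e}_2\}$,
\begin{equation*}
\|P_k(gf)\|_{L^{3/2,6/5}_{\mathbf{e}_j}}\lesssim \|g\|_{L^2_tL^2_x}\|P_{\sim l,\mathbf{e}_j}f\|_{L^{6,3}_{\mathbf{e}_j}}.
\end{equation*}
The prefactors $2^{k/6}$ in $N_k$ and $2^{-l/6}$ extracted from the $G_l$-block combine to the stated $2^{(k-l)/6}$. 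A standard pigeonhole argument selects $\mathbf{e}_j$ so that the directional Littlewood--Paley piece $P_{\sim l,\mathbf{e}_j}f$ lies in the sector controlled by $G_l$.

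\textbf{Main obstacle.} Cases (\ref{6a}) and (\ref{6c}) are routine Hölder arguments once the matching blocks are identified. The genuinely delicate point is case (\ref{6b}): one must keep track of the Galilean translations at two different scales $W_{l+40}$ and $W_{k-40}$ and verify that the off-diagonal terms really do sum, which is where the hypothesis $l\le k-80$ is used (it guarantees that a single low-frequency Galilean cell of $f$ sees only $O(1)$ high-frequency cells of the output, so the atomic sums reassemble without logarithmic loss). This bookkeeping is exactly the content of \cite{BIKT}, and I would simply cite that reference rather than reproduce the orthogonality computation.
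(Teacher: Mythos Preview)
The paper does not actually prove this lemma; it is quoted verbatim from \cite{BIKT}, Lemma~6.3, and used as a black box in Section~5 and in Appendix~A. So there is no in-paper proof to compare against. Your sketch is an accurate outline of how the argument in \cite{BIKT} runs: case~(\ref{6a}) via the $L^{4/3}$ block of $N_k$ and the $L^4$ block of $F_l$; case~(\ref{6b}) via the lateral $L^{1,2}_{\mathbf e,W_{k-40}}$ block of $N_k$ against the $L^{2,\infty}_{\mathbf e,W_{l+40}}$ block of $F_l$; case~(\ref{6c}) via the $L^{3/2,6/5}_{\mathbf e_j}$ block of $N_k$ against the directional $L^{6,3}_{\mathbf e}$ block of $G_l$.

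One small correction in your bookkeeping for case~(\ref{6b}): the lattices $W_{l+40}$ and $W_{k-40}$ do \emph{not} nest the way you suggest. When $l\le k-80$ the mesh of $W_{l+40}$ is $2^{-(l+40+2\mathcal L)}$, which is \emph{finer} than the mesh $2^{-(k-40+2\mathcal L)}$ of $W_{k-40}$, so a modulation parameter $\lambda\in W_{l+40}$ is in general not an element of $W_{k-40}$. What saves the argument is that each such $\lambda$ lies within distance $O(2^{-(k-40+2\mathcal L)})$ of some $\mu\in W_{k-40}$, and the resulting phase error $e^{-i(x\cdot(\lambda-\mu)/2+t(|\lambda|^2-|\mu|^2)/4)}$ is harmless on the time interval $|t|\le T\le 2^{2\mathcal L}$ at output frequency $\sim 2^k$ (it is essentially constant on the relevant scale). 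This is the orthogonality computation you correctly flagged as the delicate point; your instinct to defer it to \cite{BIKT} is exactly what the present paper does.
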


\begin{Lemma}[\cite{BIKT}, Lemma 6.5]
\begin{itemize}
  \item If $k\le l$ and $f\in F_{k}(T)$,$g\in F_{l}(T)$ then
\begin{align}
\|fg\|_{L^{2}_{t,x}}&\lesssim \|f\|_{F_{k}(T)}\|g\|_{F_{l}(T)}.\label{6.19}
 \end{align}
 \item If $k\le l$ and $f\in F_{k}(T)$,$g\in G_{l}(T)$ then
\begin{align}
\|fg\|_{L^{2}_{t,x}}&\lesssim 2^{\frac{k-l}{2}}\|f\|_{F_{k}(T)}\|g\|_{G_{l}(T)}.\label{6.20}
 \end{align}
\end{itemize}
\end{Lemma}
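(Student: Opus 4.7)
The plan is to treat the two estimates separately: (\ref{6.19}) will follow from a direct H\"older argument, while (\ref{6.20}) requires exploiting the angular/Galilean structure built into $G_l$ to extract the gain $2^{(k-l)/2}$.

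For (\ref{6.19}), I would simply apply H\"older in space-time: $\|fg\|_{L^2_{t,x}}\le \|f\|_{L^4_{t,x}}\|g\|_{L^4_{t,x}}$. Since $L^4$ is one of the four atomic components of the $F^0_k$ norm, and $\|h\|_{F_k(T)}\ge \|h\|_{L^4}$ for any $h$ (by testing the infimum over atomic decompositions against the $L^4$ norm and using $2^{n_l}\ge 1$ for $n_l\in\mathbb{N}$), we obtain $\|f\|_{L^4}\le \|f\|_{F_k(T)}$ and analogously for $g$. No use of the constraint $k\le l$ is required here; the inequality is completely symmetric.

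For (\ref{6.20}), the substantive estimate, the plan is to use the lateral structure. First, decompose $g$ angularly as $g=\sum_{{\bf e}} P_{l,{\bf e}}g$ using the angular Littlewood--Paley projections $P_{l,{\bf e}}$ appearing in the $G_l$ norm, where ${\bf e}$ ranges over a set of $\sim 2^{(l-k)/2}$ unit vectors partitioning $\mathbb{S}^1$ into arcs of aperture $\sim 2^{(k-l)/2}$. For each angular piece, I would apply H\"older in lateral coordinates along ${\bf e}^{\perp}$:
\begin{align*}
\|f\cdot P_{l,{\bf e}}g\|_{L^2_{t,x}}\lesssim \|f\|_{L^{2,\infty}_{{\bf e},\lambda}}\|P_{l,{\bf e}}g\|_{L^{\infty,2}_{{\bf e},\lambda}},
\end{align*}
where $\lambda$ is a Galilean boost parameter chosen so that, after applying $G_{\lambda{\bf e}}$, the factor $P_{l,{\bf e}}g$ is essentially supported on modulations $\ll 2^{2l}$, while $f$ still lies in the appropriate low-modulation region. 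By the definitions of the norms, $\|f\|_{L^{2,\infty}_{{\bf e},\lambda}}\lesssim 2^{k/2}\|f\|_{F_k(T)}$ and $\|P_{l,{\bf e}}g\|_{L^{\infty,2}_{{\bf e},\lambda}}\lesssim 2^{-l/2}\|g\|_{G_l(T)}$, and combining these two bounds already produces the advertised factor $2^{(k-l)/2}$ per angular piece. Summing orthogonally in ${\bf e}$ via almost orthogonality of the projections $\{P_{l,{\bf e}}\}_{\bf e}$ (Plancherel along the $l$-frequency annulus) gives $\|fg\|_{L^2_{t,x}}^2\lesssim \sum_{{\bf e}}\|f\cdot P_{l,{\bf e}}g\|_{L^2_{t,x}}^2$, and this reduces the task to controlling the supremum of the above bilinear bound times a cardinality factor from the angular partition, which is harmless since the $G_l$ norm absorbs the supremum over ${\bf e}$.

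The main obstacle I anticipate is matching the Galilean parameter ranges: the $F_k$ lateral norm only permits boosts $\lambda\in W_{k+40}$ with $|\lambda|\le 2^{2k}$, while the $G_l$ norm allows boosts $|\lambda|<2^{l-40}$, which can be much larger since $l\ge k$. To reconcile this I would first use the atomic decomposition of $f$ from the $F_k$ definition, splitting $f=\sum_{n\ge 0}f_n$ with $f_n$ frequency-localized at $k+n$, then for each $n$ select a Galilean boost $\lambda_n\in W_{k+n+40}$ compatible with the angular direction ${\bf e}$ of the $g$-piece; the resulting parameter is then legitimate for both norms because $|\lambda_n|\le 2^{2k+2n}$ can be made $\le 2^{l-40}$ after paying the summability factor $2^{n}$ contained in the atomic $F_k$ norm. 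This matching, together with the angular orthogonality and the choice of $\lambda$ adapted to ${\bf e}$, is the only delicate point; once arranged, the estimate is produced by the pointwise H\"older inequality above.
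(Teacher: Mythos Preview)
The paper does not prove this lemma at all: it is quoted verbatim from \cite{BIKT}, Lemma 6.5, and placed in Appendix A as a reference, with no argument given. So there is no ``paper's own proof'' to compare against; the relevant comparison is with the original BIKT argument.

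Your proof of (\ref{6.19}) via $L^4\times L^4\to L^2$ H\"older together with the embedding $\|P_kh\|_{L^4}\le\|h\|_{F_k(T)}$ is correct and is exactly how this is done.

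For (\ref{6.20}) your overall instinct --- pair the lateral maximal component $2^{-k/2}\|f\|_{L^{2,\infty}_{{\bf e},W_{k+40}}}$ of $F_k$ against the lateral local-smoothing component $2^{l/2}\|P_{j,{\bf e}}g\|_{L^{\infty,2}_{{\bf e},\lambda}}$ of $G_l$ via H\"older in lateral coordinates --- is the right mechanism, and you correctly flag the Galilean-parameter compatibility as the one delicate point. However, the angular-decomposition layer you add on top is both unnecessary and arithmetically inconsistent. The projector $P_{j,{\bf e}}$ appearing in the $G_l$ norm is a one-dimensional directional projector onto $\{\xi\cdot{\bf e}\sim 2^{j}\}$, not an angular sector projector; since $g$ lives on the annulus $|\xi|\sim 2^{l}$, one only needs $O(1)$ pieces (split according to which coordinate $\xi\cdot{\bf e}_1$ or $\xi\cdot{\bf e}_2$ is $\sim 2^{l}$) to exhaust $g$, and each piece is directly controlled by the $G_l$ norm. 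No fine angular partition is required. Moreover, your orthogonality claim $\|fg\|_{L^2}^2\lesssim\sum_{{\bf e}}\|f\,P_{l,{\bf e}}g\|_{L^2}^2$ would force the angular sectors to have aperture $\gtrsim 2^{k-l}$ (so that convolution with the $2^{k}$-ball support of $\widehat f$ does not destroy disjointness), hence $\sim 2^{l-k}$ pieces rather than $2^{(l-k)/2}$; with that count the $\ell^2$ sum over ${\bf e}$ exactly cancels the per-piece gain $2^{k-l}$, yielding no improvement at all. The gain $2^{(k-l)/2}$ in BIKT comes entirely from a single application of the lateral H\"older inequality on each of the $O(1)$ directional pieces of $g$, summed in $\ell^1$ over the atomic decomposition $f=\sum_{\lambda}f_\lambda$ coming from the $L^{2,\infty}_{{\bf e},W}$ norm --- not from any angular orthogonality.
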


  \section {Appendix B. Proof of Remained Claims}

It seems that
the following blow-up criterion  was not explicitly written down in literature of SMF. This result is well-known in energy critical heat flows. For completeness, we give a proof.
\begin{Proposition}
Suppose that $u_0\in H^{L}_Q$ with $L\ge 4$ is the initial data to SMF. If in the time interval $[-T,T]$,  the SMF solution $u$ satisfies
\begin{align}\label{hbk}
\|u(t)\|_{L^{\infty}_{t,x}(T)}\le B<\infty,
\end{align}
then $u$ has the bound
\begin{align}
\|u(t)\|_{L^{\infty}_tH^{L}_x }\le C(B,T,\|u_0\|_{H^{L}_x})<\infty.
\end{align}
As a corollary, if  (\ref{hbk}) holds then $u$ can be extended beyond $[-T,T]$ to $C([-T-\rho,T+\rho];H^{L}_Q)$ for some $\rho>0$.
\end{Proposition}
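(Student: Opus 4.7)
The plan is to reduce to a continuation argument and then derive a higher-order intrinsic energy estimate. Since $\mathcal{N}$ is compact, the literal hypothesis $\|u\|_{L^\infty_{t,x}}\le B$ is automatic; the effective bound supplied by (\ref{jhbm}) in the main argument is $\|\partial_x u\|_{L^\infty_{t,x}}\le B$, which is what I shall use throughout. By the local well-posedness theorem of McGahagan \cite{Mc} in $H^L_Q$, the solution extends past the endpoints of $[-T,T]$ as soon as $\sup_{|t|\le T}\|u(t)-Q\|_{H^L_x}$ is finite, so it suffices to produce such an a priori bound; the corollary about extension beyond $[-T,T]$ is then immediate by restarting the local flow at $t=\pm T$.

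Working intrinsically, introduce the energies
\[
E_j(t):=\sum_{i=1,2}\|\nabla^j\partial_i u(t)\|_{L^2_x}^2,\qquad 0\le j\le L-1,
\]
where $\nabla$ is the covariant derivative on $\bigotimes^j T^*\mathbb{R}^2\otimes u^*T\mathcal{N}$. Commuting $\nabla^j$ past the evolution $\nabla_t\partial_i u = J\nabla_k\nabla_k\partial_i u + J\mathbf{R}(\partial_i u,\partial_k u)\partial_k u$ (derivable from Lemma \ref{asdf}) via the curvature identity $[\nabla_\alpha,\nabla_\beta]W=\mathbf{R}(\partial_\alpha u,\partial_\beta u)W$ yields
\[
\nabla_t(\nabla^j\partial_i u)=J\,\Delta_{\mathrm{cov}}(\nabla^j\partial_i u)+N_{j,i},
\]
where $N_{j,i}$ is a finite sum of monomials in covariant derivatives of $\{\partial_k u\}_{k=1}^2$ of total spatial order at most $j+1$, multiplied by $\widetilde\nabla^a\mathbf{R}$ (and by $J$) evaluated along $u$. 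Taking the real $L^2$ inner product with $\nabla^j\partial_i u$, one integration by parts combined with the K\"ahler condition $\widetilde\nabla J=0$ and the pointwise skew-adjointness of $J$ on each fibre gives $\operatorname{Re}\langle J\Delta_{\mathrm{cov}}W,W\rangle_{L^2}=0$, whence $\tfrac{d}{dt}E_j\lesssim \|N_{j,i}\|_{L^2_x}E_j^{1/2}$.

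To close, Gagliardo--Nirenberg interpolation is applied to each monomial of $N_{j,i}$, placing the $L^\infty$ bound $\|\partial_x u\|_{L^\infty}\le B$ on the factor of lowest derivative order and $L^2$ bounds $E_{j'}^{1/2}$ on the remaining factors (with $j'\le j$); since $\mathcal{N}$ is compact, the scalar factors $\widetilde\nabla^a\mathbf{R}\circ u$ are uniformly bounded. This yields
\[
\|N_{j,i}\|_{L^2}\le C(B,\mathcal{N},L)\Big(1+\sum_{j'\le L-1}E_{j'}(t)\Big),
\]
so that $\mathcal{E}(t):=\sum_{j\le L-1}E_j(t)$ satisfies $\mathcal{E}'\le C(1+\mathcal{E})$. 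Gronwall on $[-T,T]$ then gives $\mathcal{E}(t)\le C(B,T,\|u_0\|_{H^L_x})$; combined with the at-most-linear-in-$t$ growth of $\|u(t)-Q\|_{L^2_x}$ (noted in Section 1, obtained from $\|u_t\|_{L^2_x}\lesssim \|u\|_{H^2_x}$ which is now controlled), this upgrades to the claimed $H^L$ bound, and the corollary follows. The main technical obstacle will be the combinatorial bookkeeping of the commutators $[\nabla^j,\nabla_k\nabla_k]$ and $[\nabla_t,\nabla^j]$---the latter producing factors of $\partial_t u$ which must be re-expressed via the Schr\"odinger map equation as spatial-derivative terms of appropriate order---so that every resulting monomial carries at most $j+1$ spatial derivatives and the Gagliardo--Nirenberg count closes linearly in $\mathcal{E}$.
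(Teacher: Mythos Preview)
Your proposal is correct and follows essentially the same strategy as the paper: an intrinsic covariant energy estimate in which the top-order contribution vanishes by the skew-adjointness of $J$ (the paper writes this as $\langle JX,X\rangle=0$), the remaining curvature-commutator terms are controlled using the $L^\infty$ bound on $\partial_x u$, and Gronwall closes the argument. The paper organizes the computation around the tension field $\tau(u)$ and its covariant derivatives, working out orders two and three explicitly before invoking induction and \cite{Mc}, whereas you set up all orders $E_j=\|\nabla^j\partial_i u\|_{L^2}^2$ at once; the two are equivalent up to lower-order curvature corrections, and you have correctly identified that the effective hypothesis is $\|\partial_x u\|_{L^\infty_{t,x}}\le B$.
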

\begin{proof}
 Recall the tension field  $\tau(u)=\sum^{2}_{j=1}\nabla_j\partial_j u$.
By integration by parts,
 \begin{align}
\int_{\Bbb R^2}\langle \tau(u), \tau(u)\rangle dx&=\int_{\Bbb R^2}\sum^{2}_{j,k=1}\langle \nabla_j\partial_j u,\nabla_k\partial_k u\rangle dx\nonumber\\
&=\int_{\Bbb R^2}\langle \nabla_k\nabla_j\partial_ku, \nabla_k\nabla_j\partial_ku\rangle
+\int_{\Bbb R^2}O(|du|^4)dx.\label{pojn}
\end{align}
Since $u$ solves $SMF$, by integration by parts, we get
 \begin{align*}
&\frac{d}{dt}\int_{\Bbb R^2}\langle \tau(u), \tau(u)\rangle dx=2\sum^2_{j=1}\int_{\Bbb R^2} \langle \nabla_j\partial_j \partial_t u,\tau(u)\rangle dx+\int_{\Bbb R^2}O(|du|^2|\partial_tu||\tau(u)|)dx\\
&=2\sum^2_{j=1}\int_{\Bbb R^2}\langle  \nabla_jJ\tau(u), \nabla_j\tau(u)\rangle
+\int_{\Bbb R^2}O(|du|^2|\partial_tu||\tau(u)|)dx.
\end{align*}
Since $J$ commutes with $\nabla_j$, $\langle JX,X\rangle =0 $,  we then arrive at
 \begin{align*}
\frac{d}{dt}\|\tau(u)\|^2_{L^2_x}\lesssim \|du\|^2_{L^{\infty}_{t,x}}\|\tau(u)\|^2_{L^2_x}.
\end{align*}
Gronwall inequality and (\ref{hbk}) show
 \begin{align*}
\|\tau(u)\|_{L^2_x}\lesssim e^{Bt}\|\tau(u_0)\|_{L^2_x}.
\end{align*}
Using the energy bound
 \begin{align*}
\|\nabla u\|_{L^{\infty}_tL^2_x}\lesssim  \|\nabla u_0\|_{L^2_x}
\end{align*}
and (\ref{pojn}) give
 \begin{align}\label{Hxbn}
\| u(t)\|_{\mathcal{W}^{2,2}}\lesssim  B \|\nabla u_0 \|_{L^2_x}+ e^{Bt}\|\tau(u_0)\|_{L^2_x}.
\end{align}
By integration by parts,
 \begin{align*}
&\int_{\Bbb R^2}\langle \nabla_i\tau(u), \nabla_i\tau(u)\rangle dx=\int_{\Bbb R^2}\sum^{2}_{j,k=1}\langle \nabla_i\nabla_j\partial_j u, \nabla_i\nabla_k\partial_k u\rangle dx=\int_{\Bbb R^2}\langle \nabla_i\nabla_j\partial_ku, \nabla_i\nabla_j\partial_ku\rangle\\
&+\int_{\Bbb R^2}O(|du|^3|\nabla^2du| +|\nabla u|^2|\nabla du|^2+|\nabla du||du|^2)dx.
\end{align*}
Thus we have
 \begin{align}
\| \nabla^2du(t)\|^2_{L^2_x}&\lesssim \|\nabla\tau(u)\|^2_{L^2_x}+\|du\|^6_{L^6_x}+\|du\|^2_{L^{\infty}_x}\|\nabla du\|^2_{L^2_x}+\|du\|^2_{L^{4}_x}\|\nabla du\|_{L^2_x}\nonumber\\
&\lesssim \|\nabla\tau(u)\|^2_{L^2_x}+C(B,t,\|u_0\|_{\mathcal{W}^{2,2}})\label{Hdxbn}
\end{align}
And applying integration by parts furthermore gives
 \begin{align*}
&\frac{1}{2}\frac{d}{dt}\|\nabla \tau (u)\|^2_{L^2_x}=\sum_{i,j}\langle \nabla_i\nabla_j\partial_ju,\nabla_t\nabla_i\nabla_j\partial_ju\rangle\\
&=\sum_{i,j}\langle \nabla_i\tau(u), \nabla_i\nabla_j\nabla_j\partial_tu\rangle +\int_{\Bbb R^2}|\nabla\tau(u)||\nabla\partial_tu||du|^2dx\\
&+\int_{\Bbb R^2}|\nabla^2du| \nabla du||\partial_t u||du|dx+  \int_{\Bbb R^2}|du|^3 |\partial_tu||\nabla^2du| dx\\
&\lesssim -\langle\sum_{i}\nabla_i \nabla_i\tau(u), J\sum_{j} \nabla_j\nabla_j\tau(u)\rangle+B^2\|\nabla\tau(u)\|^2_{L^2_x}+B\|\nabla\tau(u)\|_{L^2_x}\|\nabla du\|^2_{L^4_x}\\
&+B^3\|\nabla\tau(u)\|_{L^2_x}\|\tau(u)\|_{L^2_x}\\
&\lesssim B^2\|\nabla\tau(u)\|^2_{L^2_x}+B\|\nabla\tau(u)\|_{L^2_x}\|\nabla^2 du\|_{L^2_x}\|\nabla du\|_{L^2_x}+B^3\|\nabla\tau(u)\|_{L^2_x}\|\tau(u)\|_{L^2_x}.
\end{align*}
Hence, denoting $F(t)=\|\nabla\tau(u)\|_{L^2_x}$, (\ref{Hxbn}) and (\ref{Hdxbn}) show
 \begin{align*}
&\frac{1}{2}\frac{d}{dt}F^2(t)\lesssim C_1(B,T) F(t)[F(t)+ C_2(B,T)],
\end{align*}
where $C_1(B,T)$ and $C_2(B,T)$ are smooth functions of $B,T$.
So the Sobolev norm of $u$ has a uniform bound in $[-T,T]$ up to order three. This with the classical local existence theory (see \cite{DW} or \cite{Mc})  implies $u$ can be extended to $[-\rho-T,T+\rho]$ for some $\rho>0$. And the bounds for the higher order Sobolev norms follow by Theorem 3.3 of \cite{Mc} or induction. Then by Sobolev embedding $u$ is smooth in $[-\rho-T,T+\rho]$ if $u_0\in\mathcal{H}_{Q}$.
\end{proof}

{\small

}
\end{document}